\newcommand{\SO}{\ensuremath{\mathrm{SO(3)}}}
\newcommand{\TSO}{\ensuremath{\mathrm{TSO(3)}}}
\newcommand{\Ta}{\ensuremath{\mathrm{T}}}
\newcommand{\T}{^{\mbox{\small T}}}
\newcommand{\so}{\ensuremath{\mathfrak{so}(3)}}
\newcommand{\SE}{\ensuremath{\mathrm{SE(3)}}}
\newcommand{\se}{\ensuremath{\mathfrak{se}(3)}}
\newcommand{\bR}{\ensuremath{\mathbb{R}}}
\newcommand{\bS}{\ensuremath{\mathbb{S}}}
\newcommand{\mrm}{\mathrm}
\newcommand{\diag}{\mbox{diag}}
\newcommand{\bbm}{\begin{bmatrix}}
\newcommand{\ebm}{\end{bmatrix}}
\newcommand{\matl}{\left[ \begin{array}}
\newcommand{\matr}{\end{array} \right]}
\newcommand{\be}{\begin{equation}}
\newcommand{\ee}{\end{equation}}
\newcommand{\bea}{\begin{eqnarray}}
\newcommand{\eea}{\end{eqnarray}}
\newcommand{\beas}{\begin{eqnarray*}}
\newcommand{\eeas}{\end{eqnarray*}}
\newcommand{\nn}{\nonumber}
\newcommand{\mC}{\mathcal{C}}
\newcommand{\cE}{\mathcal{E}}
\newcommand{\cL}{\mathcal{L}}
\newcommand{\cM}{\mathcal{M}}
\newcommand{\cT}{\mathcal{T}}
\newcommand{\cU}{\mathcal{U}}
\newcommand{\di}{\mathrm{d}}
\newcommand{\tr}{\mathrm{trace}}
\newcommand{\lan}{\langle}
\newcommand{\ran}{\rangle}
\renewcommand{\skew}[1]{\left(#1\right)^{\times}}
\newcommand{\cS}{\mathcal{S}}
\newcommand{\cI}{\mathcal{I}}
\newcommand{\ad}[1]{{\mathrm{ad}_{#1}}}          			
\newcommand{\adast}[1]{{\mathrm{ad}_{#1}^\ast}}  			
\newcommand{\Ad}[1]{{\mathrm{Ad}_{#1}}}  			
\newcommand{\s}[1] {{\scriptscriptstyle #1}}
\newcommand{\Iref}{{\{I\}}} 
\newcommand{\Bref}{{\{B\}}} 
\renewcommand{\skew}[1]{(#1)^{\times}}
\newcommand{\Imbb}{\mathbb{I}}
\newcommand{\bJ}{\mathbb{J}}
\newcommand{\mpz}{\mathpzc}
\newcommand{\msg}{\mathsf{g}}
\newcommand{\msh}{\mathsf{h}}
\newcommand{\sS}{\mathsf{S}}
\newcommand{\sO}{\mathsf{O}}
\newcommand{\bD}{\mathbb{D}}
\newcommand{\bi}{\begin{itemize}}
\newcommand{\ei}{\end{itemize}}
\newcommand{\sD}{\mathscr{D}}
\newcommand{\dsP}{\mathds{P}}
\DeclareMathAlphabet{\mathpzc}{OT1}{pzc}{m}{it}
\DeclareMathOperator{\logm}{{logm}}  			
\DeclareMathOperator{\expm}{{expm}}  			
\theoremstyle{plain}
\newtheorem{lemma}{Lemma}[section]
\newtheorem{assumption}{Assumption}[section]
\newtheorem{proposition}{Proposition}[section]
\newtheorem{theorem}{Theorem}[section]
\theoremstyle{definition}
\theoremstyle{remark}
\newtheorem*{remark}{Remark}
\newtheorem*{convention*}{Convention}
\newlength{\singlespace}
\newlength{\doublespace}
\begin{document}
\setlength{\baselineskip}{\doublespace}
%
%
\pagenumbering{roman}
\pagestyle{empty}
\renewcommand{\baselinestretch}{2}
\begin{center}
STABLE ESTIMATION OF RIGID BODY MOTION \\
USING GEOMETRIC MECHANICS\\
\vspace{0.1in}
BY\\
\vspace{0.1in}
MAZIAR IZADI, B.S., M.S.
\end{center}
\vspace{1.0in}
\begin{center}
A dissertation submitted to the Graduate School\\
\vspace{0.1in}
in partial fulfillment of the requirements\\
\vspace{0.1in}
for the degree \\
\vspace{0.1in}
Doctor of Philosophy, Engineering
\end{center}
\vfill
\begin{center}
Specialization in Mechanical Engineering
\end{center}
\vspace{1.0in}
\begin{center}
New Mexico State University\\
\vspace{0.1in}
Las Cruces, New Mexico\\
\vspace{0.1in}
December 2015
\end{center}

\pagestyle{plain}
\noindent
``Stable Estimation of Rigid Body Motion Using Geometric Mechanics,'' 
a dissertation prepared by
Maziar Izadi 
in partial fulfillment of the requirements for the degree, 
Doctor of Philosophy, Engineering,
has been approved and accepted by the following:

\vspace{0.1in}
\begin{flushleft}
\hrulefill
\newline
{\large \href{louireye@nmsu.edu}{Loui Reyes}}
\newline
Interim Dean of the Graduate School
\vspace{0.5in}

\hrulefill
\newline
{\large \href{aksanyal@syr.edu}{Amit K. Sanyal}}
\newline
Chair of the Examining Committee
\vspace{0.5in}

\hrulefill
\newline
Date
\vspace{0.2in}
\newline
Committee in charge:
\end{flushleft}

{\large \href{http://eng-cs.syr.edu/about-the-college/faculty-and-staff/sanyal}{Dr. Amit K. Sanyal}}, Chair

{\large \href{http://mae.nmsu.edu/~oma}{Dr. Ou Ma}}

{\large\href{http://ece.nmsu.edu/faculty-staff/rprasad}{Dr. Nadipuram Prasad}}

{\large\href{http://www.math.nmsu.edu/Vitas/barany.html}{Dr. Ernest Barany}}

%
\begin{center}
DEDICATION
\end{center}

I dedicate this work to my mother Mahnaz, my father Col. E. Izadi,
and my brothers Mohsen and Moein.

%
\begin{center}
ACKNOWLEDGMENTS
\end{center}

I am deeply appreciative of the many individuals who have supported my work and continually encouraged me through the writing of this dissertation. I am extremely grateful for everyone who gave me their time, encouragement, feedback, and patience. Without these, I would not have been able to see it through.

Above all, I would like to express my deepest gratitude to my advisor, Dr. Amit K. Sanyal, for his inspirational and timely advice and untiring encouragement. Personally, I would like to thank him
for sharing his knowledge which has enriched my study in
Mechanical/Aerospace Engineering area. I really appreciate the amount of time he spent to teach me many skills in research that maybe not every PhD student is blessed enough to learn from their advisor.

I would like to thank Dr. Nadipuram Prasad, Dr. Ou Ma, and Dr. Ernest Barany for their guidance, discussions, ideas, and feedback. I was so fortunate to take many classes with them on Mathematics and Controls, which gave me great insight and enabled me to accomplish this work.

My sincere thanks go to Drs. Ian Leslie and Gabe Garcia, former and current Department Heads of Mechanical and Aerospace Engineering at NMSU, for their support before and after my defense. Special gratitude also goes to the staff at NMSU, particularly Milen Bartnick and Margaret Vasquez, for their support with the paperwork ensuring the smooth completion of my doctoral studies. Moreover, I owe a debt of gratitude to wonderful people at ASNMSU and GSC for helping me attend four conferences over the past three years.

I would especially like to thank my amazing family for the love, support, and
encouragement I have gotten over the years. I undoubtedly
could not have done this without them. Thanks also to all the amazing fellow graduate students during my doctoral studies: both those involved in some research with me (Dr. S{\'e}rgio Br{\'a}s, Dr. Daero Lee, Dr. Morad Nazari, Dr. Sashi Prabhakaran, Dr. Ehsan Samiei, and Gaurav Misra), as well as other significant graduate student friends who accompanied me throughout my
journey. Thanks for your support, encouragement, and being there whenever I needed a friend.

Finally, but not least, I am grateful for absolutely invaluable advice from Dr. Mohammad (Behzad) Zamani, Dr. Paulo Oliveira, and Dr. Carlos Silvestre which greatly improved the quality of my research and papers. Furthermore, I gratefully acknowledge the support from the National Science Foundation through grant CMMI 1131643.

%
\begin{center}
            VITA
\end{center}
\begin{flushleft}
\begin{tabular}{ll}
April 1985 &  Born in Iran
\\
& \\
2003-2007        &  B.S., Amirkabir University of Technology (Tehran Polytechnic),
\\ &Tehran, Iran
\\
& \\
2007-2010        &  M.S., University of Tehran, 
\\               
                 &   Tehran, Iran
\\
 & \\
2012-2015        &  Research Assistant, Mechanical and Aerospace Engineering Department,
\\
                 &  New Mexico State University. 
\\
& \\
2014-2015        &  Teaching Assistant, Mechanical and Aerospace Engineering Department,
\\
                 &  New Mexico State University. 
\\
& \\
2012-2015        &  Ph.D., New Mexico State University.
\\
& \\
\end{tabular}
\end{flushleft}
\vspace{0.1in}
\begin{center}
PROFESSIONAL  AND HONORARY SOCIETIES
\end{center}
\begin{flushleft}
The Institute of Electrical and Electronics Engineers (IEEE)\\
The American Society of Mechanical Engineers (ASME)
\end{flushleft}
\vspace{0.1in}
\begin{center}PUBLICATIONS
\end{center}

\section*{Journal Papers}
\begin{enumerate}
\item {Izadi,~M., \& Sanyal,~A.~K. (2014).
\newblock Rigid body attitude estimation based on the {L}agrange-d'{A}lembert
  principle.
\newblock {\em Automatica}, 50(10), 2570--2577.}

\item {Izadi,~M., \& Sanyal,~A.~K. (2015).
\newblock Rigid body pose estimation based on the {L}agrange-d'{A}lembert principle.
\newblock To appear in {\em Automatica}.}

\item{Br{\'a}s,~S., Izadi,~M., Silvestre,~C., Sanyal,~A., \& Oliveira,~P. (2015).
\newblock Nonlinear Observer for 3{D} Rigid Body Motion Estimation Using Doppler Measurements. \newblock To appear in \emph{IEEE Transactions on Automatic Control.}}

\item {Misra,~G., Izadi,~M., Sanyal,~A.~K., \& Scheeres,~D.~J. (2015).
\newblock Coupled orbit-attitude dynamics and relative state estimation of
  spacecraft near small Solar System bodies.
\newblock To appear in {\em Advances in Space Research}.}

\item {Izadi,~M., Sanyal,~A., Silvestre,~C., \& Oliveira,~P. (2015).
\newblock The Variational Attitude Estimator in the Presence of Bias in Angular Velocity Measurements. \newblock Under preparation.}

\item{Viswanathan,~S.~P., Sanyal,~A.~K., \& Izadi,~M. (2015).
\newblock Smartphone-Based Spacecraft Attitude Determination and Control
System (ADCS) using Internal Momentum Exchange Actuators. \newblock Under preparation.}
\end{enumerate}

\section*{Conference Papers}
\begin{enumerate}
\item{Izadi,~M., Bohn,~J., Lee,~D., Sanyal,~A., Butcher,~E., \& Scheeres,~D. (2013).
\newblock A Nonlinear Observer Design for a Rigid Body in the Proximity of a Spherical Asteroid.
\newblock In {\em Proceedings of the ASME Dynamics Systems and Control Conference}. Palo Alto, CA, USA.}

\item{Br{\'a}s,~S., Izadi,~M., Silvestre,~C., Sanyal,~A., \& Oliveira,~P. (2013).
\newblock Nonlinear Observer for 3D Rigid Body Motion.
\newblock In {\em Proceedings of the $52^{nd}$ IEEE Conference on Decision and Control}. Florence, Italy.}

\item{Sanyal,~A., Izadi,~M., \& Bohn,~J. (2014).
\newblock An Observer for Rigid Body Motion with Almost Global Finite-time Convergence.
\newblock In {\em Proceedings of the ASME Dynamics Systems and Control Conference}. San Antonio, TX, USA.}

\item{Lee,~D., Izadi,~M., Sanyal,~A., Butcher,~E., \& Scheeres,~D. (2014).
\newblock Finite-Time Control for Body-Fixed Hovering of Rigid Spacecraft over an Asteroid.
\newblock In {\em Proceedings of the AAS/AIAA Space Flight Mechanics}. Santa Fe, NM, USA.}

\item {Sanyal,~A.~K., Izadi,~M., \& Butcher,~E.~A. (2014).
\newblock Determination of relative motion of a space object from simultaneous
  measurements of range and range rate.
\newblock In {\em Proceedings of the American Control Conference} (pp. 1607--1612). Portland, OR, USA.}

\item {Sanyal,~A.~K., Izadi,~M., Misra,~G., Samiei,~E., \& Scheeres,~D.~J. (2014).
\newblock Estimation of Dynamics of Space Objects from Visual Feedback During Proximity Operations.
\newblock In {\em Proceedings of the AIAA Space Conference}. San Diego, CA, USA.}

\item {Viswanathan,~S.~P., Sanyal,~A.~K., \& Izadi,~M. (2015).
\newblock Mechatronics Architecture of Smartphone Based Spacecraft ADCS using VSCMG Actuators.
\newblock In {\em Proceedings of the Indian Control Conference}. Chennai, India.}

\item{Izadi,~M., Samiei,~E., Sanyal,~A.~K., \& Kumar,~V. (2015).
\newblock Comparison of an attitude estimator based on the
  {L}agrange-d'{A}lembert principle with some state-of-the-art filters.
\newblock In {\em Proceedings of the IEEE International Conference on Robotics and
  Automation} (pp. 2848--2853). Seattle, WA, USA.}

\item{Samiei,~E., Izadi,~M., Viswanathan,~S.~P., Sanyal,~A.~K., \& Butcher,~E.~A. (2015).
\newblock Robust Stabilization of Rigid Body Attitude Motion in the
  Presence of a Stochastic Input Torque.
\newblock In {\em Proceedings of the IEEE International Conference on Robotics and
  Automation} (pp. 428--433). Seattle, WA, USA.}

\item{Samiei,~E., Izadi,~M., Viswanathan,~S.~P., Sanyal,~A.~K., \& Butcher,~E.~A. (2015).
\newblock Delayed Feedback Asymptotic Stabilization of Rigid Body Attitude Motion
  for Large Rotations.
\newblock In {\em Proceedings of the 12$^{th}$ IFAC Workshop on Time Delay Systems}. Ann Arbor, MI, USA.}

\item{Izadi,~M., Sanyal,~A.~K., Samiei,~E., \& Viswanathan,~S.~P. (2015).
\newblock Discrete-time rigid body attitude state estimation based on the
  discrete {L}agrange-d'{A}lembert principle.
\newblock In {\em Proceedings of the American Control Conference} (pp. 3392--3397). Chicago, IL, USA.}

\item{Izadi,~M., Sanyal,~A.~K., Barany,~E., \& Viswanathan,~S.~P. (2015).
\newblock Rigid Body Motion Estimation based on the Lagrange-d'Alembert Principle.
\newblock In {\em Proceedings of the $54^{nd}$ IEEE Conference on Decision and Control}. Osaka, Japan.}

\item{Izadi,~M., Sanyal,~A.~K., Beard,~R.~W., \& Bai,~H. (2015).
\newblock GPS-Denied Relative Motion Estimation for Fixed-Wing UAV using the Variational Pose Estimator.
\newblock In {\em Proceedings of the $54^{nd}$ IEEE Conference on Decision and Control}. Osaka, Japan.}

\item{Izadi,~M., Sanyal,~A.~K., Silvestre,~C., \& Oliveira,~P. (2016).
\newblock The Variational Attitude Estimator in the Presence of Bias in Angular Velocity Measurements.
\newblock Submitted to {\em the American Control Conference}. Boston, MA, USA.}

\end{enumerate}

\begin{center}
FIELD OF STUDY
\end{center}
\begin{flushleft}
Major Field: Mechanical Engineering
\end{flushleft}
%
\begin{center}
ABSTRACT
\end{center}
\vspace{0.3in}
\begin{center}
STABLE ESTIMATION OF RIGID BODY MOTION\\
USING GEOMETRIC MECHANICS
\\
BY
\\
MAZIAR IZADI, B.S., M.S.
\end{center}
\vspace{0.3in}
\begin{center}
Doctor of Philosophy, Engineering

New Mexico State University

Las Cruces, New Mexico, 2015

Dr. Amit K. Sanyal, Chair
\end{center}
\vspace{0.3in}
\hspace{\parindent}
In this work, asymptotically stable state estimation schemes are proposed for rigid body motion, using the framework of geometric mechanics. Rigorous stability analyses of the estimation schemes presented here guarantee the nonlinear stability of these schemes. The stability of these schemes does not depend on the characteristics of the sensor measurement noise or external disturbances. In addition, they are robust to initial errors in the state estimates and do not need to be re-tuned when sensor noise properties change. In the first part of this dissertation, estimation of rigid body states is considered, given the dynamics model of the rigid body. In the second part, an estimation scheme that does not require knowledge of the dynamics of the rigid body is derived, based on onboard sensor measurements obtained at an appropriate frequency. The frequency of such measurements must be suitably high to resolve the motion of the rigid body. These attitude and pose estimation schemes are obtained by applying the Lagrange-d'Alembert principle from variational mechanics, to a Lagrangian constructed from state estimation errors and a dissipative term linear in the velocity estimation errors.


\tableofcontents
\newpage
\listoftables
\addcontentsline{toc}{section}{LIST OF TABLES}
\newpage
\listoffigures
\addcontentsline{toc}{section}{LIST OF FIGURES}
\newpage
\pagenumbering{arabic}
%

\section{INTRODUCTION} 

\label{intro} 

\hspace{\parindent}

\hspace{.25in}Estimation of rigid body motion is a long-standing problem of interest for a wide variety of mechanical systems. Specifically, these systems include aerial and under-water vehicles, spacecraft, or any other moving objects in three dimensions. Motion estimation for rigid bodies is challenging primarily because this motion is described by 
nonlinear dynamics and the state space is nonlinear. This nonlinearity arises from the intrinsic nature of rigid body attitude, which is represented by the special orthogonal group, $\SO$. Throughout this dissertation, rigid body attitude is represented globally over the configuration space of rigid body attitude motion without using local coordinates or quaternions. Attitude estimators using unit quaternions for attitude representation may be {\em unstable 
in the sense of Lyapunov}, unless they identify antipodal quaternions with a single attitude. 
This is also the case for attitude control schemes based on continuous feedback of unit 
quaternions, as shown in~\cite{Bayadi2014almost,sanyal2009inertia,chaturvedi2011rigid}. One adverse 
consequence of these unstable estimation and control schemes is that they end up taking 
longer to converge compared with stable schemes under similar initial conditions and initial 
transient behavior. On the contrary, all the estimation schemes proposed here are stable in the sense of \textit{Lyapunov}.

\hspace{.25in}In the first phase of this work, which includes Chapters \ref{CH02_DSCC2013} and \ref{CH03_DSCC2014}, three instances of such estimation schemes are proposed for rigid body motion using knowledge of dynamics. This requires the knowledge of the physical properties of the rigid body, as well as all external forces and moments applied on it. In these chapters, exponential coordinates are used to represent rigid body configuration. 
An observer design for arbitrary rigid-body motion in the proximity of a spherical asteroid 
of unknown mass is considered in Chapter \ref{CH02_DSCC2013}. This observer exhibits almost global convergence of 
state estimates in the state space of rigid body rotations and translations. Continuous 
observers cannot be globally asymptotically stable in this state space, which is the tangent 
bundle of the Lie group $\SE$, due to topological obstructions arising from the fact that this 
state space is not contractible~\cite{bhat}. Most unmanned and manned vehicles can be accurately modeled as rigid bodies, and therefore this observer can be applied to such vehicles operating on air, underwater, 
and in space. In particular, such vehicles when operated in uncertain or poorly known 
environments, can be subject to unknown forces and moments. Therefore, estimation of 
parameters associated with such unknown forces and moments is also of value. 
Dynamical coupling between the rotational and translational dynamics, which occurs both 
due to the natural dynamics as well as control forces and torques, is treated directly in the 
geometric mechanics framework used for our observer design.

Relevant prior research on observer designs for rigid body dynamics in $\SE$ is briefly 
covered here. A nonlinear observer for integration of GNSS and IMU measurements in the presence of gyro bias was investigated in \cite{GNSS} by using inertial reading of acceleration 
and velocities, magnetometer measurements and satellite-based measurements.
Using landmark measurements and noisy velocity data, a nonlinear observer for pose 
estimation in $\SE$ is presented in \cite{Vas1}. Ideal inertial velocity readings decouples 
the position and attitude motions, whereas they are coupled in the presence of gyro rate bias. 
The work in \cite{RodImp} proposes an observer in the special Euclidean group $\SE$ and considers the conditions under which the estimated states converge to the real states exponentially fast. It is also shown that in the case there exist some measurement noise, the estimate converges to a neighborhood of the real state. A global exponential stable attitude observer is presented in \cite{GES}. Although this observer does not evolve on $\SO$, it yields estimates that converges asymptotically to $\SO$ and as a result, it does not have any topological limitations. A nonlinear observer using active vision and inertial measurements that estimates the attitude of a rigid body is verified experimentally in \cite{Sergio1,Sergio2}. An almost globally convergent orientation estimator is presented in \cite{6315375} when just a single body-fixed vector on the rotating rigid body is available. In \cite{Zarrouati}, with the knowledge of a camera dynamics and recalling a system of partial differential equations describing the invariant dynamics of brightness and depth smooth fields, an $\SO$-invariant variational method to directly estimate the depth field is investigated.
There are some novel methods to derive the nonlinear state observers designed directly on the Lie group structure of the Special Euclidean group $\SE$ called gradient-based observer design. A type of nonlinear state observers designed directly on the Special Euclidean group $\SE$ 
(a Lie group) are gradient-based observers on Lie groups. Using these methods and considering right invariant kinematics along with left invariant cost functions, \cite{Hua,Lageman} utilize position measurements to update the state estimates. A limitation of this approach is provided in \cite{Lageman} as well as a practical design methodology in the case where a non-invariant cost-function is considered. Dynamic attitude and angular velocity estimation for uncontrolled rigid bodies in gravity, using global representation of the equations of motion based on geometric mechanics, is reported 
in~\cite{san06,SCLpaper}. This estimation scheme is used in \cite{jgcd12} for feedback 
attitude tracking control.

In addition to estimating the states, in Chapter \ref{CH02_DSCC2013} the main gravitational parameter of an asteroid 
is also estimated using full state measurements, including pose and velocities of a spacecraft 
in an orbit around the asteroid. This parameter is a very important physical property of an 
asteroid, and is a critical piece of information in order to estimate the mass of the asteroid 
and predict the forces and moments applied to a mass particle in its gravity field. Estimation 
schemes for parameter estimation of asteroid based on measurements from exploring 
spacecraft have been developed in prior literature on this topic. Physical properties of the 
asteroid 433 Eros, describing its shape, spin rate and gravity field, were estimated in 
\cite{MilSch} using the data provided by the NEAR spacecraft in an orbit around Eros. Using 
LIDAR ranging instruments, mass and density of asteroid 25143 Itokawa have been 
estimated \cite{TakSch}. The gravitational acceleration of Itokawa turns out to be 18 times 
stronger than the acceleration as a result of solar radiation pressure at a distance of $1$ km from 
the asteroid's surface \cite{TakSch}. The strength of the gravity field of some small-bodies 
during a series of slow hyperbolic flybys around them were estimated in \cite{AbeSch}. The 
work in \cite{AbeSch} also analyzed how rapidly and precisely the gravitational parameter 
had been estimated for Itokawa, Eros and Didymos, and a new operational procedure called 
$\Delta V$ ranging was proposed.

The framework of geometric mechanics has not been used in the past for design of 
observers for the particular application of spacecraft exploring unknown or little known 
solar system bodies. This framework is beneficial for this application because the 
asteroid-spacecraft pair can execute large relative rotational motions. The use of 
homogeneous coordinates, which are not generalized coordinates and allow global 
representation of the configuration space $\SE$, make it possible to represent the 
motion of bodies that are executing large, non-periodic motions \cite{marat, blbook}. 
For the observer design here, the exponential coordinates in $\SE$ are also used. Since 
the exponential coordinates are not defined for rigid body orientations that correspond 
to $\pi$ radian rotations about a body-fixed axis, the convergence of the observer 
obtained is {\em almost global} over the state space. Prior work \cite{SCLpaper} has obtained attitude determination and filtering schemes from direction measurements with bounded attitude and angular velocity measurement errors, given a dynamics model, in the framework of geometric mechanics. Nevertheless, in Chapter \ref{CH02_DSCC2013}, no measurement model has been used and instead, the full state measurement has been exploited.

Finally, at the end of this chapter, another nonlinear observer that can accurately estimate the configuration 
and velocity states of a rigid body is presented. It is assumed that the rigid body has an onboard sensor suite providing measurements of configuration and velocities as well as forces and torques. Exponential convergence of the estimation
errors is shown and boundedness of the estimation error under bounded unmodeled torques and forces is established.
Since exponential coordinates can describe uniquely almost the entire group of rigid body motions, the resulting observer design is almost globally exponentially convergent.


In Chapter \ref{CH03_DSCC2014}, a finite-time convergent observer design for arbitrary rigid-body motion is derived and presented, using rigid body's pose and velocities measurements. This 
observer has an almost global domain of attraction of 
state estimates to actual states in the state space of rigid body rotations and translations. Since finite-time convergence is known 
to be more robust to noise in the dynamics model and noise in the measured states, this 
observer design has inherent stability properties to such noise. Besides, finite-time 
observers guarantee the time it takes for the system to converge to the actual 
states~\cite{bhatFT,Dorato2006overview,Haddad2008finite}.

Unlike discontinuous sliding-mode observers that also provide finite-time 
convergence~\cite{dfpu09,Yu20051957,dimassi2012continuously}, the observer given here uses continuous feedback. Although this 
observer is not Lipchitz continuous, it is H{\"o}lder continuous like the continuous attitude 
feedback stabilization scheme on $\TSO$ presented in \cite{CDC2013Sanyal}. The observer 
presented is derived explicitly using the exponential coordinate representation of $\SE$, 
which is almost global in its description of the motion, whereas \cite{CDC2013Sanyal} uses 
the coordinate-free representation of the attitude on the group of rigid body rotations in 
three-dimensional Euclidean space, $\SO$. A few related works that exploit exponential 
coordinates to design observers or controllers are~\cite{Lee2014AAS,Bras2013CDC}. 
The continuous observer proposed here is shown to provide finite-time convergence of 
state estimates, through a Lyapunov analysis using exponential coordinates. The proposed 
observer laws are shown to drive the estimation errors to the origin in a finite amount of time. 
Although the observer design is based on a given (known) dynamics model, robustness 
to noise in the dynamics and measurement process are shown through numerical simulations. 
These simulation results for the observer with noisy measurements and additive noise in the 
dynamics, show that the estimate errors remain bounded in the presence of noise.

In Chapter \ref{CH03_DSCC2014}, we give the rigid body 
dynamics model in $\Ta\SE$, the tangent bundle of $\SE$, along with the kinematics expressed 
in exponential coordinates on $\SE$. We also present the nonlinear 
observer design, analyze its convergence properties, and show its finite-time convergence to 
actual states of the rigid body system. Numerical simulation 
results are presented for the noise-free case, when there is no measurement noise and no noise in 
the dynamics model. This chapter also presents numerical simulation results when there is 
additive noise in angular and translational velocities measurements and disturbance inputs in the dynamics 
model.

Since the dynamics model of a mechanical system may not always be accurately known due to external disturbances, or as a result of motions of internal mechanisms, estimation schemes that do not require any knowledge of the dynamcis model are of great importance. Such schemes, instead of a known dynamics model, rely on rich measurements provided by sensors (nowadays at low costs) onboard the rigid body. The second phase of this treatise (including Chapters \ref{CH04_VE}-\ref{CH08_Automatica2}) focuses on such dynamics model-free estimation schemes.

The earliest solution to the attitude determination problem from two vector measurements is the 
so-called ``TRIAD algorithm", which dates from the early 1960s~\cite{TRIAD}. This 
was followed by developments in the problem of attitude determination from a set of three 
or more vector measurements, which was set up as an optimization problem called 
Wahba's problem~\cite{jo:wahba}. This problem has been solved by different methods in 
prior literature, a sample of which can be obtained in 
\cite{jo:solwahba,markley1988attitude,san06}.

Continuous-time attitude observers and filtering schemes on 
$\SO$ and $\SE$ have been reported in, e.g., \cite{Khosravian2015Recursive,mabeda04,silvest08,SCLpaper,markSO3,mahapf08,bonmaro09,
Vas1,Khosravian2015observers,rehbinder2003pose,Bonnabel2013,Bonnabel2015}. 
These estimators do not suffer from kinematic singularities \cite{Sommer,Barfoot} like estimators using coordinate 
descriptions of attitude, and they do not suffer from unwinding as they do not use 
unit quaternions. The maximum-likelihood (minimum energy) filtering method of 
Mortensen~\cite{Mortensen} was recently applied to attitude estimation, resulting in a nonlinear 
attitude estimation scheme that seeks to minimize the stored ``energy" in measurement 
errors~\cite{Hua,Zamani2012second,Zamani2013minimum,zatruma11,aguhes06}. 
This scheme is obtained by applying Hamilton-Jacobi-Bellman (HJB) theory~\cite{kirk} to 
the state space of attitude motion \cite{ZamPhD}. Since the HJB equation can only be 
approximately solved with increasingly unwieldy expressions for higher order 
approximations, the resulting filter is only ``near optimal" up to second order. Unlike filtering 
schemes that are based on approximate or ``near optimal" solutions of the HJB equation and 
do not have provable stability, the estimation scheme obtained here can be solved 
exactly, and is shown to be almost globally asymptotically stable. Moreover, unlike filters 
based on Kalman filtering, the estimator proposed here does not presume any knowledge 
of the statistics of the initial state estimate or the sensor noise. Indeed, for 
vector measurements using optical sensors with limited field-of-view, the probability 
distribution of measurement noise needs to have compact support, unlike standard Gaussian 
noise processes that are commonly used to describe such noisy measurements.

All the estimation schemes proposed in Chapter \ref{CH04_VE} and onwards are model-free, which means that they do not depend on any knowledge of the dynamics of rigid body. In Chapter \ref{CH04_VE}, the attitude determination 
problem from vector measurements is formulated on $\SO$. Wahba's cost function is 
generalized in two ways: by choosing a symmetric matrix of weights instead of scalar 
weight factor for individual vector measurements, and by making the resulting cost function 
an argument of a continuously differentiable increasing scalar-valued function. It is shown 
that this generalization of Wahba's function is a Morse function on $\SO$ under certain 
easily satisfiable conditions on the weight matrix, which can be chosen appropriately 
to satisfy these desirable conditions. This chapter formulates the attitude estimation problem 
for continuous-time measurements of direction vectors and angular velocity on the state 
space of rigid body attitude motion, using the formulation of variational mechanics. A 
Lagrangian is constructed from the measurement residuals (between measured and 
estimated states) for the angular velocity measurements and attitude estimates
obtained from the vector measurements. 
The Lagrange-d'Alembert principle applied to this Lagrangian, with a dissipative term linearly dependent 
on the angular velocity estimate error, leads to the state estimation scheme. This estimation 
scheme, when applied in the absence of measurement errors, is shown to 
provide almost global asymptotic stability of the actual attitude and angular velocity states, 
with a domain of attraction that is almost global over the state space. In fact, this domain 
of attraction is shown to be equivalent to that of the almost global asymptotic stabilization scheme 
for attitude dynamics in \cite{chaturvedi2011rigid}. In the development of the attitude and angular velocity estimation schemes presented here, it is assumed that measurements of direction vectors and angular velocity are 
available in continuous time, or at a suitably high sampling frequency. In such 
a measurement-rich estimation process, one need not use a dynamics model for 
propagation of state estimates between measurements.

In order to obtain attitude state estimation schemes from discrete-time vector and angular velocity 
measurements, we apply the discrete-time Lagrange-d'Alembert principle to an action functional 
of a Lagrangian constructed from the state estimate errors, with a dissipation term linear
in the angular velocity estimate error. It is assumed that these measurements 
are obtained in discrete-time at a sufficiently high but constant sample rate. In this chapter, we consider the state estimation problem for attitude and angular 
velocity of a rigid body, assuming that known inertial directions and angular velocity of the 
body are measured with body-fixed sensors. The number of direction vectors measured 
by the body may vary over time. For most of the theoretical developments in this chapter, it 
is assumed that at least two directions are measured at any given instant; this assumption 
ensures that the attitude can be uniquely determined from the measured directions at each 
instant. The state estimation schemes presented here have the following important 
properties: (1) the attitude is represented globally over the configuration space of rigid 
body attitude motion without using local coordinates or quaternions; (2) the schemes 
developed do not assume any statistics (Gaussian or otherwise) on the measurement 
noise; (3) no knowledge of the attitude dynamics model is assumed; and (4) the continuous 
and discrete-time filtering schemes presented here are obtained by applying the 
Lagrange-d'Alembert principle or its discretization~\cite{marswest} to a Lagrangian 
function that depends on the state estimate errors obtained from vector measurements 
for attitude and angular velocity measurements.

Three discrete-time versions of the filter introduced in \cite{Automatica} are obtained and 
compared in Chapter \ref{CH04_VE}. The three discrete-time filters are as follows: (1) a first-order implicit Lie group 
variational integrator that was presented in \cite{Automatica}; (2) a first-order {\em explicit} 
integrator that is the adjoint of the implicit integrator; and (3) a second-order time-symmetric 
integrator obtained by composing the flows of the first order integrators. A variational integrator 
works by discretizing the (continuous-time) variational mechanics principle that leads to the 
equations of motion, rather than discretizing the equations of motion directly. A good background 
on variational integrators is given in the excellent treatise~\cite{marswest}. As described in the 
book~\cite{haluwa}, symplectic integrators (for conservative systems) are a subset within the  
class of variational integrators. Lie group variational integrators are variational integrators for 
mechanical systems whose configuration spaces are Lie groups, like rigid body systems. 
In addition to maintaining properties arising from the variational principles of mechanics, 
like energy and momenta, \emph{Lie group 
variational integrator} (LGVI) schemes also maintain the geometry of the Lie group 
that is the configuration space of the system~\cite{mclele}.

A comparison of the variational estimator is made with some of the 
state-of-the-art attitude filters, namely the Geometric Approximate Minimum-Energy (GAME), the Multiplicative Extended Kalman Filter (MEKF) and the Constant Gain Observer (CGO), in the absence of bias in sensors readings in Chapter \ref{CH05_ICRA2015}. A new measurement model of the problem which can be used for all the filters is explained first. These three state-of-the-art filters on $\SO$ are presented in detail which are used to evaluate the performance of the LGVI, by comparing their principal angles of attitude estimate errors together. Such comparisons are carried out, and cases in which the variational estimator has advantages over other state-of-the-art filters are presented using numerical simulations. Numerical simulations show that the presented observer is robust and unlike the extended Kalman filter based schemes \cite{Zlotnik,Forbes2014Automatica}, its convergence does not depend on the gains values. Besides, the variational estimator is shown to be the most computationally efficient attitude observer.


Since the Variational Estimator requires gyro measurements and these data are usually corrupted by bias in angular velocities, another generalized version of this estimation scheme is presented in Chapter \ref{CH06_Bias}, considering a constant bias in gyro measurements in addition to 
measurement noise. The measurement model for measurements of 
inertially-known vectors and biased angular velocity measurements using body-fixed sensors 
is detailed first. The problem of variational attitude estimation from these 
measurements in the presence of rate gyro bias is formulated and solved on $\SO$. A Lyapunov stability proof of this estimator is given next, along with 
a proof of the almost global domain of convergence of the estimates in the case of perfect 
measurements. It is also shown that the bias estimate converges to the true bias in this case. 
This continuous estimation scheme is discretized in the form of an LGVI using the discrete 
Lagrange-d'Alembert principle. The LGVI gives a first-order approximation of the continuous-time 
estimator. Numerical simulations are carried out using this LGVI as the discrete-time variational 
attitude estimator with a fixed set of gains.

Chapter \ref{CH07_ICC2015} describes the details of experimental verification of the attitude estimator presented in Chapter \ref{CH04_VE}. This chapter utilizes the smartphone\rq{}s inbuilt accelerometer, magnetometer and gyroscope as an Inertial Measurement Unit (IMU) for attitude determination. The primary motivation for using an open source smartphone is to create a cost-effective, generic platform for spacecraft attitude determination and control (ADCS), while not sacrificing on performance and 
fidelity. The PhoneSat mission of NASA's Ames Research Center demonstrated the application of 
Commercial Off-The-Shelf (COTS) smartphones as the satellite's onboard computer with its sensors being 
used for attitude determination and its camera for Earth observation \cite{Phonesat1}. University of Surrey's Space Centre 
(SSC) and Surrey Satellite Technology Ltd (SSTL) developed STRaND-1, a 3U CubeSat containing a 
smartphone payload \cite{strand1,strand2}. Some advantages of using smartphones, on-board are:
 \begin{enumerate}
  \item compact form factor with powerful CPU, GPU etc.,
 \item integrated sensors and data communication options,
 \item long lasting batteries: reduces total mass budget, 
 \item cheap price and open source software development kit.
 \end{enumerate}
 
The attitude and angular velocity estimation 
scheme is based on inertial directions and angular velocity of the spacecraft measured by 
sensors in the body-fixed frame of the smartphone. The 
standalone mechatronics architecture performs the task of state sensing through embedded 
MEMS sensors, filtering, state estimation, to determine 
the cellphone's attitude, while maintaining active uplink/downlink with a remote ground control station.

An important generalization of the Variational Estimation scheme is to derive an estimator for the most general motion of rigid body in 3 dimensional space \cite{Haichao2015}, which is the special Euclidean group, $\SE$. Autonomous state estimation of a rigid body based on inertial vector measurement and visual feedback from stationary landmarks, in 
the absence of a dynamics model for the rigid body, is analyzed in Chapter \ref{CH08_Automatica2}. The estimation scheme proposed here can also be applied to {\em relative state} estimation with respect to moving 
objects \cite{Gaurav_ASR}. This estimation scheme can enhance the autonomy and reliability of 
unmanned vehicles in uncertain GPS-denied environments. Salient features of this estimation 
scheme are: (1) use of onboard optical and inertial sensors, with or without rate gyros, for 
autonomous navigation; (2) robustness to uncertainties and lack of knowledge of dynamics; 
(3) low computational complexity for easy implementation with onboard processors; (4) proven 
stability with large domain of attraction for state estimation errors;  
and (5) versatile enough to estimate motion with respect to stationary as well as moving objects. 
Robust state estimation of rigid bodies in the absence of complete knowledge of their dynamics, 
is required for their safe, reliable, and autonomous operations in poorly known conditions. In 
practice, the dynamics of a vehicle may not be perfectly known, especially when the vehicle is 
under the action of poorly known forces and moments. The scheme proposed here has a single, 
stable algorithm for the coupled translational and rotational motion of rigid bodies using 
onboard optical (which may include infra-red) and inertial sensors. This avoids the need for 
measurements from external sources, like GPS, which may not be available in indoor, underwater 
or cluttered environments \cite{leishman2014relative,miller2014tracking,amelin2014algorithm}.

Chapter \ref{CH08_Automatica2} applies the variational estimation framework to coupled rotational (attitude) and translational motion, as 
exhibited by maneuvering vehicles like UAVs. In such applications, designing separate state 
estimators for the translational and rotational motions may not be effective and may lead to poor 
navigation. For navigation and tracking the motion of such vehicles, the approach proposed here 
for robust and stable estimation of the coupled translational and rotational motion will be more 
effective than de-coupled estimation of translational and rotational motion states. Moreover, like 
other vision-inertial navigation schemes~\cite{shen2013vision,shen2013rotor}, the estimation 
scheme proposed here does not rely on GPS. However, unlike many other vision-inertial 
estimation schemes, the estimation scheme proposed here can
be implemented without any direct velocity measurements.
Since rate gyros are usually corrupted by high noise content and bias \cite{GoodPhD,Good2015ACC,Good2013ECC,Good2014ASME,Good2014ACC,Good2015IJCAS,Lotfi}, 
such a velocity measurement-free scheme can result in fault tolerance in the case of faults with rate 
gyros. Additionally, this estimation scheme can be extended to relative pose estimation between 
vehicles from optical measurements, without direct communications or measurements of relative velocities.

In this chapter, the problem of motion 
estimation of a rigid body using onboard optical and inertial sensors is introduced first. The 
measurement model is introduced and rigid body states are related to these measurements. Artificial energy terms are introduced next, representing the measurement residuals 
corresponding to the rigid body state estimates. The Lagrange-d'Alembert principle is applied to the Lagrangian 
constructed from these energy terms with a Rayleigh dissipation term linear in the velocity 
measurement residual, to give the continuous time state estimator. Particular versions of 
this estimation scheme are provided for the cases when direct velocity measurements are not 
available and when only angular velocity is directly measured. The 
stability of the resulting variational estimator is proved next. It is shown that, in the absence of measurement 
noise, state estimates converge to actual states asymptotically and the domain of 
attraction is an open dense subset of the state space. The variational pose
estimator is discretized as a Lie group variational integrator, by applying the discrete 
Lagrange-d'Alembert principle to discretizations of the Lagrangian and the dissipation term. This 
estimator is simulated numerically, for two cases: the case 
where at least three beacons are measured at each time instant; and the under-determined case, 
where occasionally less than three beacons are observed. For these simulations, true states 
of an aerial vehicle are generated using a given dynamics model. Optical/inertial measurements 
are generated, assuming bounded noise in sensor readings. Using these 
measurements, state estimates are shown to converge to a neighborhood of actual states, for 
both cases simulated. Finally, the contributions and possible future 
extensions of this chapter are listed.

\newpage

\section{MODEL-BASED OBSERVER DESIGN WITH ASYMPTOTIC CONVERGENCE} 

\label{CH02_DSCC2013} 

\hspace{\parindent}
\textit{This chapter is adapted from papers published in Proceedings of the 2013 ASME Dynamic Systems and Control (DSC) Conference \cite{Izadi2013DSCC} and the $52^{nd}$ IEEE Conference on Decision and Control \cite{Bras2013CDC}. The author gratefully acknowledges Dr. Amit Sanyal, Dr. Daero Lee, Dr. Eric Butcher, Dr. Daniel Scheeres, Jan Bohn, S{\'e}rgio Br{\'a}s, Dr. Paulo Oliveira and Dr. Carlos Silvestre for their participation.}
\\
\\
{\bf{Abstract}}~
We consider an observer design for a spacecraft modeled as a rigid body in the 
proximity of an asteroid. The nonlinear observer is constructed on the nonlinear state 
space of motion of a rigid body, which is the tangent bundle of the Lie group of rigid body 
positions and orientations in three-dimensional Euclidean space. The framework of geometric mechanics
is used for the observer design. States of motion of the spacecraft are estimated based on 
state measurements. In addition, the observer designed can also estimate the gravity of the 
asteroid, assuming the asteroid to have a spherically symmetric mass distribution. Almost 
global convergence of state estimates and gravity parameter estimate to their corresponding 
true values is demonstrated analytically, and verified numerically.

\subsection{Rigid Body Dynamics}
\hspace{\parindent}
Consider a body fixed reference frame in the center of mass of a rigid spacecraft denoted as $\Bref$ and an inertial fixed frame denoted as $\Iref$.
Let the rotation matrix from $\Bref$ to the inertial fixed frame $\Iref$ be given by $R$ and the coordinates of the origin of $\Bref$ with respect to $\Iref$ be denoted as $b$.
The set of rotation matrices which contains $R$ is denoted by
$\SO=\{R\in\mathbb{R}^{3\times 3}:R\T R=I,\det(R)=1\}$.
The rigid body kinematics are given by
\begin{align}
\dot R&= R\Omega^\times\\
\dot b&= R v,
\end{align}
the linear and angular velocities expressed in the body fixed frame $\{B\}$ are denoted by $v$ and $\Omega$, respectively, and the skew-symmetric operator $\skew{.}:\mathbb{R}^3\to\mathfrak{so}(3)$ satisfies
\begin{align}
\Omega^\times=
\begin{bmatrix}
0 & -\Omega_z & \Omega_y\\
\Omega_z & 0 & -\Omega_x\\
-\Omega_y & \Omega_x & 0
\end{bmatrix}.
\end{align}
Let $g$ be the spacecraft configuration such that
\begin{equation}
g=\begin{bmatrix}
R & b\\
0 & 1
\end{bmatrix}\in\SE,
\label{gDef}
\end{equation}
where the \emph{Special Euclidean Group} $\SE$ is the Lie group of rotations and translations whose matrix representation is given by the so-called homogeneous coordinates
\begin{align}
\SE=\left\{g\in \mathbb{R}^{4\times 4}, g=
\begin{bmatrix}
R & b\\
0 & 1
\end{bmatrix}:
R\in\SO,b\in\mathbb{R}^3\right\}.
\end{align}

The dynamics equations of the spacecraft in the compact form are
\begin{align}
\dot{g}&=g\xi^\vee \label{eq:Dyn_g}\\
\mathbb{I}\dot\xi=\adast{\xi}\,\mathbb{I}\xi+\varphi_G(g)&=\adast{\xi}\,\mathbb{I}\xi+\mu\psi_G(g) \label{eq:Dyn_xi},
\end{align}
where $\xi=[\Omega\T\ v\T]\T$, $\mathbb{I}=\begin{bmatrix}
J & 0\\
 0 & mI
\end{bmatrix}$, $m$ and $J$ denote mass and inertia matrix of the spacecraft respectively, 
$I$ is the $3\times 3$ identity matrix, $\adast{\xi}=(\ad{\xi})\T$
and $\ad{\xi}$ stands for the linear adjoint operator of the Lie algebra $\mathfrak{se(3)}$ 
associated with the Lie group $\SE$ such that
\begin{equation}
\ad{\xi}=
\begin{bmatrix}
\Omega^\times & 0\\
v^\times & \Omega^\times
\end{bmatrix}.
\end{equation}

Besides, $\varphi_G(g)=\left\{\begin{matrix}
M_G\\F_G
\end{matrix}\right\}$ is the vector of external moments and forces, $\mu$ is the unknown scalar gravity parameter and $\psi_G(g)$ is a $6\times 1$ vector which satisfies the equation $\varphi_G(g)=\mu\psi_G(g)$, where $M_G, F_G\in\mathbb{R}^3$ denote the gravity gradient moment and gravitational force applied on the spacecraft respectively, which are given by \cite{schaub2009analytical}:
\begin{align}
M_G&=\mu\Big\{\frac{3}{\|b\|^5}(p\times J p)\Big\}\\
F_G&=\mu\Big\{-\frac{m}{\|b\|^3}p-\frac{3}{\|b\|^5}\mathscr{J}p+\frac{15}{2}\frac{p\T J p}{\|b\|^7}p\Big\},
\end{align}
where $p=R\T b$ and $\mathscr{J}=\frac{1}{2}trace (J)I_{3\times 3}+J$.

\subsection{Observer Design for a Spherical Asteroid}
Consider $(\hat{g},\hat{\xi})$ to be the estimated values of the states $(g,\xi)$ of a rigid body's motion on $\SE\times\mathbb{R}^6$. Define
\begin{align}
h=\hat{g}^{-1}g\, \mbox{ and }\, \eta^\vee=\logm_{\SE}(h),
\end{align}
where $\logm_{\SE}(.):\SE\rightarrow \mathfrak{se}(3)$ denotes the logarithmic map on 
$\SE$ and $\expm_{\SE}$ is its inverse. Therefore, we obtain:
\begin{align}
\dot{h}=h\tilde{\xi}^\vee \, \mbox{ where }\, \tilde{\xi}=\xi-\Ad{h^{-1}}\hat{\xi},
\end{align}
and
\begin{equation}
\Ad{g}\zeta^\vee=
\left(\begin{bmatrix}
R & 0\\
b^\times R & R
\end{bmatrix}
\zeta\right)^\vee,\ \zeta\in\mathbb{R}^6,\ g\in\SE.
\end{equation}
If we define $\breve{\xi}=\Ad{h^{-1}}\hat{\xi}$, then $\tilde{\xi}=\xi-\breve{\xi}$. We express the exponential coordinate vector $\eta$ for the pose estimate error as 
\begin{align}
\eta= \begin{bmatrix} \Theta \\ \beta \end{bmatrix}\in\mathbb{R}^6\simeq \mathfrak{se}(3),
\label{C2ExpCor}
\end{align}
where $\Theta\in\mathbb{R}^3$ is the exponential coordinate vector (principal rotation vector) 
for the attitude estimation error and $\beta\in\mathbb{R}^3$ is the exponential coordinate vector 
for the position estimate error. The time derivative of the exponential coordinates of the 
configuration error is given by \cite{Bullo_ECC95}
\begin{align}\label{eq:ddt_eta}
\dot{\eta}=G(\eta)\tilde\xi,
\end{align}
where
\begin{align}
&~~~~~~~~~~~~~~~~
G(\eta)= \begin{bmatrix} A(\Theta) & 0\\ T(\Theta,\beta) & A(\Theta) \end{bmatrix},
\label{Geta}\\
&~~~~~~~~~~~~~~~~
A(\Theta)= I+\frac12 \Theta^\times + \left(\frac1{\theta^2} - \frac{1+\cos\theta}{2\theta\sin\theta}
\right) \big(\Theta^\times\big)^2, \label{C2ATheta}\\
T&(\Theta,\beta) = \frac12\big(S(\Theta)\beta\big)^\times A(\Theta)+\left(\frac1{\theta^2} - \frac{1+\cos\theta}{2\theta\sin\theta}\right)
\big[\Theta\beta\T + (\Theta\T\beta) A(\Theta)\big]\\ 
&-\frac{(1+\cos\theta)(\theta-\sin\theta)}{2\theta\sin^2\theta}S(\Theta)\beta\Theta\T +\left( \frac{(1+\cos\theta)(\theta+\sin\theta)}{2\theta^3\sin^2\theta}- \frac2{\theta^4}\right) 
\Theta\T\beta\Theta\Theta\T,\nn\\ 
&~~~~~~~~~~~~~~~~
S(\Theta)= I+  \frac{1-\cos\theta}{\theta^2}\Theta^\times+ \frac{\theta
-\sin\theta}{\theta^3}\big(\Theta^\times\big)^2.
\label{C2STheta}
\end{align} 
The time derivative of the exponential coordinate $\Theta$ for the rotational motion is 
obtained from Rodrigues' formula
\begin{align}
R(\Theta) &= I+ \frac{\sin\theta}{\theta}\Theta^\times+ \frac{1-\cos\theta}{\theta^2}
\big(\Theta^\times\big)^2 \mbox{with }\theta= \|\Theta\|,  \label{rodriform} 
\end{align} 
which is a well-known formula for the rotation matrix in terms of the exponential coordinates 
on $\SO$, the Lie group of special orthogonal 
matrices. In the context of equations \eqref{eq:ddt_eta}-\eqref{C2STheta}, the matrix $R(\Theta)=
\tilde R$, i.e., the attitude estimate error on $\SO$. We consider next a result that is 
important in obtaining the observer described later in this section.
\begin{lemma}
The matrix $G(\eta)$, which occurs in the kinematics equations \eqref{eq:ddt_eta}-\eqref{C2STheta} 
for the exponential coordinates on $\SE$, satisfies the relation
\begin{equation}
G(\eta) \eta= \eta. \label{eq:G_eta_eta}
\end{equation}
\end{lemma}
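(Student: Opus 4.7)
The cleanest approach is to test the identity on a one-parameter subgroup of $\SE$ rather than to unpack the trigonometric blocks of $G(\eta)$. Fix an arbitrary $\eta_0\in\bR^6\simeq\se$ lying in the injectivity domain of $\expm_{\SE}$ (i.e., where $\logm_{\SE}$ is a smooth inverse), and consider the curve
\[
h(t)=\expm_{\SE}(t\eta_0^\vee), \qquad t\in[0,1].
\]
Two elementary facts drive the proof. First, since $\expm(tA)$ commutes with $A$, we have $\dot h(t)=h(t)\eta_0^\vee$, so the body velocity $\tilde\xi^\vee=h^{-1}\dot h$ is identically $\eta_0^\vee$, giving $\tilde\xi(t)\equiv\eta_0$. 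Second, by the definition of exponential coordinates on $\SE$, the exponential-coordinate vector of $h(t)$ is $\eta(t)=t\eta_0$, so $\dot\eta(t)\equiv\eta_0$.

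Substituting these into the kinematic relation \eqref{eq:ddt_eta}, which holds along any smooth curve in $\SE$, yields
\[
\eta_0 \;=\; \dot\eta(t) \;=\; G\bigl(\eta(t)\bigr)\tilde\xi(t) \;=\; G(t\eta_0)\eta_0
\]
for every $t\in[0,1]$. Evaluating at $t=1$ gives $G(\eta_0)\eta_0=\eta_0$. Since $\eta_0$ was arbitrary in the injectivity domain, this is the claimed identity \eqref{eq:G_eta_eta}; it extends to the full domain of $G$ by continuity in $\eta$.

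For completeness, a direct computational verification from the explicit formulas \eqref{Geta}--\eqref{C2STheta} is also available. It splits into two block equations: the top block $A(\Theta)\Theta=\Theta$ is immediate from $\Theta^\times\Theta=0$, which also kills $(\Theta^\times)^2\Theta$; the bottom block reduces to showing $T(\Theta,\beta)\Theta=\beta-A(\Theta)\beta=-\tfrac12\Theta^\times\beta-c(\theta)(\Theta^\times)^2\beta$, where $c(\theta)=\tfrac1{\theta^2}-\tfrac{1+\cos\theta}{2\theta\sin\theta}$. The main obstacle along this route is trigonometric bookkeeping: after using $A(\Theta)\Theta=\Theta$, $\Theta\T\Theta=\theta^2$, and the Cayley--Hamilton relation $(\Theta^\times)^3=-\theta^2\Theta^\times$ (needed to reduce the cubic power of $\Theta^\times$ arising from $(S(\Theta)\beta)^\times\Theta$), one must verify that the scalar coefficients of $\Theta^\times\beta$, $(\Theta^\times)^2\beta$, and $(\Theta\T\beta)\Theta$ emerging from the four summands of $T(\Theta,\beta)\Theta$ collapse to $-\tfrac12$, $-c(\theta)$, and $0$ respectively. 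The one-parameter subgroup argument sidesteps all of this.
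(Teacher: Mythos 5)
Your argument is correct, but it takes a genuinely different route from the paper's. The paper proves the identity by direct block computation: it splits $G(\eta)\eta$ into the rotational component $A(\Theta)\Theta$, which equals $\Theta$ immediately, and the translational component $T(\Theta,\beta)\Theta+A(\Theta)\beta$, and then asserts ``after some algebra'' that $T(\Theta,\beta)\Theta=\beta-A(\Theta)\beta$ --- i.e., it follows exactly the computational path you list as your fallback, and leaves the trigonometric bookkeeping unwritten. Your one-parameter subgroup argument replaces that computation with a structural observation: along $h(t)=\expm_{\SE}(t\eta_0^\vee)$ the body velocity is constantly $\eta_0$ while the exponential coordinate is $t\eta_0$, so the kinematic identity \eqref{eq:ddt_eta} forces $G(t\eta_0)\eta_0=\eta_0$. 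This is precisely the statement that the inverse differential of the exponential map fixes the radial direction, and it works verbatim on any Lie group, not just $\SE$. The one point you should make explicit rather than leave implicit is that the ray $t\eta_0$, $t\in[0,1]$, stays inside the injectivity domain --- true because the principal rotation angle of $t\eta_0$ is $t\|\Theta_0\|<\pi$, the domain being star-shaped about the origin --- so that $\logm_{\SE}(h(t))=t\eta_0^\vee$ is legitimate and \eqref{eq:ddt_eta} applies along the whole curve. Your closing remark about extending ``to the full domain of $G$ by continuity'' is superfluous rather than wrong: the formulas \eqref{Geta}--\eqref{C2STheta} are only used for $\theta\in[0,\pi)$ (with a removable singularity at $\theta=0$), which coincides with the injectivity domain, so there is nothing left to extend to. In terms of trade-offs, the paper's computation is self-contained at the level of matrix identities but opaque and, as written, incomplete in its hardest step; yours is shorter, transparent, and generalizes, at the cost of invoking \eqref{eq:ddt_eta} as a black box --- which the paper already does in any case, citing \cite{Bullo_ECC95}.
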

\begin{proof}
Beginning with the expression for $G(\eta)$ given by \eqref{Geta}, we evaluate
\[ G(\eta)\eta= \begin{bmatrix} A(\Theta)\Theta \\ T(\Theta,\beta)\Theta
+A(\Theta)\beta \end{bmatrix}. \]
From the expression for $A(\Theta)$, it is clear that
\[ A(\Theta)\Theta= \Theta. \]
On evaluation of the other component, after some algebra, we obtain
\[ T(\Theta,\beta)\Theta= \beta- A(\Theta)\beta. \]
Therefore, we obtain 
\[ T(\Theta,\beta)\Theta + A(\Theta)\beta= \beta, \]
which gives the desired result. 
\end{proof}

Further, define an auxiliary variable
\begin{align}
\ell=\tilde{\xi}+k_1\eta.
\end{align}

Let the candidate Lyapunov function be
\begin{align}
V=\frac{1}{2}k_2\eta\T\eta+\frac{1}{2}\ell\T\mathbb{I}\ell+\frac{1}{2}k_3\tilde{\mu}^2,
\end{align}
where $\tilde{\mu}=\mu-\hat{\mu}$ is the scalar estimation errors of the gravity parameter. Using this Lyapunov function we can show that the following observer design is stable. 

\begin{theorem}
The states observer given in the form
\begin{align}
\dot{\hat{g}}&=\hat{g}\hat{\xi}^\vee \label{eq:hat_g}\\
\mathbb{I}\dot{\breve{\xi}}&=\adast{\breve{\xi}}\mathbb{I}\xi+\hat{\mu}\psi_G(g)+s_\xi,
\label{eq:breve_xi}
\end{align}
where
\begin{align}
s_\xi=-\adast{k_1\eta}\mathbb{I}\xi+k_2G\T(\eta)\eta+k_1\mathbb{I}G(\eta)\tilde{\xi}+k_4\ell, \label{eq:s_xi}
\end{align}
along with the following equations for estimating the unknown gravity parameter $\mu$ ensures that the estimate errors converge to the origin:
\begin{align}
\dot{\hat{\mu}}=\dot{\mu}-\dot{\tilde{\mu}}&=\frac{1}{k_3}\ell\T\psi_G(g). \label{eq:hat_mu}
\end{align}
\end{theorem}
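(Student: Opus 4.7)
The plan is to show that the candidate $V$ is non-increasing along the closed-loop error dynamics and then invoke LaSalle's invariance principle to conclude convergence of $(\eta,\tilde\xi,\tilde\mu)$ to the origin. The auxiliary variable $\ell=\tilde\xi+k_1\eta$ is a backstepping-style change of variables whose purpose is to produce a Lyapunov decrease rate that is quadratic in $\eta$ and $\ell$ individually; the cross terms between the configuration block and the velocity block of $V$ are engineered to cancel.

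First I would derive the closed-loop equation for $\dot\ell$. Subtracting \eqref{eq:breve_xi} from the true dynamics $\mathbb{I}\dot\xi=\adast{\xi}\mathbb{I}\xi+\mu\psi_G(g)$ and using linearity of $\adast{\cdot}$ in its subscript gives
\[
\mathbb{I}\dot{\tilde\xi}=\adast{\tilde\xi}\mathbb{I}\xi+\tilde\mu\,\psi_G(g)-s_\xi.
\]
Adding $k_1\mathbb{I}G(\eta)\tilde\xi = k_1\mathbb{I}\dot\eta$ to both sides and substituting the definition \eqref{eq:s_xi} of $s_\xi$, the $k_1\mathbb{I}G(\eta)\tilde\xi$ terms cancel and the two adjoint terms combine as $\adast{\tilde\xi+k_1\eta}\mathbb{I}\xi=\adast{\ell}\mathbb{I}\xi$. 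I anticipate the compact identity
\[
\mathbb{I}\dot\ell=\adast{\ell}\mathbb{I}\xi+\tilde\mu\,\psi_G(g)-k_2G\T(\eta)\eta-k_4\ell.
\]

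Differentiating $V$ then relies on three cancellations. First, $\ell\T\adast{\ell}\mathbb{I}\xi=\langle\mathbb{I}\xi,\ad{\ell}\ell\rangle=0$ because $\ad{\ell}\ell$ is the self Lie bracket. Second, writing $\tilde\xi=\ell-k_1\eta$ in $k_2\eta\T\dot\eta=k_2\eta\T G(\eta)\tilde\xi$ and applying the Lemma $G(\eta)\eta=\eta$ produces $k_2\eta\T G(\eta)\ell-k_1k_2\eta\T\eta$, whose first piece equals the scalar $k_2\ell\T G\T(\eta)\eta$ and exactly cancels the matching term appearing in $\ell\T\mathbb{I}\dot\ell$. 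Third, by construction of the update law \eqref{eq:hat_mu}, $k_3\tilde\mu\dot{\tilde\mu}=-\ell\T\tilde\mu\,\psi_G(g)$ kills the unknown-gravity cross term. What remains is
\[
\dot V=-k_1k_2\,\eta\T\eta-k_4\,\ell\T\ell\le 0.
\]

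Since $\dot V$ is only negative semidefinite (it does not directly penalize $\tilde\mu$), I would close the argument with LaSalle's invariance principle: on the largest invariant subset of $\{\dot V=0\}=\{\eta=0,\ \ell=0\}$, necessarily $\tilde\xi\equiv 0$ and $\dot\ell\equiv 0$, which through the expression for $\mathbb{I}\dot\ell$ above forces $\tilde\mu\,\psi_G(g)\equiv 0$. Provided the spacecraft remains in the asteroid's gravity field ($\|b\|>0$, so $\psi_G(g)\neq 0$), the parameter error also vanishes, giving convergence of $(\eta,\ell,\tilde\mu)\to 0$; the convergence is only \emph{almost} global because the exponential coordinate $\eta$ is undefined at rotations of $\pi$ about a body axis. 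The main obstacle is orchestrating the three cancellations in the $\dot V$ computation cleanly: the non-obvious role of the Lemma $G(\eta)\eta=\eta$ is what lets the otherwise unwieldy factor $\eta\T G(\eta)$ reduce to $\eta\T$ and pair correctly with the $-k_2 G\T(\eta)\eta$ term built into $s_\xi$. A secondary issue is verifying $\psi_G(g)\neq 0$ along the closed-loop trajectory, used implicitly in the LaSalle step to identify $\tilde\mu\to 0$.
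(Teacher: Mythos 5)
Your Lyapunov construction coincides with the paper's: the same candidate $V=\tfrac12 k_2\eta\T\eta+\tfrac12\ell\T\mathbb{I}\ell+\tfrac12 k_3\tilde\mu^2$, the same reduction to $\mathbb{I}\dot\ell=\adast{\ell}\mathbb{I}\xi+\tilde\mu\,\psi_G(g)-k_2G\T(\eta)\eta-k_4\ell$ after the $k_1\mathbb{I}G(\eta)\tilde\xi$ terms cancel, and the same three cancellations in $\dot V$ (the self-bracket term $\ell\T\adast{\ell}\mathbb{I}\xi=0$, the pairing of $k_2\eta\T G(\eta)\ell$ with $-k_2\ell\T G\T(\eta)\eta$ via $G(\eta)\eta=\eta$, and the adaptation law killing the $\tilde\mu$ cross term), yielding $\dot V=-k_1k_2\|\eta\|^2-k_4\|\ell\|^2$.

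The one place you diverge is the concluding step, and it contains a (repairable) gap: the closed-loop error dynamics for $(\eta,\ell,\tilde\mu)$ is \emph{non-autonomous}, since it depends explicitly on the true trajectory through $\mathbb{I}\xi(t)$ and $\psi_G(g(t))$, so LaSalle's invariance principle does not apply in its standard form. The paper instead checks that $\ddot V$ is bounded along bounded trajectories and invokes Barbalat's lemma to conclude $\dot V\to 0$, hence $\eta\to 0$, $\ell\to 0$ and therefore $\tilde\xi\to 0$; it then argues that $\frac{d}{dt}\tilde\xi\to 0$ forces $\tilde\mu\,\psi_G(g)\to 0$ and hence $\tilde\mu\to 0$. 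Your explicit requirement that $\psi_G(g)\neq 0$ along the trajectory for this last inference is a hypothesis the paper leaves implicit, and you are right to surface it. If you replace the LaSalle step with the Barbalat argument --- which requires the boundedness check on $\ddot V$ that your sketch omits --- the proof becomes the paper's.
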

\begin{proof}
Using the equations \eqref{eq:ddt_eta} and \eqref{eq:G_eta_eta} in \cite{Bullo_ECC95}
and calculating the time derivative of estimation error in velocities and the gravity parameter as follows
\begin{align}
\mathbb{I}\dot{\tilde{\xi}}&=\adast{\ell}\mathbb{I}\xi+\tilde{\mu}\psi_G(g)-k_2G\T(\eta)\eta-k_1\mathbb{I}G(\eta)\tilde{\xi}-k_4\ell\\
\dot{\tilde{\mu}}&=-\frac{1}{k_3}\ell\T\psi_G(g),
\end{align}
and also the time derivative of the auxiliary parameter
\begin{align}
\mathbb{I}\dot{\ell}&=\mathbb{I}\dot{\tilde{\xi}}+k_1\mathbb{I}\dot{\eta}\nn\\
&=\adast{\ell}\mathbb{I}\xi+\tilde{\mu}\psi_G(g)-k_2G\T(\eta)\eta-k_1\mathbb{I}G(\eta)\tilde{\xi}-k_4\ell+k_1\mathbb{I}G(\eta)\tilde{\xi}\nn\\
&=\adast{\ell}\mathbb{I}\xi+\tilde{\mu}\psi_G(g)-k_2G\T(\eta)\eta-k_4\ell,
\end{align}
the first and second order time derivative of the proposed Lyapunov function can be written as
\begin{align}
\dot{V}=&k_2\eta\T \dot{\eta}+\ell\T\mathbb{I}\dot{\ell}+k_3\tilde{\mu}\dot{\tilde{\mu}}\nn\\
=&k_2\eta\T G(\eta)\tilde{\xi}+\ell\T\adast{\ell}\mathbb{I}\xi+\tilde{\mu}\ell\T\psi_G(g)-k_2\ell\T G\T(\eta)\eta-k_4\ell\T \ell-\tilde{\mu} \ell\T\psi_G(g)\nn\\
=&k_2\eta\T G(\eta)\big(\tilde{\xi}-\ell\big)-k_4\ell\T \ell=-k_1 k_2\eta\T G(\eta)\eta-k_4\ell\T \ell\nn\\
=&-k_1 k_2\|\eta\|^2-k_4\|\ell\|^2.
\end{align}

Thus, $\dot{V}$ is negative semi-definite. Besides,
\begin{align}
\ddot{V}=&-k_1k_2\eta\T\dot{\eta}-k_4\ell\T\dot{\ell}\\
=&-k_1k_2\eta\T G(\eta)\tilde{\xi}-k_4\ell\T\mathbb{I}^{-1}\adast{\ell}\mathbb{I}\xi+k_4\tilde{\mu}\ell\T\mathbb{I}^{-1}\psi_G(g)+k_2 k_4\ell\T\mathbb{I}^{-1}G\T(\eta)\eta\nn\\
&+k_4^2\ell\T\mathbb{I}^{-1}\ell,\nn
\end{align}
which means that $\ddot{V}$ is finite for any bounded pose and velocity vector. Using \textit{Barbalat's Lemma} one can conclude that $\dot{V}\rightarrow 0$ which gives $\|\eta\|\rightarrow 0$ and $\|\ell\|\rightarrow 0$, therefore $\|\tilde{\xi}\|\rightarrow 0$ in turn. Moreover, for initially bounded state estimate errors, $\|\tilde{\xi}\|\rightarrow 0$ leads to $\frac{d}{dt}\|\tilde{\xi}\|\rightarrow 0$ which implies that $|\tilde{\mu}|\rightarrow 0$.
\end{proof}

Note that this observer, which uses the exponential coordinate representation of the pose in 
$\SE$, is not defined when the exponential coordinate vector itself is not defined. This happens 
whenever the attitude corresponds to a principal rotation angle given by an odd multiple of 
$\pi$ radians.


\subsection{Numerical Simulations}
In order to verify the performance of the observer, we used a set of realistic data for all the 
states of the system. One can consider a spherical asteroid with the gravitational constant 
$\mu=1.729\times 10^{10}$ m$^3$/s$^{2}$ and integrate the dynamics to mimic the true 
states of the spacecraft in an orbit around this spherical asteroid. Mass and inertia matrix of the spacecraft is considered as $m=21$ kg and $J=diag([2.56\ 3.01\ 2.98]\T)$ kg m$^2$. The spacecraft is rotating in an elliptical orbit with semi-major axis $a=330$ km and the radius at periapsis equal to $r_p=310$ km. The initial configuration is given by
\begin{align}
R_0=\expm_{\SO}\skew{[0.4\ 0.2\ 0.1]\T},\
b_0=r_p\times[\frac{1}{3}\ \frac{-2}{3}\ \frac{2}{3}]\T\,
\end{align}
and the initial angular and linear velocities were set to
\begin{align}
\Omega_0&=10^{-3}\times [7\ -4\ 1]\T\ \text{rad/s},\\
\nu_0&=R_0\T\bigg(v_p\times\frac{b_0}{r_p}\times\skew{[0\  \frac{1}{\sqrt{2}}\ \frac{1}{\sqrt{2}}]\T}\bigg)=[241.4\ 16.7\ -24.5]\T\ \text{m/s},
\end{align}
where
\begin{align}
v_p=\sqrt{-\frac{\mu}{a}+2\frac{\mu}{r_p}}.
\end{align}

Using these numerical values, the simulated orbit is shown in the Fig.~\ref{C2fig:Dynamics}. As can be easily seen, the spacecraft's path around the asteroid is an elliptical periodic orbit with the radius at periapsis $r_p=310$ km and the semi-major axis $a=330$ km. Note that the orbital period of the spacecraft is
\begin{align}
T=2\pi\sqrt{\frac{a^3}{\mu}}=9058.4\text{ s}
\end{align}

\begin{figure}[t]
\centering
\includegraphics[width=0.9\columnwidth]{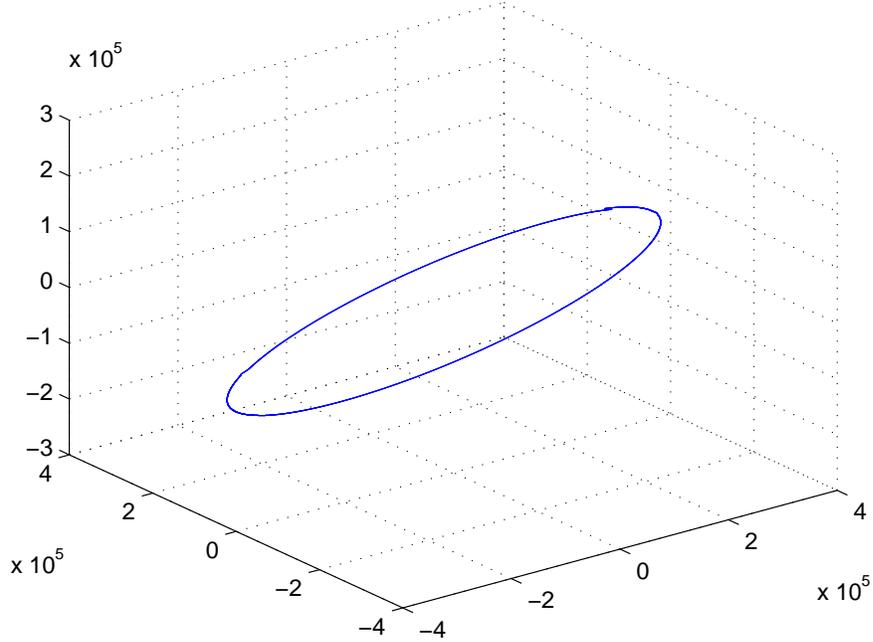}
\caption{The Generated Path for the Actual Orbit of the Spacecraft Rotating Around the Spherical Asteroid.}
\label{C2fig:Dynamics}
\end{figure}

Integrating the logarithmic map of equation \ref{eq:Dyn_g} along with \ref{eq:Dyn_xi}, the exponential coordinates of the spacecraft in the vicinity of the asteroid could be plotted numerically as in Fig.~\ref{C2fig:eta}. Note that the logarithmic map could be used to get the exponential coordinates of both absolute configuration and relative configuration and we have used the same notation for them and their components. In other words, $\eta$ denotes the logarithmic map of both pose ($g$) and relative pose ($h=\hat{g}^{-1}g$) and $\Theta$ and $\beta$ are it's angular and linear components.

\begin{figure}[thb]
\centering
\includegraphics[width=0.8\columnwidth]{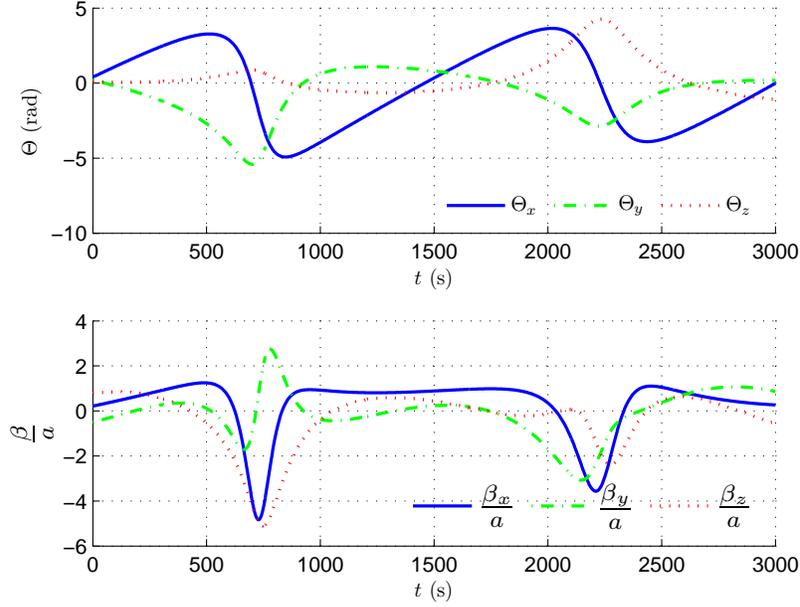}
\caption{Exponential Coordinates of the Spacecraft.}
\label{C2fig:eta}
\end{figure}

The angular and linear components of the spacecraft's velocity are also depicted in Fig.~\ref{C2fig:xi}. Both Fig.~\ref{C2fig:eta} and \ref{C2fig:xi} show somehow periodic motion in their components which was expected from the motion in the elliptical orbit of Fig.~\ref{C2fig:Dynamics}.
\begin{figure}[t]
\centering
\includegraphics[width=0.8\columnwidth]{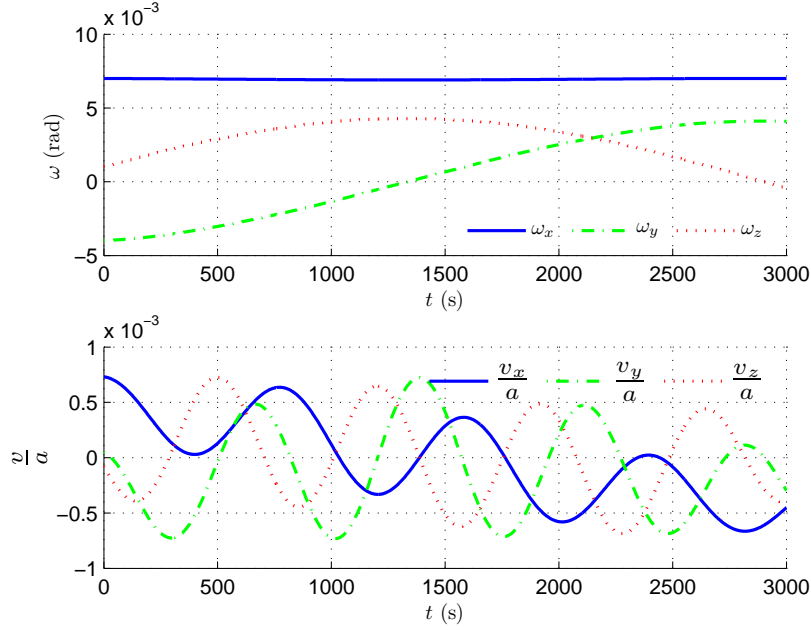}
\caption{Spacecraft's Angular and Linear Velocities.}
\label{C2fig:xi}
\end{figure}
After mimicking the actual dynamics, another code was used to numerically integrate the observer ODEs which are equations \eqref{eq:hat_g}-\eqref{eq:hat_mu}.

Since the numerical values for the translational quantities (displacements and velocities) depend on the unit by which they are described and specially in the case a relatively small unit like meters has been used the quantities will be of a much higher order compared with the angular quantities, we should normalize translational quantities to resolve numerical issues while dealing with the compact forms. The semi-major axis $a$ was used to make all linear quantities dimensionless. Note that the angular velocities are in radians and therefore dimensionless. In view of the fact that the dimension of the gravitational parameter is $L^3 T^{-2}$, it was divided by $a^3$.

In order to better agreement between angular and linear components, and as a result get better convergence behavior, we also could use a block diagonal form for some gain factors. This helps different components in the compact form converge at almost the same rate. The tuning parameters are set to be $k_1=\begin{bmatrix}
1.12\times I_{3\times 3} & 0_{3\times 3}\\
 0_{3\times 3} & I_{3\times 3}
\end{bmatrix}$, $k_2=1$, $k_3=0.2$ and $k_4=\begin{bmatrix}
1.2\times I_{3\times 3} & 0_{3\times 3}\\
 0_{3\times 3} & I_{3\times 3}
\end{bmatrix}$.

In this step, the value of the gravity parameter is completely unknown like all other states. The initial values for the estimated quantities are set as follows. The initial values of estimated attitude and position vector of the spacecraft are $\hat{R}_0=I_3\times 3$ and $\hat{b}_0=[103\ -206\ 206]\T\ \text{km}$. $\hat{\Omega}_0=10^{-3}\times[5\ -7\ 3]\T \text{rad/s}$ and $\hat{\nu}_0=[59\ -19\ -27]\T \text{m/s}$ were set as the initial estimates for the angular and linear velocities of the spacecraft.
The initial estimated value of gravitational parameter is set to $\hat{\mu}_0=1.6172\times 10^{14}$ m$^3$/s$^2$ which is almost 10,000 times bigger than the actual value and large enough to test 
the convergence behavior of the observer.


The estimation errors in the exponential coordinates obtained in this numerical simulation, 
are shown in Fig.~\ref{C2fig:etatilde}. These estimation errors are seen to decrease at a 
satisfactory rate. As can be seen, these errors become negligible after about $50 \text{(s)}$.

\begin{figure}[thb]
\centering
\includegraphics[width=0.8\columnwidth]{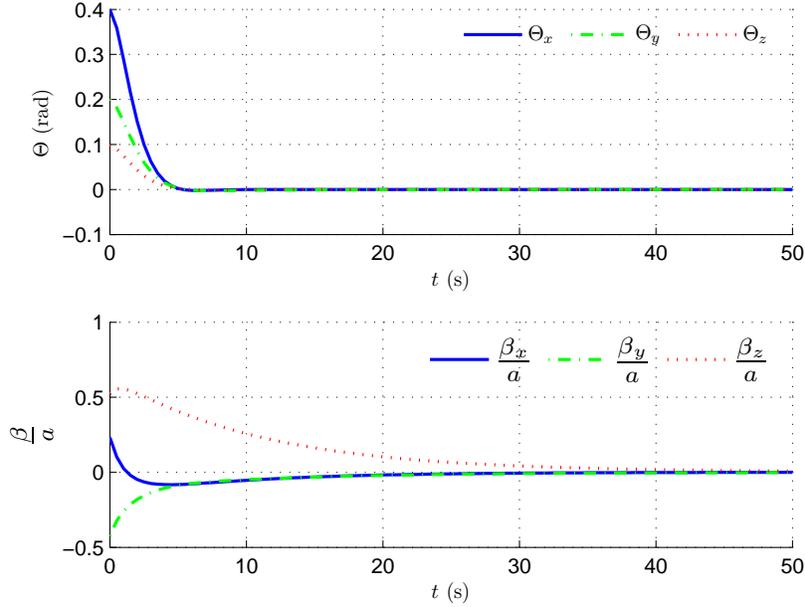}
\caption{Estimation Errors In Exponential Coordinates of the Spacecraft.}
\label{C2fig:etatilde}
\end{figure}

The errors in estimated angular and linear velocities are depicted in Fig.~\ref{C2fig:xitilde}. 
Convergence can be seen in all components, even though the first few seconds show 
increasing errors for some components. Here, the observer is seen to have desirable 
behavior for velocity estimation of the spacecraft.

\begin{figure}[t]
\centering
\includegraphics[width=0.8\columnwidth]{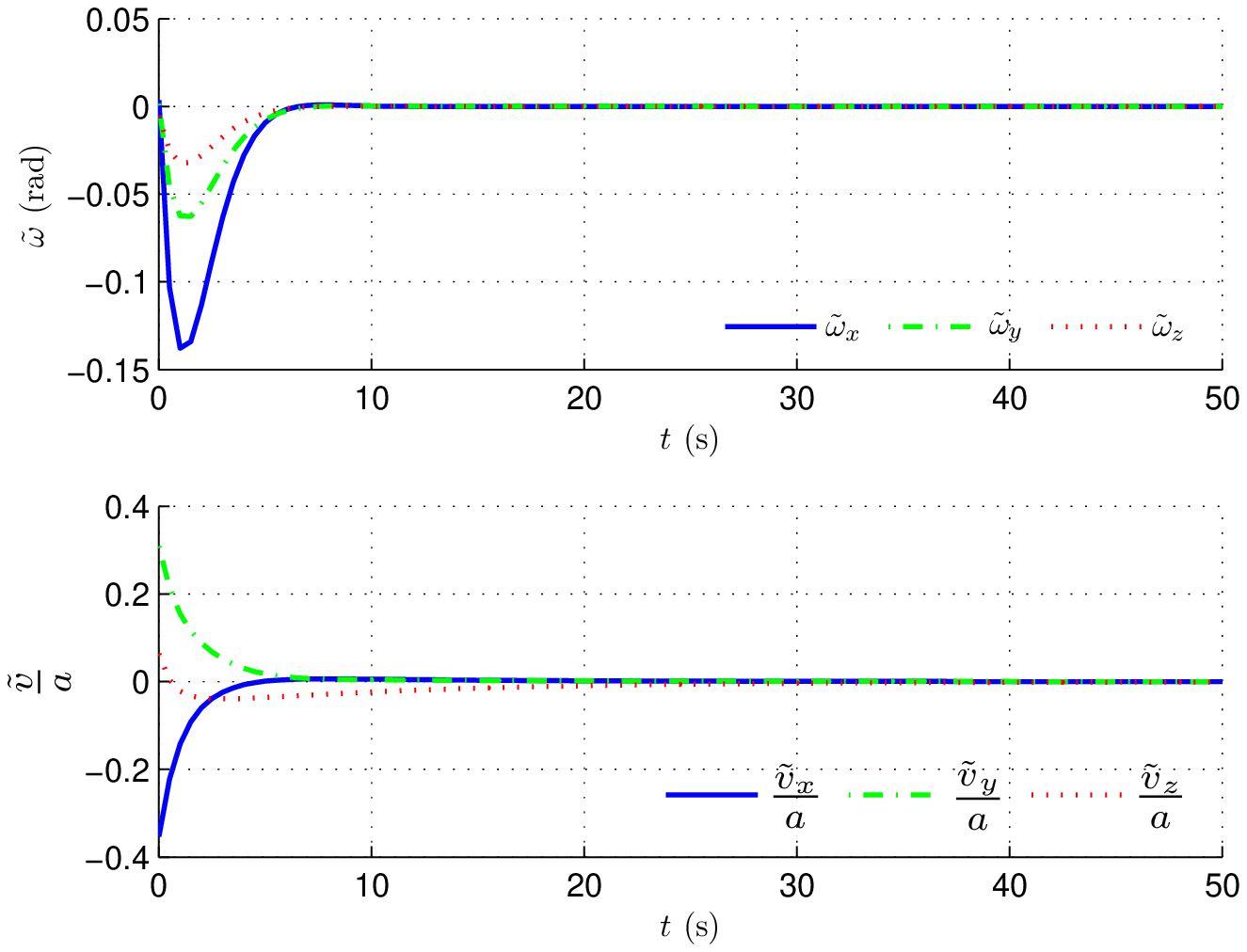}
\caption{Estimation Errors of the Spacecraft's Velocities.}
\label{C2fig:xitilde}
\end{figure}

The estimate error in the gravitational parameter is plotted in Fig.~\ref{C2fig:mutilde}, beginning  
with a large initial estimate error. Asymptotic convergence to the true value can be observed in 
this figure. Note that in Fig.~\ref{C2fig:eta}-\ref{C2fig:mutilde}, the length unit has been 
normalized to 1 unit$=$ 310 km, which is the value of the semi-major axis of the orbit of 
the spacecraft around the asteroid.

\begin{figure}[t]
\centering
\includegraphics[width=0.8\columnwidth]{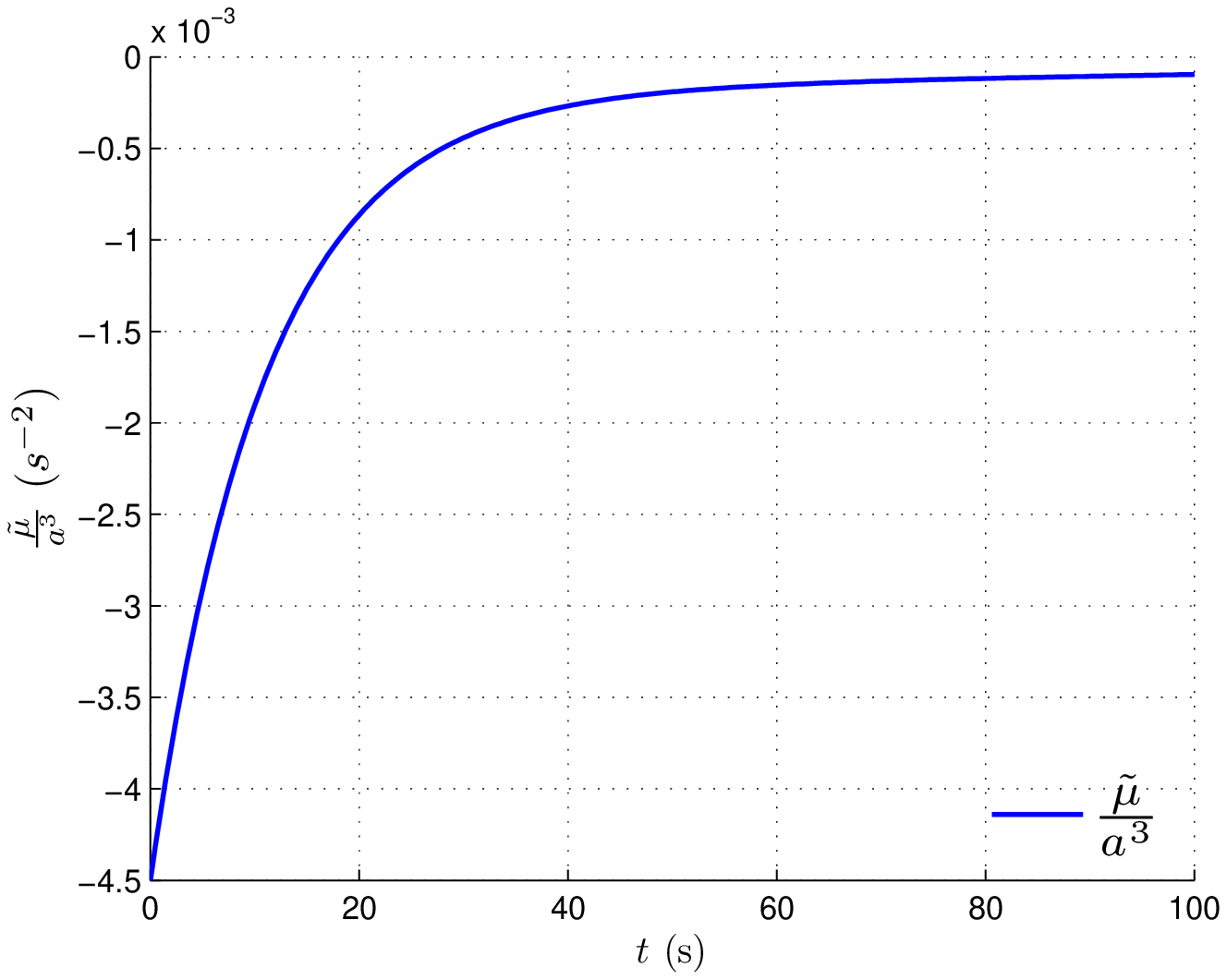}
\caption{Estimation Errors of the Gravitational Parameter.}
\label{C2fig:mutilde}
\end{figure}

\subsection{Nonlinear Observer Design when Force Measurements are available}
An observer design for pose and velocity estimation for three-dimensional rigid body motion, in the framework of geometric mechanics is presented here. Based on a Lyapunov analysis, a nonlinear observer on the Special Euclidean Group $\SE$ is derived. This observer is based on the exponential coordinates which are 
used to represent the group of rigid body motions.

\begin{assumption}\label{assump:full_state_meas}
The sensor suite available provides measurements about the configuration, velocity, forces and torques applied to the vehicle.
\end{assumption}
Note that, even with full state measurements, the existence of an observer is valuable for any navigation and control system as, like the EKF, it can mitigate the effects of sensor uncertainties such as noise and bias. The configuration observer takes the form 
\begin{align}
\dot{\hat{g}}&=\hat{g}\hat{\xi}^\vee,\label{eq:pose-est}\\
\mathbb{I}\dot{\breve\xi}&=\adast{-K(k_1\tilde\eta-\breve\xi)}\mathbb{I}\xi+\varphi+k_1\mathbb{I} G(\tilde\eta)\tilde\xi+G\T(K\tilde\eta)\tilde\eta+{k_3}u,\label{eq:dot_breve_xi}
\end{align}
where $K=\left[\begin{smallmatrix}I & 0\\0 & k_2I\end{smallmatrix}\right]$, $k_1,k_2,k_3>0$, $u=k_1\tilde\eta+\tilde\xi$ and
\begin{align}\label{eq:velest-err}
\breve{\xi}=\Ad{\tilde{g}^{-1}}\hat\xi, \qquad
\tilde{\xi}=\xi-\breve{\xi}. 
\end{align}

\begin{proof}
Consider the following Lyapunov function candidate
\begin{align}\label{eq:lyap_func}
V=\frac{1}{2}\tilde\eta\T K\tilde\eta+\frac{1}{2}(k_1\tilde\eta+\tilde\xi)\T K\mathbb{I}(k_1\tilde\eta+\tilde\xi),
\end{align}
which motivates the development of the velocity observer.
Letting $u=k_1\tilde\eta+\tilde\xi$ and taking the time derivative produces
\begin{align}
\dot{V}&=-k_1\tilde\eta\T K\tilde\eta+
u\T K(G\T(K\tilde\eta)\tilde\eta+
k_1\mathbb{I} G(\tilde\eta)\tilde\xi+\adast{\xi}\mathbb{I}\xi+\varphi-\mathbb{I}\dot{\breve\xi}),
\end{align}
where it is exploited the equality $K^{-1}G\T(\tilde\eta)K=G\T(K\tilde{\eta})$. 
Let
\begin{equation}\label{eq:dot_breve_xi}
\mathbb{I}\dot{\breve\xi}=\adast{K(k_1\tilde\eta-\breve\xi)}\mathbb{I}\xi+\varphi+k_1\mathbb{I} G(\tilde\eta)\tilde\xi+G\T(K\tilde\eta)\tilde\eta+{k_3}u.
\end{equation}

Then, resorting to some algebraic manipulations,
the time derivative of \eqref{eq:lyap_func} takes the negative definite form
\begin{equation}\label{eq:ddt_V}
\dot{V}=-k_1\tilde\eta\T K\tilde\eta-k_3(k_1\tilde\eta+\tilde\xi)\T K(k_1\tilde\eta+\tilde\xi).
\end{equation}
Thus, the point $(\tilde\eta,\tilde\xi)=(\mathbf{0},\mathbf{0})$ is asymptotically stable in sense of Lyapunov \cite{khal}. Topological limitations precludes global asymptotic stability of the origin \cite{bhat}. 
In fact, if $\theta=\pi$, the exponential coordinates of the configuration error $\tilde\eta$ cannot be computed without ambiguity. Sufficient conditions ensuring that for all $t>t_0$, $\theta(t)<\pi$ is provided in \cite{Bras2013CDC}.
\end{proof}

\subsection{Conclusion}

A nonlinear observer for rigid body motion in the presence of an unknown central gravity 
field due to a spherical asteroid was presented. In addition to estimating the states of an 
exploring spacecraft, modeled as a rigid body, in the proximity of a spherical asteroid, this 
observer also estimates the gravity parameter of this asteroid. Estimates obtained from this 
observer are shown to converge to true states and the true gravity parameter almost globally 
over the state space of motion of the rigid spacecraft. These convergence properties are 
verified by numerical simulation for a realistic scenario of a satellite in the proximity of an 
asteroid with spherical mass distribution. The following chapter presents another nonlinear observer for rigid body motion that has finite-time convergence.

\newpage

\section{MODEL-BASED OBSERVER DESIGN WITH FINITE-TIME CONVERGENCE} 

\label{CH03_DSCC2014} 

\hspace{\parindent}
\textit{This chapter is adapted from a paper published in Proceedings of the 2014 ASME Dynamic Systems and Control (DSC) Conference \cite{Izadi2014DSCC}. The author gratefully acknowledges Dr. Amit K. Sanyal and Jan Bohn for their participation.}
\\
\\
{\bf{Abstract}}~
An observer that obtains estimates of the translational and rotational motion states for a 
rigid body under the influence of known forces and moments is presented. This nonlinear 
observer exhibits almost global convergence of state estimates in finite time, based on state 
measurements of the rigid body's pose and velocities. It assumes a known dynamics model 
with known resultant force and resultant torque acting on the body, which may include feedback 
control force and control torque. The observer design based on this model uses the  
exponential coordinates to describe rigid body pose estimation errors on $\SE$, which 
provides an almost global description of the pose estimate error. Finite-time convergence of 
state estimates and the observer are shown using a Lyapunov analysis on the 
nonlinear state space of motion. Numerical simulation results confirm these analytically 
obtained convergence properties for the case that there is no measurement noise and no 
uncertainty (noise) in the dynamics. The robustness of this observer to measurement 
noise in body velocities and additive noise in the force and torque components is also 
shown through numerical simulation results.

\subsection{Rigid Body Dynamics Model}\label{sec: rbdyn}

Consider a body fixed reference frame in the center of mass of a rigid body denoted as 
$\Bref$ and an inertial fixed frame denoted as $\Iref$. Let the rotation matrix from $\Bref$ to 
the inertial fixed frame $\Iref$ be given by $R$ and the coordinates of the origin of $\Bref$ 
with respect to $\Iref$ be denoted as $b$.

\subsubsection{Rigid Body Dynamics}
The rigid body dynamics is given by
\begin{align}
J\dot{\Omega}&=J\Omega\times\Omega+{}^\s{B}\tau,\nn\\
m\dot{\nu}&=m\nu\times\Omega+{}^\s{B}\phi\label{eq:dyn},
\end{align}
where $m$ and $J$ denote the rigid body mass and inertia matrix, respectively, $^\s{B}\phi$ denotes the force applied to the rigid body and $^\s{B}\tau$ the external torque, both expressed in the body reference frame. The dynamics equations \eqref{eq:dyn} can be expressed 
in compact form as
\begin{equation}\label{eq:ddt_xi}
\Imbb\dot\xi=\adast{\xi}\Imbb\xi+\varphi,
\end{equation}
where $\varphi=[{}^\s{B}\tau\T\ {}^\s{B}\phi\T]\T$.

\subsubsection{Kinematics in Exponential Coordinates}
The exponential coordinate vector $\eta\in\bR^6$ for a given configuration $\msg\in\SE$ is given 
by 
\be \eta^\vee=  \logm(\msg)=  \bbm \Theta^\times & \beta \\ 0 & 0 \ebm, \,\mbox{ where }\, 
\eta= \bbm \Theta \\ \beta\ebm,
\label{matlog} \ee
and $\logm$ denotes the matrix logarithm, which is also the inverse of the exponential 
map $\expm: \se\to\SE$. We can obtain the exponential coordinate vector 
$\eta$ from $\msg$ as follows: 
\begin{align} 
\begin{split}
&\Theta^\times = \frac{\theta}{\sin(\theta)}\big( R- R\T\big),\, \mbox{ and }\, \beta = 
S^{-1}(\Theta) b, \\
&\mbox{where } S^{-1}(\Theta) = I-\frac12 \Theta^\times + \left(\frac1{\theta^2} - 
\frac{1+\cos\theta}{2\theta\sin\theta}\right) \big(\Theta^\times\big)^2, 
\end{split} \label{expcoord} 
\end{align}
and $\theta=\|\Theta\|$ is the principal angle of rotation corresponding to the rotation matrix 
$R$. Note that $\Theta$ in \eqref{expcoord} cannot be obtained when $\theta$ is an odd 
multiple of $\pi$ radians. Since all of $\SO$ can be represented by principal angle values 
in the range $\theta\in [0,\pi]$, we can therefore obtain an unique exponential coordinate 
vector for all $\msg\in\SE$ whose $\SO$ component has a principal angle less than $\pi$ 
radians, i.e., $\theta\in [0,\pi)$. 
Therefore, the exponential coordinates can represent almost all poses in $\SE$ excluding 
those with rotations of exactly $\pi$ radians about any axis.

The exponential coordinate vector $\eta\in\bR^6$ (corresponding to $\eta^\vee=\logm(\msg)
\in\se$), satisfies \eqref{eq:ddt_eta}. Note that $\theta=0$ is a removable singularity in equations \eqref{C2ATheta}-\eqref{C2STheta}, and corresponds to the identity orientation on $\SO$. An equivalent expression for $G(\eta)$ given in \cite{Bullo_ECC95} is as follows:
\be G(\eta) = I + \frac12 \ad{\eta} + \alpha(\theta)\ad{\eta}^2 + \beta(\theta)\ad{\eta}^4, 
\label{Gadexpr} \ee
where 
\begin{align} 
\begin{split}
\alpha(\theta) &= \frac2{\theta^2} -\frac{3}{4\theta}\cot(\theta/2) -\frac18\csc^2 (\theta/2), \\
\beta(\theta) &= \frac1{\theta^4} -\frac{1}{2\theta^3}\cot(\theta/2) -\frac{1}{8\theta^2}
\csc^2 (\theta/2). 
\end{split} \label{albeGad}
\end{align}
From the expression \eqref{albeGad}, it is clear that $G(\eta)\eta= \eta$ \cite{Izadi2013DSCC}, a fact used in 
the observer design. The exponential coordinates on $\SE$ were used for observer design 
recently in \cite{Bras2013CDC}. However, the observer design in \cite{Bras2013CDC} had 
asymptotic (exponential) convergence, unlike the observer designed here, which exhibits 
finite-time convergence. 

\subsection{Finite-Time Convergent Observer Design}\label{sec: obsdes}
We assume that a sensor suite onboard a rigid body vehicle provides 
information about the configuration and velocities of the vehicle. Our aim is to design a 
dynamic observer which exploits the sensors measurements (pose and velocities) to estimate the configuration 
(pose) and the velocities, such that the estimated states converge in finite-time to their 
true values in the absence of measurement errors. Robustness to bounded measurement 
errors and noisy inputs to the dynamics model is obtained consequently, and is shown 
through numerical simulation results. 

Consider ($\hat{\msg}$, $\hat{\xi}$) to be estimates of the states ($\msg$, $\xi$) of a rigid body's 
motion on $\SE\times\bR^6$. Define
\begin{align}
h=\hat{\msg}^{-1}\msg\, \mbox{ and }\, \tilde{\eta}^\vee=\logm(h).
\end{align}
Therefore, we obtain:
\begin{align}
\dot{h}=h\tilde{\xi}^\vee \, \mbox{ where }\, \tilde{\xi}=\xi-\Ad{h^{-1}}\hat{\xi}. \label{relkine}
\end{align}
If we define $\breve{\xi}=\Ad{h^{-1}}\hat{\xi}$, then $\tilde{\xi}=\xi-\breve{\xi}$. From 
\eqref{relkine} and the kinematics in exponential coordinates given in the previous section, 
we conclude that
\be \dot{\tilde\eta}= G(\tilde\eta)\tilde\xi. \label{relkinexp} \ee
Further, define
\begin{align}
u=\xi-\Ad{h^{-1}}\hat{\xi}+k\frac{\tilde{\eta}}{(\tilde{\eta}\T\tilde{\eta})^{1-\frac{1}{p}}}=\tilde{\xi}+
k\frac{\tilde{\eta}}{(\tilde{\eta}\T\tilde{\eta})^{1-\frac{1}{p}}},
\end{align}
where $k>0$ and $p\in (1,2)$ is a rational number (preferably a ratio of odd integers, to avoid sign mismatches when taking powers using a computer code). Let
\begin{align}
V=\frac{1}{2}\gamma\tilde{\eta}\T\tilde{\eta}+\frac{1}{2}u\T\mathbb{I}u \label{Lyapfunc}
\end{align}
be a candidate Lyapunov function, where $\gamma>0$, and $\mathbb{I}$ is the complete inertia 
matrix as given in eq. This Lyapunov function is used to show the finite-time convergence of 
the observer design that follows.
\begin{theorem}\label{filter}
The observer dynamics given by:
\begin{align}
\dot{\hat{\msg}} &=\hat{\msg}\hat{\xi}^\vee,\, \mbox{ OR }\, \dot{\tilde\eta}= G(\tilde\eta)\tilde\xi 
\mbox{ and }  \hat{\msg}= \msg\expm (-\tilde\eta^\vee), \label{kinobs} \\
\mathbb{I}\dot{\breve{\xi}} &=\adast{(\breve{\xi}-k(\tilde{\eta}\T\tilde{\eta})^{\frac{1}{p}-1}\tilde{\eta})}\mathbb{I}\xi+\varphi+k\mathbb{I}H(\tilde{\eta})G(\tilde\eta)\tilde{\xi}+\gamma G\T(\tilde{\eta})\tilde{\eta}+k\frac{\mathbb{I}u}{(u\T\mathbb{I}u)^{1-\frac{1}{p}}}, \label{dynobs}
\end{align}
where $H(\tilde{\eta})=\frac{1}{(\tilde{\eta}\T\tilde{\eta})^{1-\frac{1}{p}}}\bigg\{I-2(1-\frac{1}{p})
\frac{\tilde{\eta}\tilde{\eta}\T}{\tilde{\eta}\T\tilde{\eta}}\bigg\}$, and $\varphi$ is the resultant of 
forces and moments acting on the rigid body, ensures that the estimate errors converge to the 
origin in finite time. Thus, $(\tilde{\xi},\tilde{\eta})=(0,0)\in\mathbb{R}^6\times\mathbb{R}^6$ for 
all time $t\geq t_f$, where $t_f$ is finite.
\end{theorem}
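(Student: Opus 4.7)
The plan is to carry out a Lyapunov analysis with the candidate $V$ defined in \eqref{Lyapfunc} and show that $\dot V \le -c\,V^{1/p}$ for some $c>0$, which is the standard differential inequality certifying finite-time convergence to the origin. First I would record the error kinematics $\dot{\tilde\eta}=G(\tilde\eta)\tilde\xi$ from \eqref{relkinexp}, and then compute $\Imbb\dot{\tilde\xi}$ by subtracting the observer dynamics \eqref{dynobs} from the true dynamics \eqref{eq:ddt_xi}. Using the definition $u=\tilde\xi+k(\tilde\eta\T\tilde\eta)^{1/p-1}\tilde\eta$, the two $\adast{\cdot}\Imbb\xi$ contributions collapse into the single term $\adast{u}\Imbb\xi$, so
\begin{equation*}
\Imbb\dot{\tilde\xi} = \adast{u}\Imbb\xi - k\Imbb H(\tilde\eta)G(\tilde\eta)\tilde\xi - \gamma G\T(\tilde\eta)\tilde\eta - k\,\frac{\Imbb u}{(u\T\Imbb u)^{1-1/p}}.
\end{equation*}

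Next I would differentiate $u$ explicitly: applying the product rule to $(\tilde\eta\T\tilde\eta)^{1/p-1}\tilde\eta$ yields exactly the matrix $H(\tilde\eta)$ as defined in the theorem, so that $\dot u = \dot{\tilde\xi} + kH(\tilde\eta)G(\tilde\eta)\tilde\xi$. The key algebraic observation is that the $kH(\tilde\eta)G(\tilde\eta)\tilde\xi$ contribution in $\Imbb\dot u$ cancels the matching feedback term grafted onto the observer in \eqref{dynobs}, leaving the clean expression
\begin{equation*}
\Imbb\dot u = \adast{u}\Imbb\xi - \gamma G\T(\tilde\eta)\tilde\eta - k\,\frac{\Imbb u}{(u\T\Imbb u)^{1-1/p}}.
\end{equation*}
Then, when computing $\dot V = \gamma\tilde\eta\T\dot{\tilde\eta} + u\T\Imbb\dot u$, the gyroscopic term $u\T\adast{u}\Imbb\xi$ vanishes because $\langle\adast{u}w,u\rangle=\langle w,\ad{u}u\rangle=0$ for any $w$. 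The cross terms $\gamma\tilde\eta\T G(\tilde\eta)\tilde\xi$ and $-\gamma u\T G\T(\tilde\eta)\tilde\eta$ combine into $-k\gamma(\tilde\eta\T\tilde\eta)^{1/p-1}\tilde\eta\T G(\tilde\eta)\tilde\eta$, which then collapses to $-k\gamma(\tilde\eta\T\tilde\eta)^{1/p}$ by invoking the identity $G(\tilde\eta)\tilde\eta = \tilde\eta$ established in the lemma of the preceding chapter. The result is
\begin{equation*}
\dot V = -k\gamma\,(\tilde\eta\T\tilde\eta)^{1/p} - k\,(u\T\Imbb u)^{1/p}.
\end{equation*}

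Finally I would convert this equality into a finite-time differential inequality in $V$. Since $p\in(1,2)$, the exponent $1/p\in(1/2,1)$, so $s\mapsto s^{1/p}$ is subadditive on $[0,\infty)$ and one has $V^{1/p}\le(\gamma/2)^{1/p}(\tilde\eta\T\tilde\eta)^{1/p}+(1/2)^{1/p}(u\T\Imbb u)^{1/p}$. Choosing $c=k\min\{\gamma^{\,1-1/p}\,2^{1/p},\,2^{1/p}\}>0$ then yields $\dot V \le -c\,V^{1/p}$. A standard comparison argument on this scalar ODE gives $V(t)\equiv 0$ for all $t\ge t_f:=V(0)^{1-1/p}/[c(1-1/p)]$, establishing that $(\tilde\eta,\tilde\xi)=(0,0)$ in finite time, as claimed.

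The principal obstacle I anticipate is the bookkeeping around $H(\tilde\eta)$: one must differentiate the nonsmooth expression $(\tilde\eta\T\tilde\eta)^{1/p-1}\tilde\eta$ carefully, verify that the resulting Jacobian is precisely $H(\tilde\eta)$, and then check that this is exactly the extra matrix placed into the observer definition so that the $k\Imbb H(\tilde\eta)G(\tilde\eta)\tilde\xi$ terms cancel in $\Imbb\dot u$. A secondary concern is that the observer right-hand side is only H\"older (not Lipschitz) continuous near $\{u=0\}\cup\{\tilde\eta=0\}$, so uniqueness of trajectories has to be argued in the H\"older class; and since $\tilde\eta$ is only defined when the principal angle of $h$ is strictly less than $\pi$, the convergence is \emph{almost global} over $\SE\times\bR^6$ rather than truly global, in agreement with the topological obstruction noted earlier.
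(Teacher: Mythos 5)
Your proposal is correct and follows essentially the same route as the paper: both compute $\mathbb{I}\dot{u}$ so that the $\adast{\cdot}$ terms collapse to $\adast{u}\mathbb{I}\xi$ and the $k\mathbb{I}H(\tilde\eta)G(\tilde\eta)\tilde\xi$ feedback cancels, use $G(\tilde\eta)\tilde\eta=\tilde\eta$ to reach $\dot V=-k\gamma(\tilde\eta\T\tilde\eta)^{1/p}-k(u\T\mathbb{I}u)^{1/p}$, and close with the comparison lemma for $\dot V\le -cV^{1/p}$. Your subadditivity argument for the final inequality is in fact slightly cleaner than the paper's appeal to the binomial expansion, since it yields a valid $c>0$ for every $\gamma>0$ rather than only for $\gamma\ge 1$.
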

\begin{proof}
The time derivative of the Lyapunov function given by \eqref{Lyapfunc} is:
\begin{align}
\dot{V}&=\gamma\tilde{\eta}\T\dot{\tilde{\eta}}+u\T\mathbb{I}\dot{u} \nn \\
&=\gamma\tilde{\eta}\T G(\tilde{\eta})\tilde{\xi}+u\T\mathbb{I}\dot{u} \label{C3dotV}
\end{align}
From \eqref{dynobs} and the dynamics
\begin{align}
\mathbb{I}\dot{\xi}=\adast{\xi}\mathbb{I}\xi+\varphi
\end{align}
of the rigid body, we obtain:
\begin{align}
\mathbb{I}\dot{u}&=\mathbb{I}\bigg\{\dot{\xi}-\dot{\breve{\xi}}+\frac{d}{dt}\big(k(\tilde{\eta}\T\tilde{\eta})^{\frac{1}{p}-1}\tilde{\eta}\big)\bigg\} \nn \\
&=\mathbb{I}\dot{\xi}-\mathbb{I}\dot{\breve{\xi}}+k\mathbb{I}H(\tilde{\eta})\dot{\tilde{\eta}} \nn \\
&=\adast{\xi}\mathbb{I}\xi+\varphi-\mathbb{I}\dot{\breve{\xi}}+k\mathbb{I}H(\tilde{\eta})G(\tilde{\eta})\tilde{\xi}, \label{udot0}
\end{align}
using the kinematics $\dot{\tilde{\eta}}=G(\tilde{\eta})\tilde{\xi}$ in \eqref{kinobs}, which holds for 
the configuration error expressed in exponential coordinates. Substituting for $\dot{\breve{\xi}}$ 
from \eqref{dynobs} into expression \eqref{udot0}, we obtain:
\begin{align}
\mathbb{I}\dot{u}=\adast{u}\mathbb{I}\xi-\gamma G\T(\tilde{\eta})\tilde{\eta}-k\frac{\mathbb{I}u}{(u\T\mathbb{I}u)^{1-\frac{1}{p}}} \label{udot}
\end{align}
for the feedback dynamics of the variable $u$. Now substituting for $\mathbb{I}\dot{u}$ from 
\eqref{udot} into the expression for $\dot{V}$ in \eqref{C3dotV}, we get:
\begin{align}
\dot{V}&=\gamma \tilde{\eta}\T G(\tilde{\eta})\tilde{\xi}-\gamma u\T G\T(\tilde{\eta})\tilde{\eta}-k(u\T\mathbb{I}u)^{\frac{1}{p}} \nn\\
&=\gamma\tilde{\eta}\T G(\tilde{\eta})(\tilde{\xi}-u)-k(u\T\mathbb{I}u)^{\frac{1}{p}}\nn\\
&=-k\gamma\frac{\tilde{\eta}\T G(\tilde{\eta})\tilde{\eta}}{(\tilde{\eta}\T\tilde{\eta})^{1-\frac{1}{p}}}-k(u\T\mathbb{I}u)^{\frac{1}{p}}\nn\\
&=-k\bigg[\gamma(\tilde{\eta}\T\tilde{\eta})^{\frac{1}{p}}-(u\T\mathbb{I}u)^{\frac{1}{p}}\bigg] 
\label{dV}
\end{align}
using the fact that $G(\tilde{\eta})\tilde{\eta}=\tilde{\eta}$. From \eqref{dV}, we note that
\begin{align}
\dot{V}=-2^{\frac{1}{p}}k\bigg[\gamma^{1-\frac{1}{p}}(\frac{\gamma}{2}\tilde{\eta}\T\tilde{\eta})^{\frac{1}{p}}+(\frac{1}{2}u\T\mathbb{I}u)^{\frac{1}{p}}\bigg]\leq-2^{\frac{1}{p}}kV^\frac{1}{p}, \label{Vdot}
\end{align}
for $\gamma\geq 1$, using the binomial expansion theorem. Thus, $V$ converges to zero in 
finite time, and hence the result.
\end{proof}

Note that since the exponential coordinates are defined almost globally on the configuration 
space $\SE$, the above observer can be used for all initial estimate errors $\big(h(0),\tilde\xi
(0)\big)\in\SE\times\bR^6$ such that the principal angle corresponding to the $\SO$ component 
of $h(0)$ is not $\pi$ radians (or 180$^\circ$). Therefore, the above observer is finite-time 
convergent, and its domain of convergence is almost global on the state space $\SE\times\bR^6$. 

\subsection{Numerical Simulations}\label{sec: simres}
In this section, this observer numerical simulated for a rigid body that represents a maneuverable 
aerial vehicle with known mass and inertia. The observer needs a set of measured states (pose and rigid body velocities) to 
estimate the exponential coordinates as well as the rigid body's velocities. Measurement data  
is generated by integrating the ``true" (known) dynamics of the rigid body offline with known 
models of external torques and forces, and then adding noise. The rigid body's mass is 
assumed to be $m=21$ kg and its inertia matrix is
\begin{equation}
J=\diag (2.56,3.01,2.98)\;\; \mbox{kg.m}^2.
\end{equation}

The rigid body is subjected to an external force and an external torque, which are expressed 
in the body-fixed frame as
\begin{align}
\phi_D^0=[0.4\;\ 0.5\;\ 0.768]\T \mbox{\;N} \mbox{ and } \tau_D^0=[0.07\;\ 0.0687\;\ 0.02]\T \mbox{\;N.m},
\end{align}
as well as a uniform gravity force directed towards the negative z-axis of the inertial frame. 
The initial 
configuration (pose) is given by
\begin{align}
R_0=\expm (0.1[4\ 2\ 1]^\times),\ 
b_0=[1\ 2\ 3]\T\ \mbox{m},
\end{align}
and the initial angular and linear velocities are 
$$\omega_0=[0.5\;\;\; -0.5\;\;\; 0.1]\T\ \text{rad/s},\;\;\; v_0=10^{-3}[-5\;\;\; 25\;\;\; 30]\T\ \text{m/s}.$$
A discrete-time numerical integration scheme with constant time stepsize is used to propagate 
the true states as well as the estimated ones. The discrete time period for this numerical 
integration scheme, $h$, is chosen to be $0.01$ s. The true states are propagated for 
$T=0\mbox{ to }10$ s. 

Using the above initial states, integration scheme, time interval and step size, the dynamics of the 
rigid body is integrated and its trajectory in three dimensional space is depicted in 
Fig. \ref{C3fig:Dynamics}. The body frame axes are also plotted on this path every 1 second to 
show the attitude motion.
\begin{figure}[thb]
\centering
\psfrag{t1}[c]{Error of exponential coordinates ($\tilde{\eta}$)}
\psfrag{xb1}[c]{$t\ \mathrm{(s)}$}
\psfrag{ya1}[c]{$\Theta$ (rad)}
\psfrag{yb1}[cb]{$\beta$ (m)}
\includegraphics[width=0.8\columnwidth]{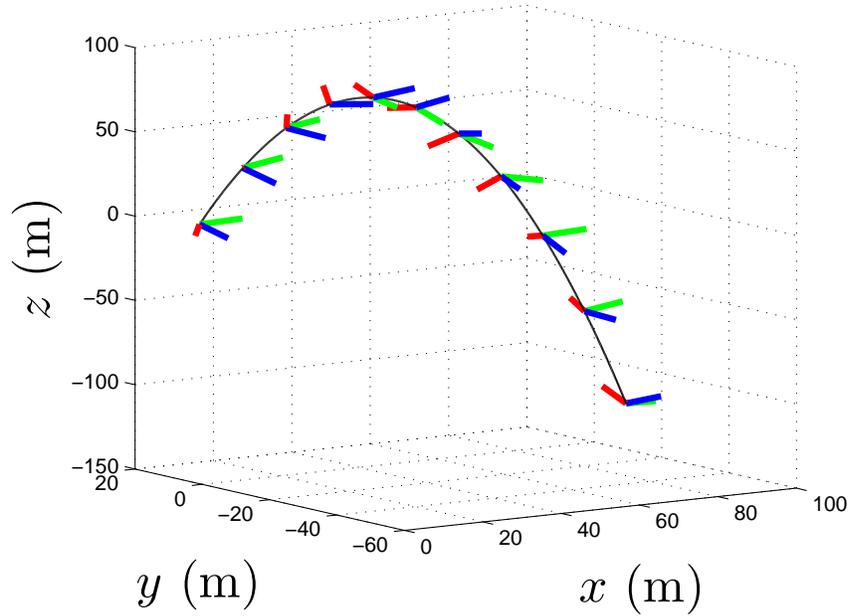}
\caption{Rigid body actual trajectory with its attitude.}
\label{C3fig:Dynamics}
\end{figure}

The initial estimated pose and velocity have been taken to be $\hat{g}_0=I_{4\times 4}$ and 
$\hat{\xi}_0=0_{6\times 1}$ repectively. In order to get fast convergence and smooth estimation 
error with a reasonable overshoot, the best design parameters of the observer arrived at 
using trial and error are:
\begin{align}
k=50,\ 
p=\frac{23}{21},\ 
\gamma=0.03.\ 
\end{align}
These parameter values were used in simulating the observer's performance with and 
without measurement noise and external disturbance. 

\subsubsection{In the absence of noise and disturbance}
Here, we assume that there is no measurement noise or external disturbance and the sensors 
are ideal. 
The measurement sampling period is taken to be $0.01$ s. This simulation runs for $1.5$ s, which is long enough for the estimation errors in exponential coordinates and velocities to 
converge to zero.
The principal angle of the attitude estimation error as well as the estimate errors in the Cartesian coordinates of the rigid body are depicted in Fig.~\ref{C3fig:Exp_coords_est_error_won}. All these components have been derived from the exponential coordinates estimation errors proposed in Theorem \ref{filter}.
The estimation errors in the angular and translational velocities of the rigid body during the simulation are shown in Fig.~\ref{C3fig:Vel_est_error_won}. 

\begin{figure}[thb]
\centering
\psfrag{t1}[c]{Error of exponential coordinates ($\tilde{\eta}$)}
\psfrag{xb1}[c]{$t\ \mathrm{(s)}$}
\psfrag{ya1}[c]{$\Theta$ (rad)}
\psfrag{yb1}[cb]{$\beta$ (m)}
\includegraphics[width=0.8\columnwidth]{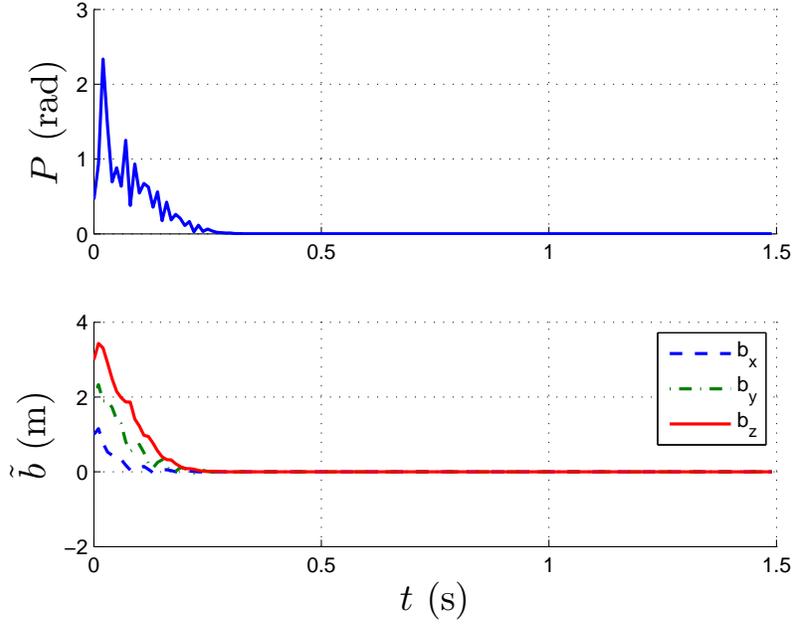}
\caption{Principal angle of attitude estimate error and rigid body's position estimate error in the absence of noise or disturbance.}
\label{C3fig:Exp_coords_est_error_won}
\end{figure}
\begin{figure}[thb]
\centering
\psfrag{t2}[c]{Error of velocities ($\tilde{\xi}$)}
\psfrag{xb2}[c]{$t\ \mathrm{(s)}$}
\psfrag{ya2}[c]{$\omega$ (rad/s)}
\psfrag{yb2}[cb]{$\nu$ (m/s)}
\includegraphics[width=0.8\columnwidth]{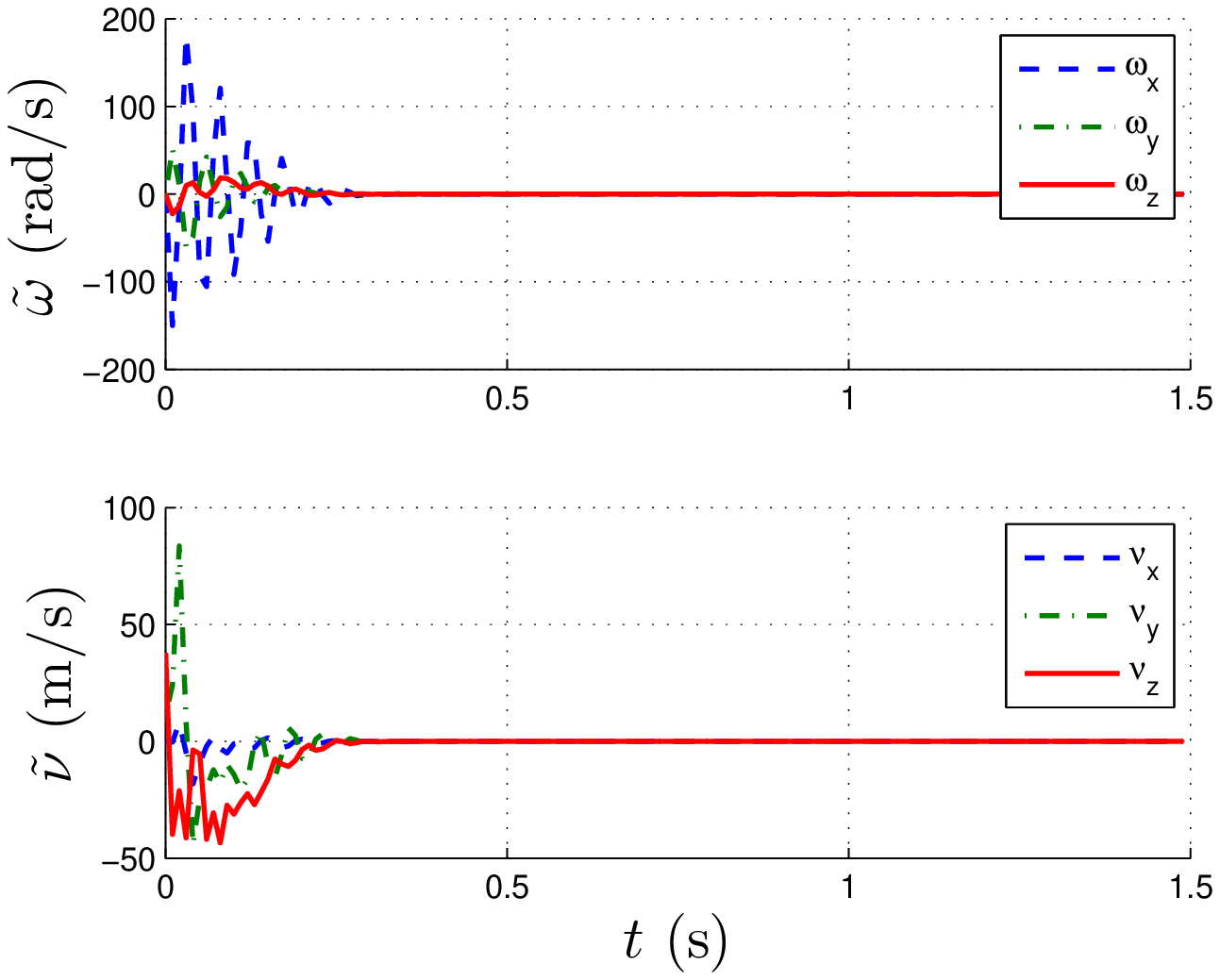}
\caption{Rigid body's angular and translational velocities estimate errors in the absence of noise or disturbance.}
\label{C3fig:Vel_est_error_won}
\end{figure}

These figures show that the estimation errors converge to zero in a very small and finite time, which is almost 0.3 s here. Just due to the numerical artifacts, the errors will not be exactly zero, but after the mentioned finite time all of their components are negligible. 

\subsubsection{In the presence of noise and disturbance}
In this section, we show that the observer is robust to measurement noises and external disturbances. Presence of measurement errors and external disturbances is always the case in reality, where the sensors data contain some certain levels of measurement noise. First of all, the dynamics of the system is mimicked to generate the pure states. Next, some noise signals with a realistic level of available rough sensors are added to each set of states. The noises in the position and attitude data are sinusoidal signals with the amplitudes of $10$ cm and $2^\circ$, respectively. Their frequencies also are both $100$ Hz. The same kind of signals are added to the pure velocities, but with different amplitudes, which are $1^\circ$/s and $1$ cm/s, respectively. Note that the observer proposed in Theorem \ref{filter} does not use the pose in all time steps. Therefore, just the noise in the initial value of position and attitude affect the estimated states. On the other hand, the noisy angular and translational velocities are used as a feedback in each step of the estimation. The external disturbances are assumed to be added to the previous external torques, which make the total torques applied to the rigid body equal to
\begin{align}
\tau_D&=
\tau_D^0+0.01\sin (0.1 t) \begin{bmatrix}
0.424\\
0.9 \\
0.1
\end{bmatrix}\mbox{\;N.m.}
\end{align}

The total external forces applied on the rigid body also are taken to have disturbance as
\begin{align}
\phi_D&=\phi_D^0
+0.1\sin (0.1 t) \begin{bmatrix}
0.1 \\
0.2 \\
0.975
\end{bmatrix}\mbox{\;N.}
\end{align}

In Fig.~\ref{C3fig:Exp_coords_est_error_wn} the principal angle corresponding to the rigid body attitude estimate error is plotted. This figure also depicts the rigid body position estimation errors by components. The estimation errors of the rigid body velocities are shown in
Fig.~\ref{C3fig:Vel_est_error_wn}. These two figures show that the estimation errors in all the pose and velocities converge to zero in a finite time almost as fast as the noise-free case. Thus, the proposed observer can estimate the real states even in the presence of additive noise in the dynamics model. This was expected, since the finite-time convergent systems have been shown to be robust to bounded external disturbances and measurement errors.

\begin{figure}[thb]
\centering
\psfrag{t3}[c]{Error of exponential coordinates ($\tilde{\eta}$)}
\psfrag{xb3}[c]{$t\ \mathrm{(s)}$}
\psfrag{ya3}[c]{$\Theta$ (rad)}
\psfrag{yb3}[cb]{$\beta$ (m)}
\includegraphics[width=0.75\columnwidth]{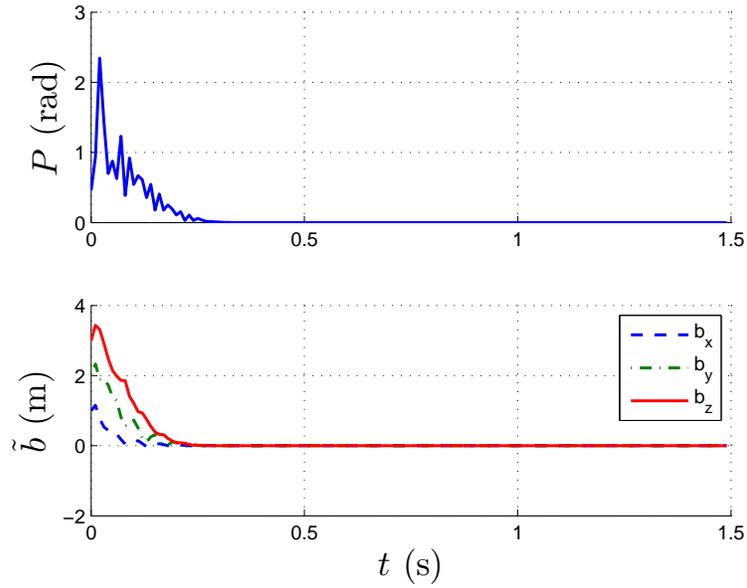}
\caption{Principal angle of attitude estimate error and rigid body's position estimate error in the presence of noise and disturbance.}
\label{C3fig:Exp_coords_est_error_wn}
\end{figure}
\begin{figure}[thb]
\centering
\psfrag{t4}[c]{Error of velocities ($\tilde{\xi}$)}
\psfrag{xb4}[c]{$t\ \mathrm{(s)}$}
\psfrag{ya4}[c]{$\omega$ (rad/s)}
\psfrag{yb4}[cb]{$\nu$ (m/s)}
\includegraphics[width=0.8\columnwidth]{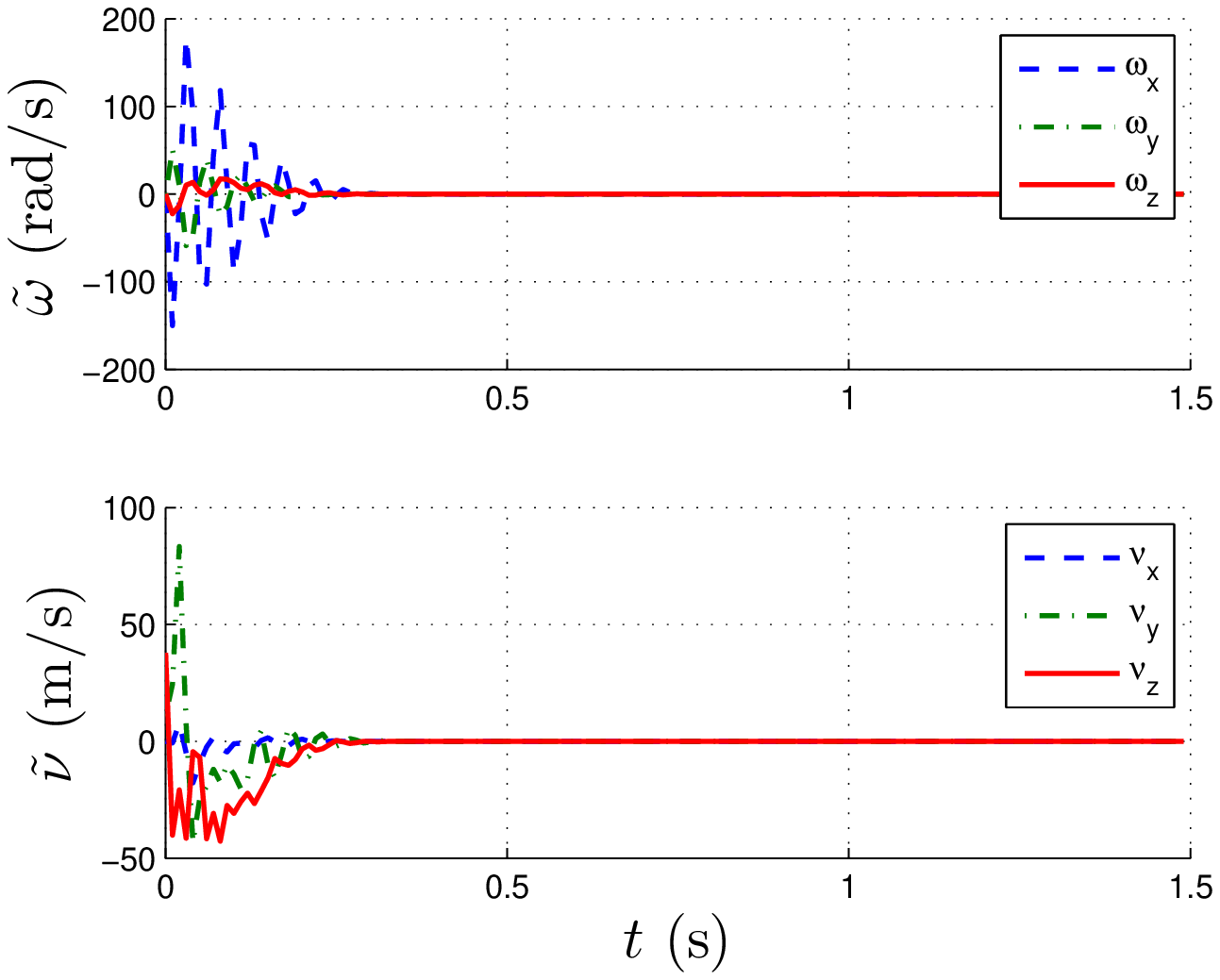}
\caption{Rigid body's angular and translational velocities estimate errors in the presence of noise and disturbance.}
\label{C3fig:Vel_est_error_wn}
\end{figure}

\subsection{Conclusion}\label{sec: conc}
An observer that exhibits finite-time convergence for arbitrary rigid body motion states in the 
tangent bundle of the special Euclidean group $\SE$ is presented. The observer design is 
based on use of the exponential coordinates, which are defined almost globally over 
the configuration space $\SE$. Therefore, the domain of convergence is almost 
global on this state space, and excludes only those initial pose errors whose rotation 
component has a principal angle of exactly $\pi$ radians. Since finite-time convergence has 
been shown to be more robust to noise in the dynamics or in measurements, this 
observer is expected to be robust to both measurement noise and process noise. Such 
robustness properties are indicated in simulation results obtained from a numerical 
implementation of this observer. The estimation errors in the absence of measurement 
noise and in the presence of measurement noise are seen to converge in a finite-time 
that depends on the observer design parameters.

\clearpage

\section{MODEL-FREE RIGID BODY ATTITUDE ESTIMATION BASED ON THE LAGRANGE-D'ALEMBERT PRINCIPLE} 

\label{CH04_VE} 

\hspace{\parindent}
\textit{This chapter is adapted from a paper published in Automatica \cite{Automatica} and a paper published in Proceedings of the 2015 American Control Conference \cite{ACC2015}. The author would like to thank Dr. Robert Mahony and Dr. Tarek Hamel for their comments during preparation of the initial manuscript. The author also gratefully acknowledges Dr. Amit K. Sanyal, Ehsan Samiei and Sasi P. Viswanathan for their participation.}
\\
\\
{\bf{Abstract}}~
Estimation of rigid body attitude and angular velocity without any knowledge of the attitude 
dynamics model, is treated using the Lagrange-d'Alembert principle from variational 
mechanics. It is shown that Wahba's cost function for attitude determination from two or 
more non-collinear vector measurements can be generalized and represented as a 
Morse function of the attitude estimation error on the Lie group of rigid body rotations. 
With body-fixed sensor measurements of direction vectors and angular velocity, a 
Lagrangian is obtained as the difference between a kinetic energy-like term that is 
quadratic in the angular velocity estimation error and an artificial potential obtained 
from Wahba's function. An additional dissipation term that depends on the angular velocity 
estimation error is introduced, and the Lagrange-d'Alembert principle is applied to the 
Lagrangian with this dissipation. A Lyapunov analysis shows that the state estimation 
scheme so obtained provides stable asymptotic convergence of state estimates to actual 
states in the absence of measurement noise, with an almost global domain of attraction. 
These estimation schemes are discretized for computer implementation using discrete 
variational mechanics. A first order implicit Lie group variational integrator is obtained as a 
discrete-time implementation and its adjoint flow yields an explicit first order LGVI. Composing these two first order flows, a symmetric second-order version of 
this discrete-time filtering scheme is also presented. In the 
presence of bounded measurement noise, numerical simulations show that the 
estimated states converge to a bounded neighborhood of the actual states. A comparison between the performances of the second-order filter and the 
first-order filter is also carried out.

\subsection{Attitude Determination from Vector Measurements}\label{C4S1}

Rigid body attitude is determined from $k\in\mathbb{N}$ known inertial vectors measured in 
a coordinate frame fixed to the rigid body. Let these vectors be denoted as $u_i^m$ for 
$i=1,2,\ldots,k$, in the body-fixed frame. The assumption that $k\ge 2$ is necessary for 
instantaneous three-dimensional attitude determination. When $k=2$, the cross product 
of the two measured vectors is considered as a third measurement for applying the attitude 
estimation scheme. Denote the corresponding known inertial vectors as seen from the rigid body as $e_i$, and 
let the true vectors in the body frame be denoted $u_i=R \T e_i$, where $R$ is the rotation 
matrix from the body frame to the inertial frame. This rotation matrix provides a coordinate-free,  
global and unique description of the attitude of the rigid body. Define the matrix composed 
of all $k$ measured vectors expressed in the body-fixed frame as column vectors,
\begin{align}
U^m&= [u_1^m\ u_2^m\ u_1^m\times u_2^m]  \mbox{ when }\, k=2,\, \mbox{ and }\nn\\ 
U^m&=[u_1^m\ u_2^m\ ... u_k^m]\in \mathbb{R}^{3\times k} \mbox{ when } k>2,\label{Umexp}
\end{align}
and the corresponding matrix of all these vectors expressed in the inertial frame as
\begin{align}
E&= [e_1\ e_2\ e_1\times e_2]  \mbox{ when }\, k=2,\, \mbox{ and }\nn\\ 
E&=[e_1\ e_2\ ... e_k]\in \mathbb{R}^{3\times k}  \mbox{ when } k>2.\label{Eexp}
\end{align}
Note that the matrix of the actual body vectors $u_i$ corresponding to the inertial vectors 
$e_i$, is given by
\begin{align}
U&= R \T E= [u_1\ u_2\ u_1\times u_2]  \mbox{ when }\, k=2,\, \mbox{ and }\nn\\ 
U&=R \T E=[u_1\ u_2\ ... u_k]\in \mathbb{R}^{3\times k} \mbox{ when } k>2.\nn
\end{align}

\subsubsection{Generalization of Wahba's Cost Function for Instantaneous Attitude 
Determination from Vector Measurements}
The optimal attitude determination problem for a set of vector measurements at a given time 
instant, is to find an estimated rotation matrix $\hat R\in\SO$ such that a weighted sum of the 
squared norms of the vector errors 
\begin{align}
s_i= e_i-\hat R u_i^m \label{dirmeaserrs}
\end{align}
are minimized. This attitude determination problem is known as Wahba's problem, and is 
the problem of minimizing the value of
\begin{align}
\cU^0 (\hat R, U^m)= \frac{1}{2}\sum_{i=1}^kw_i(e_i-\hat R u_i^m) \T(e_i-\hat R u_i^m)
\label{eq:J0}
\end{align}  %
with respect to $\hat R\in\SO$, where the weights $w_i>0$. Defining the trace inner product 
on $\mathbb{R}^{m\times n}$ as
\begin{align}
\lan A_1,A_2\ran=\tr(A_1 \T A_2),\label{eq:tr_def}
\end{align}
we can re-express equation \eqref{eq:J0} for Wahba's cost function as
\be 
\cU^0 (\hat R, U^m)= \frac{1}{2}\lan E-\hat R U^m,(E-\hat R U^m)W\ran, \label{U0def} \ee 
where $U^m$ is given by equation \eqref{Umexp}, $E$ is given by \eqref{Eexp}, and 
$W=\diag(w_i)$ is the positive diagonal matrix of the weight factors for the measured 
directions. 

From the expression \eqref{U0def}, note that $W$ may be generalized to be any positive 
definite matrix, not necessarily diagonal. Another 
generalization of Wahba's cost function is given by 
\begin{align}
\cU (\hat R,U^m)=\Phi \Big(\frac{1}{2}\lan E-\hat R U^m,(E-\hat R U^m)W\ran \Big), \label{attindex}
\end{align}
where $\Phi: [0,\infty)\mapsto[0,\infty)$ is a $C^2$ function that satisfies $\Phi(0)=0$ and 
$\Phi'(x)>0$ for all $x\in[0,\infty)$. Furthermore, $\Phi'(\cdot)\leq\alpha(\cdot)$ where 
$\alpha(\cdot)$ is a Class-$\mathcal{K}$ function. Note that these properties of 
$\Phi(\cdot)$ ensure that the indices $\cU^0 (\hat R, U^m)$ and $\cU (\hat R, U^m)$ have the 
same minimizer $\hat R\in\SO$.  In other words, minimizing the cost $\cU$, which is a 
generalization of the cost $\cU^0$, is equivalent to solving Wahba's problem. 
Here, $W$ is positive definite (not necessarily diagonal), and $E$ and $U^m$ are assumed to 
be of rank 3, which is true under the assumption that $k\ge2$ vectors are measured. The solution 
to Wahba's problem is given in \cite{san06} and \cite{markley1988attitude}.

\subsubsection{Properties of Wahba's Cost Function in the Absence of Measurement Errors}
In the absence of measurement errors, $U^m= U=R\T E$, and let $Q=R\hat{R}\T\in\SO$ denote
the attitude estimation error. The following lemmas give the structure of Wahba's cost 
function in this case.
\begin{lemma} \label{lem1}
Let $\mbox{rank}(E)=3$, where $E$ is as defined in \eqref{Eexp}.
Let the singular value decomposition of $E$ be given by
\begin{align}
E :&= U_E \Sigma_E V_E\T\, \mbox{ where }\,U_E\in\mathrm{O}(3),\ V_E\in\mathrm{O}(m),\nn\\
&\Sigma_E\in\mathrm{Diag}^+(3,m),\label{SVDE}
\end{align}
and $\mathrm{Diag}^+(n_1,n_2)$ is the vector space of $n_1\times n_2$ matrices with 
positive entries along the main diagonal and all other components zero. Let $\sigma_1, 
\sigma_2, \sigma_3$ denote the main diagonal entries of $\Sigma_E$. Further, let the positive 
definite weight matrix $W$ in the generalization of Wahba's cost function \eqref{attindex} 
be given by 
\be W= V_E W_0 V_E\T\, \mbox{ where }\, W_0\in\mathrm{Diag}^+(m,m) \label{Wdec} \ee
and the first three diagonal entries of $W_0$ are given by
\be w_1= \frac{d_1}{\sigma_1^2},\; w_2=\frac{d_2}{\sigma_2^2},\; w_3=\frac{d_3}
{\sigma_3^2}\, \mbox{ where }\, d_1,d_2,d_3>0. \label{w123} \ee
Then $K=EWE\T$ is positive definite and 
\be K= U_E\Delta U_E\T\, \mbox{ where }\, \Delta= 
\diag(d_1,d_2,d_3), \label{Kdec} \ee
is its eigendecomposition. Moreover, if 
$d_i\ne d_j\mbox{ for } i\ne j$ and $i,j\in\{1,2,3\}$, then $\lan I- Q,K\ran$ is a Morse function whose critical points are
\begin{align}
Q\in \big\{ I, Q_1, Q_2, Q_3\big\} \mbox{ where } Q_i= 2 U_E a_i a_i\T U_E\T - I, 
\label{critpts}
\end{align}
and $a_i$ is the $i$th column vector of the identity $I\in\SO$.
\end{lemma}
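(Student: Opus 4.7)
The plan is to dispose of the eigendecomposition claim by direct substitution, then characterize the critical set of $\langle I-Q,K\rangle$ by a clean algebraic reduction, and finally verify the Hessian is non-degenerate at each critical point.

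For the first assertion, substitute the given SVD $E=U_E\Sigma_E V_E^T$ and the prescribed form $W=V_E W_0 V_E^T$ into $K=EWE^T$ to obtain
\begin{equation*}
K = U_E\Sigma_E V_E^T V_E W_0 V_E^T V_E \Sigma_E^T U_E^T = U_E(\Sigma_E W_0\Sigma_E^T) U_E^T.
\end{equation*}
Since $\Sigma_E\in \mathrm{Diag}^+(3,m)$ has nonzero entries only in its first three diagonal slots, the product $\Sigma_E W_0 \Sigma_E^T$ is a $3\times 3$ diagonal matrix whose $(i,i)$ entry equals $\sigma_i^2 w_i$ for $i=1,2,3$. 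The prescribed choice $w_i=d_i/\sigma_i^2$ then gives $\Sigma_E W_0 \Sigma_E^T=\Delta$, yielding both the eigendecomposition $K=U_E\Delta U_E^T$ and positive definiteness of $K$ since each $d_i>0$.

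Next, write $f(Q)=\langle I-Q,K\rangle = \tr(K)-\tr(QK)$ and compute its first variation along $\delta Q = Q\eta^\times$, $\eta\in\bR^3$. Using the cyclic property of the trace together with the fact that $\tr(S\cdot B)=0$ for every skew-symmetric $S$ if and only if $B$ is symmetric, the criticality condition reduces to ``$KQ$ is symmetric''. Introducing $P=U_E^T Q U_E\in\SO$ (which lies in $\SO$ since $(\det U_E)^2=1$), this is equivalent to $\Delta P$ being symmetric. Setting $A=\Delta P$, symmetry of $A$ combined with orthogonality of $P$ yields
\begin{equation*}
A^2 = AA^T = \Delta P P^T \Delta = \Delta^2.
\end{equation*}
Because $\Delta^2$ has distinct positive eigenvalues $d_i^2$, its symmetric square roots are exactly the diagonal matrices with entries $\pm d_i$; hence $A$ is diagonal and $P=\Delta^{-1}A=\mathrm{diag}(\pm 1,\pm 1,\pm 1)$. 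The constraint $\det P=1$ selects the four sign patterns with an even number of minus signs, namely $I$ and $P_i:=2 a_i a_i^T - I$ for $i=1,2,3$. Conjugating back by $U_E$ recovers the claimed critical set $\{I,Q_1,Q_2,Q_3\}$ with $Q_i=U_E P_i U_E^T = 2U_E a_i a_i^T U_E^T - I$.

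For the Morse property, expand $f$ in exponential coordinates about each critical point $Q_\star$, writing $Q=Q_\star\exp(\eta^\times)$. Using $(\eta^\times)^2 = \eta\eta^T - \|\eta\|^2 I$ together with the symmetry of $KQ_\star$ at critical points (which kills the linear term), the quadratic part of $f-f(Q_\star)$ evaluates to $\tfrac{1}{2}\eta^T\bigl[\tr(KQ_\star)I - KQ_\star\bigr]\eta$. Rotating into the $U_E$-frame diagonalizes this Hessian with eigenvalues $\{d_2+d_3,\,d_1+d_3,\,d_1+d_2\}$ at $Q_\star=I$, and with one eigenvalue of the form $-d_j-d_k$ together with two signed differences $d_i-d_j$ and $d_i-d_k$ at $Q_\star=Q_i$ (where $\{i,j,k\}=\{1,2,3\}$). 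Positivity of each $d_l$ handles the former type, and the distinctness hypothesis $d_i\ne d_j$ ensures that none of the signed differences vanishes, so the Hessian is non-degenerate at every critical point and $f$ is Morse on $\SO$.

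The main obstacle is the reduction of the symmetry condition $\Delta P=P^T\Delta$ to the conclusion that $P$ is diagonal. A direct entrywise comparison would require nine-entry bookkeeping, but packaging the identity $A^2=\Delta^2$ together with distinctness of the $d_i^2$ collapses the argument via the uniqueness of symmetric square roots of a diagonal matrix with distinct eigenvalues — this is the step whose clean presentation matters most.
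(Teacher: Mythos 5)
Your proof is correct, and its skeleton matches the paper's: compute the first variation along $\delta Q=Q\Sigma^\times$, reduce criticality to symmetry of $KQ$ (equivalently $KQ=Q\T K$), conjugate by $U_E$ to get the condition $\Delta P=P\T\Delta$ with $P=U_E\T QU_E\in\SO$, and then examine the second variation. Where you genuinely improve on the paper is in the two steps it treats most lightly. First, the paper simply \emph{asserts} that the solutions of $\Delta P=P\T\Delta$ in $\SO$ are the four sign matrices; your reduction --- set $A=\Delta P$, note $A$ is symmetric with $A^2=AA\T=\Delta PP\T\Delta=\Delta^2$, so $A$ commutes with $\Delta^2$ and must be diagonal because the $d_i^2$ are distinct, whence $P=\Delta^{-1}A=\diag(\pm1,\pm1,\pm1)$ and $\det P=1$ selects the even sign patterns --- supplies the missing justification cleanly. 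Second, the paper's own non-degeneracy argument in this lemma only shows that the quadratic form $-\lan Q\T K,(\Sigma^\times)^2\ran$ is not identically zero (which would force $Q\T K=0$); strictly speaking that excludes a vanishing Hessian but not a degenerate one with a null direction. Your explicit Hessian $\tr(KQ_\star)I-KQ_\star$ with eigenvalues $\{d_2+d_3,\,d_1+d_3,\,d_1+d_2\}$ at $I$ and $\{-d_j-d_k,\,d_i-d_j,\,d_i-d_k\}$ at $Q_i$ is the complete argument --- it is essentially what the paper defers to the proof of Lemma \ref{lem2} --- and it makes visible exactly where the hypothesis $d_i\ne d_j$ is used. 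The eigendecomposition of $K$ is handled identically in both (direct substitution of the SVD).
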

{\em Proof}:
It is straightforward to show that \eqref{Kdec} holds given \eqref{SVDE}-\eqref{w123}. It is shown here that $\lan I- Q,K\ran$ has the isolated non-degenerate critical points 
given by \eqref{critpts}. Consider a first differential in $Q$ given by  
\be \delta Q= Q\Sigma^\times, \label{Qvar} \ee
where $\Sigma\in\bR^3$.
The first variation of $\lan I- Q,K\ran$ with respect to $Q$ is given by
\begin{align}
&\partial_Q \lan I- Q,K\ran= \lan K, -\delta Q\ran= \tr\Big(\frac12 (Q\T K- KQ)\Sigma^\times\Big) \nn\\
&= \frac12\lan KQ- Q\T K,\Sigma^\times\ran= S_K\T (Q) \Sigma,
\label{gradQ}
\end{align}
where 
\be S_K (Q)= \mrm{vex}\big(KQ- Q\T K\big) \label{SKdef} \ee
and $\mrm{vex}(\cdot): \so\to\bR^3$ is the inverse of the $(\cdot)^\times$ map. The critical points 
of $\lan I- Q,K\ran$ on $\SO$ are therefore given by
\be S_K (Q)=0\, \Rightarrow\, KQ= Q\T K. \label{neccond} \ee
Substituting the eigendecomposition of $K$ given by \eqref{Kdec} in equation 
\eqref{neccond}, we obtain
\begin{align}
U_E\Delta U_E\T Q = Q\T U_E\Delta U_E\T
\Rightarrow  \Delta P &= P\T \Delta, \label{critcond}
\end{align}
where $P=U_E\T Q U_E\in\SO$. Given that $\Delta$ is a positive diagonal matrix with distinct 
diagonal entries, the solution set for $P$ that satisfies the condition \eqref{critcond} is 
\begin{align}
C_P&= \big\{ I, \diag (1,-1,-1), \diag (-1,1,-1), \diag(-1,-1,1)\big\}\nn\\
 &=\big\{ I, 2a_1 a_1\T-I, 2a_2 a_2\T -I, 2 a_3 a_3\T-I\big\}. \label{setCP} 
\end{align}
Thus, the set of critical points of $\lan I- Q,K\ran$ is given by 
\begin{align}
C_Q= U_E C_P U_E\T= \big\{I, Q_1, Q_2, Q_3\big\}, \label{setCQ}
\end{align}
where $Q_1$, $Q_2$ and $Q_3$ are as given by \eqref{critpts}. These critical points are 
clearly isolated. To show that they are non-degenerate, we evaluate the second variation 
of $\lan I- Q,K\ran$ at $Q\in C_Q\subset\SO$, as follows:
\begin{align*}
&\partial^2_Q \lan I- Q,K\ran= -\lan Q\T K, \delta\Sigma^\times\ran + \lan \Sigma^\times Q\T K,
\Sigma^\times\ran.
\end{align*}
Since $Q\T K$ is symmetric at the critical points according to \eqref{neccond}, and since 
$\delta\Sigma^\times$ is 
clearly skew-symmetric, the first term on the right-hand side of the above expression 
vanishes, as symmetric and skew-symmetric matrices are orthogonal under the trace inner 
product. Therefore the second variation of $\lan I- Q,K\ran$ evaluated at the critical points 
$Q\in C_Q$ is given by
\begin{align}
&\partial^2_Q \lan I- Q,K\ran= \lan \Sigma^\times Q\T K,\Sigma^\times\ran = -\lan Q\T K, 
(\Sigma^\times)^2\ran. \label{sufcond}
\end{align}
Since $(\Sigma^\times)^2$ is symmetric, the second variation vanishes for arbitrary 
non-zero $\Sigma^\times$ if and only if 
$Q\T K=0$ for $Q\in C_Q$. However, that possibility would contradict the positive definiteness 
of $K$, which we have already established. Therefore, the critical points of $\lan I- Q,K\ran$ 
are non-degenerate and isolated, which makes this a Morse function on $\SO$~\cite{bo:miln}.
\hfill\ensuremath{\square}

Note that this lemma specifies the weight matrix $W$ according to the SVD of the matrix  
$E$ and selected eigenvalues $d_1,d_2,d_3>0$ for the matrix $K=EWE\T$. As the following lemma shows, these eigenvalues play a special role in determining the overall 
properties of Wahba's cost function and its generalization.

Note that since $\lan I- Q, K\ran$ is a Morse function on $\SO$ by Lemma \ref{lem1}, by 
the properties of the function $\Phi$, one can conclude that $\Phi(\lan I- Q, K\ran):\SO\to\bR$ 
is also a Morse function with the same critical points as those of $\lan I- Q, K\ran$. The 
following result gives the characteristics of the critical points of $\Phi(\lan I- Q, K\ran)$. 
\begin{lemma} \label{lem2}
Let $K=EWE\T$ have the properties given by Lemma \ref{lem1}. Then the critical points of 
$\Phi(\lan I- Q, K\ran):\SO\to\bR$ given by \eqref{critpts}  
consist of a global minimum at the identity $I\in\SO$, a global maximum, and two hyperbolic 
saddle points whose indices depend on the distinct eigenvalues $d_1$, $d_2$, and $d_3$ 
of $K$.
\end{lemma}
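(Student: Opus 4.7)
The plan is to reduce the analysis for $\Phi(\langle I-Q,K\rangle)$ to the simpler function $f(Q):=\langle I-Q,K\rangle$ already treated in Lemma~\ref{lem1}, and then to classify the critical points of $f$ by computing its Hessian in the eigenbasis of $K$. First, I would observe that since $\Phi$ is $C^2$ with $\Phi'>0$ and $\Phi(0)=0$, a chain-rule computation gives, at any critical point $Q_*$ of $f$,
\begin{align*}
\partial_Q \Phi(f)\big|_{Q_*} &= \Phi'(f(Q_*))\,\partial_Q f\big|_{Q_*}=0,\\
\partial^2_Q \Phi(f)\big|_{Q_*} &= \Phi'(f(Q_*))\,\partial^2_Q f\big|_{Q_*},
\end{align*}
because the term involving $\Phi''$ is multiplied by $(\partial_Q f)^2$, which vanishes at $Q_*$. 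Since $\Phi'>0$, this shows that $\Phi(f)$ has the same critical set as $f$ and that the signature of its Hessian at each critical point is identical to that of $f$; moreover, monotonicity of $\Phi$ preserves the ordering of critical values. Hence it suffices to analyze $f$ itself at the four critical points $\{I,Q_1,Q_2,Q_3\}$ from Lemma~\ref{lem1}.

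Next I would substitute the eigendecomposition $K=U_E\Delta U_E^T$ with $\Delta=\diag(d_1,d_2,d_3)$ into the second-variation formula \eqref{sufcond}, using the change of variables $\tilde\Sigma=U_E^T \Sigma$ and the identity $(\Sigma^\times)^2=\Sigma\Sigma^T-\|\Sigma\|^2 I$. This gives
\begin{equation*}
\partial^2_Q f\big|_{Q_*}= \|\tilde\Sigma\|^2\,\tr(Q_*^T K)-\tilde\Sigma^T\,U_E^T Q_*^T K\,U_E\,\tilde\Sigma.
\end{equation*}
At $Q_*=I$ this simplifies to $\sum_{j}\tilde\sigma_j^2\bigl(\sum_k d_k - d_j\bigr)$, whose coefficients $d_k+d_\ell$ (with $k,\ell\ne j$) are strictly positive, so the Hessian is positive definite and $I$ is a local (hence global, since $f(I)=0$) minimum. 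At $Q_*=Q_i=2U_E a_i a_i^T U_E^T-I$, I would use $Q_i^T K=U_E(2d_i a_i a_i^T-\Delta)U_E^T$ to obtain the diagonal quadratic form
\begin{equation*}
\partial^2_Q f\big|_{Q_i}= -(d_j+d_k)\,\tilde\sigma_i^2 + (d_i-d_k)\,\tilde\sigma_j^2+(d_i-d_j)\,\tilde\sigma_k^2,
\end{equation*}
where $\{i,j,k\}=\{1,2,3\}$. Because the $d_\ell$ are distinct, exactly one of the three coefficients is negative when $d_i$ is the smallest (Morse index $1$), exactly two are negative when $d_i$ is the middle value (Morse index $2$), and all three are negative when $d_i$ is the largest (Morse index $3$, i.e.\ a local maximum). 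Thus two of $\{Q_1,Q_2,Q_3\}$ are hyperbolic saddles with indices $1$ and $2$, and the one corresponding to the largest $d_i$ is a local maximum.

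To identify the maximum as the \emph{global} maximum, I would evaluate the critical values $f(Q_i)=2\sum_{\ell\ne i} d_\ell$ and $f(I)=0$, which are totally ordered by the ordering of $d_1,d_2,d_3$: the largest $f(Q_i)$ corresponds to the smallest missing $d_i$, i.e.\ to the largest $d_i$ being $d_j+d_k$'s complement in $\tr K$. Since $\SO$ is compact and the four critical points exhaust the critical set of the Morse function, the largest critical value must be the global maximum and $f(I)=0$ the global minimum. Pulling back through $\Phi$ (which is strictly increasing) preserves this ordering, completing the classification. The main bookkeeping obstacle is getting the signs of the three quadratic form coefficients at each $Q_i$ correct; once those are written in the diagonal form above, the Morse-index assignment is immediate.
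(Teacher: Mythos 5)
Your overall route is essentially the paper's: reduce to $f(Q)=\lan I-Q,K\ran$ by the chain rule (the $\Phi''$ term dies at critical points), then evaluate the Hessian quadratic form $\Sigma\T\big(\tr(Q\T K)I-Q\T K\big)\Sigma$ in the eigenbasis of $K$. Your identity $(\Sigma^\times)^2=\Sigma\Sigma\T-\|\Sigma\|^2I$ reaches the same Hessian $H_K(Q)=\tr(Q\T K)I-Q\T K$ that the paper obtains via its vex identity, and your diagonal form $-(d_j+d_k)\tilde\sigma_i^2+(d_i-d_k)\tilde\sigma_j^2+(d_i-d_j)\tilde\sigma_k^2$ at $Q_i$ is correct. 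Your additional computation of the critical values $f(Q_i)=2\sum_{\ell\ne i}d_\ell$ is a nice touch the paper does not make explicit.

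However, your Morse-index assignment is inverted, which is exactly the bookkeeping hazard you flagged. The coefficient of $\tilde\sigma_j^2$ is $d_i-d_k$ and that of $\tilde\sigma_k^2$ is $d_i-d_j$; these are \emph{negative} precisely when $d_i$ is \emph{smaller} than $d_k$, resp.\ $d_j$. Hence when $d_i$ is the \emph{largest} eigenvalue only the coefficient $-(d_j+d_k)$ is negative and $Q_i$ has index $1$; when $d_i$ is the middle value the index is $2$; and when $d_i$ is the \emph{smallest} all three coefficients are negative and $Q_i$ is the local (hence global) maximum with index $3$ --- the opposite of what you wrote. This matches the paper (for $d_1>d_2>d_3$ it assigns index $1$ to $Q_1$ and index $3$ to $Q_3$), and it is also what your own critical-value computation demands: $f(Q_i)=2(\tr K-d_i)$ is largest when $d_i$ is smallest, so the global maximum sits at the $Q_i$ with the smallest eigenvalue, which must therefore be the index-$3$ point. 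As written, your second and third paragraphs contradict each other; once the sign count is corrected the argument closes.
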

{\em Proof}: 
The characteristics of these critical points are obtained from the second variation of 
$\Phi(\lan I- Q, K\ran)$ with respect to $Q\in C_Q$, which was obtained in \eqref{sufcond}. We 
express \eqref{sufcond} as follows:
\begin{align} 
\partial^2_Q \lan I- Q,K\ran= \lan \Sigma^\times Q\T K,\Sigma^\times\ran = F_K\T(Q,\Sigma)\Sigma,
\label{woHess}
\end{align}
where $F_K(Q,\Sigma)= \mrm{vex}\big(KQ\Sigma^\times+\Sigma^\times Q\T K\big)$.
To express $F_K(Q,\Sigma)$ as a vector in $\bR^3$, the following identity is useful:
\begin{align}
\mrm{vex}\big(A\T\Sigma^\times +\Sigma^\times A\big)= \big(\tr (A)I- A\big)\Sigma,\label{niceident}
\end{align}
for $A\in\bR^{3\times 3}$ and $\Sigma\in\bR^3$. Using identity \eqref{niceident} in the expression \eqref{woHess}, one obtains $F_K(Q,\Sigma)= H_K(Q)\Sigma$, where
\begin{align}
H_K(Q)= \tr(Q\T K) I - Q\T K. \label{rHess}
\end{align}
Note that $H_K(Q)$ corresponds to the Hessian matrix of $\lan I- Q, K\ran$ for $Q\in C_Q$. 
Moreover, at the critical points $Q_i$ ($i=1,2,3$) defined 
by \eqref{critpts}, $\Delta P_i= \Delta (2a_i a_i\T- I)$ is a diagonal matrix that is not positive 
definite. The Hessian at these critical points is therefore evaluated to be:
\begin{align}
&H_K(Q_i)= U_E\Lambda_i U_E\T,\, \Lambda_i =\tr (\Delta P_i)I- \Delta P_i,\,\;\; i=1,2,3,\, \mbox{such that} \nn\\
&\Lambda_1= \diag (-d_2-d_3, d_1-d_3, d_1-d_2), \,\nn\\
&\Lambda_2= \diag (d_2-d_3,-d_3-d_1, d_2-d_1),\nn\\
\mbox{and } &\Lambda_3= \diag (d_3-d_2, d_3-d_1, -d_1-d_2). 
\end{align}
Clearly, the indices of these critical points depend on the distinct eigenvalues $d_1$, $d_2$ 
and $d_3$. For example, if $d_1>d_2>d_3$, then the index of $Q_1$ is one, the index of 
$Q_2$ is two, and the index of $Q_3$ is three, which makes $Q_3$ the global maximum of 
$\lan I-Q, K\ran :\SO\to\bR$. Note that the identity $I\in\SO$ is the global minimum of this 
function since the Hessian evaluated at the identity is
\begin{align}
H_K(I) &= \tr (K)I- K= U_E\Lambda_0 U_E\T,\nn\\
\mbox{ where } \Lambda_0&= \diag (d_2+d_3, d_3+d_1, d_1+d_2),
\end{align}
and therefore the identity is a critical point with index zero. Finally, note that the second 
variation of $\Phi(\lan I- Q, K\ran):\SO\to\bR$ evaluated at its critical points is given by
\begin{align}
\partial^2_Q \Phi\big(\lan I- Q,K\ran\big)&= \Phi'\big(\lan I- Q,K\ran\big)\partial^2_Q 
\lan I- Q,K\ran\nn\\
&= \Phi'\big(\lan I- Q,K\ran\big)\Sigma\T H_K(Q)\Sigma\, \mbox{ for } Q\in C_Q. 
\label{HessPhi}
\end{align}
Since $\Phi$ is a Class-$\mathcal{K}$ function, the critical points 
and their indices are identical for $\Phi(\lan I- Q, K\ran)$ and $\lan I- Q, K\ran$.
\hfill\ensuremath{\square}


\subsection{Attitude State Estimation Based on the Lagrange-d'Alembert Principle}
Let $\Omega\in\mathbb{R}^3$ be the angular velocity of the rigid body expressed in the 
body-fixed frame. The attitude kinematics is given by Poisson's equation:
\begin{align}
\dot{R}=R\Omega^\times.
\label{Poisson}
\end{align}
In order to obtain attitude state estimation schemes from continuous-time 
vector and angular velocity measurements, we apply the Lagrange-d'Alembert principle 
to an action functional of a Lagrangian of the state estimate errors, with a dissipation term 
in the angular velocity estimate error. This section presents an estimation scheme obtained 
using this approach, as well as stability properties of this estimator. 

\subsubsection{Action Functional of the Lagrangian of State Estimate Errors}
The ``energy" contained in the errors between the estimated and the measured 
inertial vectors is given by $\cU (\hat R,U^m)$, where $\cU:\SO\times\bR^{3\times k}\to\bR$ is 
defined by \eqref{attindex} and depends on the attitude estimate. The ``energy" contained in 
the vector error between the estimated and the measured angular velocity is given by
\be \cT (\hat\Omega,\Omega^m)=\frac{m}{2}(\Omega^m-\hat\Omega) \T (\Omega^m
-\hat\Omega). \label{angvelindex} \ee
where $m$ is a positive scalar. One can consider the Lagrangian composed of these 
``energy" quantities, as follows:
\begin{align} 
\cL (\hat R,U^m,\hat\Omega,\Omega^m) &= \cT(\hat\Omega,\Omega^m)-\cU (\hat R,U^m) \label{cLag}\\
&=\frac{m}{2}(\Omega^m-\hat\Omega) \T (\Omega^m-\hat\Omega)-\Phi\Big( \frac12\lan E-\hat{R}U^m,(E-\hat{R}U^m)W\ran\Big). \nn
\end{align}
If the estimation process is started at time $t_0$, then the action functional 
of the Lagrangian \eqref{cLag} over the time duration $[t_0,T]$ is expressed as
\begin{align}
\cS (\cL(\hat R&,U^m,\hat\Omega,\Omega^m))= \int_{t_0}^T \big(\cT (\hat\Omega,
\Omega^m)- \cU (\hat R,U^m)\big)\di s \label{eq:J6}\\
=& \int_{t_0}^T \bigg\{ \frac{m}{2}(\Omega^m-\hat\Omega) \T (\Omega^m-
\hat\Omega)- \Phi\Big(\frac{1}{2}\lan E-\hat{R}U^m,(E-\hat{R}U^m)W\ran\Big) \bigg\} \di s.\nn
\end{align}

\subsubsection{Variational Filtering Scheme}
Consider attitude state estimation in continuous time in the presence of measurement noise 
and initial state estimate errors. Applying the Lagrange-d'Alembert principle to the action 
functional $\cS (\cL(\hat R,U^m,\hat\Omega,\Omega^m))$ given by \eqref{eq:J6}, in the 
presence of a dissipation term on $\omega:= \Omega^m-\hat\Omega$, leads to the 
following attitude and angular velocity filtering scheme. 
\begin{proposition} \label{filterN}
The filter equations for a rigid body with the attitude kinematics \eqref{Poisson} and with 
measurements of vectors and angular velocity in a body-fixed frame, are of the form
\begin{align}
\begin{cases}
&\dot{\hat{R}}=\hat{R}\hat{\Omega}^\times=\hat{R}(\Omega^m-\omega)^\times,\vspace{3mm}\\
&m\dot{\omega}= -m\hat{\Omega}\times \omega+\Phi'\big(\cU^0(\hat{R},U^m)\big)S_L(\hat{R})-D\omega,\vspace{3mm}\\
&\hat\Omega=\Omega^m-\omega,
\end{cases}
\label{eq:filterNoise}
\end{align}
where $D$ is a positive definite filter gain matrix, $\hat{R}(t_0)=\hat{R}_0$, $\omega(t_0)=\omega_0
=\Omega^m_0-\hat\Omega_0$, $S_L(\hat R)= \mrm{vex}\big(L\T \hat R - \hat R \T L\big)\in\bR^3$, 
$L=EW(U^m)\T$ and $W$ is chosen to satisfy the conditions in Lemma \ref{lem1}.
\label{filter1}
\end{proposition}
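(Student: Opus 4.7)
The plan is to derive the filter dynamics in \eqref{eq:filterNoise} by applying the Lagrange--d'Alembert principle to the action $\cS(\cL)$ in \eqref{eq:J6}, augmented by a Rayleigh-type dissipation quadratic in $\omega=\Omega^m-\hat\Omega$. The measurements $U^m(t)$ and $\Omega^m(t)$ are held fixed along the variation, while the estimated trajectory $\hat R(t)$ is varied via the standard body-frame perturbation $\delta\hat R=\hat R\,\Sigma^\times$, $\Sigma\in\bR^3$, subject to vanishing endpoint conditions $\Sigma(t_0)=\Sigma(T)=0$. Differentiating the kinematic identity $\hat\Omega^\times=\hat R\T\dot{\hat R}$ in this perturbation yields the reduced-variation formula
\[
\delta\hat\Omega=\dot\Sigma+\hat\Omega\times\Sigma,\qquad \delta\omega=-\delta\hat\Omega,
\]
from which $\delta\cT=-m\,\omega\T(\dot\Sigma+\hat\Omega\times\Sigma)$ follows at once.

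The key calculation is the variation of the generalized potential $\cU=\Phi(\cU^0)$. Using $\delta(E-\hat R U^m)=-\hat R\,\Sigma^\times U^m$, symmetry of $W$, and cyclicity of the trace, I would rewrite $\delta\cU^0=\tr(\Sigma^\times M)$ with
\[
M=\hat R\T(E-\hat R U^m)W(U^m)\T = \hat R\T L - U^m W (U^m)\T,
\]
where $L=EW(U^m)\T$. The identity $\tr(\Sigma^\times M)=-\Sigma\T\mrm{vex}(M-M\T)$, valid for every $M\in\bR^{3\times 3}$, then collapses the expression: since $U^m W (U^m)\T$ is symmetric, it drops out of $M-M\T=\hat R\T L-L\T\hat R$, giving $\mrm{vex}(M-M\T)=-S_L(\hat R)$ and hence $\delta\cU=\Phi'(\cU^0)\,S_L(\hat R)\T\Sigma$ after applying the chain rule.

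Substituting these variations into the Lagrange--d'Alembert identity
\[
\delta\!\int_{t_0}^{T}\!\cL\,ds \;+\; \int_{t_0}^{T}(D\omega)\T\Sigma\,ds \;=\; 0,
\]
integrating the $\dot\Sigma$ term by parts (boundary contributions vanish by the endpoint conditions), and applying the scalar triple product $\omega\T(\hat\Omega\times\Sigma)=(\omega\times\hat\Omega)\T\Sigma$, the integrand reduces to
\[
\bigl\{m\dot\omega+m\,\hat\Omega\times\omega-\Phi'(\cU^0)\,S_L(\hat R)+D\omega\bigr\}\T\Sigma.
\]
Arbitrariness of $\Sigma$ then yields the second equation of \eqref{eq:filterNoise}; the first equation is the kinematic reconstruction on $\SO$, and the third is simply the definition of $\omega$.

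The main obstacle is the calculation of $\delta\cU^0$: one must carefully combine the trace--vex identity with the symmetry of $W$ so that the $\hat R$-quadratic contribution $U^m W (U^m)\T$ cancels, producing the clean feedback term in $S_L(\hat R)$ that depends only on $L=EW(U^m)\T$. A secondary subtlety is the sign of the virtual work of the Rayleigh dissipation: because the dissipation function $\tfrac12\omega\T D\omega$ is expressed in the error variable $\omega$ but the virtual displacement $\Sigma$ is conjugate to $\hat\Omega$, the Jacobian $\partial\omega/\partial\hat\Omega=-I$ produces the virtual-work contribution $+(D\omega)\T\Sigma$, which is what ultimately yields the stabilizing $-D\omega$ on the right-hand side of the filter.
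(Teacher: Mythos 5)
Your proposal is correct and follows essentially the same route as the paper's proof: reduced variations $\delta\hat R=\hat R\Sigma^\times$, $\delta\hat\Omega=\dot\Sigma+\hat\Omega\times\Sigma$ with fixed endpoints, the trace--vex reduction of $\delta\cU^0$ to $S_L(\hat R)\T\Sigma$ (the paper antisymmetrizes $L\T\hat R-\hat R\T L$ directly, which is the same cancellation of the symmetric $U^mW(U^m)\T$ term you describe), followed by integration by parts in the Lagrange--d'Alembert identity with $\tau_D=D\omega$. The resulting integrand and filter equations match the paper's exactly.
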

{\em Proof}: In order to find a filter equation which reduces the measurement noise in the estimated attitude, one may take the first variation of the action functional \eqref{eq:J6} with respect to $\hat R$ and $\hat\Omega$. Consider the potential term $\cU^0(\hat R,U^m)$ as 
defined by \eqref{U0def}.
Taking the first variation of this function with respect to $\hat{R}$ gives
\begin{align}
\delta\cU^0&=\lan -\delta\hat R U^m,(E-\hat R U^m)W \ran\nn\\
&=\frac{1}{2}\lan \Sigma^\times, U^m WE\T\hat R-\hat R\T EW(U^m)\T \ran,\nn\\
&=\frac{1}{2}\lan \Sigma^\times, L\T\hat R-\hat R\T L \ran=S\T_L(\hat R)\Sigma.
\end{align}
Now consider $\cU(\hat R,U^m)=\Phi\big(\cU^0(\hat R,U^m)\big)$. Then,
\begin{align}
\delta\cU=\Phi'\big(\cU^0(\hat R,U^m)\big)\delta\cU^0=\Phi'\big(\cU^0(\hat R,U^m)\big)S\T_L(\hat R)\Sigma.
\end{align}
Taking the first variation of the kinematic energy term associated with the artificial system \eqref{angvelindex} with respect to $\hat\Omega$ yields
\begin{align}
\delta\cT&=-m(\Omega^m-\hat\Omega)\T\delta\hat\Omega=-m(\Omega^m-\hat\Omega)\T(\dot\Sigma+\hat\Omega\times\Sigma)=-m\omega\T(\dot\Sigma+\hat\Omega\times\Sigma),
\end{align}
where $\omega=\Omega^m-\hat\Omega$.
Applying Lagrange-d'Alembert principle leads to
\begin{align}
&~~~~~\delta\cS+\int_{t_0}^T\tau_D\T\Sigma\di t=0\\
&\Rightarrow \int_{t_0}^T\Big\{-m\omega\T(\dot\Sigma+\hat\Omega\times\Sigma)-\Phi'\big(\cU^0(\hat R,U^m)\big)S\T_L(\hat R)\Sigma+\tau_D\T\Sigma\Big\}\di t=0 \Rightarrow\nn\\
& -m\omega\T\Sigma\big|_{t_0}^T+\int_{t_0}^T m\dot{\omega}\T\Sigma\di t=\int_{t_0}^T\Big\{m\omega\T\hat\Omega^\times+\Phi'\big(\cU^0(\hat R,U^m)\big)S\T_L(\hat R)-\tau_D\T\Big\}\Sigma\di t,\nn
\end{align}
where the first term in the left hand side vanishes, since $\Sigma(t_0)=\Sigma(T)=0$, and after replacing the dissipation term $\tau_D=D\omega$ gives the second equation in \eqref{eq:filterNoise}.
\hfill\ensuremath{\square}

\subsubsection{Stability of Filter}
Next consider the stability of the estimation scheme (filter) given by Proposition \ref{filterN}. 
The following result shows that this scheme is stable, with almost global convergence of 
the estimated states to the real states in the absence of measurement noise. 
\begin{theorem} \label{thmFilt0}
The filter presented in Proposition \ref{filterN}, with distinct positive eigenvalues for 
$K=EWE\T$, is asymptotically stable at the estimation error state 
$(Q,\omega)=(I,0)$ in the absence of measurement noise. Further, the domain of 
attraction of $(Q,\omega)=(I,0)$ is a dense open subset of $\SO\times\bR^3$. 
\end{theorem}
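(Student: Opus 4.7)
The plan is to establish asymptotic stability at the estimation error state $(Q,\omega)=(I,0)$ via an energy-like Lyapunov function combined with LaSalle's invariance principle, and then to identify the almost global basin of attraction using the Morse-theoretic structure established in Lemmas \ref{lem1} and \ref{lem2}. In the absence of measurement noise, $U^m=R\T E$, so $L=EW(U^m)\T=KR$, and a direct trace computation gives
\[
\mathcal{U}^0(\hat R,U^m)=\langle I-Q, K\rangle\quad\text{with}\quad Q=R\hat R\T,
\]
so the filter's potential agrees, up to the monotone $\Phi$, with the Morse function already analyzed.

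First I would take the candidate Lyapunov function
\[
V(Q,\omega)=\tfrac{m}{2}\omega\T\omega+\Phi(\langle I-Q, K\rangle),
\]
which is non-negative on $\SO\times\bR^3$ and vanishes only at $(Q,\omega)=(I,0)$ by the hypotheses on $\Phi$ and the fact from Lemma \ref{lem2} that $I$ is the unique global minimum of $\langle I-Q, K\rangle$. Using $\dot R=R\Omega^\times$, $\dot{\hat R}=\hat R\hat\Omega^\times$, and $\omega=\Omega-\hat\Omega$, the error kinematics reduce to $\dot Q=Q(\hat R\omega)^\times$. The key algebraic step is the identity
\[
S_L(\hat R)=-\hat R\T S_K(Q),
\]
which follows from $L=KR$, the symmetry of $K$, and the rule $\mathrm{vex}(\hat R\T A\hat R)=\hat R\T\mathrm{vex}(A)$ applied to the skew-symmetric matrix $KQ-Q\T K$. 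Using this identity, one checks that $\tfrac{d}{dt}\Phi(\langle I-Q,K\rangle)=-\Phi'(\mathcal{U}^0)S_L\T(\hat R)\omega$, while the filter dynamics give $m\omega\T\dot\omega=\Phi'(\mathcal{U}^0)S_L\T(\hat R)\omega-\omega\T D\omega$ (the gyroscopic term vanishes because $\omega\T(\hat\Omega\times\omega)=0$). The two $\Phi'(\mathcal{U}^0) S_L\T(\hat R)\omega$ contributions cancel, leaving
\[
\dot V=-\omega\T D\omega\le 0.
\]

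Since $V$ is bounded below, non-increasing, level sets are compact (compactness of $\SO$ and coercivity in $\omega$), LaSalle's invariance principle forces trajectories into the largest invariant subset of $\{\dot V=0\}=\{\omega=0\}$. On this set the filter reduces to $\Phi'(\mathcal{U}^0)S_L(\hat R)=0$, and since $\Phi'>0$ this is equivalent to $S_K(Q)=0$, whose solution set is exactly the critical set $\{I,Q_1,Q_2,Q_3\}$ of Lemma \ref{lem1}. Combined with the positive definiteness of $V$ in a neighborhood of $(I,0)$, this yields local asymptotic stability of the desired equilibrium.

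The main obstacle is the \emph{almost global} claim, i.e.\ that the basin of attraction is an open dense subset of $\SO\times\bR^3$. Under the assumption that $K$ has distinct positive eigenvalues, Lemma \ref{lem2} shows that each of $Q_1,Q_2,Q_3$ is a non-degenerate critical point of $\Phi(\langle I-Q,K\rangle)$ whose Hessian $H_K(Q_i)$ has at least one negative eigenvalue. Linearizing the closed-loop error dynamics at $(Q_i,0)$, the $\omega$-block is governed by $-D/m$ (strictly Hurwitz) while the $Q$-block inherits the indefiniteness of $H_K(Q_i)$, so each undesired equilibrium is itself hyperbolic with an unstable direction. By the stable manifold theorem, the union of the stable manifolds of these three undesired equilibria is a finite union of embedded submanifolds of positive codimension in $\SO\times\bR^3$, hence a closed, nowhere dense set of measure zero. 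LaSalle then forces every initial condition outside this exceptional set into the basin of $(I,0)$, establishing that the basin is an open dense subset, as claimed.
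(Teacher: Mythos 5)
Your proposal follows essentially the same route as the paper: the same Morse--Lyapunov function $V=\tfrac{m}{2}\omega\T\omega+\Phi(\lan I-Q,K\ran)$, the same cancellation yielding $\dot V=-\omega\T D\omega$, identification of the residual limit set with the critical points of $\lan I-Q,K\ran$ from Lemmas \ref{lem1}--\ref{lem2}, and a stable-manifold/codimension argument to exclude the undesired equilibria. One genuine technical gap: you invoke LaSalle's invariance principle, but the error dynamics for $(Q,\omega)$ is \emph{non-autonomous} --- $\dot Q=Q(\hat R\omega)^\times$ and the $\omega$-equation depend explicitly on $\hat R(t)$ and $\hat\Omega(t)$, which are driven by the exogenous true trajectory $(R(t),\Omega(t))$ and are not functions of the error state alone. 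LaSalle's theorem requires autonomy (or additional structure you have not supplied). The paper explicitly flags this and instead applies Theorem 8.4 of Khalil (a Barbalat-type result for non-autonomous systems) to conclude $\omega\T D\omega\to 0$, hence $\omega\to 0$, and only then argues that trajectories approach the set where $S_K(Q)=0$. You should replace the LaSalle step with this (or an equivalent uniform-continuity/Barbalat) argument.

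A secondary point on the almost-global claim: the local stable manifold theorem gives you embedded local stable manifolds at the hyperbolic equilibria $(Q_i,0)$, but the \emph{global} stable set (all initial conditions eventually attracted to $(Q_i,0)$) is a priori only an immersed submanifold; to conclude that its complement is \emph{open} (not merely full measure) you need the union of these global stable sets to be closed. The paper patches this by observing that $V$ is non-increasing along trajectories, hence non-decreasing backwards in time on the stable manifolds, which forces them to be properly embedded closed submanifolds; some such argument is needed in your write-up as well. Your dimension count agrees with the paper's $\dim(\cM^S_i)=6-\mbox{index of }Q_i$, and your linearization heuristic for hyperbolicity is consistent with the non-degeneracy of the critical points established in Lemma \ref{lem1}, so the remainder of the argument is sound.
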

{\em Proof}: In the absence of measurement noise, $U^m=U=R\T E$ and therefore 
$\cU^0(\hat R,U^m)=\frac12\lan E-\hat{R}U,(E-\hat{R}U)W\ran=\lan I-Q, K\ran=\cU^0(Q)$ 
where $K=EWE\T$ and $Q=R\hat R\T$. Therefore, $\Phi(\lan I- Q, K\ran)$, is a Morse 
function on $\SO$. The stability of this filter can be shown using the 
following candidate Morse-Lyapunov function, which can be interpreted as the total energy 
function (equal in value to the Hamiltonian) corresponding to the Lagrangian \eqref{cLag}:
\begin{align}
V(\hat R,\omega,U)& =\Phi\Big(\frac{1}{2}\lan E-\hat{R}U,(E-\hat{R}U)W\ran\Big)+ \frac{m}{2}\omega \T\omega \nn \\
=& \Phi(\lan I-Q, K\ran)+ \frac{m}{2}\omega \T\omega= V(Q,\omega). \label{lyapf} 
\end{align}
Note that $V(Q,\omega)\geq 0$ and $V( Q,\omega)=0$ if and only if $(Q,
\omega)=(I,0)$. Therefore, $V(Q,\omega)$ is positive definite on 
$\SO\times\bR^3$. Using \eqref{Poisson} and \eqref{eq:filterNoise}
\begin{align}
\begin{split}
\frac{\di}{\di t}&\Phi\big(\lan I-Q, K\ran\big) =\frac{\di}{\di t}\Phi(\lan I-R\hat R\T, K\ran)\\
&= \Phi'\big(\lan I-Q, K\ran\big)\lan K, -R\Omega^\times\hat R\T+R\hat\Omega^\times\hat R\T \ran \\
&=\Phi'\big(\lan I-Q, K\ran\big)\Big(\frac12 \lan \hat R\T K R- R\T K\hat R,\omega^\times\ran\Big) \\
&= -\Phi'\big(\lan I-Q, K\ran\big) S_L\T (\hat R) \omega.
\end{split}\label{Phidot} 
\end{align}
Therefore, the time derivative of the candidate Morse-Lyapunov function is
\begin{align}
\dot{V}(Q,\omega)&=\frac{\di}{\di t}\Phi(\lan I- Q, K\ran)+m\omega\T\dot{\omega}\nn\\
&=\omega\T\bigg(-\Phi'\big(\cU^0(Q)\big) S_L (\hat R)-m\hat{\Omega}\times\omega+\Phi'\big(\cU^0(Q)\big)S_L(\hat{R})-D\omega\bigg).
\end{align}
Noting that $m\omega\T(\hat{\Omega}\times\omega)=0$, this yields
\begin{align}
\dot{V}(Q,\omega)=-\omega\T D\omega. \label{dlyapf}
\end{align}
Hence, the derivative of the Morse-Lyapunov function is negative semi-definite.

Note that the error dynamics for the attitude estimate error is given by 
\be \dot Q = Q \psi^\times \, \mbox{ where }\, \psi=\hat R\omega, \label{Qdot} \ee
while the error dynamics for the angular velocity estimate error $\omega$ is given by the 
second of equations \eqref{eq:filterNoise}. Therefore, the error dynamics for $(Q,\omega)$ is 
non-autonomous, since they depend explicitly on $(\hat R,\hat\Omega)$. Considering \eqref{lyapf} 
and \eqref{dlyapf} and applying Theorem 8.4 in \cite{khal}, one can conclude that $\omega\T D\omega
\rightarrow 0$ as $t\rightarrow \infty$, which consequently implies $\omega\rightarrow0$. 
Thus, the positive limit set for this system is contained in
\begin{align}
\cE = \dot{V}^{-1}(0)=\big\{(Q,\omega)\in\SO\times\so:\omega\equiv0\big\}. 
\label{dotV0}
\end{align}
Substituting $\omega\equiv 0$ in the filter equations \eqref{eq:filterNoise}, we obtain the 
positive limit set where $\dot V\equiv 0$ (or $\omega\equiv 0$) as the set
\begin{align}
\begin{split}
\mathscr{I} &= \big\{(Q,\omega)\in\SO\times\bR^3: S_K(Q)\equiv 0, 
\omega\equiv0\big\} \\
&= \big\{(Q,\omega)\in\SO\times\bR^3: Q \in C_Q,\ \omega\equiv0\big\}.
\end{split} \label{invset}
\end{align}
Therefore, in the absence of measurement errors, all the solutions of this filter converge 
asymptotically to the set $\mathscr{I}$. Thus, the attitude estimate error converges to the set 
of critical points of  $\lan I-Q, K\ran$ in this intersection. 
The unique global minimum of this function is at $(Q,\omega)=(I,0)$ (Lemma 
\ref{lem2}, see also \cite{san06,AKS2008GNC}), so this estimation error is 
asymptotically stable.

Now consider the set
\be \mathscr{C}= \mathscr{I}\setminus (I,0),  \label{othereqb} \ee
which consists of all stationary states that the estimation errors may converge to, besides 
the desired estimation error state $(I,0)$. Note that all states in the stable manifold of a 
stationary state in $\mathscr{C}$ will converge to this stationary state. 
From the properties of the critical points $Q_i\in C_Q\setminus (I)$ of $\Phi(\lan K, I- Q\ran)$ 
given in Lemma \ref{lem2}, we see that the stationary points in $\mathscr{I}\setminus (I,0)=
\big\{ (Q_i, 0) : Q_i\in C_Q\setminus (I)\big\}$ have stable manifolds whose dimensions
depend on the index of $Q_i$. Since the angular velocity estimate error $\omega$ 
converges globally to the zero vector, the dimension of the stable manifold $\cM^S_i$
of $(Q_i, 0)\in\SO\times\bR^3$ is 
\be \dim (\cM^S_i) = 3+(3-\,\mbox{index of } Q_i)= 6- \,\mbox{index of } Q_i. \label{dimStabM} \ee
Therefore, the stable manifolds of $(Q,\omega)=(Q_i,0)$ are three-dimensional, 
four-dimensional, or five-dimensional, depending on the index of $Q_i\in C_Q\setminus (I)$ 
according to \eqref{dimStabM}. Moreover, the value of the Lyapunov function $V(Q,
\omega)$ is non-decreasing (increasing when $(Q,\omega)\notin\mathscr{I}$) 
for trajectories on these manifolds when going backwards in time. This 
implies that the metric distance between error states $(Q,\omega)$ along these 
trajectories on the stable manifolds $\cM^S_i$ grows with the time separation between these 
states, and this property does not depend on the choice of the metric on $\SO\times\bR^3$. 
Therefore, these stable manifolds are embedded (closed) submanifolds of $\SO\times\bR^3$ 
and so is their union. Clearly, all states starting in the complement of this union, converge 
to the stable equilibrium $(Q,\omega)=(I,0)$; therefore the domain of attraction 
of this equilibrium is
\[ \mbox{DOA}\{(I,0)\} = \SO\times\bR^3\setminus\big\{\cup_{i=1}^3 \cM^S_i\big\}, \]
which is a dense open subset of $\SO\times\bR^3$. 
\hfill\ensuremath{\square}


\subsection{Discrete-Time Variational Estimator}\label{DiscVE}
\subsubsection{Measurement Model}\label{C4S3.1}
Consider an interval of time $[t_0, T]\in\bR^+$ separated into $N$ equal-length
subintervals $[t_i,t_{i+1}]$ for $i=0,1,\ldots,N$, with $t_N=T$ and $t_{i+1}-t_i=h$ 
is the time step size. Let $(\hat R_i,\hat\Omega_i)\in\SO\times\bR^3$ denote the discrete 
state estimate at time $t_i$, such that $(\hat R_i,\hat\Omega_i)\approx (\hat R(t_i),
\hat\Omega(t_i))$ where $(\hat R(t),\hat\Omega(t))$ is the exact solution of the 
continuous-time filter at time $t\in [t_0, T]$. Rigid body attitude is determined from $k\in\mathbb{N}$ known inertial vectors measured in 
a coordinate frame fixed to the rigid body. Let these vectors at time $t_i$ be denoted as $u_{j_i}^m$ for 
$j=1,2,\ldots,k$, in the body-fixed frame. The assumption that $k\ge 2$ is necessary for 
instantaneous three-dimensional attitude determination. When $k=2$, the cross product 
of the two measured vectors is considered as a third measurement for applying the attitude 
estimation scheme. Denote the corresponding known inertial vectors as seen from the rigid body at time $t_i$ as $e_{j_i}$, and 
let the true vectors in the body frame at the same time instance be denoted $u_{j_i}=R_i \T e_{j_i}$, where $R_i$ is the rotation 
matrix from the body frame to the inertial frame at time $t_i$. This rotation matrix provides a coordinate-free,  
global and unique description of the attitude of the rigid body. Define the matrix composed 
of all $k$ measured vectors expressed in the body-fixed frame at time $t_i$ as column vectors,
\begin{align}
U^m_i&= [u_{1_i}^m\ u_{2_i}^m\ u_{1_i}^m\times u_{2_i}^m]  \mbox{ when }\, k=2,\, \mbox{ and }\nn\\ 
U^m_i&=[u_{1_i}^m\ u_{2_i}^m\ ...\ u_{k_i}^m]\in \mathbb{R}^{3\times k} \mbox{ when } k>2,\label{Umexp}
\end{align}
and the corresponding matrix of all these vectors at the same time instance expressed in the inertial frame as
\begin{align}
E_i&=[e_{1_i}\ e_{2_i}\ e_{1_i}\times e_{2_i}]  \mbox{ when }\, k=2,\, \mbox{ and }\nn\\ 
E_i&=[e_{1_i}\ e_{2_i}\ ...\  e_{k_i}]\in \mathbb{R}^{3\times k}  \mbox{ when } k>2.\label{Eexp}
\end{align}

Note that the matrix of the actual body vectors $u_{j_i}$ corresponding to the inertial vectors 
$e_{j_i}$, is given by
\begin{align}
U_i&=R_i \T E_i= [u_{1_i}\ u_{2_i}\ u_{1_i}\times u_{2_i}]  \mbox{ when }\, k=2,\, \mbox{ and }\nn\\ 
U_i&=R_i \T E_i=[u_{1_i}\ u_{2_i}\ ...\ u_{k_i}]\in \mathbb{R}^{3\times k} \mbox{ when } k>2.\nn
\end{align}

\subsubsection{Discrete-Time Lagrangian}
Let $\Omega_i\in\mathbb{R}^3$ be the angular velocity of the rigid body at time $t_i$ expressed in the 
body-fixed frame. The attitude kinematics is given by Poisson's equation:
\begin{align}
\dot{R_i}=R_i\Omega_i^\times.
\label{Disc_Poisson}
\end{align}\par
The term encapsulating the ``energy" in the 
attitude estimate error is discretized as follows:
\begin{align}
\cU (\hat R_i,U^m_i)&=\Phi\Big(\cU^0(\hat R_i,U^m_i)\Big)\nn\\
&=\Phi \Big(\frac{1}{2}\lan E_i-\hat R_i U^m_i,(E_i-\hat R_i U^m_i)W_i\ran \Big), 
\label{dattindex}
\end{align}
where
\begin{align}
\lan A_1,A_2\ran=\tr(A_1 \T A_2)\label{eq:tr_def}
\end{align}
denotes the trace inner product on $\mathbb{R}^{m\times n}$, $W_i=\diag(w_{j_i})$ is the positive diagonal matrix of the weight factors for the measured 
directions at time $t_i$ which satisfies the eigendecomposition condition in \cite{Automatica}, and $\Phi: [0,\infty)\mapsto[0,\infty)$ is a $C^2$ function that satisfies $\Phi(0)=0$ and 
$\Phi'(x)>0$ for all $x\in[0,\infty)$. Furthermore, $\Phi'(\cdot)\leq\alpha(\cdot)$ where 
$\alpha(\cdot)$ is a Class-$\mathcal{K}$ function.\par
The term containing the ``energy" in the angular velocity estimate error is discretized as
\be \cT (\hat\Omega_i,\Omega^m_i)=\frac{m}{2}(\Omega^m_i-\hat\Omega_i) \T(\Omega^m_i-\hat\Omega_i), \label{dangvelindex} \ee 
where $m$ is a positive scalar. 

As with the continuous-time state estimation process in \cite{Automatica}, one can express 
these ``energy" terms in the state estimate errors for the case that perfect measurements 
(with no measurement noise) are available. In this case, these ``energy" terms can be 
expressed in terms of the state estimate errors $Q_i= R_i \hat R_i\T$ and 
$\omega_i= \Omega_i-\hat\Omega_i$ as follows:
\begin{align}
& \cU (Q_i)= \Phi \Big(\frac{1}{2}\lan E_i - Q_i\T E_i,(E_i - Q_i\T E_i)W_i\ran \Big)=
\Phi \big( \lan I-Q_i, K_i\ran \big)\,\nn\\
& \mbox{ where }\, K_i= E_i W_i E_i\T,  \mbox{  and }\,\cT (\omega_i)= \frac{m}{2} \omega_i\T\omega_i\, \mbox{ where }
m>0. 
\label{discUandT} 
\end{align}\par
The weights in $W_i$ can be chosen such that $K_i$ is always positive definite with distinct 
(perhaps constant) eigenvalues, as in the continuous-time filter. Using these ``energy" terms in 
the state estimate errors, the discrete-time Lagrangian can be expressed as:
\begin{align}
\cL (Q_i,\omega_i)&= \cT (\omega_i)- \cU ( Q_i)
= \frac{m}{2} \omega_i\T\omega_i- \Phi\big( \lan I-Q_i, K_i\ran \big).
\label{discLag}
\end{align}

In order to numerically implement the filtering scheme introduced in this paper, a
discrete-time version is obtained to estimate the attitude states from vector 
measurements and angular velocity measurements. It is assumed that these measurements 
are obtained in discrete-time at a sufficiently high but constant sample rate. In this section, 
a discrete-time version of the filter introduced in Proposition \ref{filterN} is obtained in the 
form of a Lie group variational integrator (LGVI). A variational integrator works by discretizing 
the (continuous-time) variational mechanics principle that leads to the equations of motion, 
rather than discretizing the equations of motion directly. A good background on variational 
integrators is given in the excellent treatise~\cite{marswest}. The correspondence between 
variational integrators and symplectic integrators (for conservative systems) is given in 
the book~\cite{haluwa}. Lie group variational integrators are variational integrators for 
mechanical systems whose configuration spaces are Lie groups, like rigid body systems. 
In addition to maintaining properties arising from the variational principles of mechanics, 
like energy and momenta, LGVI schemes also maintain the geometry of the Lie group 
that is the configuration space of the system~\cite{mclele}. 

\subsubsection{Discrete-Time Lagrangian}
As a first step to obtaining the LGVI that discretizes the filter in Proposition \ref{filterN}, a 
discrete-time counterpart of the (continuous-time) Lagrangian expressed in \eqref{cLag} 
is obtained. Consider an interval of time $[t_0, T]\in\bR^+$ separated into $N$ equal-length
subintervals $[t_i,t_{i+1}]$ for $i=0,1,\ldots,N$, with $t_N=T$ and $t_{i+1}-t_i=h$ 
is the time step size. Let $(\hat R_i,\hat\Omega_i)\in\SO\times\bR^3$ denote the discrete 
state estimate at time $t_i$, such that $(\hat R_i,\hat\Omega_i)\approx (\hat R(t_i),
\hat\Omega(t_i))$ where $(\hat R(t),\hat\Omega(t))$ is the exact solution of the 
continuous-time filter at time $t\in [t_0, T]$. 

It is assumed that $k\ge 2$ known inertial vectors are measured in the body frame, as in 
Proposition \ref{filterN}. The term encapsulating the ``energy" in the 
attitude estimate error, given by \eqref{attindex}, is discretized as follows:
\begin{align}
\cU (\hat R_i,U^m_i)=\Phi \Big(\frac{1}{2}\lan E_i-\hat R_i U^m_i,(E_i-\hat R_i U^m_i)W_i\ran \Big), 
\label{dattindex}
\end{align}
where  $E_i\in\bR^{3\times k}$ is the set of inertial vectors and $U^m_i\in\bR^{3\times k}$ is the 
corresponding set of measured body vectors observed at time $t_i$,  and $W_i$ 
is the corresponding diagonal matrix of weight factors.
The term containing the ``energy" in the angular velocity estimate error is discretized as
\be \cT (\hat\Omega_i,\Omega^m_i)=\frac{m}{2}(\Omega^m_i-\hat\Omega_i) \T(\Omega^m_i-\hat\Omega_i), \label{dangvelindex} \ee 
which is the discrete-time version of equation \eqref{angvelindex}. 

As with the continuous-time state estimation process in Sections 2 and 3, one can express 
these ``energy" terms in the state estimate errors for the case that perfect measurements 
(with no measurement noise) are available. In this case, these ``energy" terms can be 
expressed in terms of the state estimate errors $Q_i= R_i \hat R_i\T$ and 
$\omega_i= \Omega_i-\hat\Omega_i$ as follows:
\begin{align}
\begin{split}
& \cU (Q_i)= \Phi \Big(\frac{1}{2}\lan E_i - Q_i\T E_i,(E_i - Q_i\T E_i)W_i\ran \Big)=
\Phi \big( \lan I-Q_i, K_i\ran \big)\, \\
& \mbox{ where }\, K_i= E_i W_i E_i\T, \mbox{and }\,\cT (\omega_i)= \frac{m}{2} \omega_i\T\omega_i\, \mbox{ where }
m>0. 
\end{split}\label{discUandT} 
\end{align}
The weights in $W_i$ can be chosen such that $K_i$ is always positive definite with distinct 
(perhaps constant) eigenvalues, as in the continuous-time filter given by Proposition 
\ref{filterN}. Using these ``energy" terms in the state estimate errors, the 
discrete-time Lagrangian can be expressed as: 
\begin{align}
\cL (Q_i,\omega_i)= \cT (\omega_i)- \cU ( Q_i)
= \frac{m}{2} \omega_i\T\omega_i- \Phi\big( \lan I-Q_i, K_i\ran \big).
\label{discLag}
\end{align}

\subsubsection{Discrete-Time Attitude State Estimation Based on the Discrete 
Lagrange-d'Alembert Principle}

The following statement gives the discrete-time filter equations, in the form of a Lie group 
variational integrator, corresponding to the continuous-time filter given by Proposition 
\ref{filterN}.
\begin{proposition} \label{discfilter}
Let two or more vector measurements be available, along with angular velocity 
measurements in discrete-time, at time intervals of length $h$. Further, let the weight 
matrix $W_i$ for the set of vector measurements $E_i$ be chosen such that $K_i=E_i 
W_i E_i\T$ satisfies the eigendecomposition condition \eqref{Kdec} of Lemma \ref{lem1}. 
A discrete-time filter that approximates the continuous-time filter of Proposition \ref{filterN} 
to first order in $h$ is 
~~~~~~~~~~~~~~~~~~~~~~~~~~~~~~~~~~~~~~~~~~~~~~~~~~
\begin{align}
&\hat R_{i+1}=\hat R_i\exp(h\hat\Omega_i^\times)=\hat{R_i}\exp\big(h(\Omega_i^m-\omega_i)^\times\big),\vspace*{4mm}\label{1stDisFil_Rhat}\\
&m\omega_{i+1}=\exp(-h \hat\Omega_{i+1}^\times)\Big\{(m I_{3\times3}-hD)
\omega_i+h\Phi'\big(\cU^0(\hat R_{i+1},U^m_{i+1})\big)S_{L_{i+1}}(\hat R_{i+1})\Big\},\vspace*{4mm}\label{1stDisFil_omega}\\
&\hat\Omega_i=\Omega_i^m-\omega_i,
\label{1stDisFil_Omegahat}
\end{align}
where $S_{L_i}(\hat R_i)=\mrm{vex}(L_i\T\hat R_i-\hat R_i\T L_i)\in\bR^3$, $L_i=E_i W_i(U^m_i)\T\in\mathbb{R}^{3\times3}$ and $(\hat R_0,\hat\Omega_0)\in\SO\times\bR^3$ are initial estimated states.
\end{proposition}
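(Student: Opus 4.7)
The plan is to obtain the discrete filter by applying the discrete Lagrange-d'Alembert principle to the discrete action sum associated with the discrete Lagrangian \eqref{discLag} together with a discrete Rayleigh dissipation term $h D \omega_i$, then to verify first-order consistency with the continuous-time filter of Proposition \ref{filter1} by expanding the resulting update equations in powers of $h$.

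First, I would fix the discrete kinematics $\hat R_{i+1}=\hat R_i F_i$ with $F_i=\exp(h\hat\Omega_i^\times)$; this is the standard first-order Lie group variational integrator update on $\SO$ and yields \eqref{1stDisFil_Rhat} directly. Next I would introduce left-trivialized variations $\delta\hat R_i=\hat R_i\Sigma_i^\times$ with $\Sigma_0=\Sigma_N=0$, and differentiate the kinematic constraint $\hat R_{i+1}=\hat R_i F_i$. Using $\delta F_i=F_i(h\,\delta\hat\Omega_i)^\times$ to first order in $h$, this gives the key ``matching condition''
\begin{equation*}
\Sigma_{i+1}=F_i\T\Sigma_i+h\,\delta\hat\Omega_i,\qquad \delta\hat\Omega_i=-\delta\omega_i,
\end{equation*}
which couples $\delta\hat\Omega_i$ to the variations $\Sigma_i,\Sigma_{i+1}$ of attitude on successive steps.

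Then I would form the discrete action sum $\mathcal{S}_d=\sum_{i=0}^{N-1}h\,\mathcal{L}(\hat R_i,U^m_i,\hat\Omega_i,\Omega^m_i)$ and compute its first variation. Following the derivation in the proof of Proposition \ref{filter1}, the variation of the potential term contributes $-\Phi'\big(\mathcal{U}^0(\hat R_i,U^m_i)\big)S_{L_i}\T(\hat R_i)\Sigma_i$, with $L_i=E_iW_i(U^m_i)\T$, while the variation of the kinetic term contributes $-m\omega_i\T\delta\hat\Omega_i=-m\omega_i\T(\Sigma_{i+1}-F_i\T\Sigma_i)/h$. The Lagrange-d'Alembert principle asserts
\begin{equation*}
\delta\mathcal{S}_d+\sum_{i=0}^{N-1}h\,(D\omega_i)\T\Sigma_i=0
\end{equation*}
for all admissible $\{\Sigma_i\}$. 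A discrete summation-by-parts shifts the index on the $\Sigma_{i+1}$ term and, using $\Sigma_0=\Sigma_N=0$, produces a sum whose $\Sigma_i$-coefficient must vanish for each interior $i$. Regrouping this coefficient gives
\begin{equation*}
m\,F_i\omega_{i+1}=m\omega_i-hD\omega_i+h\,\Phi'\!\big(\mathcal{U}^0(\hat R_{i+1},U^m_{i+1})\big)S_{L_{i+1}}(\hat R_{i+1}),
\end{equation*}
and multiplying by $F_i\T=\exp(-h\hat\Omega_{i+1}^\times)$ (to first order, $\exp(-h\hat\Omega_i^\times)\approx\exp(-h\hat\Omega_{i+1}^\times)$, which preserves first-order accuracy) yields \eqref{1stDisFil_omega}. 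Equation \eqref{1stDisFil_Omegahat} is just the definition $\omega_i=\Omega_i^m-\hat\Omega_i$.

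Finally I would verify first-order consistency with the continuous-time filter \eqref{eq:filterNoise}. Expanding $\exp(-h\hat\Omega_{i+1}^\times)=I-h\hat\Omega_{i+1}^\times+\mathcal{O}(h^2)$ in \eqref{1stDisFil_omega}, subtracting $m\omega_i$ from both sides and dividing by $h$ gives
\begin{equation*}
m\,\frac{\omega_{i+1}-\omega_i}{h}=-m\hat\Omega_{i+1}\times\omega_i-D\omega_i+\Phi'\!\big(\mathcal{U}^0(\hat R_{i+1},U^m_{i+1})\big)S_{L_{i+1}}(\hat R_{i+1})+\mathcal{O}(h),
\end{equation*}
which is exactly the continuous update of $\omega$ from \eqref{eq:filterNoise} up to $\mathcal{O}(h)$ terms, while \eqref{1stDisFil_Rhat} expands as $\hat R_{i+1}=\hat R_i+h\hat R_i\hat\Omega_i^\times+\mathcal{O}(h^2)$, consistent with the continuous kinematics. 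The main obstacle in this proof is the careful bookkeeping of the index shift and the Lie-group variation formula that couples $\delta\hat\Omega_i$ to $(\Sigma_i,\Sigma_{i+1})$ through $F_i\T$; once that matching condition is correct, the rest of the argument is a direct application of discrete variational mechanics and Taylor expansion.
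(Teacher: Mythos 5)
Your proposal follows essentially the same route as the paper's proof: discrete action sum, left-trivialized variations with the relation $h\delta\hat\Omega_i=\Sigma_{i+1}-\exp(-h\hat\Omega_i^\times)\Sigma_i$, the discrete Lagrange--d'Alembert principle with the dissipation term, and a summation-by-parts/index shift to extract the one-step update, plus a Taylor-expansion consistency check that the paper asserts but does not carry out. One bookkeeping note: when you collect the coefficient of $\Sigma_{i+1}$, the kinetic term at step $i+1$ contributes $m\omega_{i+1}\T F_{i+1}\T\Sigma_{i+1}$, so the exact computation yields $mF_{i+1}\omega_{i+1}=m\omega_i+h\Phi'S_{L_{i+1}}-h\tau_{D_{i+1}}$ with $F_{i+1}=\exp(h\hat\Omega_{i+1}^\times)$ directly, and the extra approximation $\exp(-h\hat\Omega_i^\times)\approx\exp(-h\hat\Omega_{i+1}^\times)$ you invoke is unnecessary --- equation \eqref{1stDisFil_omega} follows without it.
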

{\em Proof}: The action functional in expression \eqref{eq:J6} is replaced by the 
discrete-time action sum as follows:
\begin{align}
&\cS_d (\cL (\hat R_i,U^m_i,\hat\Omega_i,\Omega^m_i))= h \sum_{i=0}^N \Big\{ 
\frac{m}{2}(\Omega^m_i-\hat\Omega_i)\T(\Omega^m_i-\hat\Omega_i)
- \Phi\big(\cU^0(\hat R_i,U^m_i)\big) \Big\}.
\end{align}
Discretize the kinematics of the attitude estimate as
\begin{align}
\hat R_{i+1}=\hat R_i \exp( h\hat\Omega_i^\times), \label{diskin}
\end{align}
and consider a first variation in the discrete attitude estimate, $R_i$, of the form
\be \delta \hat R_i=\hat R_i\Sigma_i^\times,  \label{disVar} \ee
where $\Sigma_i\in\bR^3$ gives a variation vector for the discrete attitude estimate.
For fixed end-point variations, we have $\Sigma_0=\Sigma_N=0$. 
Further, a first order approximation is to assume that $\hat\Omega_i^\times$ and 
$\delta\hat\Omega_i^\times$ commute. With this assumption, taking the first variation of the 
discrete kinematics \eqref{diskin} and substituting from \eqref{disVar} gives:
\begin{align}
\begin{split}
\delta \hat R_{i+1} =&\delta \hat R_i \exp(h\hat\Omega_i^\times)+
\hat R_i\delta\big(\exp(h\hat\Omega_i^\times)\big) \\
=&\hat R_i\Sigma_i^\times\exp(h\hat\Omega_i^\times)+ h \hat R_i \exp(h\hat\Omega_i^\times)
\delta\hat\Omega_i^\times=\hat R_{i+1}\Sigma_{i+1}^\times. 
\end{split}\label{disLagdAlm}
\end{align}
Equation \eqref{disLagdAlm} can be re-arranged to obtain: 
\begin{align}
h\delta\hat\Omega_i^\times&=\exp(-h\hat\Omega_i^\times)\hat R_i\T\big[\delta
\hat R_{i+1}-\hat R_i\Sigma_i^\times\exp(h\hat\Omega_i^\times)\big] \nn \\
&=\exp(-h\hat\Omega_i^\times)\hat R_i\T \hat R_{i+1}\Sigma_{i+1}^\times-
\Ad{\exp(-h\hat\Omega_i^\times)}\Sigma_i^\times \nn \\
&=\Sigma_{i+1}^\times-\Ad{\exp(-h\hat\Omega_i^\times)}\Sigma_i^\times.  \label{delR}
\end{align}
This in turn can be expressed as an equation on $\bR^3$ as follows:
\be h\delta\hat\Omega_i= \Sigma_{i+1} - \exp(-h\hat\Omega_i^\times)\Sigma_i, 
\label{delomega} \ee
since $\Ad{R}\Omega^\times=R\Omega^\times R\T=(R\Omega)^\times$. 

Applying the discrete Lagrange-d'Alembert principle~\cite{marswest}, one obtains
\begin{align}
&\delta\cS_d+ h\sum_{i=0}^{N-1}\tau_{D_i}\T\Sigma_i =0\nn\\
\Rightarrow &h\sum_{i=0}^{N-1} m(\hat\Omega_i-\Omega^m_i)\T\delta\hat\Omega_i-\Big\{\Phi'\big(\cU^0(\hat R_i,U^m_i)\big)S_{L_i}\T(\hat R_i)
-\tau_{D_i}\T\Big\}\Sigma_i=0. \label{disSd1}
\end{align} 
Substituting  \eqref{disVar} and \eqref{delomega} into equation \eqref{disSd1}, 
one obtains
\begin{align}
&\sum_{i=0}^{N-1}\Big\{m(\hat\Omega_i-\Omega^m_i)\T\big(\Sigma_{i+1}-\exp(-h\hat\Omega_i^\times)\Sigma_i\big)\nn\\
&-h\Phi'\big(\cU^0(\hat R_i,U^m_i)\big)S_{L_i}\T(\hat R_i)\Sigma_i+h\tau_{D_i}\T\Sigma_i\Big\}=0.
\label{disLagdAlObs}
\end{align}
For $0\le i<N$, the expression \eqref{disLagdAlObs} leads to the following one-step 
first-order LGVI for the discrete-time filter:
\begin{align}
&m(\Omega^m_{i+1}-\hat\Omega_{i+1})\T\exp(-h\hat\Omega_{i+1}^\times)+h\tau_{D_{i+1}}\T
\nn\\ &-h\Phi'\big(\cU^0(\hat R_{i+1},U^m_{i+1})\big)S_{L_{i+1}}\T(\hat R_{i+1})
+m(\hat\Omega_i-\Omega^m_i)\T=0\nn\\
&\Rightarrow m\exp(h\hat\Omega_{i+1}^\times) (\Omega^m_{i+1}-\hat\Omega_{i+1})=m(\Omega^m_i-\hat\Omega_i)\nn\\
&+h\Big(\Phi'\big(\cU^0(\hat R_{i+1},U^m_{i+1})\big)S_{L_{i+1}}(\hat R_{i+1})-\tau_{D_{i+1}}\Big),
\end{align}
which after substituting $\omega_i=\Omega_i^m-\hat\Omega_i$ and $\tau_{D_{i+1}}=D\omega_i$ gives the discrete-time filter presented in \eqref{1stDisFil_Rhat}-\eqref{1stDisFil_Omegahat}.
\hfill\ensuremath{\square}

Note that the filter equations \eqref{1stDisFil_Rhat}-\eqref{1stDisFil_Omegahat} given by the 
LGVI scheme are in the form 
of an implicit numerical integration scheme. The discrete kinematics equation, which is 
the equation \eqref{1stDisFil_Rhat}, is solved first. Then the angular velocity estimate 
error is updated by solving the implicit discrete dynamics equation, which is the equation \eqref{1stDisFil_omega}. The stability and convergence properties of this discrete-time 
filter are not shown here directly. Since this filter is a first-order discretization of the 
continuous-time filter in \cite{Automatica} which is almost globally asymptotically stable, its 
solution will be a first-order (in $h$) approximation to the continuous-time filter.

\subsubsection{Discrete-time First Order Butterworth Filter for Angular Velocity}
Since the proposed filter does not filter noise from angular velocity measurements, a symmetric 
linear filter in the form of a discrete first-order Butterworth filter is applied to these measurements. 
The filtered velocities are then used in place of the unfiltered $\Omega_i^m$ to enhance 
the nonlinear filter given by equations\eqref{1stDisFil_Rhat}-\eqref{1stDisFil_Omegahat}. This 
time-symmetric filter is of the form:
\begin{align}
(2+ h) \bar \Omega_{i+1}= (2-h)\bar\Omega_i+ h (\Omega^m_i + \Omega^m_{i+1}),
\end{align}
where $\bar\Omega_i$ is the filtered angular velocity at time $t_i$ for $i=0,1,...,N-1$, and 
$\bar\Omega_0=\Omega^m(t_0)$. $\bar\Omega_i$ is used in place of $\Omega^m_i$ in 
\eqref{1stDisFil_Rhat}-\eqref{1stDisFil_Omegahat}.

The stability and convergence properties of this discrete-time 
filter are not shown here directly. Since this filter is a first-order discretization of the 
continuous-time filter in Proposition \ref{filterN}, its solution will be a first-order (in $h$) 
approximation to the continuous-time filter.

\subsubsection{Explicit First-Order Estimator}
Note that the second equation in the first-order Lie group variational integrator, eq. 
\eqref{1stDisFil_omega}, is an implicit equation with respect to $\omega_{i+1}$. One needs to 
solve this equation using an iterative method like Newton-Raphson at every time step. This can 
considerably increase the computational load and runtime, making the estimator difficult to 
implement in applications requiring real-time estimation \cite{ICC2015}. In order to find a solution 
for this issue, one can use the adjoint of this LGVI, which provides an explicit first order as 
given in the following statement.
\begin{proposition}\label{ExplicitFil}
A discrete-time filter that gives an explicit first order numerical integrator for the filter presented 
in \cite{Automatica} is given by:
\begin{align}
&\hat R_{i+1}=\hat{R_i}\exp\big(h(\Omega_{i+1}^m-\omega_{i+1})^\times\big),\label{ExplicitDisFil_Rhat}\\
&\omega_{i+1}= \big(mI_{3\times3}+hD\big)^{-1}\bigg\{ \exp\big(-h
\hat\Omega_i^\times\big) m\omega_i+h\Phi'\big(\cU^0(\hat R_i,U^m_i)\big)S_{L_{i}}(\hat R_i) \bigg\},\label{ExplicitDisFil_omega}\\
&\hat\Omega_i=\Omega_i^m-\omega_i,
\label{ExplicitDisFil_Omegahat}
\end{align}
where $S_{L_i}(\hat R_i)$ is defined in Proposition \ref{discfilter} and $(\hat R_0,\hat\Omega_0)\in\SO\times\bR^3$ are initial estimated states.
\end{proposition}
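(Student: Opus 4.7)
The plan is to derive the explicit filter as the \emph{adjoint} of the implicit Lie group variational integrator of Proposition~\ref{discfilter}. Recall that for a one-step method $\Phi_h$ on a time step of length $h$, its adjoint is defined by $\Phi_h^{\ast} := \Phi_{-h}^{-1}$, i.e.\ by running the scheme backward in time and then inverting. Applying this construction to the implicit LGVI is a standard device for converting an implicit variational integrator into an explicit scheme of the same order, and it automatically inherits first-order consistency with the continuous-time filter of Proposition~\ref{filterN}.

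First I would write the implicit update \eqref{1stDisFil_Rhat}--\eqref{1stDisFil_Omegahat} as a map $\Phi_h: (\hat R_i,\omega_i)\mapsto(\hat R_{i+1},\omega_{i+1})$, substitute $h\mapsto -h$, and swap the subscripts $i\leftrightarrow i{+}1$. The kinematic equation $\hat R_{i+1}=\hat R_i\exp(h\hat\Omega_i^\times)$ then becomes $\hat R_i=\hat R_{i+1}\exp(-h\hat\Omega_{i+1}^\times)$, which rearranges to the explicit kinematics \eqref{ExplicitDisFil_Rhat}. For the dynamic update, the same substitution yields
\[
m\omega_i=\exp(h\hat\Omega_i^\times)\bigl\{(mI+hD)\omega_{i+1}-hF_i\bigr\},\qquad F_i:=\Phi'\bigl(\cU^0(\hat R_i,U^m_i)\bigr)S_{L_i}(\hat R_i).
\]
Left-multiplying by $\exp(-h\hat\Omega_i^\times)$, noting invertibility of $mI+hD$ for $h$ sufficiently small (which follows from positive-definiteness of $D$), and solving for $\omega_{i+1}$ reproduces \eqref{ExplicitDisFil_omega} directly.

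To confirm that the resulting scheme is genuinely explicit, I would observe that $\omega_{i+1}$ in \eqref{ExplicitDisFil_omega} depends only on quantities indexed at step $i$, so it can be evaluated in closed form; once $\omega_{i+1}$ is known, the attitude update \eqref{ExplicitDisFil_Rhat} requires only a single matrix exponential on $\SO$. No Newton-Raphson inner iteration is needed, unlike the implicit velocity update \eqref{1stDisFil_omega}.

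The main obstacle will be verifying first-order consistency with the continuous-time filter \eqref{eq:filterNoise}. This reduces to expanding $\exp(-h\hat\Omega_i^\times)=I-h\hat\Omega_i^\times+O(h^2)$ and $(mI+hD)^{-1}=\tfrac1m I-\tfrac{h}{m^2}D+O(h^2)$ inside \eqref{ExplicitDisFil_omega}, forming the discrete derivative $m(\omega_{i+1}-\omega_i)/h$, and matching it term-by-term with $-m\hat\Omega\times\omega+\Phi'(\cU^0)S_L(\hat R)-D\omega$ from \eqref{eq:filterNoise}. Because the adjoint of a $p$-th order method is itself of order $p$ (a standard fact in geometric integration theory, see~\cite{haluwa}), and Proposition~\ref{discfilter} already established first-order consistency of the implicit LGVI, this expansion serves as a verification rather than a new derivation. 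Stability and almost-global convergence in the noise-free case follow from the same argument as in Theorem~\ref{thmFilt0} applied in the $h\to 0$ limit.
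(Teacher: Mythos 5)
Your proposal is correct and follows essentially the same route as the paper: both construct the explicit scheme as the adjoint $\Xi_h^\star=\Xi_{-h}^{-1}$ of the implicit LGVI by interchanging the indices $i$ and $i+1$ and replacing $h$ with $-h$, then rearranging the resulting kinematic and dynamic updates to isolate $\hat R_{i+1}$ and $\omega_{i+1}$. The only minor remark is that $mI_{3\times 3}+hD$ is positive definite (hence invertible) for every $h>0$ since $m>0$ and $D$ is positive definite, so no smallness restriction on $h$ is needed there.
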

\begin{proof} Let $\Xi_h$ denote the forward time map of the Lie group variational integrator given by 
equations \eqref{1stDisFil_Rhat}-\eqref{1stDisFil_Omegahat} of the filter in Proposition \ref{discfilter}. The adjoint of a 
numerical integration method whose one step forward time map is denoted $\Xi_h$, is 
defined as $\Xi_h^\star= \Xi_{-h}^{-1}$~\cite{haluwa} for the time interval $[t_i,t_{i+1}]$. In other words, the adjoint scheme is 
obtained by interchanging indices $i$ and $i+1$ and replacing $h$ with $-h$ in the original 
scheme. This adjoint flow can be 
constructed from \eqref{1stDisFil_Rhat}-\eqref{1stDisFil_Omegahat} as
\begin{align}
&\hat R_{i+1}=\hat{R}_i\exp\big(h(\Omega_{i+1}^m-\omega_{i+1})^\times\big),\label{adjntflow1_Rhat}\\
&(m I_{3\times3}+hD)\omega_{i+1}=\exp(-h\hat\Omega_i^\times)m\omega_i+h\Phi'(\cU^0(\hat R_i,U^m_i))S_{L_{i}}(\hat R_i).\label{adjntflow1_omega}
\end{align}\par
The filter equations \eqref{ExplicitDisFil_Rhat}-\eqref{ExplicitDisFil_Omegahat} are easily concluded from \eqref{adjntflow1_Rhat}-\eqref{adjntflow1_omega}.
\end{proof}

One can easily verify that the trajectories of both the implicit and the explicit first order filters are 
very similar using numerical simulations. However, the explicit filter is computationally simpler 
and faster, which makes it more suited for real-time implmentation. A second order discretization 
of this variational estimator is presented next. 

\subsubsection{Symmetric Numerical Integrator as Discrete-Time Filter}
A symmetric numerical scheme is presented here as a higher-order discretization of the 
continuous-time filter. Symmetric numerical integrators have discrete flows that are time 
reversible, i.e., the  composition of the one step forward time map with the one step 
backward time map is the identity map. Symmetric schemes have many useful 
properties (e.g., easier error analysis), as detailed in chapter 5 of \cite{haluwa}. 
The symmetric scheme presented here is obtained by composing the first-order LGVI scheme 
from Proposition \ref{discfilter}, with its adjoint defined in Proposition \ref{ExplicitFil}. Clearly, a 
symmetric integration scheme is self-adjoint by definition. The following 
statement gives the discrete-time attitude and angular velocity filter that is obtained in the 
form of a symmetric integrator using the above-mentioned composition.

\begin{proposition}\label{dis2ndfilter}
A discrete-time filter that gives a second order numerical integrator for the filter in 
Proposition \ref{discfilter} is given as follows:
\begin{align}
&\hat R_{i+1}=\hat{R_i}\exp\big(h(\Omega_{i+\frac12}^m-\omega_{i+\frac12})^\times\big),\label{2ndDisFil_Rhat}\\
&m\omega_{i+1}=\exp(-\frac{h}{2} \hat\Omega_{i+1}^\times)\Big\{(m I_{3\times3}-\frac{h}{2}D)
\omega_{i+\frac12}+\frac{h}{2}\Phi'\big(\cU^0(\hat R_{i+1},U^m_{i+1})\big)S_{L_{i+1}}(\hat R_{i+1})\Big\},
\label{2ndDisFil_omega}\\
&\hat\Omega_i=\Omega_i^m-\omega_i,\;\;\Omega_{i+\frac12}^m=\frac12(\Omega_i^m+\Omega_{i+1}^m),
\label{2ndDisFil_Omegahat}
\end{align}
where 
\begin{align}
\omega_{i+\frac12}= \big(&mI_{3\times3}+\frac{h}{2}D\big)^{-1}\bigg\{ \exp\big(-\frac{h}{2}
\hat\Omega_i^\times\big) m\omega_i+\frac{h}{2} \Phi'\big(\cU^0(\hat R_i,U^m_i)\big)S_{L_{i}}(\hat R_i) \bigg\} \label {iAV}
\end{align}
is a discrete-time approximation to the angular velocity at time $t_{i+\frac12}:=t_i+\frac{h}{2}$. 
\end{proposition}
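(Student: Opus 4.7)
The plan is to realize the second-order scheme as the composition $\Xi_{h/2} \circ \Xi_{h/2}^\star$, where $\Xi_h$ denotes the first-order implicit LGVI flow of Proposition \ref{discfilter} and $\Xi_h^\star$ its adjoint, namely the explicit first-order flow of Proposition \ref{ExplicitFil}. Since $\Xi_h^\star = \Xi_{-h}^{-1}$ by construction, the composition $\Xi_{h/2} \circ \Xi_{h/2}^\star$ is automatically self-adjoint, hence symmetric. By the standard result on symmetric methods (see Theorem II.3.2 of \cite{haluwa}), a symmetric one-step method built from a first-order consistent scheme is of even order, so at least of order two.

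First I would propagate the state $(\hat R_i,\omega_i)$ by a half-step of the adjoint (explicit) flow of Proposition \ref{ExplicitFil}. Substituting $h \mapsto h/2$ and shifting the indices so that the output is labeled at time $t_{i+\frac12}$, the velocity update from \eqref{ExplicitDisFil_omega} becomes exactly the expression \eqref{iAV} for $\omega_{i+\frac12}$, while the kinematic update \eqref{ExplicitDisFil_Rhat} gives
\begin{equation*}
\hat R_{i+\frac12} = \hat R_i \exp\!\bigl((h/2)(\Omega^m_{i+\frac12}-\omega_{i+\frac12})^\times\bigr)
= \hat R_i \exp\!\bigl((h/2)\hat\Omega_{i+\frac12}^\times\bigr).
\end{equation*}
Next I would apply a half-step of the implicit flow of Proposition \ref{discfilter} to advance $(\hat R_{i+\frac12},\omega_{i+\frac12})$ to $(\hat R_{i+1},\omega_{i+1})$. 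With $h \mapsto h/2$, \eqref{1stDisFil_omega} reproduces the velocity equation \eqref{2ndDisFil_omega} verbatim, and \eqref{1stDisFil_Rhat} yields $\hat R_{i+1} = \hat R_{i+\frac12}\exp\!\bigl((h/2)\hat\Omega_{i+\frac12}^\times\bigr)$. Composing the two kinematic half-steps and using the commutativity $\exp(\alpha A)\exp(\beta A)=\exp((\alpha+\beta)A)$ for any fixed $A \in \so$ then collapses them into the single exponential of \eqref{2ndDisFil_Rhat}.

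The one subtlety worth checking is self-consistency of the measurement discretization: the adjoint half-step is evaluated with the measurement sequence $\{\Omega^m_j\}$ stamped at half-integer times and using the \emph{midpoint} sample $\Omega^m_{i+\frac12} = \tfrac12(\Omega^m_i+\Omega^m_{i+1})$ as stipulated in \eqref{2ndDisFil_Omegahat}. This is consistent with the continuous-time angular velocity at $t_{i+\frac12}$ up to $O(h^2)$ error, which is the required accuracy for a second-order scheme; otherwise a mismatched first-order measurement approximation would degrade the order.

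The main obstacle, and the only place where a nontrivial verification is required, is confirming symmetry in the presence of the dissipation term $D\omega$ and the non-conservative driving term $\Phi'(\cU^0)S_L(\hat R)$: one must check that the factors $(mI \pm (h/2)D)$ in \eqref{2ndDisFil_omega} and \eqref{iAV} appear with opposite signs exactly as required for the adjoint $\Xi^\star_{h/2}$ to invert $\Xi_{-h/2}$, and that the sample points at which $\Phi'(\cU^0)S_L$ is evaluated (namely $t_i$ and $t_{i+1}$, symmetric about $t_{i+\frac12}$) preserve time-reversibility. Once this algebraic bookkeeping is in place, the second-order accuracy follows immediately from the symmetric-composition theorem cited above.
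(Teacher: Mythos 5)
Your proposal is correct and follows essentially the same route as the paper's proof: both realize the scheme as the Strang composition $\Xi_{h/2}\circ\Xi^\star_{h/2}$, computing the explicit adjoint half-step to obtain $\omega_{i+\frac12}$ and then the implicit half-step for $(\hat R_{i+1},\omega_{i+1})$, with the two kinematic half-exponentials collapsing into \eqref{2ndDisFil_Rhat}. Your additional remarks on the even-order property of self-adjoint compositions and on the bookkeeping for the dissipation and forcing terms go slightly beyond what the paper writes down but do not change the argument.
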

\begin{proof}
As described in chapter 2 of \cite{haluwa}, a second-order symmetric integrator is obtained by 
composing the flow of this first-order LGVI with its adjoint to obtain the forward time map:
\be \Psi_h := \Xi_{h/2}\circ \Xi^\star_{h/2}. \label{comp2nd} \ee

This composition method is referred to as the {\em Strang splitting} in \cite{haluwa}. The 
flow $\Xi^\star_{h/2}= \Xi^{-1}_{-h/2}$, for the time interval $[t_i,t_i+\frac{h}{2}]$, can be 
constructed from \eqref{1stDisFil_Rhat}-\eqref{1stDisFil_Omegahat} as
\begin{align}
&\hat R_{i+\frac12}=\hat{R}_i\exp\big(\frac{h}{2}(\Omega_{i+\frac12}^m-\omega_{i+\frac12})^\times\big),\label{adjntflow_R}\\
&(m I_{3\times3}+\frac{h}{2}D)\omega_{i+\frac12}=\exp(-\frac{h}{2} \hat\Omega_i^\times)m\omega_i+\frac{h}{2} \Phi'(\cU^0(\hat R_i,U^m_i))S_{L_{i}}(\hat R_i).\label{adjntflow_omega}
\end{align}

Note that equations \eqref{adjntflow_R} and \eqref{adjntflow_omega} give an {\em explicit integrator}, where \eqref{adjntflow_omega} can be solved first to obtain $\omega_{i+\frac12}$, following which \eqref{adjntflow_R} can be used to solve for $\hat R_{i+\frac12}$. Besides, $\Omega_{i+\frac12}^m$ could be approximated as the average of $\Omega_i^m$ and $\Omega_{i+1}^m$.
The flow $\Xi_{h/2}$ is easily obtained from \eqref{1stDisFil_Rhat}-\eqref{1stDisFil_Omegahat} as follows:
\begin{align}
&\hat R_{i+1}=\hat{R}_{i+\frac12}\exp\big(\frac{h}{2}(\Omega_{i+\frac12}^m-\omega_{i+\frac12})^\times\big),\label{origFlow_Rhat}\\
&m\omega_{i+1}=\exp(-\frac{h}{2} \hat\Omega_{i+1}^\times)\Big\{(m I_{3\times3}-\frac{h}{2}D)
\omega_{i+\frac12}+\frac{h}{2}\Phi'\big(\cU^0(\hat R_{i+1},U^m_{i+1})\big)S_{L_{i+1}}(\hat R_{i+1})\Big\}.\label{origFlow_omega}
\end{align}\par
Composing the discrete-time flows given by \eqref{adjntflow_R}-\eqref{adjntflow_omega} and \eqref{origFlow_Rhat}-\eqref{origFlow_omega}, in the 
order specified by \eqref{comp2nd}, gives rise to the one-step forward time map given as in \eqref{2ndDisFil_Rhat}-\eqref{2ndDisFil_Omegahat} and \eqref{iAV}. The overal integration scheme given by \eqref{2ndDisFil_Rhat}-\eqref{iAV} is implicit because \eqref{origFlow_omega} is implicit.
\end{proof}


\subsection{Numerical Simulations}\label{Sec4}
\subsubsection{First-order variational integrator}\label{1stSim}
This section presents numerical simulation results of the discrete time estimator presented in 
Section \ref{DiscVE}, which is a first order Lie group variational integrator. The estimator is simulated 
over a time interval of $T=300 \mbox{ s}$, with a time stepsize of $h=0.01 \mbox{ s}$. The 
rigid body is assumed to have an initial attitude and angular velocity given by,
\[ R_0=\expm_{\SO}\bigg(\Big(\frac{\pi}{4}\times[\frac{3}{7}\ \frac{6}{7}\ \frac{2}{7}]\T\Big)^\times\bigg), \]
\[\mbox{and } \Omega_0=\frac{\pi}{60}\times[-2.1\ 1.2\ -1.1]\T\mbox{ rad/s}.  \] The inertia scalar gain is $m=100$ and the dissipation matrix is selected as the following positive definite matrix:
\[D=\diag\big([12\ 13\ 14]\T\big).\]
$\Phi(\cdot)$ could be any $C^2$ function with the properties described in Section 2, but is 
selected to be $\Phi(x)=x$ here. $W$ is selected based on the measured set of vectors $E$ at 
each instant, such that it satisfies the conditions in 
Lemma \ref{lem1}. The initial estimated states have the following initial estimation errors:
\begin{align}
Q_0&=\expm_{\SO}\bigg(\Big(\frac{\pi}{2.5}\times[\frac{3}{7}\ \frac{6}{7}\ \frac{2}{7}]\T\Big)^\times\bigg), \nn\\
\mbox{and } \omega_0&=[0.001\ 0.002\ -0.003]\T\mbox{ rad/s}.
\end{align}
We assume that there are at most 9 inertially known directions which are being measured 
by the sensors fixed to the rigid body at a constant sample rate. The number of observed directions is taken to be variable over different time intervals. The dynamics equations produce the true states of the rigid body, assuming a sinusoidal force is applied to it. These true states are used to simulate the observed directions in the body-fixed frame, as well as the comparison between true and estimated states. Bounded zero mean noises are considered to be added to the true quantities to generate each measured component. A summation of 
three sinusoidal matrix functions is added to the matrix $U=R\T E$, to generate a measured $U^m$ 
with measurement noise. The frequency of the noise signals are 1, 10 and 100 Hz, with different phases and amplitudes up to $2.4^\circ$, based on coarse attitude sensors like sun sensors and magnetometers. Similarly, two sinusoidal noise signals of 10 Hz and 200 Hz frequencies are added 
to $\Omega$ to form the measured $\Omega^m$. These signals also have different phases and their magnitude is up to $0.97^\circ/s$, which is close to the real noise levels for coarse rate gyros. In order to integrate the implicit set of equations in \eqref{1stDisFil_Rhat}-\eqref{1stDisFil_Omegahat} numerically, the first equation is solved at each sampling step, then the result for $R_{i+1}$ is substituted in the second one. Using the Newton-Raphson method, the resulting equation is solved with respect to $\omega_{i+1}$ iteratively. The root of this nonlinear equation with a specific accuracy along with the $\hat R_{i+1}$ is used for the next sampling time instant. This process is repeated to the end of the simulation time.
Using the aforementioned quantities and the integration method, the simulation is carried out. The principal angle $\phi$ corresponding to the rigid body's attitude estimation error $Q$ is depicted in 
Fig. \ref{fig1}. Components of the estimation error $\omega$ in the rigid body's angular velocity 
are shown in Fig. \ref{fig2}. All the estimation errors are seen to converge to a neighborhood 
of $(Q,\omega)=(I,0)$, where the size of this neighborhood depends on the bounds of the 
measurement noise.

\begin{figure}
\begin{center}
\includegraphics[height=3.2in]{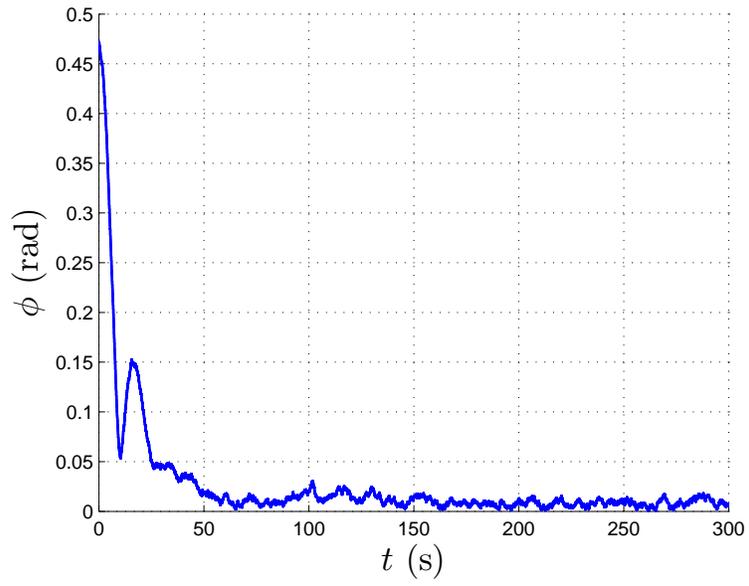}    
\caption{Principal Angle of the Attitude Estimation Error}  
\label{fig1}                                 
\end{center}                                 
\end{figure}

\begin{figure}
\begin{center}
\includegraphics[height=3.2in]{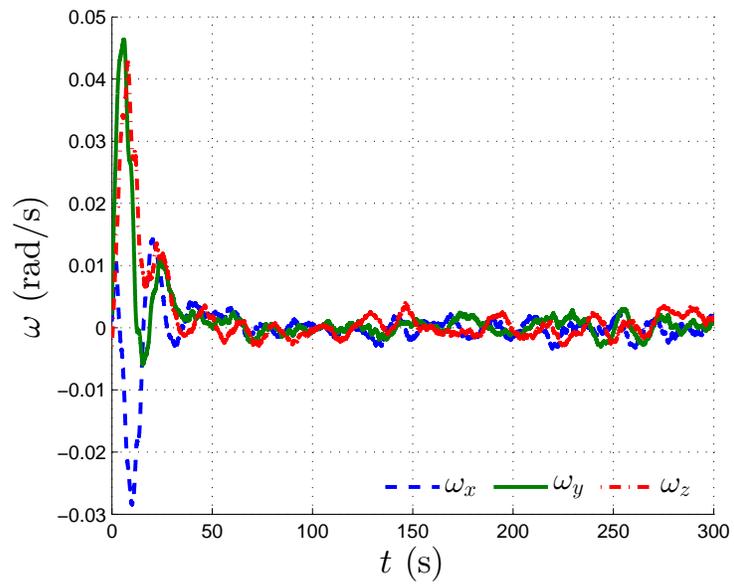}    
\caption{Angular Velocity Estimation Error}  
\label{fig2}                                 
\end{center}                                 
\end{figure}

\subsubsection{Comparison between the First-Order and Second-Order Filters}
A set of comparisons between performances of the first- and second-order filters are presented
next. The same initial conditions and parameters as in Subsection \ref{1stSim} are used for both first 
order and second order filters. The noise type and levels are also identical to those introduced 
in Subsection \ref{1stSim}. The simulations are carried out for a simulated duration of $T=300$s and 
for three different time stepsizes, namely $h=0.005$s, $h=0.01$s and $h=0.05$s. This shows the 
effect of the discretization time stepsize on each filter's convergence behavior. In order to integrate the implicit set of equations in \eqref{2ndDisFil_Rhat}-\eqref{2ndDisFil_Omegahat} numerically, $\omega_{i+\frac12}$ is substituted from \eqref{iAV}, the first 
equation is solved at each sampling step, then the result for $\hat R_{i+1}$ is substituted in the 
second one. Using the Newton-Raphson method, the resulting implicit equation is solved with 
respect to $\omega_{i+1}$ iteratively to a set tolerance. The root of this nonlinear equation 
along with $\hat R_{i+1}$ is used for the next sampling time instant. This process is repeated to 
the end of the simulated duration.\par
The principal 
angles of attitude estimate errors are depicted in Fig.~\ref{fig4}, Fig.~\ref{fig6} and Fig.~\ref{fig8}. 
In these figures, $\phi_1$ and $\phi_2$ denote the principal angle of attitude estimate error for 
first and second order observers, respectively. It can be observed that the transient response of 
the second order filter has less oscillations when compared with the first order integrator. Besides, 
the higher order estimator has a smoother behavior in the steady state.\par
In order to compare the convergence of angular velocity estimate errors of the two discrete filters, 
the norm of each vector is calculated as shown in Fig.~\ref{fig5}, Fig.~\ref{fig7} and Fig.~\ref{fig9} 
corresponding to $h=0.005$s, $h=0.01$s and $h=0.05$s, respectively. In these figures, the norm 
of the first and second order filter's angular velocity estimate errors are denoted by $\|\omega_1\|$ 
and $\|\omega_2\|$, respectively. The first order filter always has a higher overshoot and more 
oscillations in the transient and steady state phases. Moreover, the second order LGVI appears 
to converges faster to the steady state. This second order filter also shows a more robust behavior 
throughout the simulation.\par
Comparing these three pairs of figures (for different time stepsizes), one can notice that with 
increasing time stepsize $h$, the difference between the behavior of first order and second order 
integrators increases, as is expected.

\begin{figure}
\begin{center}
\includegraphics[height=3.2in]{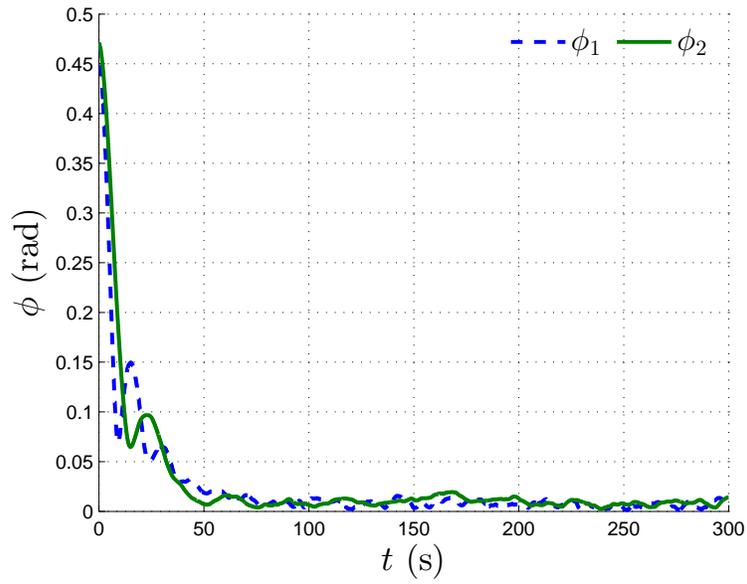}    
\caption{Principal Angles of the Attitude Estimate Errors for $h=0.005$s}  
\label{fig4}                                 
\end{center}                                 
\end{figure}

\begin{figure}
\begin{center}
\includegraphics[height=3.2in]{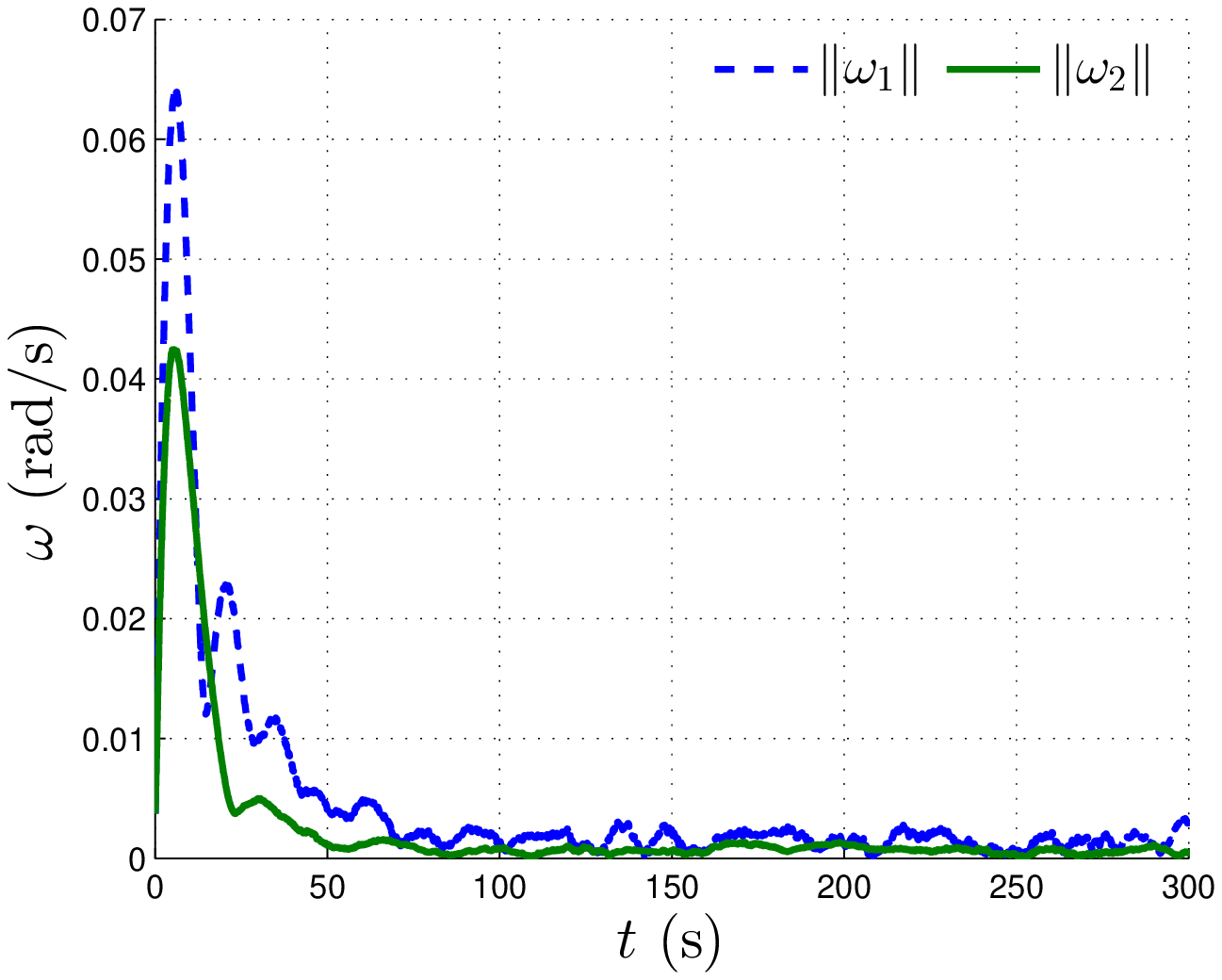}    
\caption{Norms of Angular Velocity Estimate Errors for $h=0.005$s}  
\label{fig5}                                 
\end{center}                                 
\end{figure}

\begin{figure}
\begin{center}
\includegraphics[height=3.2in]{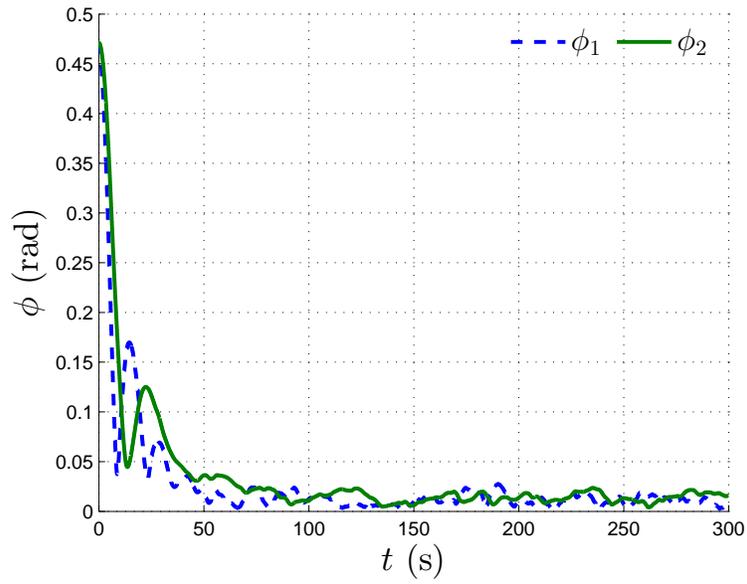}    
\caption{Principal Angles of the Attitude Estimate Errors for $h=0.01$s}  
\label{fig6}                                 
\end{center}                                 
\end{figure}

\begin{figure}
\begin{center}
\includegraphics[height=3.2in]{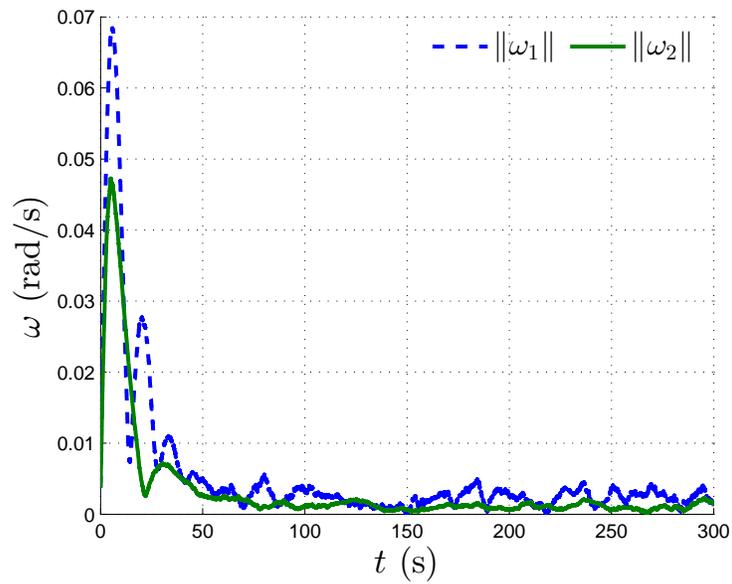}    
\caption{Norms of Angular Velocity Estimate Errors for $h=0.01$s}  
\label{fig7}                                 
\end{center}                                 
\end{figure}

\begin{figure}
\begin{center}
\includegraphics[height=3.2in]{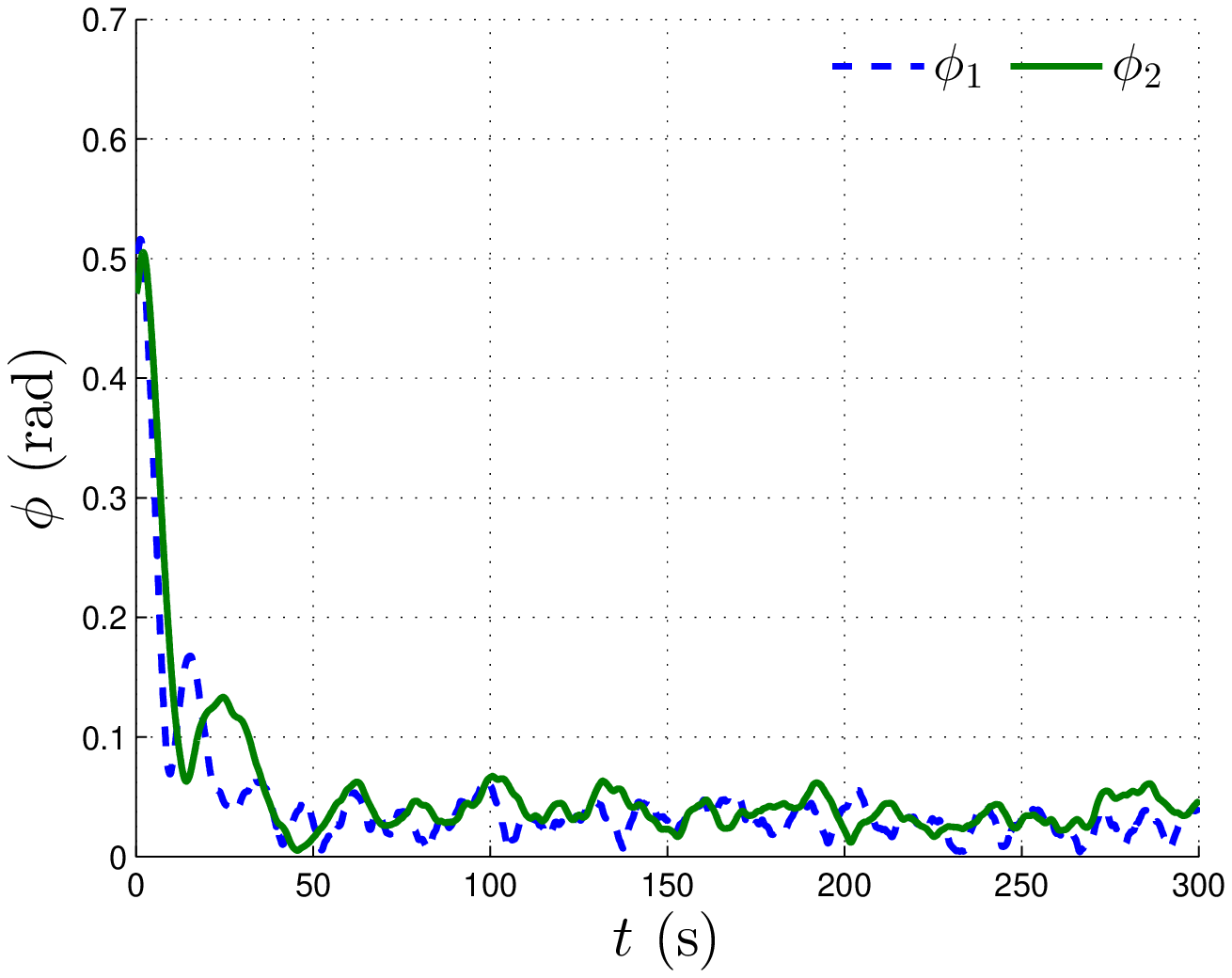}    
\caption{Principal Angles of the Attitude Estimate Errors for $h=0.05$s}  
\label{fig8}                                 
\end{center}                                 
\end{figure}

\begin{figure}[h!]
\begin{center}
\includegraphics[height=3.2in]{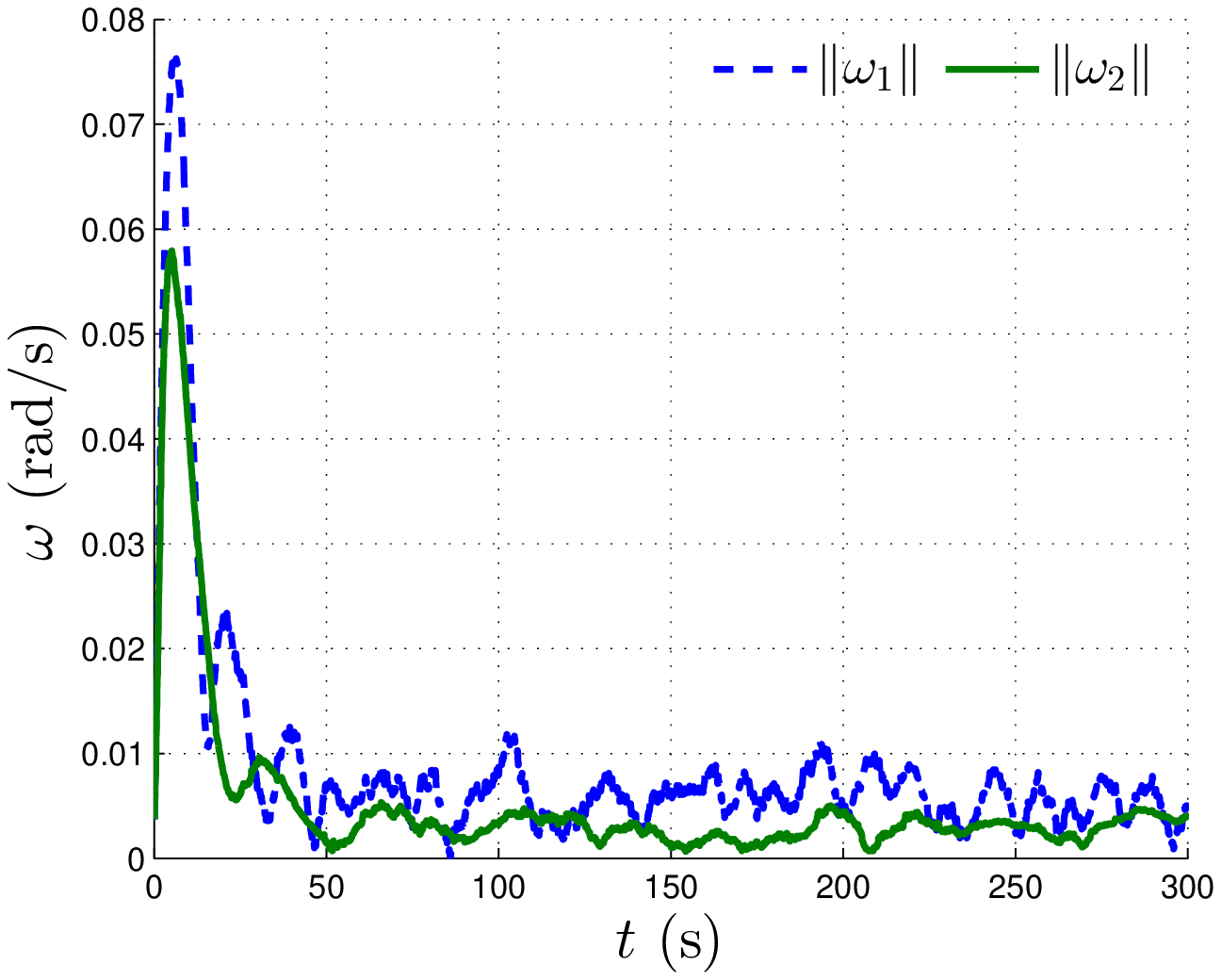}    
\caption{Norms of Angular Velocity Estimate Errors for $h=0.05$s}  
\label{fig9}                                 
\end{center}                                 
\end{figure}


\subsection{Conclusion}

This work obtains an attitude and angular velocity estimation scheme on the Lie group 
of rigid body rotational motion, assuming that measurements of inertial vectors and angular 
velocity are available in continuous-time or at a high sample rate in discrete-time. 
It is shown that Wahba's cost function for attitude determination from vector measurements 
can be generalized and cast as a Morse function on the Lie group of rigid body rotations. 
This Morse function can also be considered as an artificial potential function. A kinetic 
energy-like term, quadratic in the angular velocity estimation errors, can 
be used along with this artificial potential to construct a Lagrangian dependent on 
state estimation errors. The estimator is obtained by applying the Lagrange-d'Alembert 
principle and its discretization to this Lagrangian and a dissipation term dependent on 
the angular velocity estimation error. This estimation scheme is shown 
to be almost globally asymptotically stable, with estimates converging to actual states 
in a domain of attraction that is open and dense in the state space. In the presence of 
bounded measurement noise, the numerical results show that state estimates converge 
to a bounded neighborhood of the actual states. An implicit first order discrete-time version of 
the continuous-time estimation algorithm is obtained by applying the discrete 
Lagrange-d'Alembert principle. An explicit filter is obtained as the adjoint method corresponding to this implicit filter. A symmetric 
second order version of this estimation algorithm is constructed by composing these two 
filters. Using a realistic set of data for a rigid body, numerical 
simulations show that the estimation errors in attitude and angular velocities converge to 
a bounded neighborhood of $(I,0)$ in the presence of a bounded measurement 
noise. Some numerical comparison 
results are presented to show the performances of these filters.

\newpage

\section{COMPARISON OF AN ATTITUDE ESTIMATOR BASED ON THE LAGRANGE-D'ALEMBERT PRINCIPLE WITH SOME STATE-OF-THE-ART FILTERS} 

\label{CH05_ICRA2015} 

\hspace{\parindent}
\textit{This chapter is adapted from a paper published in Proceedings of the 2015 IEEE International Conference on Robotocs and Automation \cite{ICRA2015}. The author gratefully acknowledges Dr. Amit K. Sanyal, Dr. Vijay Kumar and Ehsan Samiei for their participation.}
\\
\\
{\bf{Abstract}}~
Discrete-time estimation of rigid body attitude and angular velocity without any knowledge of 
the attitude dynamics model, is treated using the discrete Lagrange-d'Alembert principle. 
Using body-fixed sensor measurements of direction vectors and angular velocity, a 
Lagrangian is obtained as the difference between a kinetic energy-like term that is 
quadratic in the angular velocity estimation error and an artificial potential obtained 
from Wahba's function. An additional dissipation term that depends on the angular velocity 
estimation error is introduced, and the discrete Lagrange-d'Alembert principle is applied to the 
Lagrangian with this dissipation. An implicit and an explicit first-order version of 
this discrete-time filtering scheme is presented. A comparison of this estimator is made with certain 
state-of-the-art attitude filters in the absence of bias in sensors readings. Numerical simulations show that the presented observer is robust and unlike the extended Kalman filter based schemes, its convergence does not depend on the gains values. Ultimately, the variational estimator is shown to be the most computationally efficient attitude observer.

\subsection{Measurement Model}\label{C5Sec3}
The vectors notation and discretization definitions are introduced in Sections \ref{C4S3.1}. Besides, the measurement model for direction sensors is of the form
\begin{align}
u_{j_i}=R\T_i e_{j_i}+\mathcal{D}_j\nu_{j_i},\label{DirMeasMod}
\end{align}
where the coefficient matrix $\mathcal{D}_j\in\bR^{3\times3}$ allows for different weightings of the components of the output measurement error $\nu_{j_i}$. A common assumption is that the matrix $\mathcal{D}_j$ is full rank and $\sD_j^{-1}=\mathcal{D}_j\mathcal{D}_j\T$ is positive definite. Let $\Omega_i\in\mathbb{R}^3$ be the angular velocity of the rigid body at time $t_i$ expressed in the 
body-fixed frame. The attitude kinematics is given by Poisson's equation:
\begin{align}
\dot{R_i}=R_i\Omega_i^\times.
\label{kinematics}
\end{align}\par
The measurement model for angular velocity is also as follows
\begin{align}
\Omega^m_i=\Omega_i+B\mpz{w}_i,\label{AngMeasMod}
\end{align}
where $\mpz{w}_i\in\bR^3$ is the measurement error in angular velocity and $B\in\bR^{3\times3}$ allows for different weightings for the components of the unknown input measurement $\mpz{w}_i$.\par

\subsection{Other State-of-the-Art Filters on $\SO$}\label{C5Sec5}
Some other observers are available in the literature which can estimate the attitude of the rigid body using the same measurement as explained in Chapter \ref{CH04_VE}. Three estimation schemes are used in  
comparisons with the variational filter: the geometric approximate minimum-energy (GAME), the 
multiplicative extended Kalman filter (MEKF) and a constant gain observer (CGO).

\subsubsection{GAME Filter}\label{GAME}
Generalizing Mortensen's maximum-likelihood filtering scheme, a near-optimal filter is proposed in \cite{zatruma11}. This geometric approximate minimum-energy (GAME) filter in continuous form is given by
\begin{align}
\dot{\hat{R}}&=\hat{R}(\Omega^m-P\ell)^\times,\ell=\sum_{j=1}^k\Big(\sD_j(\hat{u}_j-u_j)\Big)\times\hat{u}_j, \label{RGAME} \\
\dot{P}&=\mathcal{Q}+\dsP_s\bigg(P(2\Omega^m-P\ell)^\times\bigg)+P\Bigg(\tr\Big(\sum_{j=1}^k\dsP_s\big(\sD_j(\hat{u}_j-u_j)\hat{u}_j\T  \big)\Big)I_{3\times3}\nn\\
&~~~~~~~~~~~~~~~~~~~~~~~~~~~~~~~~~~~-\sum_{j=1}^k\dsP_s\big(\sD_j(\hat{u}_j-u_j)\hat{u}_j\T\big)+\sum_{j=1}^k\hat{u}_j^\times\sD_j\hat{u}_j^\times\Bigg)P,
\label{PGAME}
\end{align}
where $\hat{u}_j=\hat{R}\T e_j$, $\mathcal{Q}=BB\T$ with $B$ defined in \eqref{AngMeasMod}, $\dsP_s(\mpz{X})=\frac12(\mpz{X}+\mpz{X}\T)$ for $\mpz{X}\in\bR^{3\times3}$, $\sD_j=(\mathcal{D}_j\mathcal{D}_j\T)^{-1}$ with $\mathcal{D}_j$ defined in \eqref{DirMeasMod}, $\hat{R}(0)=I_{3\times3}$, and $P(0)=\frac{1}{\varphi^2}I_{3\times 3}$ where $\varphi^2$ is the variance 
of the principal angle corresponding to the initial attitude estimate.

\subsubsection{The Multiplicative Extended Kalman Filter}
The Multiplicative Extended Kalman Filter (MEKF) presented in \cite{ekf2,Seba1,Seba2} in continuous form is as follows
\begin{align}
\dot{\hat{R}}&=\hat{R}(\Omega^m-P\ell)^\times,\;\ell=\sum_{j=1}^k\Big(\sD_j(\hat{u}_j-u_j)\Big)\times\hat{u}_j, \label{RMEKF} \\
\dot{P}&=\mathcal{Q}+\dsP_s\Big(P(2\Omega^m)^\times\Big)-P\Bigg(\sum_{j=1}^k\hat{u}_j^\times\sD_j\hat{u}_j^\times\Bigg)P,
\label{PMEKF}
\end{align}
where $\mathcal{Q}$, $\dsP_s(\mpz{X})$ and $\sD_j$ are as defined in Subsection \ref{GAME}, 
and $\hat{R}(0)$ and $P(0)$ are set to the same values as in the GAME filter.

\subsubsection{The Constant Gain Observer}
The Constant Gain Observer (CGO) presented in \cite{mahapf08} in continuous form is also represented as
\begin{align}
\dot{\hat{R}}&=\hat{R}\Big(\Omega^m-K_P\bar\ell \Big)^\times,\;\bar\ell=\sum_{j=1}^k \big(u_j\times\hat{u}_j\big),
\label{CGO}
\end{align}
where $K_P$ is a constant gain and $\hat{R}(0)=I_{3\times3}$.\par
Note that all the filters presented here are in continuous setting and a discretized version of them need to be implemented in numerical simulations, using the measurement model defined in Section \ref{C5Sec3}. The discrete-time versions of these estimators presented in \cite{ZamPhD} use the 
unit quaternion representation.


\subsection{Numerical Simulations and Discussion}\label{C5Sec6}
The performance of the discrete-time variational estimator is compared against that of the estimation 
schemes presented in Section \ref{C5Sec5}, under identical conditions. This means that all the 
estimation schemes work on the  same rigid-body dynamics, have the same initial estimate errors, 
equal time steps, and identical measurement noise. The sampling period and the total simulation 
time are $h=0.01$s and $T=20$s, respectively. A rigid body with prescribed dynamics and 
inputs is considered. 
Three inertially known directions are measured by the sensors and form the matrix $E=I_{3\times3}$ in inertial frame. These measurements contain known levels of noise, however, all sensors 
are assumed to be unbiased. The initial rotation matrix is selected randomly with zero mean and a standard 
deviation of $std_{R0}=60^{\circ}$. The rigid body also has the following angular velocity profile:
\begin{align}
\Omega=\bbm \sin(\frac{2\pi}{15}t)\\-\sin(\frac{2\pi}{18}t+\frac{\pi}{20})\\\cos(\frac{2\pi}{17}t) \ebm
\end{align}

All the estimators start from the same initial attitude estimate, which is $\hat{R}_0=I_{3\times3}$. 
The initial angular velocity estimates are also set to be identical, as follows. According to 
eqs. \eqref{RGAME} and \eqref{RMEKF}, the initial angular velocity estimate errors are given 
by $P(0)\times\ell(0)$ for GAME and MEKF. For the variational estimator, choosing 
$\omega_0=P(0)\times\ell(0)$ and for the CGO, choosing $K_p=P(0)$ satisfies this condition. 
The corresponding initial 
value for covariance matrices in equations \eqref{PGAME} and \eqref{PMEKF} are chosen as $P(0)=\frac{1}{std_{q0}^2}I_{3\times3}=\frac{9}{\pi^2}I_{3\times3}$.\par
The inertia scalar gain for the variational observer is $m=0.5\mbox{ kg}$ and its dissipation matrix is selected as the following positive definite matrix:
\[D=\diag\big([1.8\;\;\; 1.95\;\;\; 2.1]\T\big)\mbox{ N.s}.\]\par
According to \cite{Automatica}, $\Phi(\cdot)$ could be any $C^2$ function, and here it is set to 
$\Phi(x)=x$. The weight matrix for the three directions is also
\[W=\diag\big([1.67\;\;\; 1.11\;\;\; 0.56]\T\big)\mbox{ N.s}.\]\par
As discussed in \cite{ZamPhD}, GAME filter is designed based on rotation matrices, but the numerical 
implementation utilizes unit quaternions. The sensors readings are assumed to be bias-free in all cases. The constant gain for CGO is also set as $k_P=P(0)$, to make its attitude and angular 
velocity estimates identical to the other filters. We compare the performance of these 
filters for two different cases. 

\subsubsection{CASE 1: High Noise Levels}
Both directions measurement error $\nu_j$ and angular velocity measurement error $\mpz{w}$ are random zero mean signals whose probability distribution follow a bump function with unit maximum. The coefficient matrices $\mathcal{D}_j$ in \eqref{DirMeasMod} are chosen equal to $30^\circ$. The coefficient matrix $B$ in \eqref{AngMeasMod} is also set as $25^\circ$/s. Since different estimation techniques have different (initial) filter gains, the equivalence in these 
parameters need to be defined reasonably. In this comparison, the filter gains are set such that 
all filters have the same {\em initial rate} of the attitude estimate, or equivalently they have the same 
initial angular velocity estimate $\hat\Omega_0$. The principal 
angle profiles of the attitude estimate error are compared in Fig. \ref{fig1}.\par

\begin{figure}[h!]
\begin{center}
\includegraphics[height=3.2in]{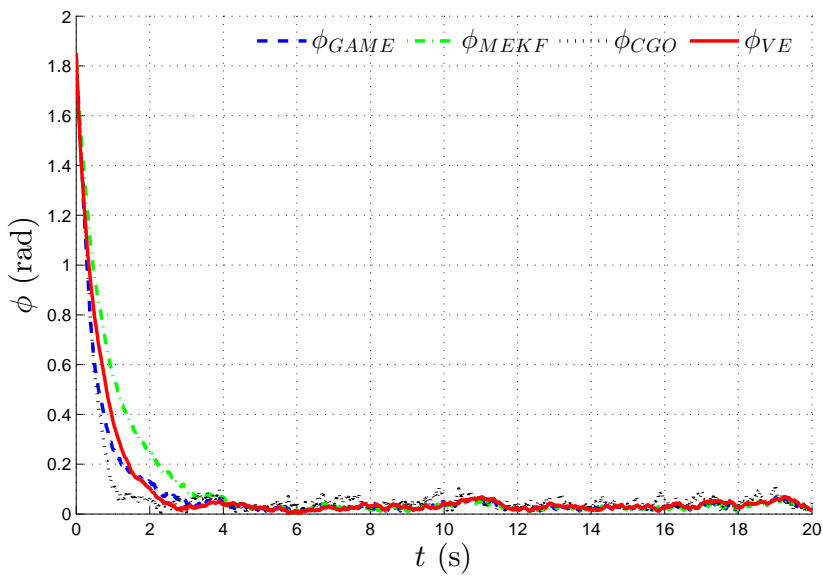}    
\caption{Principal Angles of Attitude Estimation Error For Noise Levels In \cite{ZamPhD}.}  
\label{fig1}                                 
\end{center}                                 
\end{figure}

Based on the behavior of the filters shown in Fig.~\ref{fig1}, the variational estimator converges 
fast enough, if the filter gains are chosen wisely. Although the transient behavior of the LGVI is not the best, the differences of the convergences are not remarkable. The settling time for this filter is as small as the other filters' settling times. Besides, the steady state phase of the attitude estimate error is as smooth as other observers.

\subsubsection{CASE 2: Low Noise Levels, with Filter Gains as Before} 
In this case, the noise signals are considered to be the same type as the previous case (normally-distributed random zero mean bump functions), but with much smaller levels, which are used in simulation implementations of \cite{Automatica}. These levels are close to common coarse attitude and angular velocity sensors for space applications. All the observer gains are kept the same as in case 1, 
to see the filters' performance in the case that these gains are not designed for known noise 
statistics. The coefficient matrices $\mathcal{D}_j$ and $B$ are chosen in such a way that the magnitude of each component of the signals $\mathcal{D}_j\nu_{j_i}$ and $B\mpz{w}_i$ are $2.4^\circ$ and $0.97^\circ$/s, respectively. The principal angle of attitude estimate error for the mentioned filters are plotted in Fig.~\eqref{fig2}.
\begin{figure}[h!]
\begin{center}
\includegraphics[height=3.2in]{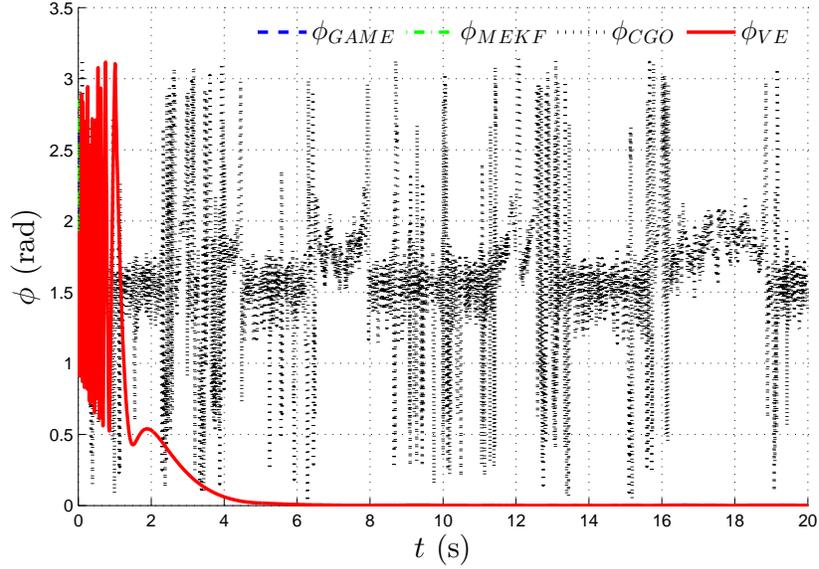}    
\caption{Principal Angles of Attitude Estimation Error For Noise Levels In \cite{Automatica}.}  
\label{fig2}                                 
\end{center}                                 
\end{figure}
\begin{figure}[h!]
\begin{center}
\includegraphics[height=3.2in]{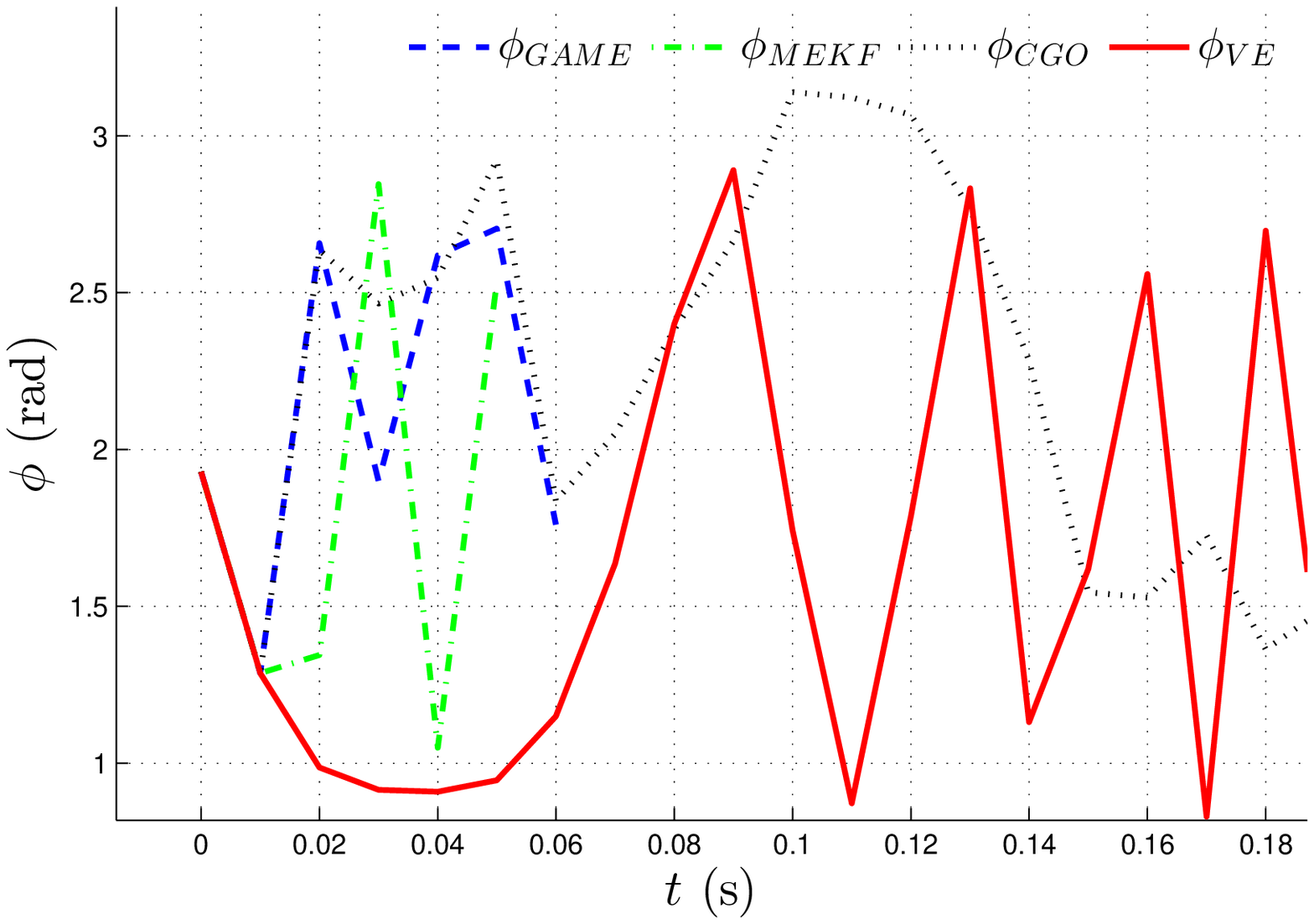}    
\caption{Principal Angles of Attitude Estimation Error For Noise Levels In \cite{Automatica} For Initial Few Steps.}  
\label{fig3}                                 
\end{center}                                 
\end{figure}\par
A magnified behavior of these filters are depicted in Fig. \eqref{fig3}. As can be observed, in this case, the GAME filter and MEKF become singular. On the other hand, the CGO and the variational estimator are stable and filter noise out from the estimates. The settling times are also sufficiently small.\par
Comparing these two cases, one can conclude that although the MEKF and GAME filter perform nicely in the presence of measurement noise with known distribution and level; however, they may not be stable and their initial gains need to be reset, if the noise signal's nature changes. Hence, one major downfall of these filters are their dependence on the value of the initial estimator gain. On the contrary, the variational estimator is a robust filter with proven stability regardless of the statistics 
of the noise. This is manly because of the almost global asymptotically stable structure of this estimator, as has been shown in \cite{Automatica} using the total energy as a Lyapunov function.\par

\subsubsection{CASE 3: Smaller Initial Estimate Error and Low Noise Levels, with Filter Gains as Before}
\begin{figure}[h!]
\begin{center}
\includegraphics[height=3.2in]{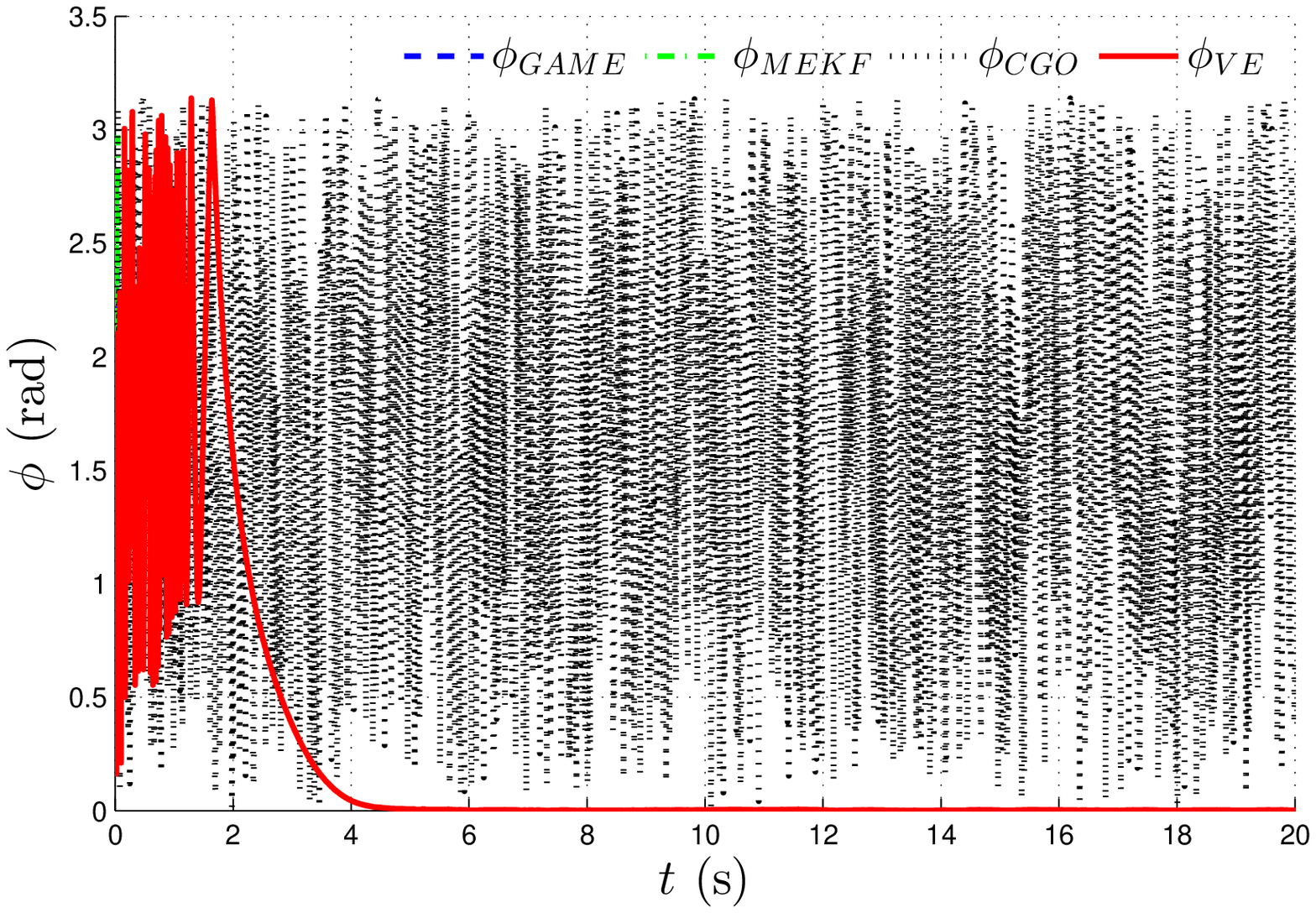}    
\caption{Principal Angles of Attitude Estimation Error For Noise Levels In \cite{Automatica}.}  
\label{fig4}                                 
\end{center}                                 
\end{figure}
\begin{figure}[h!]
\begin{center}
\includegraphics[height=3.2in]{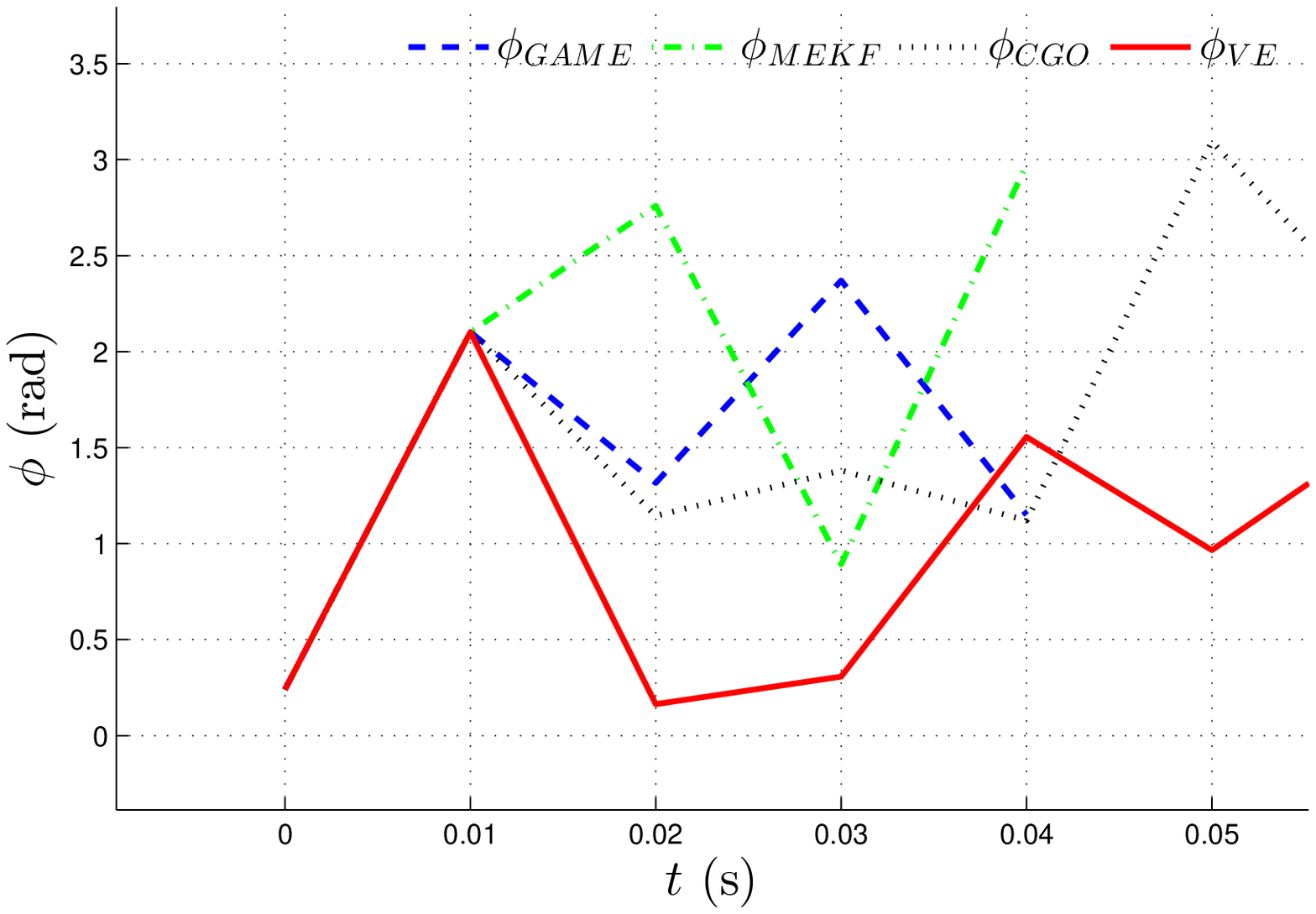}    
\caption{Principal Angles of Attitude Estimation Error For Noise Levels In \cite{Automatica} For Initial Few Steps.}  
\label{fig5}                                 
\end{center}                                 
\end{figure}\par

\subsubsection{Discussion}
Besides, considering the run-times of these filters, one can notice that the explicit version of the variational observer (which has the same behavior as the implicit version) is considerably faster than the others. Using the aforementioned initial conditions and filter gains, the run-times are depicted for a simulation duration of $20$s for these four filters in Table \ref{Tab1}.

\begin{table}[h]
\caption{Run-time of different Filters}\label{Tab1}
\hspace{.75in}\resizebox{4.4in}{!} {
\begin{tabular}{|c||c|c|c|c|}
  \hline
\raisebox{1.5\height}{\color{white} a} Filter & GAME&MEKF&CGO&Var. Est.\raisebox{-1.5\height}{\color{white} a}\\
  \hline
\raisebox{1.5\height}{\color{white} a}   Run-Time & 0.6864 s&0.6240 s&0.4304 s&0.1716 s\raisebox{-1.5\height}{\color{white} a} \\
  \hline
\end{tabular}}
\end{table}

It can be seen from this table that the explicit variational estimator is computationally faster than 
the other filters used in this comparison. This advantage make this LGVI the best choice in real-time experiments, where the computation time is a bottleneck \cite{ICC2015}.


\subsection{Conclusion}\label{C5Sec7}
This work presents both an implicit and an explicit discrete-time attitude and angular velocity 
estimation scheme on the Lie group of rigid body rotational motion, assuming that measurements 
of inertial vectors and angular velocity are available at a high sample rate in discrete-time. 
A discrete-time filter is obtained, in the form of an implicit first order Lie group 
variational integrator, by applying the discrete Lagrange-d'Alembert principle to the discrete 
Lagrangian and a dissipation term dependent on the angular velocity estimation error. An 
explicit filter is also derived as the adjoint method corresponding to this implicit filter. The behavior of this estimation scheme is compared with three state-of-the-art observers for attitude estimation. Using a realistic set of data for a rigid body, numerical simulations show that the variational estimator performs as good as other filters, taking less computational budget. Furthermore, unlike the GAME filter and MEKF, it always is stable and its convergence is not dependent on the type and level of measurement noise.

\newpage

\section{THE VARIATIONAL ATTITUDE ESTIMATOR IN THE PRESENCE OF BIAS IN ANGULAR VELOCITY MEASUREMENTS} 

\label{CH06_Bias} 

\hspace{\parindent}
\textit{This chapter is adapted from a paper submitted to the 2016 American Control Conference. The author gratefully acknowledges Dr. Amit K. Sanyal for his participation.}
\\
\\
{\bf{Abstract}}~
In this work, the variational attitude estimator 
is generalized to include angular velocity measurements that have a constant bias in addition to 
measurement noise. It is shown that the state estimates converge to true states almost 
globally over the state space if the measurements are perfect. Further, it is shown that the 
bias estimates converge to the true bias once the state estimates converge to the true states.

\subsection{Measurement Model}\label{C6Sec3}

For rigid body attitude estimation, assume that some inertially-fixed vectors are measured 
in a body-fixed frame, along with body angular velocity measurements having a constant bias.
Let $k\in\mathbb{N}$ known inertial vectors be measured in the coordinate frame fixed to the 
rigid body as introduced in \ref{C4S1}. Moreover, the direction vector measurements are given by
\begin{align}
u_j^m=R\T e_j+ \nu_j\, \mbox{ or }\, U^m = R\T E+ N, \label{C6DirMeasMod}
\end{align}
where $\nu_j\in\bR^3$ is an additive measurement noise vector and $N\in\bR^{3\times k}$ 
is the matrix with $\nu_j$ as its $j^{\mbox{th}}$ column vector.

The attitude kinematics for a rigid body is given by Poisson's equation \eqref{Poisson} and the measurement model for angular velocity is 
\begin{align}
\Omega^m=\Omega+ w +\beta,\label{C6AngMeasMod}
\end{align}
where $w\in\bR^3$ is the measurement error in angular velocity and $\beta\in\bR^3$ is 
a vector of bias in angular velocity component measurements, which we consider to be a 
constant vector. 

\subsection{Attitude State and Bias Estimation Based on the Lagrange-d'Alembert 
Principle}\label{C6Sec4}
In order to obtain attitude state estimation schemes from continuous-time vector and angular 
velocity measurements, we apply the Lagrange-d'Alembert principle to an action functional of 
a Lagrangian of the state estimate errors, with a dissipation term linear 
in the angular velocity estimate error. This section presents an estimation scheme obtained 
using this approach, as well as stability and convergence properties of this estimator. 

\subsubsection{Lagrangian Constructed from Measurement Residuals}
The ``energy" contained in the errors between the estimated and the measured 
inertial vectors is given by $\cU (\hat R,U^m)$, where $\cU:\SO\times\bR^{3\times k}\to\bR$ is 
defined by \eqref{attindex} and depends on the attitude estimate. Let $\hat\Omega\in\bR^3$ 
and $\hat\beta\in\bR^3$ denote the estimated angular velocity and bias vectors, 
respectively. The ``energy" contained in the vector error between the estimated and the 
measured angular velocity is then given by
\be \cT (\hat\Omega,\Omega^m,\hat\beta)=\frac{m}{2}(\Omega^m-\hat\Omega-\hat\beta) \T 
(\Omega^m-\hat\Omega-\hat\beta). \label{C6angvelindex} \ee
where $m$ is a positive scalar. One can consider the Lagrangian composed of these 
``energy" quantities, as follows:
\begin{align} 
\cL (\hat R,U^m,\hat\Omega,&\Omega^m,\hat\beta) = \cT(\hat\Omega,\Omega^m,\hat\beta)-
\cU (\hat R,U^m) \label{C6cLag}\\
=&\frac{m}{2}(\Omega^m-\hat\Omega-\hat\beta) \T (\Omega^m-\hat\Omega-\hat\beta)
-\Phi\Big( \frac12\lan E-\hat{R}U^m,(E-\hat{R}U^m)W\ran\Big). \nn
\end{align}
If the estimation process is started at time $t_0$, then the action functional 
of the Lagrangian \eqref{C6cLag} over the time duration $[t_0,T]$ is expressed as
\begin{align}
\cS (\cL(\hat R&,U^m,\hat\Omega,\Omega^m))= \int_{t_0}^T \big(\cT (\hat\Omega,
\Omega^m,\hat\beta)- \cU (\hat R,U^m)\big)\di s\label{C6eq:J6}\\
= \int_{t_0}^T& \bigg\{ \frac{m}{2}(\Omega^m-\hat\Omega-\hat\beta) \T (\Omega^m-
\hat\Omega-\hat\beta)
- \Phi\Big(\frac{1}{2}\lan E-\hat{R}U^m,(E-\hat{R}U^m)W\ran\Big) \bigg\} \di s.
 \nn
\end{align}

\subsubsection{Variational Filtering Scheme}
Define the angular velocity measurement residual and the dissipation term:
\be \omega := \Omega^m-\hat\Omega- \hat\beta, \;\ \tau_D= D\omega, \label{C6angvelres} \ee
where $D\in\bR^{3\times 3}$ is a positive definite filter gain matrix. Consider attitude state estimation in continuous time in the presence of measurement noise 
and initial state estimate errors. Applying the Lagrange-d'Alembert principle to the action 
functional $\cS (\cL(\hat R,U^m,\hat\Omega,\Omega^m))$ given by \eqref{C6eq:J6}, in the 
presence of a dissipation term linear in $\omega$, leads to the 
following attitude and angular velocity filtering scheme. 
\begin{theorem} \label{C6filterN}
The filter equations for a rigid body with the attitude kinematics \eqref{Poisson} and with 
measurements of vectors and angular velocity in a body-fixed frame, are of the form
\begin{align}
\begin{cases}
&\dot{\hat{R}}=\hat{R}\hat{\Omega}^\times=\hat{R}(\Omega^m-\omega-\hat\beta)^\times,
\vspace{3mm}\\
&m\dot{\omega}= -m\hat{\Omega}\times \omega+\Phi'\big(\cU^0(\hat{R},U^m)\big)S_L(\hat{R})-D\omega,\vspace{3mm}\\
&\hat\Omega=\Omega^m-\omega-\hat\beta,
\end{cases}
\label{C6eq:filterNoise}
\end{align}
where $\hat{R}(t_0)=\hat{R}_0$, $\omega(t_0)=\omega_0
=\Omega^m_0-\hat\Omega_0$, $S_L(\hat R)= \mrm{vex}\big(L\T \hat R - \hat R \T L\big)\in\bR^3$, 
$L=EW(U^m)\T$ and $W$ is chosen to satisfy the conditions in Lemma 2.1 of \cite{Automatica}.
\label{C6filter1}
\end{theorem}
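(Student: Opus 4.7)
The plan is to derive the filter equations by applying the Lagrange--d'Alembert principle to the action functional $\cS$ in \eqref{C6eq:J6}, following the same pattern as the proof of Proposition \ref{filter1} in Chapter \ref{CH04_VE}, with the only substantive modification being that $\omega$ is now defined with the bias estimate subtracted, i.e.\ $\omega = \Omega^m - \hat\Omega - \hat\beta$.

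First I would introduce a left-trivialized Lie-algebra-valued variation $\delta\hat R = \hat R\,\Sigma^\times$ with $\Sigma\in\bR^3$ and fixed endpoints $\Sigma(t_0)=\Sigma(T)=0$, and a temporally-independent variation of $\hat\beta$ equal to zero (i.e.\ treat $\hat\beta$ as a parameter entering the Lagrangian, since the theorem provides no separate kinematic constraint for the bias estimate). Commuting $\delta$ and $\frac{d}{dt}$ in the kinematic constraint $\dot{\hat R}=\hat R\hat\Omega^\times$ yields the standard induced variation $\delta\hat\Omega = \dot\Sigma + \hat\Omega\times\Sigma$ on $\SO$.

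Next I would compute the first variation of each piece of the Lagrangian. The variation of the potential-like term $\cU(\hat R,U^m)=\Phi(\cU^0(\hat R,U^m))$ is unchanged from Proposition \ref{filter1}: by the chain rule and the trace-inner-product manipulations given there, $\delta\cU = \Phi'(\cU^0(\hat R,U^m))\, S_L^{\T}(\hat R)\,\Sigma$ with $L=EW(U^m)^{\T}$. For the kinetic-energy-like term $\cT(\hat\Omega,\Omega^m,\hat\beta)=\tfrac{m}{2}\omega^{\T}\omega$, using $\delta\hat\beta\equiv 0$ gives $\delta\cT = -m\,\omega^{\T}\delta\hat\Omega = -m\,\omega^{\T}(\dot\Sigma+\hat\Omega\times\Sigma)$.

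Then the Lagrange--d'Alembert principle, with Rayleigh-type dissipation $\tau_D = D\omega$ conjugate to the angular-velocity residual, reads $\delta\cS + \int_{t_0}^T \tau_D^{\T}\Sigma\,\di t = 0$. Substituting $\delta\cT$ and $\delta\cU$, integrating the $-m\,\omega^{\T}\dot\Sigma$ term by parts (the boundary contribution vanishes by the endpoint conditions on $\Sigma$), and using the triple-product identity $\omega^{\T}(\hat\Omega\times\Sigma) = -(\hat\Omega\times\omega)^{\T}\Sigma$ converts the integrand into $\bigl[m\dot\omega + m(\hat\Omega\times\omega) - \Phi'(\cU^0)S_L(\hat R) + D\omega\bigr]^{\T}\Sigma$. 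Because $\Sigma$ is arbitrary on $(t_0,T)$, the fundamental lemma of the calculus of variations yields the second equation of \eqref{C6eq:filterNoise}; the first and third equations are then simply the assumed kinematics rewritten via the definition $\hat\Omega = \Omega^m - \omega - \hat\beta$. Since the bias enters the derivation only through the definition of $\omega$ and is never differentiated or varied, no new technical difficulty arises relative to the bias-free case; the main point of care is bookkeeping, namely ensuring that the modified $\omega$ is used consistently so that the $\hat\beta$-dependence is correctly absorbed into the dissipation channel rather than producing spurious terms in the Euler--Lagrange equation.
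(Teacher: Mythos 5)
Your proposal is correct and follows essentially the same route as the paper: vary only $\hat R$ and $\hat\Omega$ (leaving $\hat\beta$ unvaried, since its update law is introduced separately in the stability theorem), reuse the computation of $\delta\cU$ from the bias-free case, obtain $\delta\cT=-m\,\omega\T(\dot\Sigma+\hat\Omega\times\Sigma)$ with the redefined residual $\omega=\Omega^m-\hat\Omega-\hat\beta$, and integrate by parts in the Lagrange--d'Alembert identity with $\tau_D=D\omega$. The sign bookkeeping via the triple-product identity and the vanishing boundary term match the paper's derivation, so no gap remains.
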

{\em Proof}: In order to find an estimation scheme that filters the measurement noise in the 
estimated attitude, take the first variation of the action functional \eqref{C6eq:J6} with respect to 
$\hat R$ and $\hat\Omega$ and apply the Lagrange-d'Alembert principle with the dissipative 
term in \eqref{C6angvelres}. Consider the potential term $\cU^0(\hat R,U^m)$ as defined by \eqref{U0def}.
Taking the first variation of this function with respect to $\hat{R}$ gives
\begin{align}
\delta\cU^0&=\lan -\delta\hat RU^m,(E-\hat RU^m)W \ran\nn\\
&=\frac{1}{2}\lan \Sigma^\times,U^m WE\T\hat R-\hat R\T EW(U^m)\T \ran,\nn\\
&=\frac{1}{2}\lan \Sigma^\times, L\T\hat R-\hat R\T L \ran=S\T_L(\hat R)\Sigma.
\end{align}
Now consider $\cU(\hat R,U^m)=\Phi\big(\cU^0(\hat R,U^m)\big)$. Then,
\begin{align}
\delta\cU=\Phi'\big(\cU^0(\hat R,U^m)\big)\delta\cU^0=\Phi'\big(\cU^0(\hat R,U^m)\big)S\T_L(\hat R)\Sigma.
\end{align}
Taking the first variation of the kinematic energy term associated with the artificial system \eqref{C6angvelindex} with respect to $\hat\Omega$ yields
\begin{align}
\delta\cT&=-m(\Omega^m-\hat\Omega-\hat\beta)\T\delta\hat\Omega=-m(\Omega^m-\hat\Omega
-\hat\beta)\T(\dot\Sigma+\hat\Omega\times\Sigma)\nn\\
&=-m\omega\T(\dot\Sigma+\hat\Omega\times\Sigma),
\end{align}
where $\omega$ is as given by \eqref{C6angvelres}. 
Applying Lagrange-d'Alembert principle leads to
\begin{align}
&~~~~~\delta\cS+\int_{t_0}^T\tau_D\T\Sigma\di t=0\\
&\Rightarrow \int_{t_0}^T\Big\{-m\omega\T(\dot\Sigma+\hat\Omega\times\Sigma)-\Phi'\big(\cU^0(\hat R,U^m)\big)S\T_L(\hat R)\Sigma+\tau_D\T\Sigma\Big\}\di t=0 \Rightarrow\nn\\
& -m\omega\T\Sigma\big|_{t_0}^T+\int_{t_0}^T m\dot{\omega}\T\Sigma\di t
=\int_{t_0}^T\Big\{m\omega\T\hat\Omega^\times+\Phi'\big(\cU^0(\hat R,U^m)\big)S\T_L(\hat R)-\tau_D\T\Big\}\Sigma\di t,\nn
\end{align}
where the first term in the left hand side vanishes since $\Sigma(t_0)=\Sigma(T)=0$. After 
substituting the dissipation term $\tau_D=D\omega$, one obtains the second equation in 
\eqref{C6eq:filterNoise}. \hfill\ensuremath{\square}

\subsection{Stability and Convergence of Variational Attitude Estimator}\label{C6Sec5}
The variational attitude estimator given by Theorem \ref{C6filterN} can be used for constant 
or time-varying bias in the angular velocity measurements given by the measurement model 
\eqref{C6AngMeasMod}. The following analysis gives the stability and convergence properties of 
this estimator for the case that $\beta$ in equation \ref{C6AngMeasMod} is constant.

\subsubsection{Stability of Variational Attitude Estimator} 
Prior to analyzing the stability of this attitude estimator, it is useful and instructive to interpret 
the energy-like terms used to define the Lagrangian in equation \eqref{C6cLag} in terms of state 
estimation errors. The following result shows that the measurement residuals, and therefore 
these energy-like terms, can be expressed in terms of state estimation errors in the case of 
perfect measurements. 
\begin{proposition}
Define the state estimation errors
\begin{align} 
&Q= R \hat R\T \, \mbox{ and }\, \omega= \Omega-\hat\Omega-\tilde\beta, \label{C6esterrs} \\
&\mbox{where }\, \tilde\beta = \beta -\hat\beta. \label{C6biaserr} 
\end{align}
In the absence of measurement noise, the energy-like terms \eqref{attindex} and 
\eqref{C6angvelindex} can be expressed in terms of these state estimation errors as follows:
\begin{align}
&\cU (Q) =\Phi \Big(\lan I-Q, K\ran \Big)\, \mbox{ where }\, K=EWE\T, \label{C6attindperf} \\
&\mbox{and } \cT (\omega)= \frac{m}{2} \omega\T\omega. \label{C6angvindperf} 
\end{align} 
\end{proposition}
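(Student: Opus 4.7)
Both identities follow by direct substitution of the noise-free measurement models into the two energy-like terms comprising the Lagrangian \eqref{C6cLag}, and my plan is to carry out these substitutions and simplify using the trace inner product structure.

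For the potential energy, I would set $N=0$ in \eqref{C6DirMeasMod} so that $U^m = R\T E$, and then observe that
\begin{equation*}
E - \hat R U^m = E - \hat R R\T E = (I - Q\T) E,
\end{equation*}
using $\hat R R\T = (R\hat R\T)\T = Q\T$. Plugging this into the definition \eqref{U0def} of $\cU^0$ and expanding via the cyclic property of the trace, I would obtain $\cU^0 = \tfrac12 \tr\!\big((I-Q)(I-Q\T) K\big)$ with $K = EWE\T$. The remaining algebra is routine: expand $(I-Q)(I-Q\T) = 2I - Q - Q\T$ (using $QQ\T = I$) and collapse the two cross terms via $\tr(Q\T K) = \tr(QK)$, which holds because $K$ is symmetric. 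This yields $\cU^0 = \tr(K) - \tr(QK) = \lan I-Q, K\ran$, and composing with $\Phi$ gives the stated identity $\cU(Q) = \Phi(\lan I-Q, K\ran)$.

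For the kinetic-like term the work is lighter: setting $w=0$ in \eqref{C6AngMeasMod} gives $\Omega^m - \hat\Omega - \hat\beta = (\Omega - \hat\Omega) + (\beta - \hat\beta)$, which coincides with the estimation error $\omega$ defined in the proposition (modulo the sign convention adopted for $\tilde\beta$, which should be chosen so that the filter residual $\omega = \Omega^m - \hat\Omega - \hat\beta$ matches $\Omega - \hat\Omega - \tilde\beta$ when $w=0$). The claimed $\cT(\omega) = \tfrac{m}{2}\omega\T\omega$ is then immediate from \eqref{C6angvelindex}.

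The ``main obstacle,'' such as it is, is the bookkeeping identity $\tr(Q\T K) = \tr(QK)$ that lets the cross terms combine; this requires symmetry of $K$, a property that was engineered into the weight matrix $W$ via the construction in Lemma 2.1. No topological or analytic machinery is required — this proposition is essentially a rewriting of the energy terms in error coordinates and serves as the bridge to the Lyapunov-based stability argument that follows.
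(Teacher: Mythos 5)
Your proof is correct and follows essentially the same route as the paper's: substitute $N=0$ and $w=0$ into the measurement models and rewrite the two energy terms in error coordinates, with the $\lan I-Q,K\ran$ identity being precisely the computation the paper delegates to the earlier attitude-estimation chapter rather than spelling out. Your side remark about the sign of $\tilde\beta$ is also well taken: with $\tilde\beta=\beta-\hat\beta$ the noise-free residual is $\omega=\Omega^m-\hat\Omega-\hat\beta=(\Omega-\hat\Omega)+\tilde\beta$, so the minus sign in \eqref{C6esterrs} is a typo, as is confirmed by the error kinematics $\dot Q=Q\big(\hat R(\omega-\tilde\beta)\big)^\times$ used later, which requires $\omega-\tilde\beta=\Omega-\hat\Omega$.
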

{\em Proof}: The proof of the above statement is obtained by first substituting $N=0$ and $w=0$ in 
equations \eqref{C6DirMeasMod} and \eqref{C6AngMeasMod}, respectively. The resulting expressions 
for $U^m$ and $\Omega^m$ are then substituted back into equations \eqref{attindex} and 
\eqref{C6angvelindex} respectively. Note that the same variable $\omega$ is used to represent 
the angular velocity estimation error for both cases: with and without measurement noise. 
Expression \eqref{C6attindperf} is also derived in \cite{Automatica}. 
\hfill\ensuremath{\square}
 
The stability of this estimator, for the case of constant rate gyro bias vector $\beta$, is given by  
the following result.
\begin{theorem}\label{C6stabproof}
Let $\beta$ in equation \eqref{C6AngMeasMod} be a constant vector. Then the variational attitude 
estimator given by equations \eqref{C6eq:filterNoise}, in addition to the following equation for 
update of the bias estimate:
\be \dot{\hat\beta}=  \Phi' \big(\cU^0(\hat{R},U^m)\big) P^{-1} S_L(\hat{R}), \label{C6biasest} \ee
is Lyapunov stable for $P\in\bR^{3\times 3}$ positive definite.
\end{theorem}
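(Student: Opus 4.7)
The plan is to construct an augmented Morse--Lyapunov function on $\SO\times\bR^3\times\bR^3$ that extends the total-energy function used in Theorem \ref{thmFilt0} by an extra quadratic penalty on the bias estimation error weighted by $P$:
\begin{equation*}
V(Q,\omega,\tilde\beta) = \Phi\bigl(\lan I-Q, K\ran\bigr) + \frac{m}{2}\omega\T\omega + \frac{1}{2}\tilde\beta\T P\tilde\beta.
\end{equation*}
Since $K=EWE\T$ is chosen to satisfy the eigendecomposition conditions of Lemma \ref{lem1}, the first term is a Morse function on $\SO$ with a unique global minimum at $Q=I$; together with the positive definiteness of $P$ and the positivity of $m$, this makes $V$ positive definite in the error state $(Q,\omega,\tilde\beta)$ about $(I,0,0)$.

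I would then compute $\dot V$ along trajectories of the closed-loop system \eqref{C6eq:filterNoise}--\eqref{C6biasest} in the noise-free case ($w=0$, $N=0$). From $\Omega^m=\Omega+\beta$ and $\omega=\Omega^m-\hat\Omega-\hat\beta$ one obtains the kinematic identity $\Omega-\hat\Omega=\omega-\tilde\beta$, so the attitude estimate error evolves as $\dot Q = Q\bigl(\hat R(\omega-\tilde\beta)\bigr)^\times$. Repeating the calculation \eqref{Phidot} with this replacement yields
\begin{equation*}
\frac{d}{dt}\Phi\bigl(\lan I-Q,K\ran\bigr)= -\Phi'\bigl(\cU^0(\hat R,U^m)\bigr)\,S_L\T(\hat R)\,(\omega-\tilde\beta).
\end{equation*}
Substituting the second filter equation in \eqref{C6eq:filterNoise} and using $\omega\T(\hat\Omega\times\omega)=0$ gives $m\omega\T\dot\omega = \Phi'(\cU^0(\hat R,U^m))\,\omega\T S_L(\hat R) - \omega\T D\omega$, while the adaptive law \eqref{C6biasest} combined with $\dot\beta=0$ gives $\tilde\beta\T P\dot{\tilde\beta}=-\Phi'(\cU^0(\hat R,U^m))\,\tilde\beta\T S_L(\hat R)$. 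Summing the three contributions, the $S_L(\hat R)$-dependent cross terms telescope and collapse to $\dot V = -\omega\T D\omega\le 0$.

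This is precisely the role of the specific form of \eqref{C6biasest}: the factor $P^{-1}S_L(\hat R)$ scaled by $\Phi'(\cU^0)$ is engineered so that the $\tilde\beta\T S_L$ piece created by $\dot\Phi$ is exactly absorbed by $\tilde\beta\T P\dot{\tilde\beta}$. Once that cancellation is verified, Lyapunov stability of $(I,0,0)$ follows immediately from $V$ being positive definite and $\dot V$ being negative semidefinite, and the bias estimation error is automatically bounded along trajectories. The main obstacle I anticipate is keeping the sign bookkeeping consistent along the chain $\omega\mapsto\Omega-\hat\Omega\mapsto\dot Q\mapsto\dot\Phi$ --- a single sign slip flips one of the $\tilde\beta\T S_L$ terms and destroys the cancellation. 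After that, any subsequent asymptotic-convergence claim (e.g.\ $\omega\to 0$ via a LaSalle-type argument, and $\tilde\beta\to 0$ under a suitable persistency-of-excitation condition on $S_L(\hat R)$) is routine adaptive-observer reasoning rather than a new structural obstacle.
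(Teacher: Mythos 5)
Your proposal is correct and follows essentially the same route as the paper: the same augmented Morse--Lyapunov function $V(Q,\omega,\tilde\beta)=\Phi(\lan I-Q,K\ran)+\tfrac{m}{2}\omega\T\omega+\tfrac12\tilde\beta\T P\tilde\beta$, the same error kinematics $\dot Q=Q\bigl(\hat R(\omega-\tilde\beta)\bigr)^\times$, and the same cancellation of the $\tilde\beta\T S_L(\hat R)$ cross terms between $\dot\Phi$ and the adaptive law, yielding $\dot V=-\omega\T D\omega\le 0$. Your sign bookkeeping is consistent (noting $\dot{\tilde\beta}=-\dot{\hat\beta}$ since $\beta$ is constant), so nothing further is needed.
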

{\em Proof}: To show Lyapunov stability, the following Morse-Lyapunov function is considered:
\begin{align} 
\begin{split}
V (U^m,\Omega^m,\hat{R},\hat\Omega,\hat\beta) &= \frac{m}{2} (\Omega^m-\hat\Omega-\hat\beta)\T
(\Omega^m-\hat\Omega-\hat\beta) \\
&+ \Phi \big(\cU^0(\hat{R},U^m)\big) + \frac12 (\beta-\hat\beta)\T P (\beta-\hat\beta). 
\end{split} \label{C6Lyapf} 
\end{align}  
Now consider the case that there is no measurement noise, i.e., $N=0$ and $w=0$ in 
equations \eqref{C6DirMeasMod} and \eqref{C6AngMeasMod}, respectively. In this case, 
the Lyapunov function \eqref{C6Lyapf} can be re-expressed in terms of the errors $\omega$, $Q$ and 
$\tilde\beta$ defined by equations \eqref{C6esterrs}-\eqref{C6biaserr}, as follows:
\be V(Q,\omega,\tilde\beta)= \frac{m}{2}\omega\T\omega + \Phi\big(\lan I-Q, K\ran\big) +
\frac12\tilde\beta\T P\tilde\beta. \label{C6errLyap} \ee
The time derivative of the attitude estimation error, $Q\in\SO$, is obtained as:
\be \dot Q= R(\Omega-\hat\Omega)^\times\hat{R}\T= Q\big(\hat R (\omega-\tilde\beta)\big)^\times, 
\label{C6dotQ} \ee
after substituting for $\hat\Omega$ from the third of equations \eqref{C6eq:filterNoise} in the 
case of zero angular velocity measurement noise (when $\Omega^m= \Omega+ \beta$). The 
time derivative of the Morse-Lyapunov function expressed as in \eqref{C6errLyap} can now be obtained 
as follows: 
\begin{align}
\dot V(Q,\omega,\tilde\beta)= m \omega\T\dot\omega -\Phi'\big(\lan I-Q, K\ran\big) S_L\T (\hat R) 
(\omega-\tilde\beta) -\tilde\beta\T P\dot{\hat\beta}. \label{C6dotV} 
\end{align}
After substituting equation \eqref{C6biasest} and the second of equations \eqref{C6eq:filterNoise} in the 
above expression, one can simplify the time derivative of this Lyapunov function along the 
dynamics of the estimator as
\be \dot V(Q,\omega,\tilde\beta)= -\omega\T D\omega \le 0. \label{C6Vdotsimp} \ee
The time derivative \eqref{C6Vdotsimp} is negative semi-definite in the states $(Q,\omega,\tilde\beta)
\in\Ta\SO\times\bR^3$ of this estimator. This proves the result. \hfill\ensuremath{\square}

Lemma \ref{lem1} and \ref{lem2} are required to show the convergence of estimation errors. Lemma \ref{lem1} provides guidelines on how to choose the weight matrix $W$ in Wahba's 
cost function such that it is a Morse function on $\SO$. Choosing $W$ according to these 
guidelines, which depend on the set of inertial vector observed (denoted $E$ here), ensures 
that this is a smooth function on $\SO$ that has the minimum possible number of critical points 
as dictated by the Morse lemma~\cite{bo:miln}. Note that the estimated attitude coincides with the 
true attitude when $Q=I$, which is the minimum of this Morse function according to Lemma 
\ref{lem2}.

\subsubsection{Domain of Convergence of Variational Attitude Estimator}
The domain of convergence of this estimator is given by the following result. 
\begin{theorem}\label{C6convproof}
In the absence of measurement noise, the variational attitude estimator with biased velocity 
measurements, given by eqs. \eqref{C6eq:filterNoise} and \eqref{C6biasest}, converges asymptotically 
to $(Q,\omega,\tilde\beta)=(I,0,0)\in\Ta\SO\times\bR^3$. Further, the domain of attraction is 
a dense open subset of $\Ta\SO\times\bR^3$.
\end{theorem}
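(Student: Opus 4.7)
The plan is to proceed in close parallel to the proof of Theorem \ref{thmFilt0} of Chapter \ref{CH04_VE}, adapting the Lyapunov-plus-invariance argument to the augmented state space $\Ta\SO\times\bR^3$ and to the non-autonomous error dynamics in $(Q,\omega,\tilde\beta)$ that now include the bias update \eqref{C6biasest}. First, I would reuse the Morse-Lyapunov function $V(Q,\omega,\tilde\beta)$ of \eqref{C6errLyap} from Theorem \ref{C6stabproof}, which already yields $\dot V = -\omega\T D\omega \le 0$. Since $V$ is positive definite in $(Q,\omega,\tilde\beta)$ and non-increasing, all three error components remain bounded along trajectories, with $Q\in\SO$ bounded automatically by compactness. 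Assuming the true angular velocity $\Omega$ is bounded, every term on the right-hand side of the $\omega$-dynamics in \eqref{C6eq:filterNoise} is bounded, hence $\dot\omega$ is bounded, $\omega\T D\omega$ is uniformly continuous, and Barbalat's lemma applied to the convergent integral $\int_{t_0}^\infty \omega\T D\omega\,\di t < \infty$ yields $\omega\to 0$.

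Next I would characterize the largest positively invariant subset of $\{\omega\equiv 0\}$. On any such invariant trajectory, $\dot\omega\equiv 0$ together with the $\omega$-dynamics forces $\Phi'(\cU^0(\hat R,U^m))\,S_L(\hat R)\equiv 0$; since $\Phi'>0$, this gives $S_L(\hat R)\equiv 0$. Using $L=KR$ in the noise-free case and the skew-symmetric identity $\mrm{vex}(R\T A R)=R\T\mrm{vex}(A)$, one obtains $S_L(\hat R)=R\T\mrm{vex}(KQ\T-QK)$, so the condition reduces to $KQ\T=QK$, whose solution set in $\SO$ coincides with the critical set $C_Q=\{I,Q_1,Q_2,Q_3\}$ identified by Lemmas \ref{lem1} and \ref{lem2}. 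Since $C_Q$ is discrete and $Q(\cdot)$ is continuous, $Q$ must be constant on the invariant set, hence $\dot Q\equiv 0$. Substituting $\omega\equiv 0$ and $\dot Q\equiv 0$ into the attitude error kinematics $\dot Q=Q(\hat R(\omega-\tilde\beta))^\times$ then yields $\hat R\tilde\beta\equiv 0$, and nonsingularity of $\hat R\in\SO$ collapses this to $\tilde\beta\equiv 0$. Therefore the largest positively invariant subset of $\{\omega=0\}$ is exactly $\mathscr{I}^\star=\{(Q_c,0,0):Q_c\in C_Q\}$, and every trajectory converges to $\mathscr{I}^\star$.

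To identify the domain of attraction of $(I,0,0)$, I would then follow the unstable-manifold argument at the end of the proof of Theorem \ref{thmFilt0}, combined with the Hessian classification of Lemma \ref{lem2}. The quadratic-positive terms $\frac{m}{2}\omega\T\omega$ and $\frac{1}{2}\tilde\beta\T P\tilde\beta$ contribute no negative Hessian directions at any equilibrium, so the Hessian of $V$ at each undesired equilibrium $(Q_i,0,0)$ has exactly $\mrm{index}(Q_i)\in\{1,2,3\}$ negative directions, all arising from the attitude component. Because $V$ is strictly monotone along backward-time trajectories that leave $\mathscr{I}^\star$, the stable manifolds $\cM^S_i$ of the $(Q_i,0,0)$ are embedded submanifolds of $\Ta\SO\times\bR^3$ of codimension at least $\mrm{index}(Q_i)\ge 1$. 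Their union is therefore a closed nowhere-dense subset, and its complement is a dense open subset of $\Ta\SO\times\bR^3$ on which trajectories converge to $(I,0,0)$.

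The principal obstacle will be the non-autonomous coupling between attitude and bias: the error dynamics depend explicitly on the driving signal $R(t)$ through $\hat R$, so a classical autonomous LaSalle invariance theorem is not directly available, and convergence of $\tilde\beta$ cannot be read off from $\omega\to 0$ alone. It relies crucially on the invariant-set step in which discreteness of $C_Q$ forces $Q$ to be constant, which in turn collapses the bias error through the attitude kinematics --- a step with a persistent-excitation flavor that must be executed carefully rather than extracted from a quadratic Lyapunov bound on $\tilde\beta$.
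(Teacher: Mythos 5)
Your proposal follows essentially the same route as the paper's proof: the Morse--Lyapunov function \eqref{C6errLyap} with $\dot V=-\omega\T D\omega$, a Barbalat/Khalil Theorem~8.4 argument for $\omega\to 0$, identification of the limit set with $\{(Q_i,0,0):Q_i\in C_Q\}$, and the stable-manifold codimension argument (codimension equal to the index of $Q_i$) to conclude the domain of attraction is open and dense. The only difference is presentational --- you spell out explicitly why $\tilde\beta\to 0$ on the invariant set (discreteness of $C_Q$ forces $Q$ constant, and the kinematics $\dot Q=Q(\hat R(\omega-\tilde\beta))^\times$ then collapses $\tilde\beta$), a step the paper encodes more tersely by including $\psi=0$ in the definition of $\mathscr{I}$ in \eqref{C6invset}; your version is a correct and slightly more careful execution of the same idea.
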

{\em Proof}: Note that the error dynamics for the attitude estimate error is given by 
\be \dot Q = Q \psi^\times \, \mbox{ where }\, \psi=\hat R(\omega-\tilde\beta), \label{C6Qdot} \ee
while the error dynamics for the angular velocity estimate error $\omega$ is given by the 
second of equations \eqref{C6eq:filterNoise} and the bias estimate error dynamics is obtained 
from \eqref{C6biaserr} and \eqref{C6biasest} as  
\be \dot{\tilde\beta}= -\Phi' \big(\cU^0(\hat{R},U^m)\big) P^{-1} S_L(\hat{R}). \label{C6betildot} \ee
Therefore, the error dynamics for $(Q,\omega,\tilde\beta)$ is non-autonomous, since they depend 
explicitly on $(\hat R,\hat\Omega)$. In the absence of measurement noise, considering 
\eqref{C6errLyap} and \eqref{C6Vdotsimp} and applying Theorem 8.4 in \cite{khal}, one can conclude 
that $\omega\T D\omega\rightarrow 0$ as $t\rightarrow \infty$, which consequently implies 
$\omega\rightarrow0$. Thus, the positive limit set for this system is contained in
\begin{align}
\cE = \dot{V}^{-1}(0)=\big\{(Q,\omega,\tilde\beta)\in\Ta\SO\times\bR^3:\omega\equiv0\big\}. 
\label{C6dotV0}
\end{align}
Substituting $\omega\equiv 0$ in the filter equations \eqref{C6eq:filterNoise} in the absence of 
measurement noise, we obtain the positive limit set where $\dot V\equiv 0$ (or $\omega\equiv 0$) 
as the set
\begin{align}
\begin{split}
\mathscr{I} &= \big\{(Q,\omega,\tilde\beta)\in\Ta\SO\times\bR^3: S_K(Q)\equiv 0, 
\omega\equiv0, \psi= 0 \big\} \\
&= \big\{(Q,\omega,\tilde\beta)\in\SO\times\bR^3: Q \in C_Q,\, \omega\equiv0,\, \tilde\beta=0 \big\},
\end{split} \label{C6invset}
\end{align}
where $\psi\in\bR^3$ is defined by \eqref{C6Qdot} and $C_Q$ is the set of critical points of 
$\lan I-Q, K\ran$, given by equation (12) or (20) in~\cite{Automatica}. Therefore, in the absence 
of measurement errors, all the solutions of this filter converge asymptotically to the set 
$\mathscr{I}$. Thus, the attitude estimate error converges to the set of critical points of 
$\lan I-Q, K\ran$ in this intersection. The unique global minimum of the Morse-Lyapunov 
function $V$ is at $(Q,\omega,\tilde\beta)=(I,0,0)$ according to Lemma \ref{lem2}; therefore, 
this state estimation error is asymptotically stable.

Now consider the set
\be \mathscr{C}= \mathscr{I}\setminus (I,0,0),  \label{C6othereqb} \ee
which consists of all stationary states that the estimation errors may converge to, besides 
the desired estimation error state $(I,0,0)$. Note that all states in the stable manifold of a 
stationary state in $\mathscr{C}$ will converge to this stationary state. 
From the properties of the critical points $Q_i\in C_Q\setminus (I)$ of $\Phi(\lan K, I- Q\ran)$ 
given in Lemma \ref{lem2}, we see that the stationary points in $\mathscr{I}\setminus (I,0,0)=
\big\{ (Q_i, 0, 0) : Q_i\in C_Q\setminus (I)\big\}$ have stable manifolds whose dimensions
depend on the index of $Q_i$ in $\SO$. Since the angular velocity estimate error $\omega$ 
and the bias estimate error $\tilde\beta$ converge globally to the zero vector according to 
\eqref{C6invset}, the dimension of the stable manifold $\cM^S_i$ of $(Q_i, 0, 0)\in\SO\times\bR^3$ is 
\be \dim (\cM^S_i) = 6+(3-\,\mbox{index of } Q_i)= 9- \,\mbox{index of } Q_i. \label{C6dimStabM} \ee
Therefore, the stable manifolds of $(Q,\omega,\tilde\beta)=(Q_i,0,0)$ are six-dimensional, 
seven-dimensional, or eight-dimensional, depending on the index of $Q_i\in C_Q\setminus (I)$ 
according to \eqref{C6dimStabM}. Moreover, the value of the Lyapunov function $V(Q,\omega,
\tilde\beta)$ is non-decreasing (increasing when $(Q,\omega,\tilde\beta)\notin\mathscr{I}$) 
for trajectories on these manifolds going backwards in time. This implies that the metric distance 
between error states $(Q,\omega,\tilde\beta)$ along trajectories on the stable manifolds 
$\cM^S_i$ grows with the time separation between these states, and this property does not 
depend on the choice of the metric on $\Ta\SO\times\bR^3$. Therefore, these stable manifolds 
are embedded (closed) submanifolds of $\Ta\SO\times\bR^3$ and so is their union. Clearly, all 
states starting in the complement of this union, converge to the stable equilibrium $(Q,\omega,
\tilde\beta)=(I,0,0)$; therefore the domain of attraction of this equilibrium is
\[ \mbox{DOA}\{(I,0,0)\} = \Ta\SO\times\bR^3\setminus\big\{\cup_{i=1}^3 \cM^S_i\big\}, \]
which is a dense open subset of $\Ta\SO\times\bR^3$. 
\hfill\ensuremath{\square}  

\subsection{Discrete-Time Estimator Based on the Lagrange-d'Alembert Principle}\label{C6Sec6}

\subsubsection{Discrete-Time Lagrangian}

The ``energy" in the measurement residual for attitude is discretized as:
\begin{align}
\cU (\hat R_i,U^m_i)&=\Phi\Big(\cU^0(\hat R_i,U^m_i)\Big)=\Phi \Big(\frac{1}{2}\lan E_i-\hat R_i U^m_i,(E_i-\hat R_i U^m_i)W_i\ran \Big),\label{C6dattindex}
\end{align}
where 
$\Phi: [0,\infty)\mapsto[0,\infty)$ is as defined in Section \ref{C6Sec4}. 
The ``energy" in the angular velocity measurement residual is discretized as
\be \cT (\hat\Omega_i,\Omega^m_i)=\frac{m}{2}(\Omega^m_i-\hat\Omega_i-\hat\beta_i) \T
(\Omega^m_i-\hat\Omega_i-\hat\beta_i), \label{C6dangvelindex} \ee 
where $m$ is a positive scalar. 

Similar to the continuous-time attitude estimator in \cite{Automatica}, one can express 
these ``energy" terms for the case that perfect measurements (with no measurement noise) are 
available. In this case, these ``energy" terms can be expressed in terms of the state estimate 
errors $Q_i= R_i \hat R_i\T$ and $\omega_i= \Omega_i-\hat\Omega_i-\hat\beta_i$ as follows:
\begin{align}
& \cU (Q_i)= \Phi \Big(\frac{1}{2}\lan E_i - Q_i\T E_i,(E_i - Q_i\T E_i)W_i\ran \Big)=\Phi \big( \lan I-Q_i, K_i\ran \big)\,\nn \\
& \mbox{ where }\, K_i= E_i W_i E_i\T, \mbox{ and }\,\cT (\omega_i)= \frac{m}{2} \omega_i\T\omega_i\, \mbox{ where }
m>0. \label{C6discUandT} 
\end{align}
The weights in $W_i$ can be chosen such that $K_i$ is always positive definite with distinct 
(perhaps constant) eigenvalues, as in the continuous-time estimator of \cite{Automatica}. Using 
these ``energy" terms in the state estimate errors, the discrete-time Lagrangian is expressed as:
\begin{align}
\cL (Q_i,\omega_i)= \cT (\omega_i)- \cU ( Q_i) =
\frac{m}{2} \omega_i\T\omega_i- \Phi\big( \lan I-Q_i, K_i\ran \big).
\label{C6discLag}
\end{align}

\subsubsection{First-Order Discrete-Time Attitude State Estimation Based on the Discrete 
Lagrange-d'Alembert Principle}
The following statement gives a first-order discretization, in the form of a Lie 
group variational integrator, for the continuous-time estimator of \cite{Automatica}.
\begin{proposition} \label{C6discfilter}
Let discrete-time measurements for two or more inertial vectors along with angular velocity be 
available at a sampling period of $h$. Further, let the weight 
matrix $W_i$ for the set of vector measurements $E_i$ be chosen such that $K_i=E_i 
W_i E_i\T$ satisfies Lemma 2.1 in \cite{Automatica}. A discrete-time estimator obtained by 
applying the discrete Lagrange-d'Alembert principle to the Lagrangian \eqref{C6discLag} is:
\begin{align}
&\hat R_{i+1}=\hat{R_i}\exp\big(h(\Omega_i^m-\omega_i
-\hat\beta_i)^\times\big),\label{C61stDisFil_Rhat}\\
&\hat\beta_{i+1}= \hat\beta_i+ h\Phi' \big(\cU^0(\hat{R}_i,U^m_i)\big) P^{-1} S_{L_i}(\hat{R}_i), 
\label{C61stDisFil_betahat}\\
&\hat\Omega_i=\Omega_i^m-\omega_i-\hat\beta_i, \label{C61stDisFil_Omegahat}\\
&m\omega_{i+1}=\exp(-h \hat\Omega_{i+1}^\times)\Big\{(m I_{3\times3}-hD)
\omega_i+h\Phi'\big(\cU^0(\hat R_{i+1},U^m_{i+1})\big)S_{L_{i+1}}(\hat R_{i+1})\Big\},\label{C61stDisFil_omega}
\end{align}
where $S_{L_i}(\hat R_i)=\mrm{vex}(L_i\T\hat R_i-\hat R_i\T L_i)\in\bR^3$, $\mrm{vex}(\cdot): 
\so\to\bR^3$ is the inverse of the $(\cdot)^\times$ map, $L_i=E_i W_i(U^m_i)\T\in
\mathbb{R}^{3\times3}$, $\cU^0(\hat R_i,U^m_i)$ is defined in \eqref{C6dattindex} and $(\hat R_0,\hat\Omega_0)\in\SO\times\bR^3$ are initial estimated states.
\end{proposition}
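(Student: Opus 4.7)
The plan is to mirror the derivation of Proposition 4.3 of Chapter~\ref{CH04_VE}, but with the Lagrangian \eqref{C6discLag} that incorporates the bias estimate through $\omega_i = \Omega_i^m - \hat\Omega_i - \hat\beta_i$, and to supplement the variational update for $(\hat R_i,\hat\Omega_i)$ with a discrete version of the bias estimator law \eqref{C6biasest}. First I would write the discrete action sum
\begin{equation*}
\cS_d = h\sum_{i=0}^{N-1}\Big\{\tfrac{m}{2}(\Omega^m_i-\hat\Omega_i-\hat\beta_i)\T(\Omega^m_i-\hat\Omega_i-\hat\beta_i) - \Phi\big(\cU^0(\hat R_i,U^m_i)\big)\Big\},
\end{equation*}
impose the discrete kinematics \eqref{C61stDisFil_Rhat} as a constraint, and introduce the standard left-trivialized variations $\delta\hat R_i = \hat R_i \Sigma_i^\times$ with fixed endpoints $\Sigma_0 = \Sigma_N = 0$. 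As in Proposition \ref{discfilter}, assuming commutativity of $\hat\Omega_i^\times$ and $\delta\hat\Omega_i^\times$ to first order in $h$, varying \eqref{C61stDisFil_Rhat} yields $h\,\delta\hat\Omega_i = \Sigma_{i+1} - \exp(-h\hat\Omega_i^\times)\Sigma_i$.

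Next I would compute $\delta\cU(\hat R_i,U^m_i) = \Phi'(\cU^0(\hat R_i,U^m_i))S_{L_i}\T(\hat R_i)\Sigma_i$ exactly as in the continuous-time derivation, so that applying the discrete Lagrange-d'Alembert principle with dissipative force $\tau_{D_i} = D\omega_i$ gives
\begin{equation*}
\sum_{i=0}^{N-1}\Big\{m\omega_i\T\big(\Sigma_{i+1}-\exp(-h\hat\Omega_i^\times)\Sigma_i\big) - h\Phi'(\cU^0_i)S_{L_i}\T(\hat R_i)\Sigma_i + h\tau_{D_i}\T\Sigma_i\Big\} = 0.
\end{equation*}
A standard discrete integration-by-parts (shift of index in the first term) and the arbitrariness of the interior $\Sigma_i$'s produce the implicit update
\begin{equation*}
m\exp(h\hat\Omega_{i+1}^\times)\omega_{i+1} = (mI - hD)\omega_i + h\Phi'(\cU^0_{i+1})S_{L_{i+1}}(\hat R_{i+1}),
\end{equation*}
which after multiplication by $\exp(-h\hat\Omega_{i+1}^\times)$ is exactly \eqref{C61stDisFil_omega}. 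The kinematic update \eqref{C61stDisFil_Rhat} and the definition \eqref{C61stDisFil_Omegahat} are then immediate.

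For the bias equation \eqref{C61stDisFil_betahat}, I would not derive it by varying $\hat\beta_i$ in $\cS_d$, because that variation produces a pointwise condition ($\omega_i = 0$) rather than an update law. Instead, following the strategy used to obtain \eqref{C6biasest} in Theorem~\ref{C6stabproof}, I would impose the explicit Euler discretization of the continuous bias law, namely $\hat\beta_{i+1} = \hat\beta_i + h\Phi'(\cU^0(\hat R_i,U^m_i))P^{-1}S_{L_i}(\hat R_i)$, which is first order in $h$ and therefore consistent with the first-order accuracy of the Lie group variational integrator for $(\hat R_i,\omega_i)$.

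The main obstacle I anticipate is the correct handling of the index shift and the noncommutative adjoint factor $\exp(-h\hat\Omega_i^\times)$ when regrouping terms in the sum by $\Sigma_i$; getting the boundary terms to vanish cleanly and identifying the coefficient of $\Sigma_i$ for $1\le i\le N-1$ requires care, since $\hat\Omega_i$ itself depends on $\omega_i$ and $\hat\beta_i$. A second, more conceptual subtlety is justifying that treating $\hat\beta_i$ as externally updated (rather than variationally derived) still yields a scheme that, in the noise-free limit and modulo terms of order $h^2$, inherits the Lyapunov decrease \eqref{C6Vdotsimp} of the continuous estimator; this is what the first-order consistency of \eqref{C61stDisFil_betahat} with \eqref{C6biasest} is meant to guarantee.
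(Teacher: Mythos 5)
Your proposal follows essentially the same route as the paper's proof: the discrete action sum with the bias-augmented residual, left-trivialized variations of the discrete kinematics giving $h\,\delta\hat\Omega_i=\Sigma_{i+1}-\exp(-h\hat\Omega_i^\times)\Sigma_i$, the discrete Lagrange--d'Alembert principle with $\tau_{D_i}=D\omega_i$, and an index shift to isolate the coefficient of $\Sigma_i$, while the bias update \eqref{C61stDisFil_betahat} is obtained exactly as in the paper by an explicit first-order (Euler) discretization of the continuous law \eqref{C6biasest} rather than by varying $\hat\beta_i$ in the action. Your remark that a variation in $\hat\beta_i$ would yield only a degenerate pointwise condition correctly explains why the paper treats the bias update separately.
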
\par

{\em Proof}: Equation \eqref{C6angvelres} is discretize as
\begin{align}
\omega_i := \Omega^m_i-\hat\Omega_i- \hat\beta_i, \;\ \tau_{D_i}= D\omega_i, \label{C6Discangvelres}
\end{align}
and \eqref{C6biasest} can be rewritten in discrete-time as
\begin{align}
\dot{\hat\beta}_i=\frac{\hat\beta_{i+1}-\hat\beta_i}{h}=  \Phi' \big(\cU^0(\hat{R}_i,U^m_i)\big) P^{-1} S_{L_i}(\hat{R}_i), \label{C6Discbiasest}
\end{align}
 The action functional in expression \eqref{C6eq:J6} is replaced by the 
discrete-time action sum as follows:
\begin{align}
&\cS_d (\cL (\hat R_i,U^m_i,\hat\Omega_i,\Omega^m_i))=  \\ &h \sum_{i=0}^N \Big\{ 
\frac{m}{2}(\Omega^m_i-\hat\Omega_i-\hat\beta_i)\T(\Omega^m_i-\hat\Omega_i-\hat\beta_i)
- \Phi\big(\cU^0(\hat R_i,U^m_i)\big) \Big\}. \nn
\end{align}
Discretize the kinematics of the attitude estimate as
\begin{align}
\hat R_{i+1}=\hat R_i \exp( h\hat\Omega_i^\times), \label{C6diskin}
\end{align}
and consider a first variation in the discrete attitude estimate, $R_i$, of the form
\be \delta \hat R_i=\hat R_i\Sigma_i^\times,  \label{C6disVar} \ee
where $\Sigma_i\in\bR^3$ gives a variation vector for the discrete attitude estimate.
For fixed end-point variations, we have $\Sigma_0=\Sigma_N=0$. 
Further, a first order approximation is to assume that $\hat\Omega_i^\times$ and 
$\delta\hat\Omega_i^\times$ commute. With this assumption, taking the first variation of the 
discrete kinematics \eqref{C6diskin} and substituting from \eqref{C6disVar} gives:
\begin{align}
\begin{split}
\delta \hat R_{i+1} =&\delta \hat R_i \exp(h\hat\Omega_i^\times)+
\hat R_i\delta\big(\exp(h\hat\Omega_i^\times)\big) \\
=&\hat R_i\Sigma_i^\times\exp(h\hat\Omega_i^\times)+ h \hat R_i \exp(h\hat\Omega_i^\times)
\delta\hat\Omega_i^\times \\
 &=\hat R_{i+1}\Sigma_{i+1}^\times. 
\end{split}\label{C6disLagdAlm}
\end{align}
Equation \eqref{C6disLagdAlm} can be re-arranged to obtain: 
\begin{align}
h\delta\hat\Omega_i^\times&=\exp(-h\hat\Omega_i^\times)\hat R_i\T\big[\delta
\hat R_{i+1}-\hat R_i\Sigma_i^\times\exp(h\hat\Omega_i^\times)\big] \nn \\
&=\exp(-h\hat\Omega_i^\times)\hat R_i\T \hat R_{i+1}\Sigma_{i+1}^\times-
\Ad{\exp(-h\hat\Omega_i^\times)}\Sigma_i^\times \nn \\
&=\Sigma_{i+1}^\times-\Ad{\exp(-h\hat\Omega_i^\times)}\Sigma_i^\times.  \label{C6delR}
\end{align}
This in turn can be expressed as an equation on $\bR^3$ as follows:
\be h\delta\hat\Omega_i= \Sigma_{i+1} - \exp(-h\hat\Omega_i^\times)\Sigma_i, 
\label{C6delomega} \ee
since $\Ad{R}\Omega^\times=R\Omega^\times R\T=(R\Omega)^\times$.

Applying the discrete Lagrange-d'Alembert principle~\cite{marswest}, one obtains
\begin{align}
&\delta\cS_d+ h\sum_{i=0}^{N-1}\tau_{D_i}\T\Sigma_i =0\nn\\
\Rightarrow &h\sum_{i=0}^{N-1} m(\hat\Omega_i-\Omega^m_i+\hat\beta_i)\T\delta\hat\Omega_i-\Big\{\Phi'\big(\cU^0(\hat R_i,U^m_i)\big)S_{L_i}\T(\hat R_i)
-\tau_{D_i}\T\Big\}\Sigma_i=0. \label{C6disSd1}
\end{align} 
Substituting  \eqref{C6disVar} and \eqref{C6delomega} into equation \eqref{C6disSd1}, 
one obtains
\begin{align}
&\sum_{i=0}^{N-1}\Big\{m(\hat\Omega_i-\Omega^m_i+\hat\beta_i)\T\big(\Sigma_{i+1}-\exp(-h\hat\Omega_i^\times)\Sigma_i\big)\nn\\
&-h\Phi'\big(\cU^0(\hat R_i,U^m_i)\big)S_{L_i}\T(\hat R_i)\Sigma_i+h\tau_{D_i}\T\Sigma_i\Big\}=0.
\label{C6disLagdAlObs}
\end{align}
For $0\le i<N$, the expression \eqref{C6disLagdAlObs} leads to the following one-step 
first-order LGVI for the discrete-time filter:
\begin{align}
&m(\Omega^m_{i+1}-\hat\Omega_{i+1}-\hat\beta_{i+1})\T\exp(-h\hat\Omega_{i+1}^\times)+h\tau_{D_{i+1}}\T
\nn\\ &-h\Phi'\big(\cU^0(\hat R_{i+1},U^m_{i+1})\big)S_{L_{i+1}}\T(\hat R_{i+1})
+m(\hat\Omega_i-\Omega^m_i+\hat\beta_i)\T=0\nn\\
&\Rightarrow m\exp(h\hat\Omega_{i+1}^\times) (\Omega^m_{i+1}-\hat\Omega_{i+1}-\hat\beta_{i+1})=m(\Omega^m_i-\hat\Omega_i-\hat\beta_i)\nn\\
&+h\Big(\Phi'\big(\cU^0(\hat R_{i+1},U^m_{i+1})\big)S_{L_{i+1}}(\hat R_{i+1})-\tau_{D_{i+1}}\Big),
\end{align}
which after substituting $\omega_i=\Omega_i^m-\hat\Omega_i-\hat\beta_i$ and $\tau_{D_{i+1}}=D\omega_i$ gives the discrete-time equation \eqref{C61stDisFil_omega}. This equation along with \eqref{C6Discangvelres}, \eqref{C6Discbiasest} and \eqref{C6diskin} form the estimator equations \eqref{C61stDisFil_Rhat}-\eqref{C61stDisFil_Omegahat}.
\hfill\ensuremath{\square}

 Note that the estimator equations 
\eqref{C61stDisFil_Rhat}-\eqref{C61stDisFil_Omegahat} given by the LGVI scheme are in the form of an 
implicit numerical integration scheme. The discrete kinematics \eqref{C61stDisFil_Rhat} is solved 
first. Then the angular velocity estimate error is updated by solving the implicit discrete dynamics, 
equation \eqref{C61stDisFil_omega}. The stability and convergence properties of this discrete-time 
estimator are not shown here. This estimator is a first-order (in $h$) discretization of the 
continuous-time estimator given by eqs. \eqref{C6eq:filterNoise} and \eqref{C6biasest}, which is almost 
globally asymptotically stable.

\subsection{Numerical Simulation}\label{C6Sec7}
This section presents numerical simulation results of the discrete estimator 
presented in Section \ref{C6Sec6}, in the presence of constant bias in angular velocity measurements. In order to validate the performance of this estimator, a rigid body's states are artificially generated using the kinematics and dynamics equations. The rigid body moment of inertia is selected to be $J_v=\diag([2.56\;\;3.01\;\;2.98]\T)$ kg.m$^2$. Moreover, a sinusoidal function is applied to it as the only external torque, which is expressed in body fixed frame as
\begin{align}
\varphi(t)=[0\;\;\; 0.028\sin(2.7t-\frac{\pi}{7})\;\;\; 0]\T\mbox{ N.m}.
\end{align}
The rigid body is assumed to have an initial attitude and angular velocity given by,
\begin{align}
\begin{split}
R_0=&\expm_{\SO}\bigg(\Big(\frac{\pi}{4}\times[\frac{3}{7}\;\;\;\; \frac{6}{7}\;\;\;\; \frac{2}{7}]\T\Big)^\times\bigg)\\
\mbox{and }& \Omega_0=\frac{\pi}{60}\times[-2.1\;\;\;\; 1.2\;\; -1.1]\T\mbox{ rad/s}.
\end{split}
\end{align}
A set of five inertial sensors and three gyros perpendicular to each other are assumed to be onboard the rigid body. The actual states generated from kinematics and dynamics of this rigid body are used to simulate the observed directions in the body fixed frame, as well as the comparison between true and estimated states. We assume that there are five inertially known directions which are being measured 
by the five inertial sensors fixed to the rigid body at a constant sample rate. These unit vectors for the constant inertially known matrix $E$ as follows:
\begin{align}
E=\bbm&-0.6543\;\;&-0.6338\;\; &-0.5978\;\;&-0.5559\;\;&-0.5138\\
            &-0.5407   &-0.4559   &-0.4202   &-0.4253   &-0.3845\\
            &0.5287    &0.6248    &0.6827    &0.7142    &0.7669\ebm.
\end{align}
Bounded random zero mean signals whose
probability distributions are normalized bump functions are added to the true direction vectors $U$ to generate each measured 
direction $U^m$. The maximum
error (width of bump function) in each component of a
direction vector measurement is $2.4^\circ$ based on coarse attitude sensors like sun 
sensors and magnetometers. Similarly, random zero mean bump functions are added to each element of $\Omega$ to form the measured $\Omega^m$. The width of these bump functions is $0.97^\circ/s$, which corresponds to a coarse 
rate gyro. Besides, the gyro readings are assumed to contain a constant bias in three directions, as follows:
\begin{align}
\beta=[-0.01\;\;\;-0.005\;\;\;\;\;0.02]\T\mbox{ rad/s}.
\end{align}
The estimator is simulated over a time interval of $T=20$s, with a time stepsize of $h=0.01$s. The estimator's inertia scalar gain is $m=5$ and the dissipation matrix is selected as the following positive definite matrix:
\begin{align}
D=\diag\big([17.04\;\;\; 18.46\;\;\; 19.88]\T\big).
\end{align}
As in \cite{Automatica}, $\Phi(x)=x$. The weight matrix $W$ is also calculated using the conditions in 
\cite{Automatica}. This matrix is given by:
\begin{align}
W=\bbm&296.5458\;\; &-296.8526\;\; &-293.3936\;\; &150.4527\;\; &150.2987\\
            &-296.8526  &368.7300 &341.0189 &-197.1644 &-221.0503\\
            &-293.3936  &341.0189 &321.6729 &-179.3406 &-194.9746\\
            &150.4527  &-197.1644  &-179.3406 &107.4149  &123.2687\\
            &150.2987 &-221.0503 &-194.9746  &123.2687  &147.3057\ebm.
\end{align}
The positive definite matrix for bias gain is selected as $P=4\times10^3I$. The initial estimated states are equal to:
\begin{align}
\begin{split}
\hat R_0&=\expm_{\SO}\bigg(\Big(\frac{\pi}{2.5}\times[\frac{3}{7}\;\;\;\; \frac{6}{7}\;\;\;\; \frac{2}{7}]\T\Big)^\times\bigg),\\
\hat\Omega_0&=[-0.26\;\;\;\;\; 0.1725\;\;\; -0.2446]\T\mbox{ rad/s},\\
&\mbox{and }\hat\beta_0=[0\;\;\;-0.01\;\;\;\;\;0.01]\T\mbox{ rad/s}.
\end{split}
\end{align}

In order to integrate the implicit set of equations in \eqref{C61stDisFil_Rhat}-\eqref{C61stDisFil_omega} numerically, the first two equation are solved at each sampling step. Using \eqref{C61stDisFil_Omegahat}, $\hat\Omega_{i+1}$ in \eqref{C61stDisFil_omega} is written in terms of $\omega_{i+1}$ next. The resulting implicit equation is solved with 
respect to $\omega_{i+1}$ iteratively to a set tolerance applying the Newton-Raphson method. The root of this nonlinear equation 
along with $\hat R_{i+1}$ and $\hat\beta_{i+1}$ are used for the next sampling time instant. This process is repeated to the end of the simulated duration.\par
Results from this numerical simulation are shown here. The principal angle corresponding 
to the rigid body's attitude estimation error is depicted in Fig. \ref{C6Fig1}, and estimation errors in the 
rigid body's angular velocity components are shown in Fig. \ref{C6Fig2}. Finally, Fig. \ref{C6Fig3} portrays estimate errors in bias components. All the estimation errors are 
seen to converge to a neighborhood of $(Q,\omega,\tilde\beta)=(I,0,0)$, where the size of this neighborhood 
depends on the bounds of the measurement noise.

\begin{figure}
\begin{center}
\includegraphics[height=3.2in]{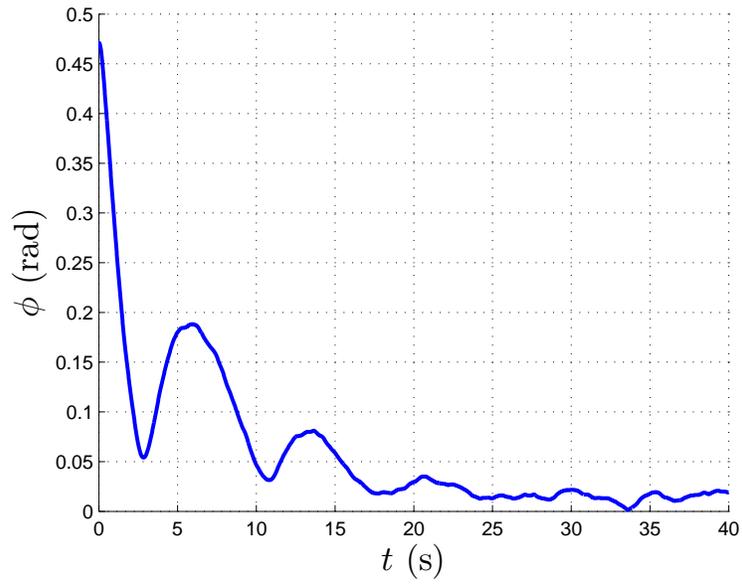}
\caption{Principal angle of the attitude estimate error}  
\label{C6Fig1}                                 
\end{center}                                 
\end{figure}

\begin{figure}
\begin{center}
\includegraphics[height=3.2in]{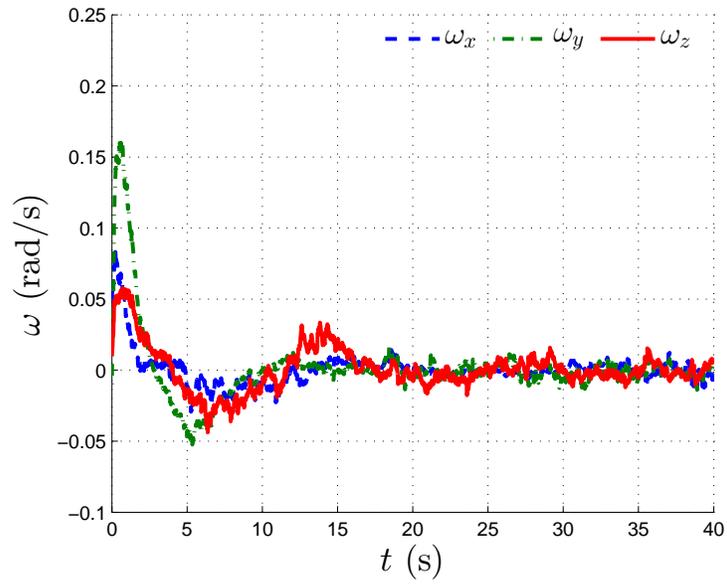}
\caption{Angular velocity estimate error}  
\label{C6Fig2}                                 
\end{center}                                 
\end{figure}

\begin{figure}
\begin{center}
\includegraphics[height=3.2in]{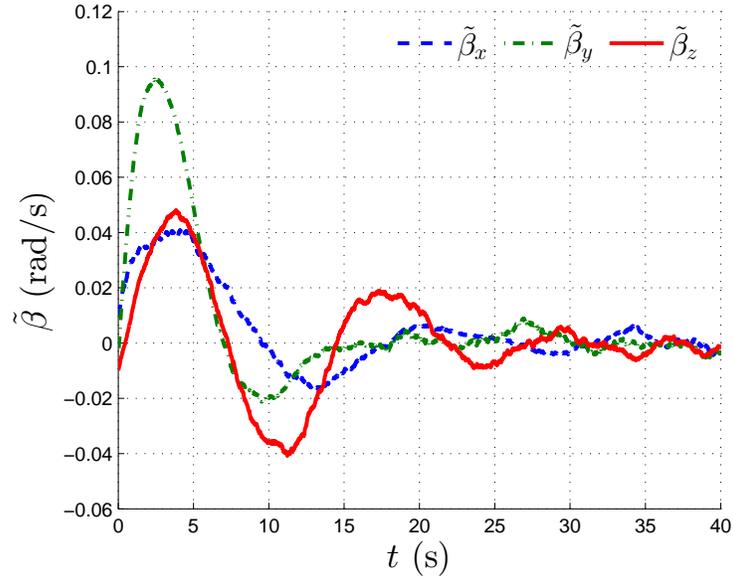}
\caption{Bias estimate error}  
\label{C6Fig3}                                 
\end{center}                                 
\end{figure}

\subsection{Conclusion}\label{C6Sec8}
The formulation of variational attitude estimation is generalized to include bias in angular 
velocity measurements and estimate a constant bias vector. The continuous-time state 
estimator is obtained by applying the Lagrange-d'Alembert principle of variational mechanics 
to a Lagrangian consisting of the energies in the measurement residuals, along with a 
dissipation term linear in angular velocity measurement residual. The update law for the 
bias estimate ensures that the total energy content in the state and bias estimation errors 
is dissipated as in a dissipative mechanical system. The resulting generalization of the 
variational attitude estimator is almost globally asymptotically stable, like the variational attitude 
estimator for the bias-free case reported in~\cite{Automatica}. A discretization of this estimator 
is obtained, in the form of an implicit first order Lie group variational integrator, by applying the 
discrete Lagrange-d'Alembert principle to the discrete Lagrangian with the dissipation term 
linear in the angular velocity estimation error. Using a realistic set of data for rigid body 
rotational motion, numerical simulations show that the estimated states and estimated bias 
converge to a bounded neighborhood of the true states and true bias when the measurement 
noise is bounded. A future extension of this work will be the formulation of an explicit 
discrete-time implementation of this variational attitude estimation in the presence of bias, 
and its real-time implementation with optical and inertial sensors.

\newpage

\section{EXPERIMENTAL VALIDATION OF THE VARIATIONAL ATTITUDE ESTIMATOR} 

\label{CH07_ICC2015} 

\hspace{\parindent}
\textit{This chapter is adapted from a paper published in Proceedings of the 2015 Indian Control Conference \cite{ICC2015}. The author gratefully acknowledges Dr. Amit K. Sanyal and S.P. Viswanathan for their participation.}
\\
\\
{\bf{Abstract}}~
The attitude determination (estimation) scheme presented in Chapter \ref{CH04_VE} is experimentally verified here. Implementing this variational estimation scheme on an Android cellphone and using the data from its ``onboard'' sensors, the cellphone's attitude is determined. This attitude is compared against the attitude derived from solving the Wahba's problem at each time instant to show the performance of the estimator. These results, obtained in the Spacecraft Guidance, Navigation and Control Laboratory 
at NMSU, demonstrate the excellent performance of this estimation scheme with the 
noisy raw data from the smartphone sensors.

\subsection{Definitions}
The raw IMU measurements from the smartphone are fused/filtered through the variational attitude estimation.
In Chapter \ref{CH04_VE}, an estimation of rigid body attitude and angular velocity without any knowledge of the attitude 
dynamics model, is presented using the Lagrange-d'Alembert principle from variational 
mechanics. This variational observer requires at least two body-fixed sensors to measure inertially known and constant direction vectors as well as sensors to read the angular velocity. First- and second-order Lie group variational integrators were introduced for computer implementation using discrete variational mechanics.

In order to determine three-dimensional rigid body attitude instantaneously, three known inertial vectors are needed. This could be satisfied with just two vector measurements. In this case, the cross product 
of the two measured vectors is considered as a third measurement for applying the attitude 
estimation scheme. Let these vectors be denoted as $u_1^m$ and $u_2^m$, in the body-fixed frame. Denote the corresponding known inertial vectors as seen from the rigid body as $e_1$ and $e_2$, and 
let the true vectors in the body frame be denoted $u_i=R \T e_i$ for $i=1,2$, where $R$ is the rotation 
matrix from the body frame to the inertial frame. This rotation matrix provides a coordinate-free,  
global and unique description of the attitude of the rigid body. Define the matrix composed 
of all three measured vectors expressed in the body-fixed frame as column vectors, $U^m= [u_1^m\ u_2^m\ u_1^m\times u_2^m]$ 
and the corresponding matrix of all these vectors expressed in the inertial frame as
$E= [e_1\ e_2\ e_1\times e_2]$.
Note that the matrix of the actual body vectors $u_i$ corresponding to the inertial vectors 
$e_i$, is given by $U= R \T E= [u_1\ u_2\ u_1\times u_2]$. The symmetric second-order filter equations \eqref{2ndDisFil_Rhat}-\eqref{iAV} has been used in order to verify its performance in practice.

\subsection{Experiments}
This estimation scheme is implemented off-board on a remote PC using the sensor measurements acquired and transmitted by the smartphone. The coordinates used for the inertial frame is ENU, which is a right-handed Cartesian frame formed by local east, north and up. The coordinates fixed to the COM of the cellphone with right direction of the screen as $x$, upwards direction as $y$ and the direction out of screen as $z$ is considered to be the body fixed frame. As mentioned in the previous section, at least two inertially known and constant directions are required in order to estimate the rigid body attitude. Using the inertial sensors installed on the smartphone, the accelerometer is used to measure the gravity direction and the magnetometer is used to measure the geomagnetic field direction. The cross product of these two vectors is considered as the third measured vector. In order to find these directions, one could normalize the vector readings from accelerometer and magnetometer in the case that the cellphone is aligned with the true geographical directions and the body fixed frame coincides with the ENU frame. Note that the direction read by the accelerometer shows the local up direction, since an upward acceleration equal to $g$ is applied to the phone in order to cancel the Earth's gravity and keep the phone still. Therefore, the matrix of these three inertially constant directions as expressed in the ENU frame is found to be
$$E=[e_1\ e_2\ e_1\times e_2]=
\begin{bmatrix}
0 & 0.0772 & -0.9921\\
0 & 0.6117 & 0.1251\\
1 & -0.7873 & 0
\end{bmatrix}.
$$
The three axis gyroscope also gives the angular velocity measurements. These three sensors  produce measurement data at different frequencies. The filter's time step is selected according to the fastest sensor, which is the accelerometer here. At those time instants where some of the sensors readings are not available because of the difference in sampling frequencies, the last read value 
from that sensor is used.

$\Phi(\cdot)$ could be any $C^2$ function with the properties described in Section 2 of \cite{Automatica}, but is 
selected to be $\Phi(x)=x$ here. Further, $W$ is selected based on the value of $E$, such that it satisfies the conditions in \cite{Automatica} as below:
$$W=
\begin{bmatrix}
3.19 & 1.51 & 0\\
1.51 & 3.19 & 0\\
0 & 0 & 2
\end{bmatrix}.
$$
The inertia scalar gain is set to $m=0.5$ and the dissipation matrix is selected as the following positive definite matrix:
\[D=\diag\big([12\ 13\ 14]\T\big).\]

Sensors outputs usually contain considerable levels of noise that may harm the behavior of the nonlinear filter. A Butterworth pre-filter is implemented in order to reduce these high-frequency noises. Note that the true quantities would not contain high-frequency signals, since they are related to a rigid body motion. A symmetric discrete-time filter for the first order Butterworth pre-filter is implemented for filtering the measurement data as follows:
\begin{align}
(2+ h)\bar{x}_{k+1}= (2-h)\bar{x}_k + h (x^m_k + x^m_{k+1}),
\end{align}
where $h$ is the time stepsize, $\bar{x}$ and $x^m$ are the filtered and measured quantities, respectively, and the subscript $k$ denotes the $k$th time stamp.
The initial estimated states have the following initial estimation errors:
\begin{align}
Q_0=\expm&_{\SO}\bigg(\Big(2.2\times[0.63\;\;\; 0.62\ -0.48]\T\Big)^\times\bigg), \nn\\
\mbox{and } \omega_0&=[0.001\ 0.002\ -0.003]\T\mbox{ rad/s}.
\end{align}
In order to integrate the implicit set of equations \eqref{2ndDisFil_Rhat}-\eqref{iAV} numerically, the first equation is solved at each sampling step, then the result for $\hat R_{i+1}$ is substituted in the second one. Using the Newton-Raphson method, the resulting equation is solved with respect to $\omega_{i+1}$ iteratively. The root of this nonlinear equation with a specified tolerance along with the $\hat R_{i+1}$ is used for the next sampling time instant. This process is repeated over the simulated time period. The results of this experiment are described next.

\subsection{Results}
Experimental results for the attitude estimation scheme, obtained from the experimental 
setup described in the previous subsection, are presented here. These experiments were 
carried out on the HIL simulator testbed in the Spacecraft Guidance, Navigation and Control 
laboratory at NMSU's MAE department. The principal angle corresponding to the rigid body's attitude estimation error is depicted in Figure
\ref{fig1}. Estimation errors in the rigid body's angular velocity components are shown in 
Figure \ref{fig2}. All the estimation errors are seen to converge to a neighborhood 
of $(Q,\omega)=(I,0)$, where the size of this neighborhood depends on the characteristics of the 
measurement noise.
\begin{figure}
\begin{center}
\includegraphics[height=2.2in]{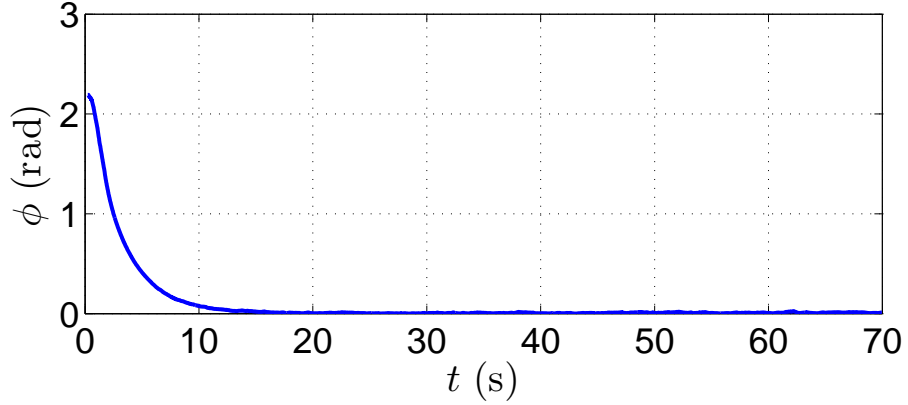}
\caption{Principle Angle of the Attitude Estimation Error}  
\label{fig1}                                 
\end{center}                                 
\end{figure}

\begin{figure}
\begin{center}
\includegraphics[height=2.2in]{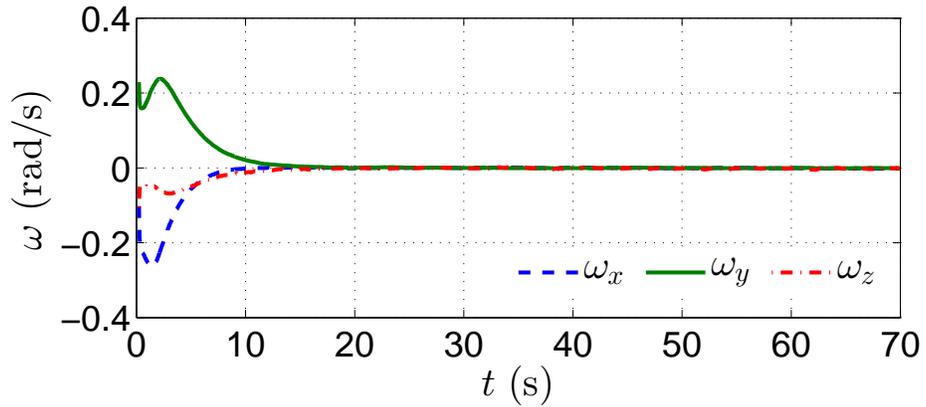}
\caption{Angular Velocity Estimation Error}  
\label{fig2}                                 
\end{center}                                 
\end{figure}

\subsection{Conclusion}
This chapter presents a novel software architecture of a spacecraft 
attitude determination and control subsystem (ADCS), using a smartphone as the onboard 
computer. This architecture is being implemented using a HIL ground simulator for three-axis 
attitude motion simulation in the Spacecraft Guidance, Navigation and Control laboratory at 
NMSU. Theoretical and numerical results for the attitude control and attitude estimation schemes 
that are part of this architecture, have appeared in recent publications. The attitude estimation scheme provides almost global asymptotic stability, and is 
robust to measurement noise and bounded disturbance inputs acting on the spacecraft. 
Experimental verification of the attitude estimation algorithm is presented here, and the 
experimental results show excellent agreement with the theoretical and numerical results 
on this algorithm that have appeared in recent publications.

\newpage

\section{MODEL-FREE RIGID BODY POSE ESTIMATION BASED ON THE LAGRANGE-D'ALEMBERT PRINCIPLE} 

\label{CH08_Automatica2} 

\hspace{\parindent}
\textit{This chapter is adapted from a paper to appear in Automatica \cite{Automatica2}. The author gratefully acknowledges Dr. Amit K. Sanyal for his participation.}
\\
\\
{\bf{Abstract}}~
Stable estimation of rigid body pose and velocities from noisy measurements, without any 
knowledge of the dynamics model, is treated using the Lagrange-d'Alembert principle from 
variational mechanics.
With body-fixed optical and inertial sensor measurements, a Lagrangian is obtained as the 
difference between a kinetic energy-like term that is quadratic in velocity estimation error and 
the sum of two artificial potential functions; one obtained from a generalization of Wahba's 
function for attitude estimation and another which is quadratic in the position estimate error. An 
additional dissipation term that is linear in the velocity estimation error is introduced, and the 
Lagrange-d'Alembert principle is applied to the Lagrangian with this dissipation. A Lyapunov 
analysis shows that the state estimation scheme so obtained provides stable asymptotic 
convergence of state estimates to actual states in the absence of measurement noise, with an 
almost global domain of attraction. This estimation scheme is discretized for computer 
implementation using discrete variational mechanics, as a first order Lie group variational 
integrator. The continuous and discrete pose estimation schemes require optical measurements 
of at least three inertially fixed landmarks or beacons in order to estimate instantaneous pose. 
The discrete estimation scheme can also estimate velocities from such optical measurements.
Moreover, all states can be estimated during time periods when measurements of only two 
inertial vectors, the angular velocity vector, and one feature point position vector are available 
in body frame. In the presence of bounded measurement noise in the vector measurements, 
numerical simulations show that the estimated states converge to a bounded neighborhood 
of the actual states.

\subsection{Navigation using Optical and Inertial Sensors}\label{Sec2}

Consider a vehicle in spatial (rotational and translational) motion. 
\begin{figure}[htb!]
	\centering
	\includegraphics[width=0.7\textwidth]{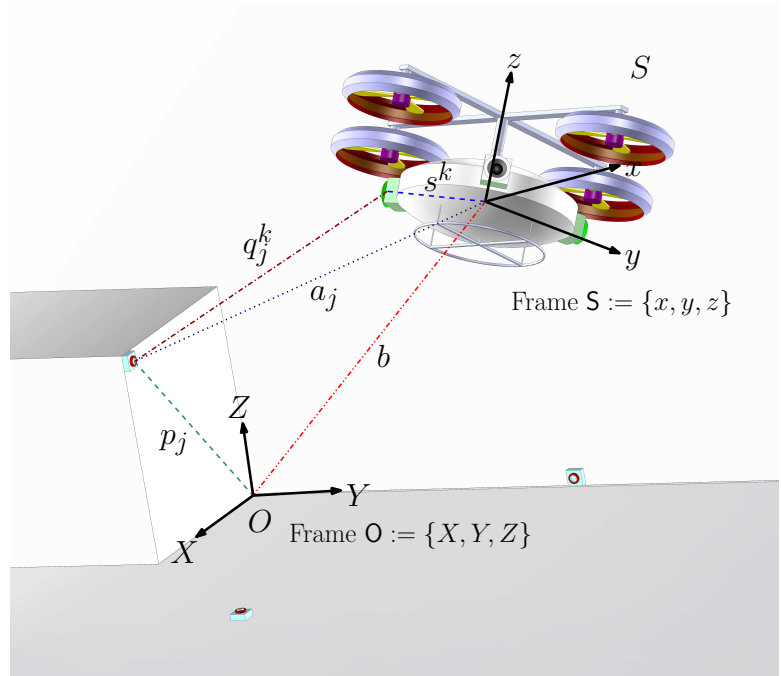}
    	\caption{Inertial landmarks on $O$ as observed from vehicle $S$ with optical  
	measurements.}
    	\label{Frames}
\end{figure}
Onboard estimation of the pose of the vehicle involves assigning a coordinate frame fixed to 
the vehicle body, and another coordinate frame fixed in the environment which takes the role 
of the inertial frame. Let $O$ denote the observed environment and $S$ denote the vehicle. 
Let $\mathsf{S}$ denote a coordinate frame fixed to $S$ and $\sO$ be a coordinate frame 
fixed to $O$, as shown in Fig. \ref{Frames}. Let $R\in\SO$ denote the rotation matrix from frame 
$\sS$ to frame $\sO$ and $b$ denote the position of origin of $\sS$ expressed in frame $\sO$.
The pose (transformation) from body fixed frame $\sS$ to inertial frame $\sO$ is then given by \eqref{gDef}. Consider vectors known in inertial frame $\sO$ measured by inertial sensors 
in the vehicle-fixed frame $\sS$; let $\beta$ be the number of such vectors. In addition, consider 
position vectors of a few stationary points in the inertial frame $\sO$ measured by optical (vision 
or lidar) sensors in the vehicle-fixed frame $\sS$. 
Velocities of the vehicle may be directly measured or can be estimated by linear 
filtering of the optical position vector measurements \cite{ACC2015}. 
Assume that these optical measurements are available for 
$\mpz j$ points at time $t$, whose positions are known in frame $\sO$ as $p_j$, $j\in\cI (t)$, 
where $\cI (t)$ denotes the index set of beacons observed at time $t$. Note that the observed 
stationary beacons or landmarks may vary over time due to the vehicle's motion. These points 
generate ${\mpz j\choose 2}$ unique relative position vectors, which are the vectors connecting 
any two of these landmarks. When two or more position vectors are optically measured, the number of vector measurements that
can be used to estimate attitude is ${\mpz j\choose 2}+\beta$. This number needs to be at least two 
(i.e., ${\mpz j\choose 2}+\beta\geq2$) at an instant, for the attitude to be uniquely determined at 
that instant. In other words, if at least two inertial vectors are measured at all instants (i.e., $\beta\geq2$), 
then beacon position measurements are not required for estimating attitude. However, at least one 
beacon or feature point position measurement is still required to estimate the position of the vehicle. 
Note that the use of two vector measurements for attitude determination was first proposed by the 
TRIAD algorithm in the 1960s \cite{TRIAD}.

\subsubsection{Pose Measurement Model} 

Denote the position of an optical sensor and the unit vector from that sensor to an observed  
beacon in frame $\sS$ as $s^k\in\bR^3$ and $u^k\in\bS^2$, $k=1,\ldots,\mpz k$, respectively. 
Denote the relative position of the $j^{th}$ stationary beacon observed by the $k^{th}$ 
sensor expressed in frame $\sS$ as $q^k_j$. Thus, in the absence of measurement noise
\begin{align}
p_j=R(q^k_j+s^k)+b=Ra_j+b,\; j\in\mathcal I(t),
\label{FrameTrans}
\end{align}
where $a_j=q^k_j+s^k$, are positions of these points expressed in $\sS$. In practice, the $a_j$ 
are obtained from range measurements that have additive noise; we denote as $a_j^m$ the 
measured vectors. In the case of lidar range measurements, these are given by
\be a_j^m=(q^k_j)^m+s^k=(\varrho_j^k)^m u^k+s^k,\; j\in\mathcal I(t), \label{pim} \ee
where $(\varrho_j^k)^m$ is the measured range to the point by the $k^{th}$ sensor. 
The mean of the vectors $p_j$ and $a_j^m$ are denoted as $\bar{p}$ and $\bar{a}^m$ respectively, 
and satisfy
\begin{align}
\bar{a}^m= R\T (\bar p- b)+ \bar\varsigma, \label{barp}
\end{align}
where $\bar{p}=\frac{1}{\mpz j}\sum\limits^\mpz j_{j=1}p_j$, $\bar{a}^m=\frac{1}{\mpz j}\sum
\limits^\mpz j_{j=1}a_j^m$ and $\bar\varsigma$ is the additive measurement noise 
obtained by averaging the measurement noise vectors for each of the $a_j$. Consider the 
${\mpz j\choose 2}$ relative position vectors from optical measurements, denoted as $d_j=
p_\lambda-p_\ell$ in frame $\sO$ and the corresponding vectors in frame $\sS$ as $l_j=
a_\lambda-a_\ell$, for $\lambda,\ell\in\cI (t)$, $\lambda\ne \ell$. The $\beta$ measured inertial 
vectors are included in the set of $d_j$, and their corresponding measured values expressed 
in frame $\sS$ are included in the set of $l_j$. If the total number of measured vectors (both 
optical and inertial), ${\mpz j\choose 2}+\beta=2$, then $l_3=l_1\times l_2$ is considered 
a third measured direction in frame $\sS$ with corresponding vector $d_3=d_1\times d_2$ in 
frame $\sO$. Therefore,
\begin{align}
d_j=Rl_j\Rightarrow D=RL,
\end{align}
where $D=[d_1\;\, \cdots\;\, d_n]$, $L=[l_1\;\, \cdots\;\, l_n]\in\bR^{3\times n}$ with $n=3$ if 
${\mpz j\choose 2}+\beta=2$ and $n={\mpz j\choose 2}+\beta$ if ${\mpz j\choose 2}+\beta>2$. 
Note that the matrix $D$ consists of vectors known in frame $\sO$. Denote the measured value 
of matrix $L$ in the presence of measurement noise as $L^m$. Then,
\begin{align}
L^m=R\T D+\mathscr{L},
\label{VecMeasMod}
\end{align}
where $\mathscr{L}\in\bR^{3\times n}$ consists of the additive noise in the vector measurements made
in the body frame $\sS$. 

\subsubsection{Velocities Measurement Model} 
Denote the angular and translational velocity of the rigid body expressed in body 
fixed frame $\sS$ by $\Omega$ and $\nu$, respectively. Therefore, one can write the kinematics 
of the rigid body as
\begin{align}
\dot{\Omega}=R\Omega^\times,\dot{b}=R\nu\Rightarrow\dot{\msg}= \msg\xi^\vee,
\label{Kinematics}
\end{align}
where $\xi= \bbm \Omega\\ \nu\ebm\in\bR^6$ and $\xi^\vee=\bbm \Omega^\times &\; \nu\\ 0 \;\;& 0\ebm$.
For the general development of the motion estimation scheme, it is assumed that the velocities 
are directly measured. The estimator is then extended to cover the cases where: (i) only angular velocity is 
directly measured; and (ii) none of the velocities are directly measured.

\subsection{Dynamic Estimation of Motion from Proximity Measurements}\label{Sec3}
In order to obtain state estimation schemes from measurements as outlined in Section 
\ref{Sec2} in continuous time, the Lagrange-d'Alembert principle is applied to an action functional of 
a Lagrangian of the state estimate errors, with a dissipation term linear 
in the velocities estimate error. This section presents the estimation scheme obtained 
using this approach. Denote the estimated pose and its kinematics as
\begin{align}
\hat\msg=\bbm \hat R & \;\;\; \hat{b}\\ 0 & \;\;\; 1\ebm\in\SE, \;\;\dot{\hat\msg}=\hat\msg\hat\xi^\vee,
\end{align}
where $\hat\xi$ is rigid body velocities estimate, with $\hat\msg_0$ as the initial pose estimate and the pose estimation 
error as
\begin{align}
\msh=\msg\hat\msg^{-1}=\bbm Q &\;\;\;\; b-Q\hat{b}\\ 0 & \;\;1\ebm=\bbm Q &\;\;\; x\\ 0 &\;\;\;1\ebm\in\SE,
\end{align}
where $Q=R\hat{R}\T$ is the attitude estimation error and $x=b-Q\hat{b}$. Then one obtains, in the case of perfect measurements,
\begin{align}\begin{split}
&\dot\msh = \msh\varphi^\vee,\, \mbox{ where }\, \varphi(\hat\msg,\xi^m,\hat\xi)=\bbm\omega\\\upsilon\ebm= \Ad{\hat\msg}\big(\xi^m- \hat\xi),
\end{split}\label{hdot}
\end{align}
where $\Ad{\mpz{g}}=\bbm \mpz{R}~~&~~0\\ \mpz{b}^\times\mpz{R}~~&~~\mpz{R}\ebm$ for $\mpz{g}=\bbm \mpz{R} &\;\;\; \mpz{b}\\ 0 & \;\;\;1\ebm$. The attitude and position estimation error dynamics are also in the form
\begin{align}
\dot{Q}=Q\omega^\times,\;\;\dot{x}=Q\upsilon.
\label{Qdot}
\end{align}

\subsubsection{Lagrangian from Measurement Residuals}
Consider the sum of rotational and translational measurement residuals between the 
measurements and estimated pose as a potential energy-like function. The rotational potential function (Wahba's cost function~\cite{jo:wahba}) is expressed as
\begin{align}
\cU^0_r (\hat{\msg},L^m,D) &= \frac12\lan D -\hat R L^m , (D -\hat R L^m)W\ran,
\label{U0r}
\end{align}
where $W=\diag(w_j)\in\bR^{n\times n}$ is a positive diagonal matrix of weight factors for the 
measured $l_j^m$. Consider the translational potential function
\begin{align}
\cU_t (\hat \msg,\bar a^m,\bar p) &= \frac12 \kappa y\T y= \frac12\kappa \|\bar{p}-\hat{R}\bar{a}^m-\hat{b}\|^2,
\label{U0t} 
\end{align}
where $\bar{p}$ is defined by \eqref{barp}, $y\equiv y(\hat\msg,\bar{a}^m,\bar p)=\bar{p}-
\hat{R}\bar{a}^m-\hat{b}$ and $\kappa$ is a positive scalar. Therefore, the total potential function is 
defined as the sum of the generalization of \eqref{U0r} defined in~\cite{Automatica,ast_acc14}  
for attitude determination on $\SO$, and the translational energy \eqref{U0t} as
\begin{align}
\cU(\hat{\msg},L^m,D,\bar a^m,\bar p)&=\cU_r (\hat{\msg},L^m,D)+\cU_t(\hat{\msg},\bar a^m,\bar p)\label{costU}\\
&= \Phi \big( \cU^0_r (\hat{\msg},L^m,D) \big)+\cU_t(\hat{\msg},\bar a^m,\bar p)\nn\\
&=\Phi \big(\frac12\lan D -\hat R L^m , (D -\hat R L^m)W\ran\big)+\frac12\kappa \|\bar{p}-\hat{R}\bar{a}^m-\hat{b}\|^2,\nn
\end{align}
where $W$ is positive definite (not necessarily diagonal), and 
$\Phi: [0,\infty)\mapsto[0,\infty)$ is a $\mC^2$ function that satisfies $\Phi(0)=0$ and 
$\Phi'(\mpz x)>0$ for all $\mpz x\in[0,\infty)$. Furthermore, $\Phi'(\cdot)\leq\alpha(\cdot)$ where 
$\alpha(\cdot)$ is a Class-$\mathcal{K}$ function~\cite{khal} and $\Phi'(\cdot)$ denotes the 
derivative of $\Phi(\cdot)$ with respect to its argument. Because of these properties 
of the function $\Phi$, the critical points and their indices coincide for $\cU^0_r$ and 
$\cU_r$~\cite{Automatica}. Define the kinetic energy-like function: 
\be
\cT \Big(\varphi(\hat\msg,\xi^m,\hat\xi)\Big)= \frac12 \varphi(\hat\msg,\xi^m,\hat\xi)\T \bJ\varphi(\hat\msg,\xi^m,\hat\xi), 
\label{costT} \ee
where $\bJ\in\bR^{6\times 6}>0$ is an artificial inertia-like kernel matrix. Note that in contrast to 
rigid body inertia matrix, $\bJ$ is not subject to intrinsic physical constraints like the triangle 
inequality, which dictates that the sum of any two eigenvalues of the inertia matrix has to be 
larger than the third. Instead, $\bJ$ is a gain matrix that can be used to tune the estimator. For 
notational convenience, $\varphi(\hat\msg,\xi^m,\hat\xi)$ is denoted as $\varphi$ from 
now on; this quantity is the velocities estimation error in the absence of measurement 
noise. Now define the Lagrangian 
\be
\cL (\hat{\msg},L^m,D,\bar a^m,\bar p,\varphi)= \cT(\varphi) -\cU(\hat{\msg},L^m,D,\bar a^m,\bar p),
\label{contLag}\ee
and the corresponding action functional over an arbitrary time interval $[t_0,T]$ for $T>0$,
\be \cS \big(\cL (\hat{\msg},L^m,D,\bar a^m,\bar p,\varphi)\big)= \int_{t_0}^T \cL 
(\hat{\msg},L^m,D,\bar a^m,\bar p,\varphi) \di t, \,\label{action} \ee
such that  $\dot{\hat \msg}= \hat \msg(\hat\xi)^\vee$. The following statement gives the form of 
the Lagrangian when perfect (noise-free) measurements are available, and derives the 
variational estimator for rigid body pose and velocities.
\begin{lemma}\label{NoiseFree}
In the absence of measurement noise, the Lagrangian is of the form
\begin{align}
\cL (\msh,D,\bar p,\varphi)=&\frac12 \varphi\T \bJ\varphi -\Phi \big(\lan I-Q, K\ran\big)-\frac12\kappa y\T y,\label{NFContLag}
\end{align}
where $K=DWD\T$ and $y\equiv y(\msh,\bar p)=Q\T x+(I-Q\T)\bar{p}$.
\end{lemma}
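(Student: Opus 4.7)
The plan is to substitute the noise-free measurement expressions into each of the three constituent terms of the Lagrangian \eqref{contLag}, and then rewrite each term in terms of the pose estimation error $\msh$ introduced just before the lemma. Since $\cT(\varphi)$ is already expressed purely in terms of $\varphi$, no work is needed there beyond observing that when $\xi^m=\xi$ the quantity $\varphi=\Ad{\hat\msg}(\xi^m-\hat\xi)$ reduces to the true velocities estimation error. Thus the real content of the lemma lies in rewriting $\cU_r$ and $\cU_t$.

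First I would treat the rotational term $\cU_r^0$. Setting $\mathscr{L}=0$ in \eqref{VecMeasMod} gives $L^m=R\T D$, and from the definition $Q=R\hat R\T$ it follows that $\hat R L^m=\hat R R\T D=Q\T D$, so $D-\hat R L^m=(I-Q\T)D$. Substituting into \eqref{U0r} and using the trace inner product,
\begin{align*}
\cU_r^0 &= \tfrac12 \tr\!\big(D\T(I-Q\T)\T(I-Q\T)DW\big) = \tfrac12\tr\!\big(D\T(2I-Q-Q\T)DW\big).
\end{align*}
Cyclicity of trace together with the symmetry $\tr(QK)=\tr(Q\T K)$ (with $K=DWD\T$) collapses this to $\cU_r^0=\tr(K)-\tr(QK)=\lan I-Q,K\ran$, so $\Phi(\cU_r^0)=\Phi(\lan I-Q,K\ran)$ as required.

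Next I would handle the translational term. Setting $\bar\varsigma=0$ in \eqref{barp} gives $\bar a^m=R\T(\bar p-b)$, so that
\begin{align*}
y &= \bar p - \hat R\bar a^m - \hat b = \bar p - \hat R R\T(\bar p-b) - \hat b = \bar p - Q\T(\bar p-b)-\hat b.
\end{align*}
Using $x=b-Q\hat b$ and hence $Q\T x = Q\T b - \hat b$, this rearranges to $y=Q\T x + (I-Q\T)\bar p$, which is the stated expression. Substituting into \eqref{U0t} gives $\cU_t=\tfrac12\kappa y\T y$ with $y$ now a function of $\msh$ and $\bar p$ alone.

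Assembling the three pieces into \eqref{contLag} yields exactly \eqref{NFContLag}. I do not anticipate a genuine obstacle here; the step that warrants the most care is the algebraic reduction of the Wahba-type trace expression to $\lan I-Q,K\ran$, since it relies on cyclicity of trace and on the symmetry of $K$, and this is the bookkeeping move that most cleanly explains why the potential becomes a Morse-type function on $\SO$ as used in subsequent stability arguments.
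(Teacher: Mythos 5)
Your proposal is correct and follows essentially the same route as the paper's proof: substitute the noise-free measurements $L^m=R\T D$ and $\bar a^m=R\T(\bar p-b)$, use $Q\T=\hat R R\T$ to reduce the Wahba term to $\lan I-Q,K\ran$ via the trace inner product, and rearrange $y$ using $x=b-Q\hat b$. The only cosmetic difference is that you expand $(I-Q)(I-Q\T)=2I-Q-Q\T$ explicitly where the paper factors the inner product as $\frac12\lan I-Q\T,(I-Q\T)DWD\T\ran$; both are the same bookkeeping.
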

{\em Proof}: Suppose that all the measured states are noise free. Therefore, one can replace 
$L^m=L$, $\bar a^m=\bar a$ and $\xi^m=\xi$. The rotational potential function \eqref{U0r} can be 
replaced by
\begin{align}
\cU^0_r (\msh,D) &=\frac12\lan D-\hat{R}L^m, (D-\hat{R}L^m)W\ran=\frac12\lan D-Q\T D, (D-Q\T D)W\ran,\nn\\
&=\frac12\lan I-Q\T, (I-Q\T)DWD\T\ran=\lan I-Q, K\ran\label{NFU0r}
\end{align}
since $\hat{R}E=Q\T D$ for the noise-free case. In addition,
\begin{align}
y(\msh, \bar p)&=\bar{p}-\hat{R}\bar{a}^m-\hat{b}=\bar{p}-\hat{R}\bar{a}-\hat{b}\\
&=\bar{p}-Q\T R\bar{a}-Q\T(b-x)=Q\T x+(I-Q\T)\bar{p}. \nn
\end{align}
The translational potential function in the absence of measurement noise can be expressed as
\begin{align}
\cU_t (\msh,\bar p) =\frac12\kappa y\T y.
\label{NFU0t}
\end{align}
Therefore, the total potential energy function is
\begin{align}
\cU(\msh,D,\bar p)&=\cU_r (\msh,D)+\cU_t(\msh,\bar p)= \Phi \big( \cU^0_r (\msh,D) \big)+\cU_t(\msh,\bar p) \nn\\
&=\Phi \big(\lan I-Q, K\ran\big)+\frac12\kappa y\T y,
\label{NFcostU}
\end{align}
and the kinetic energy function is
\begin{align}
\cT(\varphi)= \frac12 \varphi\T \bJ\varphi.
\label{NFcostT}
\end{align}
Substituting \eqref{NFcostU} and \eqref{NFcostT} into:
\begin{align}
\cL (\msh,D,\bar{p},\varphi)&= \cT (\varphi) -\cU(\msh,D,\bar p)=\cT (\varphi) -\Phi\big(\cU^0_r(\msh,D)\big)-\cU_t(\msh,\bar p),
\label{NFLag}
\end{align}
gives the Lagrangian \eqref{NFContLag} for the noise-free case.
\hfill\ensuremath{\square}

As in \cite{Automatica}, the positive definite weight matrix $W$ can be selected according to the 
following lemma:
\begin{lemma}\label{LemmaW}
Let $\mbox{rank}(D)=3$. Let the singular value decomposition of $D$ be given by
\begin{align}
D :&= U_D \Sigma_D V_D\T\, \mbox{ where }\,U_D\in\mathrm{O}(3),\ V_D\in\mathrm{O}(n),\nn\\
&\Sigma_D\in\mathrm{Diag}^+(3,n),
\end{align}
and $\mathrm{Diag}^+(n_1,n_2)$ is the vector space of $n_1\times n_2$ matrices with 
positive entries along the main diagonal and all other components zero. Let $\sigma_1, 
\sigma_2, \sigma_3$ denote the main diagonal entries of $\Sigma_D$. Further, let the positive 
definite weight matrix $W$ be given by 
\be W= V_D W_0 V_D\T\, \mbox{ where }\, W_0\in\mathrm{Diag}^+(n,n) \label{Wdec} \ee
and the first three diagonal entries of $W_0$ are given by
\be w_1= \frac{\varsigma_1}{\sigma_1^2},\; w_2=\frac{\varsigma_2}{\sigma_2^2},\; w_3=\frac{\varsigma_3}
{\sigma_3^2}\, \mbox{ where }\, \varsigma_1,\varsigma_2,\varsigma_3>0. \label{w123} \ee
Then, $K=DWD\T$ is positive definite and 
\be K= U_D\Delta U_D\T\, \mbox{ where }\, \Delta=
\diag(\varsigma_1,\varsigma_2,\varsigma_3), \ee
is its eigendecomposition. Moreover, if $\varsigma_\imath\ne \varsigma_\jmath\mbox{ for } \imath\ne \jmath$ and $\imath,\jmath\in\{1,2,3\}$, then $\lan I- Q,K\ran$ is a Morse function whose critical points are
\begin{align}
Q\in C_Q=\big\{ I, Q_1, Q_2, Q_3\big\} \mbox{ where } Q_\imath= 2 U_D I_\imath I_\imath\T U_D\T - I,
\label{C_Qdef}
\end{align}
and $I_\imath$ is the $\imath^{th}$ column vector of the identity $I\in\SO$.
\end{lemma}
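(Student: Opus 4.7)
The plan is to mirror the structure of the proof of Lemma \ref{lem1}, since the statement of Lemma \ref{LemmaW} is essentially the pose-estimation analogue of the attitude lemma, with $E$ replaced by $D$ and with the same positive definiteness and Morse-function conclusions on the same Lie group $\SO$. First I would verify the asserted eigendecomposition of $K$ by direct substitution; then I would compute the first variation of $\lan I-Q,K\ran$ on $\SO$ to characterize the critical set $C_Q$; then I would compute the second variation at each critical point to confirm non-degeneracy and conclude the Morse property.

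For the first step, substituting the SVD $D=U_D\Sigma_D V_D\T$ and the decomposition \eqref{Wdec} of $W$ into $K=DWD\T$ yields
\[
K = U_D \Sigma_D V_D\T \cdot V_D W_0 V_D\T \cdot V_D \Sigma_D\T U_D\T = U_D(\Sigma_D W_0 \Sigma_D\T) U_D\T.
\]
Because only the upper-left $3\times 3$ block of $W_0$ contributes to $\Sigma_D W_0 \Sigma_D\T$ and the diagonal entries are chosen as in \eqref{w123}, this block evaluates to $\diag(\sigma_\imath^2 w_\imath)=\diag(\varsigma_1,\varsigma_2,\varsigma_3)=\Delta$. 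Positive definiteness of $K$ follows from $\varsigma_\imath>0$.

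For the second step, using the variation $\delta Q = Q\Sigma^\times$ with $\Sigma\in\bR^3$ and repeating the computation \eqref{gradQ}--\eqref{SKdef}, the critical condition becomes $KQ = Q\T K$. Substituting the eigendecomposition and setting $P=U_D\T Q U_D\in\SO$ reduces this to $\Delta P = P\T \Delta$; since the $\varsigma_\imath$ are distinct, the set of $P\in\SO$ satisfying this relation is precisely the four diagonal sign matrices, giving $C_Q=\{I,Q_1,Q_2,Q_3\}$ with $Q_\imath = U_D(2 I_\imath I_\imath\T - I)U_D\T$, matching \eqref{C_Qdef}.

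For the third step, I would compute the Hessian quadratic form by taking a second variation in the direction $\Sigma$. Exactly as in the derivation leading to \eqref{sufcond}, the symmetry of $Q\T K$ at critical points annihilates one term, leaving
\[
\partial^2_Q \lan I-Q,K\ran = -\lan Q\T K,(\Sigma^\times)^2\ran.
\]
Non-degeneracy would then be argued as in Lemma \ref{lem1}: because $(\Sigma^\times)^2$ is symmetric, vanishing of this form for all nonzero $\Sigma$ would force $Q\T K=0$, contradicting the positive definiteness of $K$ established in the first step. The expected main obstacle is essentially bookkeeping, namely keeping the variation formulas and the orthogonal conjugation by $U_D$ straight so that the critical points can be identified with the diagonal sign matrices; once this is done, the Morse property and the explicit form of $C_Q$ follow by the same reasoning used for the attitude-only case, and no new technical difficulty arises from the $\SE$ setting because the translational potential $\cU_t$ plays no role in this particular lemma.
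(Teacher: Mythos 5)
Your proposal is correct and follows essentially the same route as the paper: the dissertation's own ``proof'' of Lemma \ref{LemmaW} is a one-line citation back to the attitude lemma (Lemma \ref{lem1}), whose argument you reproduce step for step with $E$, $U_E$, $d_\imath$ replaced by $D$, $U_D$, $\varsigma_\imath$, and your explicit computation $\Sigma_D W_0 \Sigma_D\T=\Delta$ is actually more detailed than the paper's ``it is straightforward to show.'' The one caveat, inherited verbatim from the paper, is that the final non-degeneracy step (``the form vanishes identically only if $Q\T K=0$'') only shows the Hessian is not the zero form rather than that $\tr(Q\T K)I-Q\T K$ is nonsingular; the latter genuinely requires the distinctness of $\varsigma_1,\varsigma_2,\varsigma_3$, as the explicit eigenvalue computation in Lemma \ref{lem2} makes clear.
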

The proof is presented in \cite{Automatica}.


\subsubsection{Variational Estimator for Pose and Velocities}
The nonlinear variational estimator obtained by 
applying the Lagrange-d'Alembert principle to the Lagrangian \eqref{contLag} with a dissipation 
term linear in the velocities estimation error, is given by the following statement.

\begin{theorem}\label{filterTHM}
The nonlinear variational  estimator for pose and velocities is given by
\begin{align}
\begin{cases}
\bJ\dot{\varphi}&=\adast{\varphi}\bJ\varphi-Z(\hat{\msg},L^m,D,\bar{a}^m,\bar{p})-\bD\varphi,\vspace{.05in}\\
\hat{\xi}&=\xi^m-\Ad{\hat\msg^{-1}}\varphi,
\vspace{.05in}\\
\dot{\hat{\msg}}&=\hat{\msg}(\hat{\xi})^\vee,\label{ContFil}
\end{cases}
\end{align}
where $\adast{\zeta}=(\ad{\zeta})\T$ with $\ad{\zeta}$ defined by \eqref{ad_def}, and 
$Z(\hat{\msg},L^m,D,\bar{a}^m,\bar{p})$ is defined by
\begin{align}
\begin{split}
Z(\hat{\msg},L^m,D,&\bar{a}^m,\bar{p})=\bbm \Phi'\Big(\cU^0_r (\hat{\msg},L^m,D)\Big)S_\Gamma(\hat{R})+\kappa\bar{p}^\times y\\ \kappa y\ebm,\end{split}\label{Z}
\end{align}
where $\cU^0_r (\hat{\msg},L^m,D)$ is defined as \eqref{U0r}, $y\equiv y(\hat{\msg},\bar{a}^m,\bar{p})=\bar{p}-\hat{R}\bar{a}^m-\hat{b}$ and
\begin{align} S_\Gamma(\hat{R})&=\mrm{vex}\big(\Gamma\hat{R}\T-\hat{R}\Gamma\T\big)=\mrm{vex}\big(DW(L^m)\T\hat{R}\T-\hat{R}L^mWD\T\big), \label{SLdef} \end{align}
$\Gamma= DW(L^m)\T$ and $\mrm{vex}(\cdot): \so\to\bR^3$ is the inverse of the $(\cdot)^\times$ map.
\end{theorem}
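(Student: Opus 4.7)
The plan is to derive the estimator equations by applying the Lagrange-d'Alembert principle to the action functional $\cS$ defined in \eqref{action}, with Rayleigh-type dissipation term $\tau_D = \bD\varphi$. First I would introduce a compactly supported first variation of the estimated pose on $\SE$, taken in the form $\delta\hat\msg = \hat\msg\,\Sigma^\vee$ where $\Sigma = [\Sigma_r\T \; \Sigma_t\T]\T \in \bR^6$, satisfying the fixed-endpoint conditions $\Sigma(t_0) = \Sigma(T) = 0$. From the estimator kinematics $\dot{\hat\msg} = \hat\msg\hat\xi^\vee$, the induced variation of the estimated velocity is $\delta\hat\xi = \dot\Sigma + \ad{\hat\xi}\Sigma$, obtained by equating $\delta\dot{\hat\msg}$ and $\tfrac{d}{dt}\delta\hat\msg$ and unpacking on the Lie algebra $\se$.

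Next I would compute the first variation of each summand in $\cL = \cT - \cU$. For the rotational potential $\cU_r = \Phi(\cU^0_r)$, I would proceed as in the proof of Proposition 4.1, using $\delta\hat R = \hat R\Sigma_r^\times$, the trace identity $\tr(v^\times M) = v\T\mrm{vex}(M-M\T)$, and the fact that $\hat R L^m W L^{m\T}\hat R\T$ is symmetric (so it annihilates against the skew $\Sigma_r^\times$), to isolate $\delta\cU_r = \Phi'(\cU^0_r)\,S_\Gamma\T(\hat R)\,\Sigma_r$ with $\Gamma = DW(L^m)\T$ and $S_\Gamma$ as defined in \eqref{SLdef}. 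For the translational potential $\cU_t = \tfrac12\kappa y\T y$, I would use $\delta y = -\delta\hat R\,\bar a^m - \delta\hat b$, substitute $\hat R\bar a^m + \hat b = \bar p - y$, and exploit $y\T y^\times = 0$ to collapse the result into a $\kappa\bar p^\times y$ contribution paired with $\Sigma_r$ and a $\kappa y$ contribution paired with $\Sigma_t$. These two pieces assemble into $Z$ as specified in \eqref{Z}.

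For the kinetic term, I would compute $\delta\varphi$ by differentiating $\varphi = \Ad{\hat\msg}(\xi^m - \hat\xi)$, using the identity $\delta\Ad{\hat\msg} = \Ad{\hat\msg}\ad{\Sigma}$ and the expression for $\delta\hat\xi$ from above, and then apply the coadjoint identity $(\bJ\varphi)\T\ad{\,\cdot\,}\Sigma = (\adast{\,\cdot\,}\bJ\varphi)\T\Sigma$. Integration by parts of the $\dot\Sigma$ term, with the boundary contributions vanishing by the fixed-endpoint assumption, produces the expected $\bJ\dot\varphi - \adast{\varphi}\bJ\varphi$ structure. Invoking the Lagrange-d'Alembert principle
\begin{equation*}
\delta\cS + \int_{t_0}^T (\bD\varphi)\T\Sigma\,dt = 0
\end{equation*}
for arbitrary admissible $\Sigma$, and collecting coefficients of $\Sigma$ via the fundamental lemma of the calculus of variations, yields $\bJ\dot\varphi = \adast{\varphi}\bJ\varphi - Z - \bD\varphi$. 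The auxiliary relation $\hat\xi = \xi^m - \Ad{\hat\msg^{-1}}\varphi$ is read off from the definition of $\varphi$, and the estimator kinematics $\dot{\hat\msg} = \hat\msg\hat\xi^\vee$ closes the system \eqref{ContFil}.

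The hard part will be the Lie-algebraic bookkeeping in Step 3: $\varphi$ depends on $\hat\msg$ both explicitly through $\Ad{\hat\msg}$ and implicitly through $\hat\xi$ (via the estimator kinematics), so $\delta\varphi$ picks up two contributions whose interplay must produce precisely the $\adast{\varphi}\bJ\varphi$ Euler-Poincaré term after integration by parts. A secondary delicate point is verifying that the variation of $\cU_t$ yields exactly $\kappa\bar p^\times y$ in the rotational slot and $\kappa y$ in the translational slot after the $\bar p - y$ substitution and the cancellation $y\T y^\times = 0$, so that the rotational coefficient combines cleanly with the $\Phi' S_\Gamma$ term from $\cU_r$ to match the closed-form expression for $Z$ in \eqref{Z}.
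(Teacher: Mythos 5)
Your route is in essence the paper's: apply the Lagrange--d'Alembert principle with a Rayleigh term linear in $\varphi$, take reduced variations, integrate the $\dot\eta$-term by parts, and invoke the fundamental lemma of the calculus of variations. The difference is the choice of variation variable. The paper postulates reduced variations on the \emph{error} pair, $\delta\msh=\msh\eta^\vee$ and $\delta\varphi=\dot\eta+\ad{\varphi}\eta$ as in \eqref{variations}, and pairs both the potential gradient $Z$ and the dissipative force $\bD\varphi$ against $\eta$ in \eqref{LagdAlem}; you vary the estimate, $\delta\hat\msg=\hat\msg\,\Sigma^\vee$, and chain-rule through $\varphi=\Ad{\hat\msg}(\xi^m-\hat\xi)$. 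The two parametrizations are related by $\eta=-\Ad{\hat\msg}\Sigma$, and your computation of $\delta\varphi$ does reproduce $\dot\eta+\ad{\varphi}\eta$ under this identification, so the Euler--Poincar\'{e} structure $\bJ\dot\varphi-\adast{\varphi}\bJ\varphi$ comes out the same either way.

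The trap is that \eqref{Z} and \eqref{ContFil} are written in the $\eta$-pairing, while your plan as stated keeps the $\Sigma$-pairing in two places where it matters. (i) Dissipation: if the virtual work is $\int_{t_0}^T(\bD\varphi)\T\Sigma\,\di t$, then after converting the kinetic and potential contributions to the same pairing the final equation carries $-(\Ad{\hat\msg}\T)^{-1}\bD\varphi$ in place of $-\bD\varphi$, a state-dependent gain that does not match \eqref{ContFil}; the dissipative force must be paired with $\eta$, exactly as in \eqref{LagdAlem}. (ii) Translational potential: with $\delta\hat R=\hat R\Sigma_r^\times$ and $\delta\hat b=\hat R\Sigma_t$, the substitution $\hat R\bar a^m=\bar p-\hat b-y$ together with $y\T y^\times=0$ gives a rotational coefficient proportional to $(\bar p-\hat b)^\times y$, not $\bar p^\times y$; the clean split $\kappa\bar p^\times y$ versus $\kappa y$ in \eqref{Z} appears only in the $(\Sigma,\rho)$ coordinates of $\delta\msh$, because $\rho=-(\hat b^\times\hat R\Sigma_r+\hat R\Sigma_t)$ mixes your $\Sigma_r$ and $\Sigma_t$ and absorbs the $\hat b^\times$ piece (compare \eqref{deltaU0t} and \eqref{NFZ}). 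Similarly, the Proposition-4.1-style rotational variation yields the body-frame quantity $\mrm{vex}\big(\Gamma\T\hat R-\hat R\T\Gamma\big)$ paired with $\Sigma_r$, which equals the $S_\Gamma(\hat R)$ of \eqref{SLdef} only after push-forward by $-\hat R$. None of this is fatal, since the two parametrizations differ by an invertible time-dependent linear map and hence yield equivalent equations; but to land literally on \eqref{ContFil} and \eqref{Z} you must either work in the error variables $(\msh,\varphi)$ from the outset, as the paper does, or convert all three contributions (kinetic, potential, dissipative) to the $\eta$-pairing before collecting coefficients.
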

{\em Proof}: A Rayleigh dissipation term linear in the velocities of the form $\bD\varphi$ 
where $\bD\in\bR^{6\times 6} >0$ is used in addition to the Lagrangian \eqref{NFContLag}, and 
the Lagrange-d'Alembert principle from variational mechanics is applied to obtain the estimator on 
$\Ta\SE$.  {\em Reduced variations} with respect to $\msh$ and $\varphi$~\cite{blbook,marat} are 
applied, given by
\begin{align}
\delta\msh&= \msh\eta^\vee,\; \delta\varphi= \dot\eta+ \ad{\varphi}\eta,\label{variations}\\
\mbox{where }&\eta^\vee=\bbm \Sigma^\times & \rho\\ 0\;\; & 0\ebm\mbox{ and }\ad{\mpz{\zeta}}=\bbm \mpz w^\times~~ & 0\\ \mpz v^\times\;\; & \mpz{w}^\times\ebm,\label{ad_def}
\end{align}
for $\eta=\bbm \Sigma\\ \rho\ebm\in\bR^6$ and
$\zeta=\bbm \mpz w\\ \mpz v\ebm\in\bR^6$, with $\eta(t_0)=\eta(T)=0$. This leads to the expression:
\begin{align}
\delta_{\msh,\varphi} \cS \big(\cL (\msh,D,\bar{p},\varphi)\big)=\int_{t_0}^T \eta\T
\bD\varphi \di t.\label{LagdAlem}
\end{align}
Note that the variations of the attitude and position estimation errors are of the form
\begin{align}
\delta Q=Q\Sigma^\times, \;\delta x=Q\rho,
\label{deltaQ}
\end{align}
respectively. Applying reduced variations to the rotational potential energy term 
\eqref{NFU0r}, one obtains
\begin{align}
\delta_{Q}\cU^0_r(\msh,D)&=\lan -Q\Sigma^\times, K\ran=\frac12\lan \Sigma^\times,KQ-Q\T K\ran
=S_K\T(Q)\Sigma,
\label{deltaU0r}
\end{align}
where
\be S_K(Q)=\mrm{vex}\big(K Q-Q\T K\big). \label{SBdef} \ee
Taking first variation of the translational potential energy term \eqref{NFU0t} with respect to $Q$ 
and $x$ yields:
\begin{align}
\delta_{\msh}\cU_t(\msh,\bar{p})&=\kappa(\delta x+\delta Q\bar{p})\T\big\{x+(Q-I)\bar{p}\big\}=\kappa\big(\rho\T y+\Sigma\T\bar{p}^\times y\big).
\label{deltaU0t}
\end{align}

Therefore, the first variation of the total potential energy \eqref{NFcostU} with respect to estimation errors is
\begin{align}
\delta_{\msh}\cU(\msh,D,\bar{p})=Z\T(\msh,D,\bar{p})\eta,
\end{align}
where $Z(\msh,D,\bar{p})$ is defined by
\begin{align}
Z(\msh,D,\bar{p})=\bbm \Phi'\Big(\lan I-Q, K\ran\Big)S_K(Q)+\kappa\bar{p}^\times\big\{Q\T x+(I-Q\T)\bar{p}\big\} \\
\kappa \{Q\T x+(I-Q\T)\bar{p}\}\ebm.\label{NFZ}
\end{align}
Taking the first variation of the kinetic energy term \eqref{NFcostT} with respect to $\varphi$ 
results in:
\begin{align}
\delta_{\varphi}\cT(\varphi)=\varphi\T\bJ\delta\varphi=\varphi\T\bJ(\dot{\eta}+\ad{\varphi}\eta),
\end{align}
applying the reduced variation for $\delta\varphi$ as given in \eqref{variations}. Therefore, the first 
variation of the action functional \eqref{action} is obtained as
\begin{align}
\delta_{\msh,\varphi}\cS\big(\cL(\msh,D,\bar{p},\varphi)\big)&=\int_{t_0}^T\big\{\varphi\T\bJ(\dot{\eta}+\ad{\varphi}\eta)-\eta\T Z(\msh,D,\bar{p})\big\}\di t\nn\\
&=\int_{t_0}^T\eta\T\Big(\adast{\varphi}\bJ\varphi-Z(\msh,D,\bar{p})-\bJ\dot{\varphi}\Big)\di t+\varphi\T\bJ\eta|_{t_0}^T  \nn\\
&=\int_{t_0}^T\eta\T\Big(\adast{\varphi}\bJ\varphi-Z(\msh,D,\bar{p})-\bJ\dot{\varphi}\Big)\di t,
\label{actionVariation}
\end{align}
applying fixed endpoint variations with $\eta(t_0)=\eta(T)=0$. Substituting \eqref{actionVariation} in expression \eqref{LagdAlem} one obtains
\be \bJ\dot{\varphi}=\adast{\varphi}\bJ\varphi-Z(\msh,D,\bar{p})-\bD\varphi,\label{NFdotvarphi}\ee
where $Z(\msh,D,\bar{p})$ is defined by \eqref{NFZ}. In order to implement this estimator using the 
aforementioned measurements, substitute $Q\T D=\hat{R}L^m$. This changes the rotational 
potential energy formed by the estimation errors in attitude \eqref{NFU0r} to \eqref{U0r}.
Equation \eqref{SBdef} is also reformulated as
\begin{align}
 S_K(Q)&=\mrm{vex}(DWD\T Q-Q\T DWD\T)\label{SBSL}\\
&=\mrm{vex}(DW(L^m)\T\hat{R}\T-\hat{R}(L^m)WD\T)=S_\Gamma(\hat{R}).\nn
\end{align}
Finally, the second row in the matrix $Z(\msh,D,\bar{p})$ is replaced by
\begin{align}
\kappa \{Q\T x+(I-Q\T)\bar{p}\}&=\kappa \{Q\T b-\hat{b}+\bar{p}-Q\T\bar{p}\}\nn\\
&=\kappa \{\hat{R}R\T(b-\bar{p})-\hat{b}+\bar{p}\}\nn\\
&=\kappa \{-\hat{R}\bar{a}^m-\hat{b}+\bar{p}\}.
\end{align}
Taking these changes into account, one could obtain the first of equations \eqref{ContFil} with 
$Z(\hat{\msg},L^m,D,\bar{a}^m,\bar{p})$ and $S_\Gamma(\hat{R})$ defined by \eqref{Z} and \eqref{SLdef}, respectively. 
Thus, the complete nonlinear estimator equations are given by \eqref{ContFil}.
\hfill\ensuremath{\square}

This is a fundamentally new idea of applying a principle from variational mechanics 
to obtain a state estimator, recently applied to rigid body attitude estimation 
in~\cite{Automatica}. This approach differs from the ``minimum-energy" approach to 
nonlinear estimation due to Mortensen~\cite{Mortensen} in some important ways. The 
minimum-energy approach applies Hamilton-Jacobi-Bellman (HJB) theory~\cite{kirk}, 
which can only be ``approximately solved." This approach was recently applied to state 
estimation of rigid body attitude motion in \cite{ZamPhD}. This HJB formulation can only be
approximately solved in practice, using a Riccati-like equation, to obtain a near-optimal filter 
that has no guarantees on stability. In the proposed approach, the time 
evolution of $(\hat \msg,\hat\xi)$ has the form of the dynamics of a rigid body with Rayleigh 
dissipation. This results in an estimator for the motion states $(\msg,\xi)$ that 
dissipates the ``energy" content in the estimation errors $(\mathsf{h},\varphi)= (\msg \hat\msg^{-1}, 
\Ad{\hat\msg}(\xi- \hat\xi))$ to provide guaranteed asymptotic stability in the case of perfect 
measurements~\cite{Automatica}. The differences between these two approaches were 
detailed in~\cite{ICRA2015}, for rigid body attitude estimation.

The proposed estimator combines certain desirable features of stochastic estimation and 
observer design approaches to state estimation for unmanned vehicles, when simultaneous 
inertial vector measurements and optical measurements of fixed beacons or landmarks are available. 
This nonlinear estimator is robust to measurement noise and does not require a dynamics model for 
the vehicle; instead, it estimates the dynamics of the vehicle given the measurement model 
in Section \ref{Sec2}. The variational pose estimator can also be interpreted as a low-pass stable filter (cf. \cite{Tayebi2011}). 
Indeed, one can connect the low-pass filter interpretation to the simple example of the natural dynamics 
of a mass-spring-damper system. This is a consequence of the fact that the mass-spring-damper system 
is a mechanical system with passive dissipation, evolving on a configuration space that is the vector 
space of real numbers, $\bR$. In fact, the equation of motion of this system can be obtained by 
application of the Lagrange-d'Alembert principle on the configuration space $\bR$. 
If this analogy or interpretation is extended to a system evolving on a Lie group as a 
configuration space, then the generalization of the mass-spring-damper system is a ``forced 
Euler-Poincar\'{e} system'' \cite{blbook,marat} with passive dissipation, as is obtained here. Explicit expressions for the vector of velocities $\xi^m$ can be obtained for two common cases 
when these velocities are not directly measured. These two cases are dealt with in the next subsection.

\subsubsection{Variational Estimator Implemented without Direct Velocity Measurements}\label{ContButterworth}

The velocity measurements in \eqref{ContFil} can be replaced by filtered velocity estimates 
obtained by linear filtering of optical and inertial measurements using, e.g., a second-order 
Butterworth filter. This is both useful and necessary when velocities are not directly measured. The 
filtered values $\xi^f$ are then used in place of $\xi^m$ to enhance the nonlinear estimator given by 
Theorem \ref{filterTHM}. Denote the measured vector quantity at time 
$t$ by $\mpz{z}^m$. A linear second-order filter of the form:
\begin{align}
\ddot{\mpz{z}}^f+ 2\mu\omega_n\dot{\mpz{z}}^f=\omega_n^2\big(\mpz{z}^m-\mpz{z}^f\big),
\label{ContBW}
\end{align}
is used, where $\omega_n$ is the natural (cutoff) frequency, $\mu$ is the damping ratio, and 
$\mpz{z}^f$ is the filtered value of $\mpz{z}^m$. Thereafter, 
$\mpz{z}^f$ is used in place of $\mpz{z}^m$ in equations \eqref{ContFil}. 

\paragraph{Angular velocity is measured using rate gyros}
For the case that rate gyro measurements of angular velocities are available besides the 
$\mpz{j}$ feature point (or beacon) position measurements, the linear velocities of the rigid body can 
be calculated using each single position measurement by rewriting \eqref{pjdot} as
\begin{align}
\nu^f=(a_j^f)^\times\Omega^f-v_j^f.
\end{align}
for the $j^{th}$ point. Averaging the values of $\nu$ derived from all feature points gives a more reliable result. Therefore, the rigid body's filtered velocities are expressed in this case as
\begin{align}
\xi^f=\bbm \Omega^f\\ \frac{1}{\mpz{j}}\sum\limits_{j=1}^{\mpz{j}}(a_j^f)^\times\Omega^f-v_j^f \ebm.
\end{align}

\paragraph{Translational and angular velocity measurements are not available}
In the case that both angular and translational velocity measurements are not available or accurate, 
rigid body velocities can be calculated in terms of the inertial and optical measurements. In order to do 
so, one can differentiate \eqref{FrameTrans} as follows
\begin{align}
&\dot{p}_j=R\Omega^\times a_j+R\dot{a}_j+\dot{b}=R\big(\Omega^\times a_j+\dot{a}_j+\nu\big)=0\nn\\
\Rightarrow&\dot{a}_j-a_j^\times\Omega+\nu=0\nn\\
\Rightarrow&v_j=\dot{a}_j=[a_j^\times\; -I]\xi=G(a_j)\xi,
\label{pjdot}
\end{align}
where $G(a_j)=[a_j^\times\; -I]$ has full row rank. From vision-based or Doppler lidar sensors, one can also 
measure the velocities of the observed points in frame $\sS$, denoted $v_i^m$. Here, velocity 
measurements as would be obtained from vision-based sensors is considered. 
The measurement model for the velocity is of the form
\be v_j^m=G(a_j)\xi+\vartheta_j,\ee
where $\vartheta_j\in\bR^3$ is the additive error in velocity measurement $v_j^m$. Instantaneous 
angular and translational velocity determination from such measurements is treated 
in~\cite{ast_acc14}. 
Note that $v_j=\dot{a}_j$, for $j\in\mathcal I(t)$. As this kinematics indicates, the relative 
velocities of at least three beacons are needed to determine the vehicle's translational and 
angular velocities uniquely at each instant. However, when only one or two landmarks/beacons are 
measured, the estimator can propagate velocity estimates based on a least squares velocity 
determined from the available measurements. The rigid body velocities in both cases are obtained 
using the pseudo-inverse of $\mathds{G}(A^f)$:
\begin{align}
\mathds{G}(A^f)\xi^f&=\mathds{V}(V^f)\Rightarrow\xi^f=\mathds{G}^\ddag(A^f)\mathds{V}(V^f),\label{ximMore2}\\
\mbox{where }\;\mathds{G}(A^f)&=\bbm G(a^f_1)\\\vdots\\G(a^f_\mpz j)\ebm\;\mbox{and }\;\mathds{V}(V^f)=\bbm v^f_1\\\vdots\\v^f_\mpz j \ebm,\label{GVDef}
\end{align}
for $1,...,\mpz j\in\mathcal I(t)$. 
When at least three beacons are measured, $\mathds{G}(A^f)$ is a full column rank matrix, 
and $\mathds{G}^\ddag(A^f)= \Big( \mathds{G}\T(A^f)\mathds{G} (A^f)\Big)^{-1} 
\mathds{G}\T (A^f)$ gives its pseudo-inverse. For the case that only one or two beacons are 
observed, $\mathds{G}(A^f)$ is a full row rank matrix, whose pseudo-inverse is given by 
$\mathds{G}^\ddag (A^f)= \mathds{G}\T (A^f)\Big( \mathds{G}(A^f)\mathds{G}\T 
(A^f)\Big)^{-1}$.

\subsection{Stability and Robustness of Estimator}\label{Sec4}
The stability of the estimator (filter) given by Theorem \ref{filterTHM} is analyzed here.
The following result shows that this scheme is stable, with almost global convergence of 
the estimated states to the real states in the absence of measurement noise.
\begin{theorem} \label{thmFilt0}
Let the observed position vectors from optical measurements be bounded. Then, 
the estimator presented in Theorem \ref{filterTHM} is asymptotically stable at the estimation error 
state $(\msh,\varphi)=(I,0)$ in the absence of measurement noise. Further, the domain of 
attraction of $(\msh,\varphi)=(I,0)$ is a dense open subset of $\SE\times\bR^6$. 
\end{theorem}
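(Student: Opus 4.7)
The plan is to mirror the Morse-Lyapunov argument used for the attitude-only variational estimator, extending it to pose estimation on $\Ta\SE$. I would first take as candidate Lyapunov function the total ``energy'' of the estimator, expressed in error coordinates via Lemma \ref{NoiseFree}:
\begin{align*}
V(\msh,\varphi) = \tfrac12 \varphi\T \bJ \varphi + \Phi\big(\lan I-Q, K\ran\big) + \tfrac12 \kappa\, y\T y,
\end{align*}
where $y = Q\T x + (I-Q\T)\bar p$. By Lemma \ref{LemmaW} and the hypotheses on $\Phi$, the rotational term is a Morse function on $\SO$ with unique global minimum at $Q = I$; the translational term is non-negative and vanishes exactly when $y=0$; and the kinetic term is positive definite in $\varphi$. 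Consequently $V \ge 0$ on $\SE\times\bR^6$, with $V = 0$ iff $(\msh,\varphi) = (I,0)$.

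Next, I would compute $\dot V$ along trajectories of the error dynamics \eqref{hdot}--\eqref{Qdot} generated by the estimator \eqref{ContFil}. By the construction of $Z$ in \eqref{Z} as the (reduced) gradient of the potential $\cU$, the chain rule yields $\dot V = \varphi\T\bJ\dot\varphi + Z\T\varphi$. Substituting the $\bJ\dot\varphi$ equation from \eqref{ContFil}, the adjoint term $\varphi\T\adast{\varphi}\bJ\varphi$ vanishes by skew-symmetry of $\ad{\varphi}$, the $Z$ contributions cancel exactly, and one is left with the dissipation identity
\begin{align*}
\dot V = -\varphi\T\bD\varphi \le 0,
\end{align*}
which is the direct analogue of the dissipation equality obtained for the attitude case. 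Since the observed inertial position vectors are bounded by hypothesis, $\bar p$ remains bounded; together with radial unboundedness of $V$ in $\varphi$ and $x$, this keeps trajectories bounded.

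The third step is a LaSalle-type invariance argument. Because the error dynamics are non-autonomous (depending on $D$, $\bar p$, and the running $\hat\msg$), I would apply Theorem 8.4 of \cite{khal} as in the attitude proof of Chapter \ref{CH04_VE}, yielding $\varphi \to 0$. Setting $\varphi \equiv 0$ in \eqref{ContFil} forces $Z(\hat\msg,L^m,D,\bar a^m,\bar p) \equiv 0$, which in error coordinates reads $S_K(Q) = 0$ and $y = 0$. By Lemma \ref{LemmaW} the first equation restricts $Q$ to the critical set $C_Q = \{I, Q_1, Q_2, Q_3\}$, and the second then pins $x = (I-Q)\bar p$ uniquely. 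The positive limit set $\mathscr I$ therefore comprises four isolated equilibria $(Q_\imath, (I-Q_\imath)\bar p, 0)$, among which $(I,0,0)$ is the unique global minimum of $V$.

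The main technical obstacle is characterizing the three undesired equilibria and showing that the union of their stable manifolds has positive codimension in $\SE\times\bR^6$. At $(Q_\imath, (I-Q_\imath)\bar p, 0)$, the Hessian of $V$ decouples into the positive-definite kinetic block $\bJ$ in $\varphi$ and the Hessian of $\cU$ in the reduced configuration variations $(\Sigma, \rho)$. Writing $\delta y = \rho - \bar p^\times \Sigma$ at $y=0$, the cross terms in the translational block are structured so that the Schur complement in the $\rho$-block reduces exactly to $\Phi'(\cdot) H_K(Q_\imath)$; hence the index of the potential Hessian equals the Morse index $\imath$ of $Q_\imath$ on $\SO$ established in Lemma \ref{LemmaW}. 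Linearizing the dissipative Euler--Poincar\'e system at each equilibrium then gives $\dim\cM^S_\imath = 12-\imath \in \{9,10,11\}$. A monotonicity argument on $V$ along reverse-time trajectories, identical to that in the attitude proof, shows each $\cM^S_\imath$ is an embedded closed submanifold. Consequently
\begin{align*}
\mathrm{DOA}\{(I,0)\} = \SE\times\bR^6 \,\setminus\, \bigcup_{\imath=1}^{3}\cM^S_\imath
\end{align*}
is a dense open subset of $\SE\times\bR^6$, completing the proof.
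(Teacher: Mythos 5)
Your proposal is correct and follows essentially the same route as the paper's proof: the identical Morse--Lyapunov function $V=\tfrac12\varphi\T\bJ\varphi+\Phi(\lan I-Q,K\ran)+\tfrac12\kappa y\T y$, the same dissipation identity $\dot V=-\varphi\T\bD\varphi$, the same appeal to Theorem 8.4 in \cite{khal} for the non-autonomous error dynamics, and the same stable-manifold dimension count $\dim(\cM^S_\imath)=12-\mbox{index of }Q_\imath$ yielding an open dense domain of attraction. The only substantive differences are refinements in your favor: you locate the spurious equilibria at $x=(I-Q_\imath)\bar p$, which is what $y=0$ actually implies (the paper writes the limit set with $Q\T x=0$), and you justify that the index of the full potential Hessian equals the Morse index of $Q_\imath$ via the Schur complement in the $\rho$-block, where the paper simply cites the attitude-case lemma; neither changes the structure or conclusion of the argument.
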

{\em Proof}: In the absence of measurement noise, $\hat R E=Q\T D$. Therefore, the function $\Phi\big(\cU^0_r
(\hat\msg,L^m,D)\big)=\Phi\big(\cU^0_r(\msh,D)\big)$ is a Morse function on $\SO$. The stability of this 
estimator can be shown using the following candidate Morse-Lyapunov function, which can be 
interpreted as the total energy function (equal in value to the Hamiltonian) corresponding to the 
Lagrangian \eqref{contLag}:
\begin{align}
V(\msh,&D,\bar{p},\varphi)=\cT (\varphi)+\cU(\msh,D,\bar{p})=\frac{1}{2}\varphi\T\bJ\varphi+\Phi \big(\lan I-Q, K\ran\big)+\frac12\kappa y\T y.\label{lyapf}
\end{align}
Note that $V(\msh,D,\bar{p},\varphi)\geq 0$ and $V(\msh,D,\bar{p},\varphi)=0$ if and only if $(\msh,\varphi)=(I,0)$. Therefore, $V(\msh,D,\bar{p},\varphi)$ is positive definite on 
$\SE\times\bR^6$. Using \eqref{Qdot}, one can derive the time derivative of \eqref{NFcostU} as
\begin{align}
\frac{\di}{\di t}\cU(\msh,D,&\bar{p})=\Phi'(\cU^0_r(\msh,D)\big)\lan-Q\omega^\times, K\ran+\kappa (\dot x+\dot Q\bar{p})\T (Qy)\nn\\
&=\Phi'(\cU^0_r(\msh,D)\big)\lan\omega^\times, -Q\T K\ran+\kappa (Q\upsilon+Q\omega^\times\bar{p})\T(Qy)\nn\\
&=\frac12\Phi'(\cU^0_r(\msh,D)\big)\lan\omega^\times, KQ-Q\T K\ran+\kappa (\upsilon+\omega^\times\bar{p})\T y\nn\\
&=\Phi'(\cU^0_r(\msh,D)\big)S\T_K(Q)\omega+\kappa y\T \upsilon+\kappa(\bar{p}^\times y)\T \omega\nn\\
&=Z\T(\msh,D,\bar{p})\varphi,
\label{cU0dot} 
\end{align}
where $S_K(Q)$ is defined as \eqref{SBdef} and $Z(\msh,D,\bar{p})$ as \eqref{NFZ}. Therefore, the time derivative of the candidate Morse-Lyapunov function is
\begin{align}
\dot{V}(\msh,D,\bar{p}&,\varphi)=\varphi\T\bJ\dot\varphi+\varphi\T Z(\msh,D,\bar{p})\nn\\
&=\varphi\T\Big(\adast{\varphi}\bJ\varphi-Z(\msh,D,\bar{p})-\bD\varphi+Z(\msh,D,\bar{p})\Big)\nn\\
&=-\varphi\T\bD\varphi. \label{dlyapf}
\end{align}
noting that $\varphi\T\adast{\varphi}\bJ\varphi=0$. Hence, the derivative of the Morse-Lyapunov function is negative semi-definite. Note that the error dynamics for the pose estimate error $\msh$ is given by \eqref{hdot}, while the 
error dynamics for the velocities estimate error $\varphi$ is given by \eqref{NFdotvarphi}. Note that $D(t)$, as a function of time, is piecewise continuous and uniformly bounded. The first property 
(piecewise continuity) is naturally satisfied by $D(t)$, which is piecewise constant as the number and 
inertial positions of beacons (or feature points) observed by body-fixed optical sensors is piecewise 
continuous in time. The second property (uniform boundedness) is satisfied by $D(t)$ if the position 
vectors observed are bounded in $\bR^3$, as assumed in the statement. Therefore, the error dynamics 
for $(\msh,\varphi)$ is non-autonomous. Considering \eqref{lyapf} and \eqref{dlyapf}, and 
applying Theorem 8.4 in \cite{khal}, one can conclude that $\varphi\T \bD\varphi
\rightarrow 0$ as $t\rightarrow \infty$, which consequently implies $\varphi\rightarrow0$. 
Thus, the positive limit set for this system is contained in
\begin{align}
\cE = \dot{V}^{-1}(0)=\big\{(\msh,\varphi)\in\SE\times\se:\varphi\equiv0\big\}. 
\label{dotV0}
\end{align}
Substituting $\varphi\equiv 0$ in the first equation of the estimator \eqref{ContFil}, we obtain the 
positive limit set where $\dot V\equiv 0$ (or $\varphi\equiv 0$) as the set
\begin{align}
\mathscr{I} &= \big\{(\msh,\varphi)\in\SE\times\bR^6: Z(\msh,D,\bar{p})\equiv 0, 
\varphi\equiv0\big\} \label{invset}\\
&= \big\{(\msh,\varphi)\in\SE\times\bR^6: Q \in C_Q,\  Q\T x=0,\ \varphi\equiv0\big\},\nn
\end{align}
where $C_Q$ is defined by \eqref{C_Qdef}. Therefore, in the absence of measurement errors, all the 
solutions of this estimator converge 
asymptotically to the set $\mathscr{I}$. Define $\cU_r (Q):= \Phi \big(\lan I-Q, K\ran\big)$, 
which is the attitude measurement residual in the case of perfect measurements. 
Thus, the attitude estimate error converges to the set of critical points of $\cU_r (Q)$ in this 
intersection, and the position estimate error $x$ converges to zero. 
%
The unique global minimum of $\cU_r (Q)$ is at $Q=I$ (Lemma 2.1 in \cite{Automatica}), so 
this estimation error is asymptotically stable.\par
Now consider the set
\be \mathscr{C}= \mathscr{I}\setminus (I,0),  \label{othereqb} \ee
which consists of all stationary states that the estimation errors may converge to, besides 
the desired estimation error state $(I,0)$. Note that all states in the stable manifold of a 
stationary state in $\mathscr{C}$ converge to this stationary state. 
From the properties of the critical points $Q_\iota\in C_Q\setminus (I)$ of $\cU^0_r (Q)$, 
$(\iota=1,2,3)$ given in Lemma 2.1 of \cite{Automatica}, we see that the stationary points in 
$\mathscr{I}\setminus (I,0)=
\big\{ (\bbm Q_\iota\;\;&0\\0\;\;\;&1\ebm ,0) : Q_\iota\in C_Q\setminus (I)\big\}$ have stable manifolds 
whose dimensions depend on the index of $Q_\iota$. Since the velocities estimate error $\varphi$ 
converges globally to the zero vector, the dimension of the stable manifold $\cM^S_\iota$
of the critical points, i.e. $(\bbm Q_\iota\;\;&0\\0\;\;\;&1\ebm,0)\in\SE\times\bR^6$ is 
\be \dim (\cM^S_\iota) = 9+(3-\,\mbox{index of } Q_\iota)= 12- \,\mbox{index of } Q_\iota. \label{dimStabM} \ee
Therefore, the stable manifolds of $(\msh,\varphi)=(\bbm Q_\iota\;\;&0\\0\;\;\;&1\ebm,0)$ are nine-dimensional, 
ten-dimensional, or eleven-dimensional, depending on the index of $Q_\iota\in C_Q\setminus (I)$ 
according to \eqref{dimStabM}. Moreover, the value of the Lyapunov function $V(\msh,D,
\varphi)$ is non-decreasing (increasing when $(\msh,\varphi)\notin\mathscr{I}$) 
for trajectories on these manifolds when going backwards in time. This 
implies that the metric distance between error states $(\msh,\varphi)$ along these 
trajectories on the stable manifolds $\cM^S_\iota$ grows with the time separation between these 
states, and this property does not depend on the choice of the metric on $\SE\times\bR^6$. 
Therefore, these stable manifolds are embedded (closed) submanifolds of $\SE\times\bR^6$ 
and so is their union. Clearly, all states starting in the complement of this union, converge 
to the stable equilibrium $(\bbm Q_\iota\;\;&0\\0\;\;\;&1\ebm,0)=(I,0)$; therefore the domain of 
attraction of this equilibrium is
\[ \mbox{DOA}\{(I,0)\} = \SE\times\bR^6\setminus\big\{\cup_{\iota=1}^3 \cM^S_\iota\big\}, \]
which is a dense open subset of $\SE\times\bR^6$. 
\hfill\ensuremath{\square}

Therefore, the domain of attraction for the variational estimation scheme at $(\msh,\varphi)=(I,0)$ is 
almost global over the state space $\Ta\SE\simeq\SE\times\bR^6$, which is the best possible with 
continuous control and navigation schemes for systems evolving on a non-contractible state 
space~\cite{chaturvedi2011rigid,bo:miln}. In the presence of measurement noise with 
bounded frequencies and amplitudes, one can show that the expected values of the 
state estimates converge to a bounded neighborhood of the true states. The size of this 
neighborhood, which can be considered as a measure of the robustness of this estimation 
scheme, depends on the values of the estimator gains $\bJ$, $W$ and $\bD$. These estimator 
gains can be selected based on balancing the transient and steady-state behavior
of the estimator. 

\begin{remark}
In the special case that the weight matrix $W$ in Wahba's function is chosen as a piecewise time 
constant matrix according to Lemma \ref{LemmaW}, $K=DWD\T$ is a constant matrix for all time. 
Therefore, the RHS of \eqref{NFdotvarphi} is not explicitly dependent on time. This makes 
$(\msh,\varphi)$ an autonomous system and therefore the use of Theorem 8.4 of \cite{khal} is not 
required to prove asymptotic stability. One can apply LaSalle's invariance principle (Theorem 4.4 
in \cite{khal}) to prove the convergence of state estimates to the equilibrium $(I,0)$ in this case.
\end{remark}

\subsection{Discretization for Computer Implementation}\label{Sec5}
For onboard computer implementation, the variational estimation scheme outlined above has to 
be discretized. This discretization is carried out in the framework of discrete geometric 
mechanics, and the resulting discrete-time estimator is in the form of a Lie group variational 
integrator (LGVI), as in~\cite{SCLpaper}. Since the estimation scheme proposed here is obtained from 
a variational principle of mechanics, it can be discretized by applying the discrete 
Lagrange-d'Alembert principle~\cite{marswest}. Consider an interval of time $[t_0, T]\in\bR^+$ 
separated into $N$ equal-length subintervals $[t_i,t_{i+1}]$ for $i=0,1,\ldots,N$, with $t_N=T$ 
and $t_{i+1}-t_i=\Delta t$ is the time step size. Let $(\hat \msg_i,\hat\xi_i)\in\SE\times\bR^6$ 
denote the discrete state estimate at time $t_i$, such that $(\hat \msg_i,\hat\xi_i)\approx 
(\hat \msg(t_i),\hat\xi(t_i))$ where $(\hat \msg(t),\hat\xi(t))$ is the exact solution of the 
continuous-time estimator at time $t\in [t_0, T]$. Let the values of the discrete-time measurements  
$\xi^m$, $\bar a^m$ and $L^m$ at time $t_i$ be denoted as $\xi^m_i$, $\bar a^m_i$ and $L^m_i$, 
respectively. Further, denote the corresponding values for the latter two quantities in inertial frame 
at time $t_i$ by $\bar p_i$ and $D_i$, respectively.
The term representing the energy content of the pose estimation error, given by 
\eqref{costU}, is discretized as
\begin{align}
\cU(\hat{\msg}_i,L^m_i,D_i,&\bar{a}^m_i,\bar{p}_i)=\cU_r (\hat{\msg}_i,L_i^m,D_i)+\cU_t(\hat{\msg}_i,
\bar{a}^m_i,\bar{p}_i)\nn\\
&= \Phi \big( \cU^0_r (\hat{\msg}_i,L_i^m,D_i) \big)+\cU_t(\hat{\msg}_i,\bar{a}^m_i,\bar{p}_i)\label{DiscCostU}\\
&=\Phi \big(\frac12\lan D_i -\hat R_i L_i^m , (D_i -\hat R_i L_i^m)W_i\ran\big)+\frac12\kappa\|\bar{p}_i-\hat{R}_i\bar{a}_i^m-\hat{b}_i\|^2,\nn
\end{align}
where $W_i$ is the matrix of weight factors corresponding to $D_i$ at time $t_i$. The term 
encapsulating the energy in the velocities estimate error \eqref{costT}, is discretized as
\begin{align}
\cT \Big(\varphi(\hat\msg_i,\xi_i^m,\hat\xi_i)\Big)= \frac12 \varphi(\hat\msg_i,\xi_i^m,\hat\xi_i)\T \bJ\varphi(\hat\msg_i,\xi_i^m,\hat\xi_i), 
\end{align}
where $\bJ=\diag(J,M)$ and $M,J$ are positive definite matrices.

\begin{lemma}\label{NoiseFree}
In the absence of measurement noise, the discrete-time Lagrangian is of the form
\begin{align}
\cL (\msh_i,D_i,\bar{p}_i&,\varphi_i)=\frac12\lan\mathcal{J}\omega_i^\times,\omega_i^\times\ran+\frac12\lan M\upsilon_i,\upsilon_i\ran-\Phi \big(\lan I-Q_i,K_i\ran\big)-\frac12 \kappa y_i\T y_i,\label{DisLag}
\end{align}
where $y_i\equiv y(\msh_i,\bar p_i)=Q_i\T x_i+(I-Q_i\T)\bar{p}_i$ and $\mathcal{J}$ is defined in terms of the 
matrix $J$ by $\mathcal{J}=\frac12\tr[J]I-J$.
\end{lemma}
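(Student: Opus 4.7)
The plan is to reduce this lemma to the previous \textbf{NoiseFree} lemma (which already gave the continuous-time Lagrangian in the absence of noise) and then rewrite the quadratic form in angular velocity using a standard trace identity from geometric mechanics. In essence, the discrete-time Lagrangian has exactly the same functional form as the continuous-time one, evaluated at time $t_i$, because $\cT$ and $\cU$ are pointwise expressions with no time derivatives in their arguments; the discretization only affects how the Lagrangian enters the action sum, not the Lagrangian itself.

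First I would substitute the noise-free measurement models at time $t_i$: namely $L^m_i = R_i^T D_i$, $\bar a^m_i = R_i^T(\bar p_i - b_i)$, and $\xi^m_i = \xi_i$. Applied to \eqref{DiscCostU}, the argument of $\Phi$ collapses exactly as in equation \eqref{NFU0r} of Lemma \ref{NoiseFree}, yielding $\cU_r(\msh_i,D_i) = \Phi(\langle I-Q_i, K_i\rangle)$ with $K_i = D_i W_i D_i^T$. The same substitution in $\cU_t$ produces $y_i = Q_i^T x_i + (I-Q_i^T)\bar p_i$, and so $\cU_t(\msh_i,\bar p_i) = \tfrac{1}{2}\kappa y_i^T y_i$. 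These two steps are direct transcriptions of the continuous-time derivation already carried out.

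The only genuinely new computation is recasting the kinetic energy. Writing $\varphi_i = [\omega_i^T,\, \upsilon_i^T]^T$ and $\bJ = \diag(J,M)$ gives $\cT(\varphi_i) = \tfrac{1}{2}\omega_i^T J\omega_i + \tfrac{1}{2}\upsilon_i^T M\upsilon_i$. The translational piece is already in the claimed form since $\tfrac{1}{2}\upsilon_i^T M\upsilon_i = \tfrac{1}{2}\langle M\upsilon_i,\upsilon_i\rangle$. For the rotational piece I would use the identity $(\omega^\times)(\omega^\times)^T = \|\omega\|^2 I - \omega\omega^T$, together with the cyclic property of the trace, to compute
\begin{align*}
\langle \mathcal{J}\omega_i^\times,\omega_i^\times\rangle &= \tr\!\bigl(\mathcal{J}\,\omega_i^\times (\omega_i^\times)^T\bigr) = \|\omega_i\|^2\tr(\mathcal{J}) - \omega_i^T\mathcal{J}\omega_i.
\end{align*}
Substituting $\mathcal{J} = \tfrac{1}{2}\tr(J)I - J$ yields $\tr(\mathcal{J}) = \tfrac{1}{2}\tr(J)$ and $\omega_i^T\mathcal{J}\omega_i = \tfrac{1}{2}\tr(J)\|\omega_i\|^2 - \omega_i^T J\omega_i$, so the two $\tfrac{1}{2}\tr(J)\|\omega_i\|^2$ terms cancel and one obtains $\langle \mathcal{J}\omega_i^\times,\omega_i^\times\rangle = \omega_i^T J\omega_i$.

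Assembling these three pieces gives the claimed expression \eqref{DisLag} directly. The main (and only nontrivial) obstacle is the trace identity for the rotational kinetic energy; the rest is bookkeeping and a repeat of the noise-free reductions in the continuous-time case. The point of writing $\tfrac{1}{2}\omega_i^T J\omega_i$ as $\tfrac{1}{2}\langle \mathcal{J}\omega_i^\times,\omega_i^\times\rangle$ is that the latter form, being expressed entirely in terms of elements of $\so$, is the one amenable to the discrete Lagrange--d'Alembert procedure on the Lie group $\SE$ that will follow this lemma.
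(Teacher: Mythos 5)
Your proposal is correct. The paper actually states this discrete-time lemma without proof, so there is nothing to compare against line by line; your argument --- repeating the noise-free substitutions $L^m_i=R_i\T D_i$, $\bar a^m_i=R_i\T(\bar p_i-b_i)$, $\xi^m_i=\xi_i$ exactly as in the continuous-time noise-free lemma, and then verifying $\lan\mathcal{J}\omega_i^\times,\omega_i^\times\ran=\omega_i\T J\omega_i$ via $\omega_i^\times(\omega_i^\times)\T=\|\omega_i\|^2I-\omega_i\omega_i\T$ and $\tr(\mathcal{J})=\tfrac12\tr(J)$ --- is precisely the intended content, and your trace identity is the quadratic-form counterpart of the relation $(J\omega)^\times=\omega^\times\mathcal{J}+\mathcal{J}\omega^\times$ that the paper invokes in \eqref{JomegaCross} when deriving the LGVI.
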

A Lie group variational integrator (LGVI) introduced in \cite{Sanyal2011almost} is applied to the 
discrete-time Lagrangian \eqref{DisLag} to obtain the discrete-time filter.

\begin{theorem} \label{discfilter}
A first-order discretization of the estimator proposed in Theorem \ref{filterTHM} is given by
\begin{align}
(J\omega_i)^\times&=\frac{1}{\Delta t}(F_i\mathcal{J}-\mathcal{J}F_i\T),\label{LGVI_F}\\
(M+\Delta t\bD_t)\upsilon_{i+1}&=F_i\T M\upsilon_i+\Delta t \kappa (\hat{b}_{i+1}+\hat{R}_{i+1}\bar{a}^m_{i+1}-\bar{p}_{i+1}),\label{LGVI_upsilon}\\
(J+\Delta t\bD_r)\omega_{i+1}&=F_i\T J\omega_i+\Delta t M\upsilon_{i+1}\times\upsilon_{i+1}\nn\\
+\Delta t&\kappa \bar{p}_{i+1}^\times (\hat{b}_{i+1}+\hat{R}_{i+1}\bar{a}^m_{i+1})\label{LGVI_omega}\\
-\Delta t&\Phi' \big( \cU^0_r (\hat{\msg}_{i+1},L_{i+1}^m,D_{i+1}) \big)S_{\Gamma_{i+1}}(\hat{R}_{i+1}),\nn\\
\hat\xi_i&=\xi^m_i-\Ad{\hat\msg_i^{-1}}\varphi_i,\label{LGVI_xihat}\\
\hat\msg_{i+1}&=\hat\msg_i\exp(\Delta t\hat\xi_i^\vee),\label{LGVI_ghat}
\end{align}
where $F_i\in\SO$, $\big(\hat\msg(t_0),\hat\xi(t_0)\big)=(\hat\msg_0,\hat\xi_0)$, 
$\varphi_i=[\omega_i\T\;\upsilon_i\T]\T$, and $S_{\Gamma_i}(\hat R_i)$ is the value of  
$S_\Gamma(\hat R)$ at time $t_i$, with $S_\Gamma(\hat R)$ as defined by \eqref{SLdef}. 
\end{theorem}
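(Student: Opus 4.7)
The plan is to derive the scheme \eqref{LGVI_F}--\eqref{LGVI_ghat} by applying the discrete Lagrange-d'Alembert principle to the discrete action sum $\cS_d = \Delta t\sum_{i=0}^{N-1}\cL(\msh_i,D_i,\bar p_i,\varphi_i)$ built from the discrete Lagrangian \eqref{DisLag}, augmented by the discrete virtual work of a Rayleigh dissipation term linear in the velocity estimate error. First I would introduce the natural update variable $F_i\in\SO$ via $\hat R_{i+1}=\hat R_i F_i$ together with a translational step $\hat b_{i+1}=\hat b_i+\Delta t\,\hat R_i\hat\nu_i$, which is exactly the kinematic update \eqref{LGVI_ghat} written component-wise, so that $F_i=\exp(\Delta t\,\hat\Omega_i^\times)$. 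The rotational kinetic energy in \eqref{DisLag} can then be rewritten (using the standard trace identity on $\SO$) as a function of $F_i$, and the discrete Legendre transform identifies $(J\omega_i)^\times = (F_i\mathcal{J}-\mathcal{J}F_i^{\T})/\Delta t$, which is precisely \eqref{LGVI_F}.

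Next I would take right-trivialized variations $\delta\hat\msg_i=\hat\msg_i\eta_i^\vee$ with $\eta_i=(\Sigma_i,\rho_i)\in\bR^6$ and fixed endpoints $\eta_0=\eta_N=0$, following the same reduction used in \eqref{variations}. These induce first-order variations $\delta F_i$ and variations of the discrete translational step, and by a discrete summation-by-parts one expresses the variation of the discrete kinetic energy sum as $\sum_i\eta_i^{\T}\bigl(\text{discrete momentum update}\bigr)$. The variations of the potential terms $\cU_r$ and $\cU_t$ at step $i+1$ are precisely those computed in \eqref{deltaU0r}--\eqref{deltaU0t}, and they produce the forcing $\Phi'(\cU^0_r)S_{\Gamma_{i+1}}(\hat R_{i+1})$ together with the translational residual $\kappa(\hat b_{i+1}+\hat R_{i+1}\bar a^m_{i+1}-\bar p_{i+1})$ and its moment $\kappa\bar p_{i+1}^\times(\hat b_{i+1}+\hat R_{i+1}\bar a^m_{i+1})$ appearing in \eqref{LGVI_omega}--\eqref{LGVI_upsilon}. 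Setting $\delta\cS_d+\Delta t\sum_i\eta_i^{\T}\bD\varphi_i=0$ for arbitrary $\eta_i$, $0<i<N$, and collecting the rotational and translational components of the coefficient, yields \eqref{LGVI_omega} and \eqref{LGVI_upsilon}; the remaining equations \eqref{LGVI_xihat} and \eqref{LGVI_ghat} are the direct discretizations of the second and third lines of \eqref{ContFil}. To certify that the scheme is first-order accurate, I would Taylor-expand $F_i=I+\Delta t\,\hat\Omega_i^\times+O(\Delta t^2)$, substitute into \eqref{LGVI_F}--\eqref{LGVI_omega}, and verify that the leading-order terms reproduce the continuous-time estimator equations \eqref{ContFil} with local truncation error $O(\Delta t^2)$.

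The main obstacle will be handling the variations on $\SE$ in discrete time consistently. In particular, expressing $\delta F_i$ as a function of $\Sigma_i$ and $\Sigma_{i+1}$ (analogous to \eqref{C6delR}--\eqref{C6delomega} in Chapter \ref{CH06_Bias}) and then carrying out the discrete summation-by-parts requires repeated use of the adjoint $\Ad{F_i}$ and its transpose, and the translational and rotational components couple through the semidirect-product structure of $\SE$. A secondary technical point will be converting the Morse potential gradient, originally $S_K(Q_{i+1})$, into the measurement-dependent form $S_{\Gamma_{i+1}}(\hat R_{i+1})$ via the identity \eqref{SBSL}; this ensures the discrete scheme uses only measured quantities $L^m_{i+1}$, $\bar a^m_{i+1}$ and known inertial data $D_{i+1}$, $\bar p_{i+1}$, and does not require the (unknown) true attitude error $Q_{i+1}$.
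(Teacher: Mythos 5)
Your overall strategy --- discrete Lagrange--d'Alembert applied to the action sum of the discrete Lagrangian \eqref{DisLag} with a Rayleigh dissipation term, discrete summation by parts, separation into rotational and translational components, and the final substitution $S_{K}(Q)\to S_\Gamma(\hat R)$ and $y\to\bar p-\hat b-\hat R\bar a^m$ via \eqref{SBSL} so that only measured quantities appear --- is the same as the paper's. However, there is a genuine error in your identification of $F_i$. You set $\hat R_{i+1}=\hat R_iF_i$ with $F_i=\exp(\Delta t\,\hat\Omega_i^\times)$, i.e.\ you take $F_i$ to be the increment of the attitude \emph{estimate}. In the paper, $F_i$ is the increment of the attitude estimation \emph{error}: the error kinematics $\dot Q=Q\omega^\times$ from \eqref{Qdot} is discretized as $Q_{i+1}=Q_iF_i$ with $F_i=\exp(\Delta t\,\omega_i^\times)\approx I+\Delta t\,\omega_i^\times$, where $\omega_i$ is the rotational part of the velocities estimation error $\varphi_i$. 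This matters: substituting your $F_i\approx I+\Delta t\,\hat\Omega_i^\times$ into \eqref{LGVI_F} gives $(J\omega_i)^\times\approx(J\hat\Omega_i)^\times$, which is false, and the transport terms $F_i\T M\upsilon_i$ and $F_i\T J\omega_i$ in \eqref{LGVI_upsilon}--\eqref{LGVI_omega} would then transport the discrete momenta by the wrong rotation. Your own consistency check (Taylor-expanding $F_i$ and comparing with \eqref{ContFil}) would expose this immediately.

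The source of the confusion is that the discrete Lagrangian \eqref{DisLag} is a function of the estimation errors $(\msh_i,\varphi_i)$, not of the estimates, so the discrete variational principle must be applied to the error kinematics $Q_{i+1}=Q_iF_i$ and $x_{i+1}=x_i+\Delta t\,Q_i\upsilon_i$, with variations $\delta Q_i=Q_i\Sigma_i^\times$ and $\delta x_i=Q_i\rho_i$, which induce $\delta F_i=-\Sigma_i^\times F_i+F_i\Sigma_{i+1}^\times$ and $\delta\upsilon_i=-\Sigma_i^\times\upsilon_i+\frac{1}{\Delta t}F_i\rho_{i+1}-\frac{1}{\Delta t}\rho_i$; this mirrors the continuous-time proof, where the reduced variations \eqref{variations} are taken with respect to $\msh$ and $\varphi$, not $\hat\msg$. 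Equation \eqref{LGVI_F} is then not a discrete Legendre transform of the estimate kinematics but the first-order enforcement of the identity $(J\omega_i)^\times=\omega_i^\times\mathcal{J}+\mathcal{J}\omega_i^\times$ using $F_i-I\approx\Delta t\,\omega_i^\times$, and the update \eqref{LGVI_ghat} of the estimate by $\hat\xi_i$ is a separate explicit step obtained from \eqref{LGVI_xihat}. With this correction the rest of your outline goes through essentially as in the paper.
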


{\em Proof}: Consider first variations with fixed endpoints for the pose estimation errors in discrete 
time given by:
\begin{align}
\delta Q_i &=Q_i\Sigma_i^\times,\;\ \Sigma_0=\Sigma_N=0,\label{discQVar}\\
\delta x_i &=Q_i\rho_i,\;\;\;\; \rho_0=\rho_N=0, \label{discxVar}
\end{align}
where $\Sigma_i,\rho_i\in\bR^3$ are ``discrete variation vectors". It can be shown that for any 
$\omega\in\bR^3$ we have
\begin{align}
(J\omega)^\times=\omega^\times\mathcal{J}+\mathcal{J}\omega^\times.
\label{JomegaCross}
\end{align}
Discretizing \eqref{Qdot} assuming that the angular velocity estimation error is constant in the time interval $[t_i,t_{i+1}]$ with a constant time step size $\Delta t$, one gets
\begin{align}
Q_{i+1}=Q_iF_i, \;\ i\in\{0,1,2,\ldots,N-1\},
\label{AltDiscQ}
\end{align}
where $F_i\in\SO$ is given by
\begin{align}
F_i=\exp(\Delta t\omega_i^\times)\approx I+\Delta t\omega_i^\times.
\label{AltF}
\end{align}
The variation of $F_i$ can be derived from \eqref{AltDiscQ} and $\delta Q_i=Q_i\Sigma_i^\times$. Thus
\begin{align}
\delta F_i=-\Sigma_i^\times F_i+F_i\Sigma_{i+1}^\times.
\label{deltaF}
\end{align}
Using \eqref{JomegaCross} and \eqref{AltF}, one can enforce the skew-symmetry of 
$(J\omega_i)^\times$ by
\begin{align}
(J\omega_i)^\times&=\omega_i^\times\mathcal{J}+\mathcal{J}\omega_i^\times\approx\frac{1}{\Delta t}\Big((F_i-I)\mathcal{J}-\mathcal{J}(F_i\T-I)\Big)=\frac{1}{\Delta t}(F_i\mathcal{J}-\mathcal{J}F_i\T).
\label{DiscJomegaCross}
\end{align}

From \eqref{hdot}, the continuous rate of change of the attitude estimation error is $\dot{x}=
Q\upsilon$, which can be approximated to first order in discrete-time as
\begin{align}
\frac{x_{i+1}-x_i}{\Delta t}\approx Q_i\upsilon_i\Rightarrow x_{i+1}=\Delta t Q_i\upsilon_i+x_i.
\label{Discx}
\end{align}
The first variation in $\upsilon_i$ is then calculated using \eqref{Discx} as
\begin{align}
\delta\upsilon_i&=\delta\Big(\frac{1}{\Delta t}Q_i\T(x_{i+1}-x_i)\Big)\nn\\
&=-\Sigma_i^\times\upsilon_i+\frac{1}{\Delta t}Q_i\T(\delta x_{i+1}-\delta x_i)\nn\\
&=-\Sigma_i^\times\upsilon_i+\frac{1}{\Delta t}F_i\rho_{i+1}-\frac{1}{\Delta t}\rho_i.
\label{deltanu}
\end{align}
The discrete Lagrangian \eqref{DisLag} can be rewritten as
\begin{align}
\cL(\msh_i,&D_i,\bar{p}_i,F_i,\upsilon_i)=\label{AltLag}\\
\frac{1}{2\Delta t}\lan &\mathcal{J}(F_i-I),(F_i-I)\ran+\frac{\Delta t}{2}\lan M\upsilon_i,\upsilon_i\ran-\Delta t\Phi \big( \cU^0_r (\msh_i,D_i) \big)
-\frac{\Delta t}{2}\kappa(Q_iy_i)\T (Q_iy_i).\nn
\end{align}
The action functional \eqref{action} is replaced by the action sum 
\be \cS_d \big(\cL (\msh_i,D_i,\bar{p}_i,F_i,\upsilon_i)\big)= \Delta t
\sum_{i=0}^{N-1} \cL (\msh_i,D_i,\bar{p}_i,F_i,\upsilon_i). \label{actsum} \ee
Applying the discrete Lagrange-d'Alembert principle with two Rayleigh dissipation terms for 
angular and translational motions gives 
\begin{align}
\delta\cS_d\big(\cL(\msh_i,D_i,\bar{p}_i,F_i,\upsilon_i)\big)&+\Delta t\sum_{i=0}^{N-1} \Big\{\lan\Sigma_i,\tau_i\ran+\lan 
\rho_i, f_i\ran\Big\}=0\label{dLagdAl}\\
\Rightarrow\sum_{i=0}^{N-1}\Bigg\{\frac{1}{\Delta t}\lan\delta F_i,\mathcal{J}(F_i-I)&\ran+\Delta t\lan \delta\upsilon_i,M\upsilon_i\ran-\frac{\Delta t}{2}\Phi' \big( \cU^0_r (\msh_i,D_i) \big)\big\lan\Sigma_i^\times,S_{K_i}^\times(Q_i)\big\ran  \nn\\
-\Delta t\kappa\lan \rho_i,y_i&\ran-\Delta t\kappa\lan \Sigma_i^\times,y_i\bar{p}_i\T\ran+\frac{\Delta t}{2}\lan\Sigma_i^\times,\tau_i^\times\ran+\Delta t\lan \rho_i,f_i\ran
\Bigg\}=0.\nn
\end{align}
As symmetric matrices are orthogonal to skew-symmetric matrices in the trace inner product, 
using \eqref{AltF} we can rewrite the first term in \eqref{AltLag} as
\begin{align}
\lan\delta F_i,\mathcal{J}(F_i-I)\ran&=\lan\Sigma_i^\times,\mathcal{J} F_i\T\ran-\lan\Sigma_{i+1}^\times,F_i\T\mathcal{J}\ran
\label{deltaFJ}\\
&=\frac12\lan\Sigma_i^\times,\mathcal{J} F_i\T\ran-\frac12\lan\Sigma_i^\times,F_i\mathcal{J}\ran-\frac12\lan\Sigma_{i+1}^\times,F_i\T\mathcal{J}\ran+\frac12\lan\Sigma_{i+1}^\times,\mathcal{J}F_i\ran\nn\\
&=-\frac{\Delta t}{2}\lan\Sigma_i^\times,(J\omega_i)^\times\ran+\frac{\Delta t}{2}\lan\Sigma_{i+1}^\times,F_i\T(J\omega_i)^\times F_i\ran.\nn
\end{align}
Hence equation \eqref{dLagdAl} can be re-expressed as
\begin{align}
\sum_{i=0}^{N-1}&\Bigg\{ -\frac12\lan\Sigma_i^\times,(J\omega_i)^\times\ran+\frac12\lan\Sigma_{i+1}^\times,F_i\T(J\omega_i)^\times F_i\ran-\frac{\Delta t}{2}\lan \Sigma_i^\times,(\upsilon_i\times M\upsilon_i)^\times\ran\nn\\
&+\lan F_i\rho_{i+1},M\upsilon_i\ran-\lan \rho_i,M\upsilon_i\ran-\frac{\Delta t}{2}\Phi' \big( \cU^0_r (\msh_i,D_i) \big)\big\lan\Sigma_i^\times,S_{K_i}^\times(Q_i)\big\ran-\kappa\Delta t\big\lan \rho_i, y_i\big\ran\nn\\
&-\frac{\kappa\Delta t}{2}\big\lan\Sigma^\times_i, (\bar{p}_i^\times y_i)^\times \big\ran+\frac{\Delta t}{2}\lan\Sigma_i^\times,\tau_i^\times\ran+\Delta t\lan \rho_i,f_i\ran
\Bigg\}=0.
\label{AltLdA2}
\end{align}
Separating this equation into two (rotational and translational) parts leads to
\begin{align}
(M+\Delta t\bD_t)\upsilon_{i+1}=&F_i\T M\upsilon_i-\Delta t \kappa y_{i+1},\\
(J+\Delta t\bD_r)\omega_{i+1}=&F_i\T J\omega_i+\Delta t M\upsilon_{i+1}\times\upsilon_{i+1}-\Delta t\kappa \bar{p}_{i+1}^\times y_{i+1}\\
&-\Delta t\Phi' \big( \cU^0_r (\msh_{i+1},D_{i+1}) \big)S_{K_{i+1}}(Q_{i+1}),\nn
\end{align}
using the identity $\mpz F\T\mpz w^\times \mpz F=(\mpz F\T\mpz w)^\times$ and by replacing 
$\tau_i=-\bD_r \omega_i$ and $f_i=-\bD_t \upsilon_i$, where $\bD_r$ and $\bD_t$ are positive 
definite matrices such that 
\[ \bD= \bbm \bD_r & 0\\ 0\; &\; \bD_t\ebm. \] 
In the presence of measurement noise, $Q_i\T D_i$ and $y_i$ 
are replaced by $\hat{R}_iL_i^m$ and $\bar{p}_i-\hat{b}_i-\hat{R}_i\bar{a}_i^m$, respectively. These 
give the discrete-time state estimator in the form of the Lie group variational integrator 
\eqref{LGVI_F}-\eqref{LGVI_ghat}. 
\hfill\ensuremath{\square}

Model-based discrete-time rigid body state estimators 
using LGVI schemes for attitude estimation were reported in \cite{SCLpaper,jgcd12}, but 
dynamics model-free state estimators using LGVIs have appeared only recently  
in~\cite{Automatica,ACC2015}.

\begin{remark}
In the absence of any direct velocity measurements or only angular velocity measurements, the expressions provided in Section \ref{ContButterworth} to calculate rigid body velocities are still valid in discrete-time. 
One can use the discrete-time variables introduced in this section in place of their continuous-time counterparts. The second-order Butterworth filter \eqref{ContBW} is discretized using the 
\textit{Newmark-$\beta$ Method} as follows:
\begin{align}
\begin{cases} 
\mpz{z}^f_{i+1}=\mpz{z}^f_i+\Delta t \dot{\mpz{z}}^f_i+\frac{\Delta t^2}{4}(\ddot{\mpz{z}}^f_i+\ddot{\mpz{z}}^f_{i+1})\\
\dot{\mpz{z}}^f_{i+1}=\dot{\mpz{z}}^f_i+\frac{\Delta t}{2}(\ddot{\mpz{z}}^f_i+\ddot{\mpz{z}}^f_{i+1})
\end{cases}.
\label{Newmark}
\end{align}
Choosing $\omega_n=2$ and $\mu=\frac12$, this method gives the filtered positions and velocities as  follows:
\begin{align}
\begin{Bmatrix}
\mpz{z}^f_{i+1}\\\dot{\mpz{z}}^f_{i+1}
\end{Bmatrix}
&=\frac{1}{4+4\mu\omega_n\Delta t+\omega_n^2\Delta t^2}\\&
\begin{bmatrix}
4+4\mu\omega_n\Delta t-\omega_n^2\Delta t^2 \;&\; 4\Delta t\;&\; \omega_n^2\Delta t^2\\
-4\omega_n^2\Delta t \;&\; 4-4\mu\omega_n\Delta t-\omega_n^2\Delta t^2 \;&\; 2\omega_n^2\Delta t
\end{bmatrix}
\begin{Bmatrix}
\mpz{z}^f_i\\
\dot{\mpz{z}}^f_i\\
\mpz{z}_i^m+\mpz{z}_{i+1}^m
\end{Bmatrix}.\nn
\end{align}
where $\mpz{z}^m_i$ and $\mpz{z}^f_i$ are the corresponding value of quantities $\mpz{z}^m$ and $\mpz{z}^f$ at time instant $t_i$, respectively. As with the continuous time version, $\xi^m_i$ can be replaced 
with $\xi^f_i$ in the estimator equations. 
\end{remark}

\subsection{Numerical Simulations}\label{Sec6}
This section presents numerical simulation results for the discrete-time estimator obtained in 
Section \ref{Sec5}. In order to numerically simulate this estimator, simulated true states of an 
aerial vehicle flying in a room are produced using the kinematics and dynamics equations of a 
rigid body. The vehicle mass and moment of inertia are taken to be $m_v=420$ g and $J_v=
[51.2\;\;60.2\;\;59.6]\T$ g.m$^2$, respectively. The resultant external forces and torques applied 
on the vehicle are $\phi_v(t)=10^{-3}[10\cos(0.1t)\;\;2\sin(0.2t)\;\;-2\sin(0.5t)]\T$ N and 
$\tau_v(t)=10^{-6}\phi_v(t)$ N.m, respectively. The room is assumed to be a cubic space of size 
10m$\times$10m$\times$10m with the inertial frame origin at the center of this cube. The initial 
attitude and position of the vehicle are:
\begin{align}
R_0=\expm_{\SO}\bigg(\Big(\frac{\pi}{4}\times[\frac{3}{7}\ -\frac{6}{7}\ \frac{2}{7}]\T\Big)^\times
\bigg),\mbox{and } b_0=[2.5\ 0.5\ -3]\T\mbox{ m}.
\end{align}
This vehicle's initial angular and translational velocity respectively, are:
\begin{align}
\begin{split}
\Omega_0=[0.2\;\; -0.05\;\; 0.1]\T\mbox{ rad/s},\mbox{ and } \nu_0=[-0.05\;\;0.15\;\;0.03]\T\mbox{ m/s}.
\end{split}
\end{align}
The vehicle dynamics is simulated over a time interval of $T=150 \mbox{ s}$, with a time 
stepsize of $\Delta t=0.02 \mbox{ s}$. The trajectory of the vehicle over this time interval is 
depicted in Fig.~\ref{Fig2}.
\begin{figure}
\begin{center}
\includegraphics[height=3.2in]{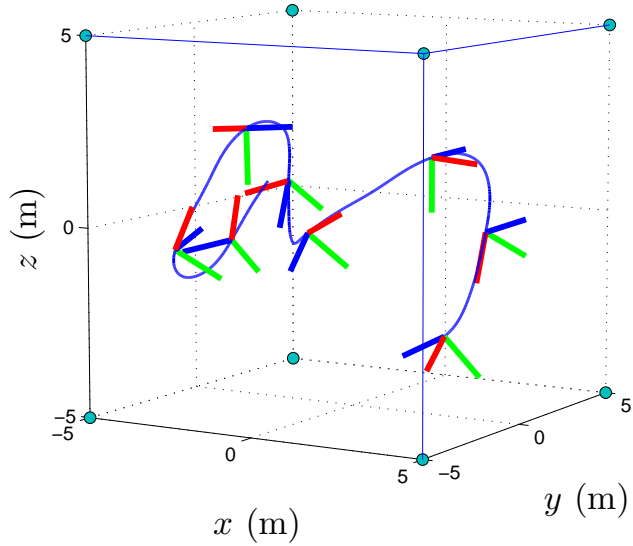}
\caption{Position and attitude trajectory of the simulated vehicle in 3D space.}  
\label{Fig2}                                 
\end{center}                                 
\end{figure}
The following two inertial directions, corresponding to nadir and Earth's magnetic field 
direction, are measured by the inertial sensors on the vehicle:
\begin{align}
d_1=[0\;\;0\;\;-1]\T,\;\;d_2=[0.1\;\;0.975\;\;-0.2]\T.
\end{align}
For optical measurements, eight beacons are located at the eight vertices of the cube, labeled 
1 to 8. The positions of these beacons are known in the inertial frame and their index (label) 
and relative positions are measured by optical sensors onboard the vehicle whenever the 
beacons come into the field of view of the sensors. Three identical cameras (optical sensors) 
and inertial sensors are assumed to be installed on the vehicle. The cameras are fixed to known 
positions on the vehicle, on a hypothetical horizontal plane passing through the vehicle, 120$^\circ$ 
apart from each other, as shown in Fig.~\ref{Frames}. All the camera readings contain random 
zero mean signals whose probability distributions are normalized bump functions with width of 
$0.001$m. The following are selected for the positive definite estimator gain matrices:
\begin{align}
J&=\diag\big([0.9\;\;0.6\;\;0.3]\big),\;\;
M=\diag\big([0.0608\;\;0.0486\;\;0.0365]\big), \\
\bD_r&=\diag\big([2.7\;2.2\;1.5]\big),\;\;\bD_t=\diag\big([0.1\;\;0.12\;\;0.14]\big). \nn
\end{align}
$\Phi(\cdot)$ could be any $C^2$ function with the properties described in Section \ref{Sec3}, but is 
selected to be $\Phi(x)=x$ here. The initial state estimates have the following values:
\begin{align}
\begin{split}
\hat\msg_0=I,\;\;\;\hat\Omega_0=[0.1\;\;0.45\;\;0.05]\T\mbox{ rad/s},
\mbox{ and }\hat\nu_0=[2.05\;\;0.64\;\;1.29]\T\mbox{ m/s}.
\end{split}
\end{align}
The performance of the proposed estimator is presented for two different cases.

\subsubsection{CASE 1: At least three beacons are observed at each time instant}
Having three beacons measured at each time instant guarantees full determination of vehicle's 
translational and angular velocities instantaneously. A conic field of view (FOV) of 
2$\times$40$^\circ$ for cameras can satisfy this condition. The vehicle's velocity is calculated by 
\eqref{ximMore2} in this case. The discrete-time estimator \eqref{LGVI_F}-\eqref{LGVI_ghat} is 
simulated over a time interval of $T=20$ s with sampling interval $\Delta t=0.02$ s. At each time instant, 
\eqref{LGVI_F} is solved using the Newton-Raphson iterative method to find an approximation for 
$F_i$. Following this, the remaining equations (all explicit) are solved to generate the 
estimated states. The principal angle of the attitude estimation error and the position estimation  
error for CASE 1 are plotted in Fig.~\ref{Fig3}. Plots of the angular and translational velocity estimation  
errors are shown in Fig.~\ref{Fig4}.

\begin{figure}
\begin{center}
\includegraphics[height=3.2in]{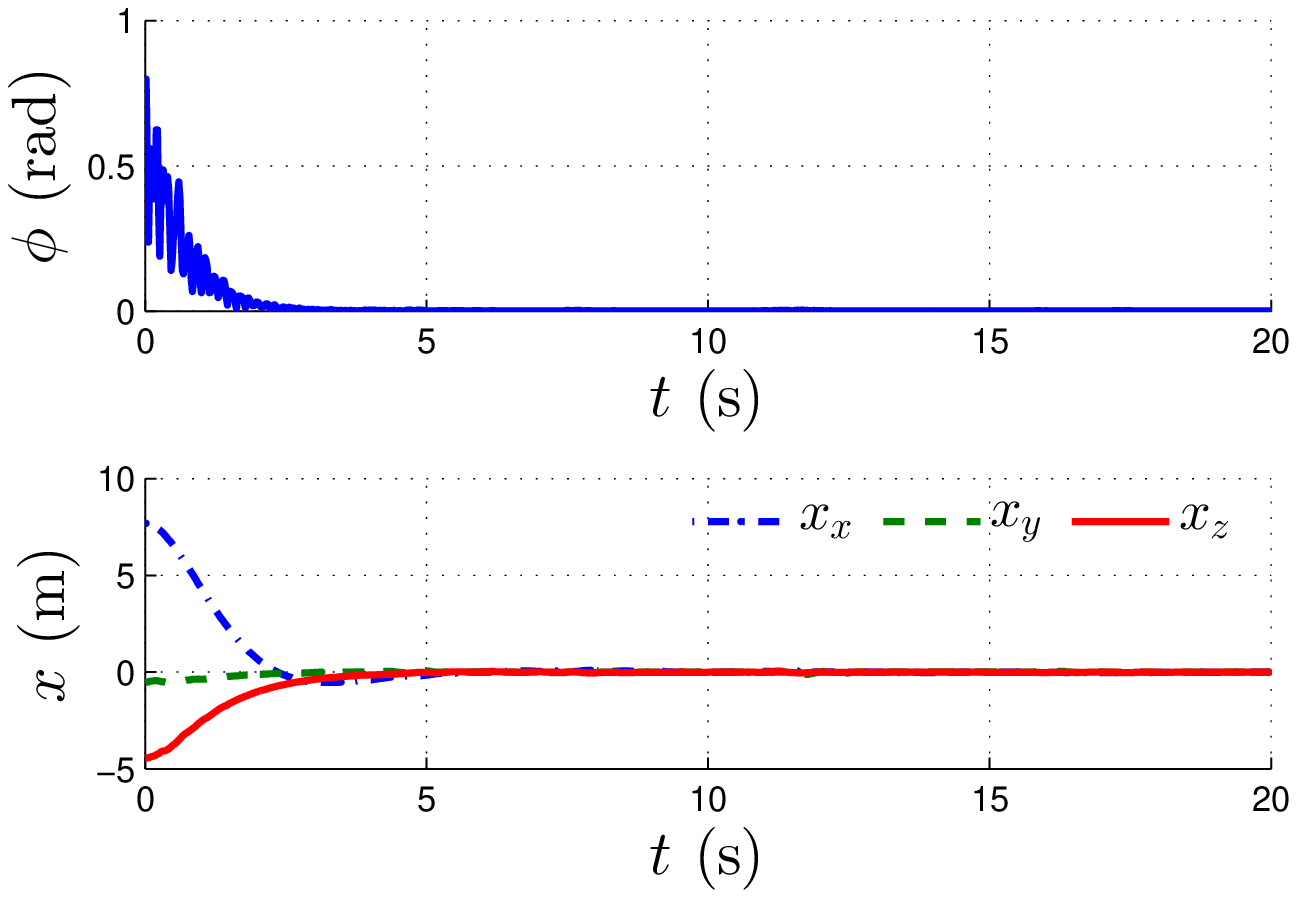}
\caption{Principal angle of the attitude and position estimation error for CASE 1.}  
\label{Fig3}                                 
\end{center}                                 
\end{figure}

\begin{figure}
\begin{center}
\includegraphics[height=3.2in]{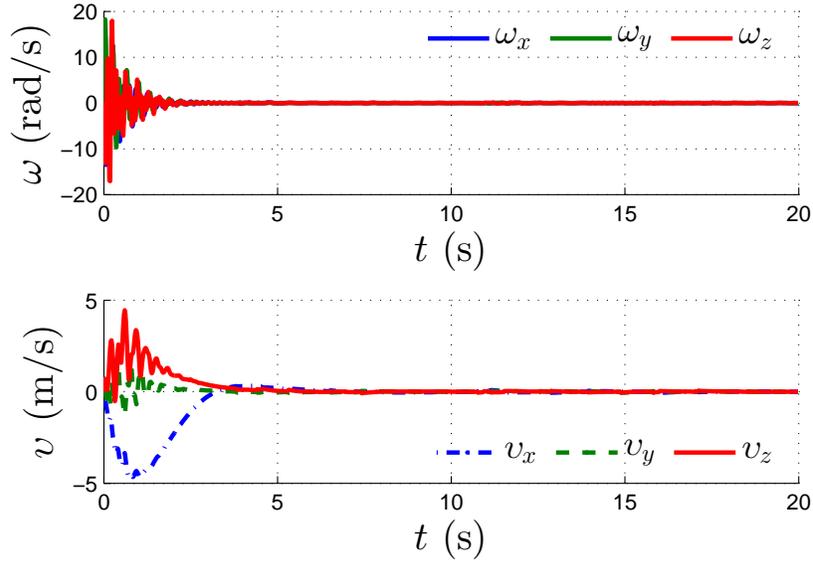}
\caption{Angular and translational velocity estimation error for CASE 1.}  
\label{Fig4}                                 
\end{center}                                 
\end{figure}

\subsubsection{CASE 2: Less than three beacons are measured at some time instants}
To implement the variational estimator for the case that less than three optical 
measurements are available, the field of view of the cameras is decreased to limit the number of 
beacons observed. Assuming the cameras have conical fields of view of 2$\times$25$^\circ$, 
the minimum number of beacons observed instantaneously drops to 1 during 
the simulated time interval. The dynamics model for the aerial vehicle, simulated time duration, 
and sample rate are identical to CASE 1. Fig.~\ref{Fig5} depicts the principal angle of the 
attitude estimation error and the position estimation error for CASE 2, and Fig.~\ref{Fig6} shows 
the angular and translational velocity estimation errors. All estimation errors are shown to 
converge to a neighborhood of $(\msh,\varphi)=(I,0)$ in both cases, where the size of this 
neighborhood depends on the magnitude of measurement noise.

\begin{figure}
\begin{center}
\includegraphics[height=3.2in]{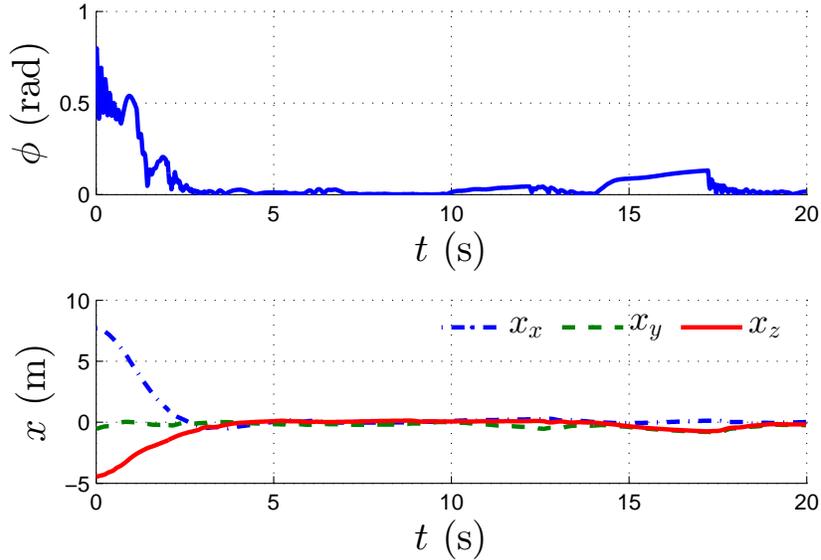}
\caption{Principal angle of the attitude and position estimation error for CASE 2.}  
\label{Fig5}                                 
\end{center}                                 
\end{figure}

\begin{figure}
\begin{center}
\includegraphics[height=3.2in]{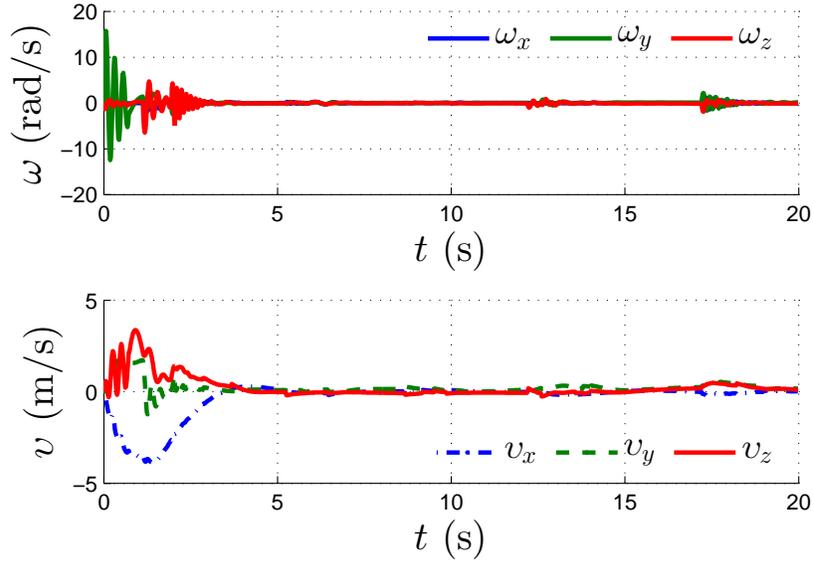}
\caption{Angular and translational velocity estimation error for CASE 2.}  
\label{Fig6}                                 
\end{center}                                 
\end{figure}

\subsection{Conclusion}\label{Sec7}
This chapter proposes an estimator for rigid body pose and velocities, using optical and inertial 
measurements by sensors onboard the rigid body. The sensors are assumed to provide 
measurements in continuous-time or at a sufficiently high frequency, with bounded 
measurement noise. An artificial kinetic energy quadratic in rigid body velocity estimate errors is 
defined, as well as two fictitious potential energies: (1) a generalized Wahba's cost function for 
attitude estimation error in the form of a Morse function, and (2) a quadratic function of  
the vehicle's position estimate error. Applying the Lagrange-d'Alembert principle on a Lagrangian 
consisting of these energy-like terms and a dissipation term linear in velocities estimation error, an 
estimator is designed on the Lie group of rigid body motions. In the absence of measurement 
noise, this estimator is shown to be almost globally asymptotically stable, with estimates converging 
to actual states in a domain of attraction that is open and dense in the state space. The continuous 
estimator is discretized by applying the discrete Lagrange-d'Alembert principle on the discrete 
Lagrangian and dissipation terms linear in rotational and translational velocity estimation errors. 
In the presence of measurement noise, numerical simulations show that state estimates converge 
to a bounded neighborhood of the true states. Future extensions of this work include higher-order 
discretizations of the continuous-time filter given here and obtaining a stochastic 
interpretation of the variational pose estimator.

\newpage

\section{IDEAS FOR FUTURE WORK} 

\label{CH09_Future} 

\hspace{\parindent}
The following are ideas to extend the research presented in this PhD dissertation:
\begin{itemize}

\item{{\bf Combining model-free and model-based estimation}: Implementations of filtering schemes similar to the variational estimator in the presence of measurements of direction vectors and 
angular velocities at different rates, use of a dynamics model for propagation of state 
estimates when measurements are available at low sampling rates, and design of 
state-varying or time-varying filter gains for faster convergence of state estimates.}

\item{{\bf Showing robustness to bounded measurement noise for model-free estimators}: Proving robustness of the variational estimators presented in Chapters \ref{CH04_VE} and \ref{CH08_Automatica2} in the presence of measurement noise. This proof could be obtained by finding a neighborhood of the origin to which the estimates converge, given known upper bounds on the measurement errors. Lyapunov analysis will be exploited to show that outside the derived neighborhood, all the states estimates tend to converge to a region inside that neighborhood.}

\item{{\bf Computationally efficient discretization of model-free filter in $\SE$}: The Lie group variational integrator presented in Chapter \ref{CH08_Automatica2} is implicit and needs to be solved using iterative Newton-Raphson method. This involves a complicated algebra to calculate the rate of change of the implicit equation, in order to approximate its root. Moreover, due to the iterative structure of the solution, the estimator will be computationally slow. A discretization scheme proposed in \cite{Sanyal2011almost} will be utilized in order to obtain computationally more efficient LGVI. Numerical simulations for this variational estimator in $\SE$ will be carried out in future research.}

\item{{\bf Experimental validation of model-free filter in $\SE$}: The performance of the variational estimator presented in Chapter \ref{CH04_VE} has been verified experimentally in \cite{ICC2015}. The comparison of the variational estimator with the state-of-the-art filters can also be verified using the same setup by implementing all the estimators on the Android device. A similar set of experiments could be conducted to show the performance of the $\SE$ version of the filter using experimental equipments. }

\item{{\bf Experimental validation of model-free filter in $\SO$ in the presence of bias in angular velocities}: Gyroscopes are used in practice to provide angular velocities. The output of such sensors usually contain constant or variable drift which harms the performance of the filter. The estimator presented in Chapter \ref{CH06_Bias} is designed in such a way that it could be robust to bias in the sensor readings. The performance of this version of the Variational Attitude Estimator needs to be verified experimentally.}

\end{itemize}

\newpage
\addcontentsline{toc}{section}{REFERENCES}
\setlength{\baselineskip}{\singlespace}
\bibliographystyle{plain}
\bibliography{main}

\begin{thebibliography}{10}

\bibitem{AbeSch}
Abe,~S., Mukai,~T., Hirata,~N., Barnouin-Jha,~O., Cheng,~A., Demura,~H., Gaskell,~R., Hashimoto,~T., Hiraoka,~K., Honda,~T., Kubota,~T., Matsuoka,~M., Mizuno,~T., Nakamura,~R., Scheeres,~D., \& Yoshikawa,~M. (2006).
\newblock Mass and local topography measurements of Itokawa by Hayabusa.
\newblock {\em Science}, 312(5778), 1344--1347.

\bibitem{aguhes06}
Aguiar,~A., \& Hespanha,~J. (2006).
\newblock Minimum-energy state estimation for systems with perspective outputs.
\newblock {\em IEEE Transactions on Automatic Control}, 51(2), 226--241.

\bibitem{amelin2014algorithm}
Amelin,~K.~S., \& Miller,~A.~B. (2014).
\newblock An algorithm for refinement of the position of a light UAV on the
  basis of Kalman filtering of bearing measurements.
\newblock {\em Journal of Communications Technology and Electronics},
  59(6), 622--631.

\bibitem{Barfoot}
Barfoot,~T., Forbes,~J.~R., \& Furgale,~P.~T. (2011).
\newblock Pose estimation using linearized rotations and quaternion algebra.
\newblock {\em Acta Astronautica}, 68(1), 101--112.

\bibitem{Bonnabel2013}
Barrau,~A., \& Bonnabel,~S. (2013).
\newblock Intrinsic filtering on $\SO$ with discrete-time observations.\newblock In {\em Proceedings of the $52^{nd}$ IEEE Conference on Decision and Control} (pp. 3255--3260). Florence, Italy.

\bibitem{Bonnabel2015}
Barrau,~A., \& Bonnabel,~S. (2015).
\newblock Intrinsic filtering on Lie groups with applications to attitude estimation.
\newblock {\em IEEE Transactions on Automatic Control}, 60(2), 436--449.

\bibitem{GES}
Batista,~P., Silvestre,~C., \& Oliveira,~P. (2012).
\newblock A GES attitude observer with single vector observations.
\newblock {\em Automatica}, 48(2), 388--395.

\bibitem{Bayadi2014almost}
Bayadi,~R., \& Banavar,~R.~N. (2014).
\newblock Almost global attitude stabilization of a rigid body for both internal and external actuation schemes.
\newblock {\em European Journal of Control}, 20(1), 45--54.

\bibitem{Lotfi}
Benziane,~L., Benallegue,~A., \& Tayebi,~A. (2014).
\newblock Attitude stabilization without angular velocity measurements.
\newblock In {\em Proceedings of the IEEE International Conference on Robotics and
  Automation} (pp. 3116--3121). Hong Kong.

\bibitem{bhat}
Bhat,~S.~P., \& Bernstein,~D.~S. (2000).
\newblock A topological obstruction to continuous global stabilization of rotational motion and the unwinding phenomenon.
\newblock {\em Systems \& Control Letters}, 39(1), 63--70.

\bibitem{bhatFT}
Bhat,~S.~P., \& Bernstein,~D.~S. (2000).
\newblock Finite-time stability of continuous autonomous systems.
\newblock {\em SIAM Journal on Control and Optimization}, 38(3), 751--766.

\bibitem{TRIAD}
Black,~H. (1964).
\newblock A passive system for determining the attitude of a satellite.
\newblock {\em AIAA Journal}, 2(7), 1350--1351.

\bibitem{blbook}
Bloch,~A.~M. (2003).
\newblock {\em Nonholonomic Mechanics and Control}.
\newblock New York: Springer-Verlag.

\bibitem{bonmaro09}
Bonnabel,~S., Martin,~P., \& Rouchon,~P. (2009).
\newblock Nonlinear symmetry-preserving observers on {L}ie groups.
\newblock {\em IEEE Transactions on Automatic Control}, 54(7), 1709--1713.

\bibitem{Bras2013CDC}
Br{\'a}s,~S., Izadi,~M., Silvestre,~C., Sanyal,~A., \& Oliveira,~P. (2013).
\newblock Nonlinear Observer for 3D Rigid Body Motion.
\newblock In {\em Proceedings of the $52^{nd}$ IEEE Conference on Decision and Control}. Florence, Italy.

\bibitem{Sergio2}
Br{\'a}s,~S., Cunha,~R., Vasconcelos,~J., Silvestre,~C., \& Oliveira,~P. (2010).
\newblock Experimental evaluation of a nonlinear attitude observer based on image and inertial measurements.
\newblock In {\em Proceedings of the American Control Conference} (pp. 4552--4557). Baltimore, MD.

\bibitem{Sergio1}
Br{\'a}s,~S., Silvestre,~C., Oliveira,~P., Vasconcelos,~J., \& Cunha,~R. (2011).
\newblock An Experimentally Evaluated Attitude Observer Based on Range and Inertial Measurements.
\newblock In {\em Proceedings of the $18^{th}$ IFAC World Congress}. Milan, Italy.

\bibitem{strand1}
Bridges,~C., Kenyon,~S., Underwood,~C., \& Sweeting,~M. (2011).
\newblock STRaND: Surrey Training Research and Nanosatellite Demonstrator.
\newblock In {\em Proceedings of the $1^{st}$ IAA Conference on University Satellite Missions and CubeSat Workshop in Europe}. Roma, Italy.

\bibitem{Bullo_ECC95}
Bullo,~F., \& Murray,~R.~M. (1995).
\newblock Proportional derivative ({PD}) control on the {E}uclidean group.
\newblock In {\em Proceedings of the European Control Conference} (pp. 1091--1097). Roma, Italy.

\bibitem{chaturvedi2011rigid}
Chaturvedi,~N.~A., Sanyal,~A.~K., \& McClamroch,~N.~H. (2011).
\newblock Rigid-body attitude control.
\newblock {\em IEEE Control Systems Magazine}, 31(3), 30--51.

\bibitem{dfpu09}
Davila,~J., Fridman,~L., Pisano,~A., \& Usai,~E. (2009).
\newblock Finite-time state observation for nonlinear uncertain systems via higher-order sliding modes.
\newblock {\em International Journal of Control}, 82(8), 1564--1574.

\bibitem{dimassi2012continuously}
Dimassi,~H., Loria,~A., \& Belghith,~S. (2012).
\newblock Continuously-implemented sliding-mode adaptive unknown-input observers under noisy measurements.
\newblock {\em Systems \& Control Letters}, 61(12), 1194--1202.

\bibitem{Dorato2006overview}
Dorato,~P. (2006).
\newblock An overview of finite-time stability.
\newblock {\em Current Trends in Nonlinear Systems and Controln}, 185--194.

\bibitem{jo:solwahba}
Farrell,~J., Stuelpnagel,~J., Wessner,~R., Velman,~J., \& Brock,~J. (1966).
\newblock A least squares Estimate of Satellite Attitude, Solution 65-1.
\newblock {\em SIAM Review}, 8(3), 384--386.

\bibitem{Forbes2014Automatica}
Forbes,~J.~R., de~Ruiter,~A.~H.~J., \& Zlotnik,~D.~E. (2014).
\newblock Continuous-time norm-constrained Kalman filtering.
\newblock {\em Automatica}, 50(10), 2546--2554.

\bibitem{gold}
Goldstein,~H. (1980).
\newblock {\em Classical Mechanics} (2$^{nd}$ edition).
\newblock Pearson Education India.

\bibitem{GoodPhD}
Goodarzi,~F.~A. (2015).
\newblock {\em Geometric Nonlinear Controls for Multiple Cooperative Quadrotor UAVs Transporting a Rigid Body}.
\newblock Ph.D. Dissertation. The George Washington University, Washington, D.C., USA.

\bibitem{Good2015ACC}
Goodarzi,~F.~A., \& Lee,~T. (2015).
\newblock Dynamics and Control of Quadrotor UAVs Transporting a Rigid Body Connected via Flexible Cables.
\newblock In {\em Proceedings of the American Control Conference} (pp. 4677--4682). Chicago, IL, USA.

\bibitem{Good2013ECC}
Goodarzi,~F.~A., Lee,~D., \& Lee,~T. (2013).
\newblock Geometric nonlinear {PID} control of a quadrotor {UAV} on {SE}(3).
\newblock In {\em Proceedings of the European Control Conference} (pp. 3845--3850). Zurich, Switzerland.

\bibitem{Good2014ASME}
Goodarzi,~F.~A., Lee,~D., \& Lee,~T. (2014).
\newblock Geometric Adaptive Tracking Control of a Quadrotor UAV on SE(3) for Agile Maneuvers.
\newblock {\em ASME Journal of Dynamic Systems, Measurement and Control}, 137(9), 091007.

\bibitem{Good2014ACC}
Goodarzi,~F.~A., Lee,~D., \& Lee,~T. (2014).
\newblock Geometric stabilization of a quadrotor UAV with a payload connected by flexible cable.
\newblock In {\em Proceedings of the American Control Conference} (pp. 4925--4930). Portland, OR, USA.

\bibitem{Good2015IJCAS}
Goodarzi,~F.~A., Lee,~D., \& Lee,~T. (2015).
\newblock Geometric Control of a Quadrotor UAV Transporting a Payload Connected via Flexible Cable.
\newblock {\em International Journal of Control, Automation, and Systems}, 13(6), 1--13.

\bibitem{green}
Greenwood,~D.~T. (1977).
\newblock {\em Classical Dynamics}.
\newblock Courier Corporation.

\bibitem{GNSS}
Grip,~H.~F., Fossen,~T.~I., Johansen,~T.~A., \& Saberi,~A. (2012).
\newblock A nonlinear observer for integration of {GNSS} and {IMU} measurements with gyro bias estimation.
\newblock In {\em Proceedings of the American Control Conference} (pp. 4607--4612). Montreal, Canada.

\bibitem{Haichao2015}
Gui,~H., \& Vukovich,~G. (2015).
\newblock Dual-quaternion-based adaptive motion tracking of spacecraft with reduced control effort.
\newblock {\em Nonlinear Dynamics}, 1--18.

\bibitem{Haddad2008finite}
Haddad,~M.~M., Nersesov,~S.~G., \& Du,~L. (2008).
\newblock Finite-time stability for time-varying nonlinear dynamical systems.
\newblock In {\em Proceedings of the American Control Conference} (pp. 4135--4139). Seattle, WA, USA.

\bibitem{haluwa}
Hairer,~E., Lubich,~C., \& Wanner,~G. (2006).
\newblock {\em Geometric Numerical Integration}.
\newblock volume 31 of Springer Series in Computational Mathematics.

\bibitem{Hua}
Hua,~M.D., Zamani,~M., Trumpf,~J., Mahony,~R., \& Hamel,~T. (2011).
\newblock Observer design on the special euclidean group {SE}(3).
\newblock In {\em Proceedings of the 50th IEEE Conference on Decision and
  Control and European Control Conference} (pp. 8169--8175). Orlando, FL, USA.

\bibitem{Izadi2013DSCC}
Izadi,~M., Bohn,~J., Lee,~D., Sanyal,~A., Butcher,~E., \& Scheeres,~D. (2013).
\newblock A Nonlinear Observer Design for a Rigid Body in the Proximity of a Spherical Asteroid.
\newblock In {\em Proceedings of the ASME Dynamic Systems and Control Conference}. Palo Alto, CA, USA.

\bibitem{ICRA2015}
Izadi,~M., Samiei,~E., Sanyal,~A.~K., \& Kumar,~V. (2015).
\newblock Comparison of an attitude estimator based on the
  {L}agrange-d'{A}lembert principle with some state-of-the-art filters.
\newblock In {\em Proceedings of the IEEE International Conference on Robotics and
  Automation} (pp. 2848--2853). Seattle, WA, USA.

\bibitem{Automatica}
Izadi,~M., \& Sanyal,~A.~K. (2014).
\newblock Rigid body attitude estimation based on the {L}agrange-d'{A}lembert
  principle.
\newblock {\em Automatica}, 50(10), 2570--2577.

\bibitem{Automatica2}
Izadi,~M., \& Sanyal,~A.~K. (2015).
\newblock Rigid body pose estimation based on the {L}agrange-d'{A}lembert principle.
\newblock To appear in {\em Automatica}.

\bibitem{ACC2015}
Izadi,~M., Sanyal,~A.~K., Samiei,~E., \& Viswanathan,~S.~P. (2015).
\newblock Discrete-time rigid body attitude state estimation based on the
  discrete {L}agrange-d'{A}lembert principle.
\newblock In {\em Proceedings of the American Control Conference} (pp. 3392--3397). Chicago, IL, USA.

\bibitem{strand2}
Kenyon,~S., Bridges,~C., Liddle,~D., Dyer,~R., Parsons,~J., Feltham,~D., Taylor,~R., Mellor,~D., Schofield,~A., \& Linehan,~R. (2011).
\newblock STRaND-1: Use of a \$500 Smartphone as the Central Avionics of a Nanosatellite.
\newblock In {\em Proceedings of the $62^{nd}$ International Astronautical Congress}. Cape Town, South Africa.

\bibitem{khal}
Khalil,~H.~K. (2001).
\newblock {\em Nonlinear Systems} (3$^{rd}$ edition).
\newblock Prentice Hall, Upper Saddle River, NJ.

\bibitem{Khosravian2015Recursive}
Khosravian,~A., Trumpf,~J., Mahony,~R., \& Hamel,~T. (2015).
\newblock Recursive Attitude Estimation in the Presence of Multi-rate and Multi-delay Vector Measurements.
\newblock In {\em Proceedings of the American Control Conference} (pp. 3199--3205). Chicago,
  IL, USA.

\bibitem{Khosravian2015observers}
Khosravian,~A., Trumpf,~J., Mahony,~R., \& Lageman,~C. (2015).
\newblock Observers for invariant systems on Lie groups with biased input
  measurements and homogeneous outputs.
\newblock {\em Automatica}, 55, 19--26.

\bibitem{kirk}
Kirk,~D.~E. (1971).
\newblock {\em Optimal Control Theory: An Introduction}.
\newblock Prentice Hall, NY.

\bibitem{Lageman}
Lageman,~C., Trumpf,~J., \& Mahony,~R. (2010).
\newblock Gradient-like observers for invariant dynamics on a {L}ie group.
\newblock {\em IEEE Transactions on Automatic Control}, 55, 367--377.

\bibitem{Lee2014AAS}
Lee,~D., Izadi,~M., Sanyal,~A., Butcher,~E., \& Scheeres,~D. (2014).
\newblock Finite-Time Control for Body-Fixed Hovering of Rigid Spacecraft over an Asteroid.
\newblock In {\em Proceedings of the AAS/AIAA Space Flight Mechanics}. Santa Fe, NM, USA.

\bibitem{mclele}
Lee,~T., Leok,~M., \& McClamroch,~N.~H. (2005).
\newblock A Lie Group Variational Integrator for the Attitude Dynamics of a Rigid Body with Applications to the {3D} Pendulum.
\newblock In {\em Proceedings of the IEEE Conference on Control Applications} (pp. 962--967). Toronto, Canada.

\bibitem{leishman2014relative}
Leishman,~R.~C., McLain,~T.~W., \& Beard,~R.~W. (2014).
\newblock Relative navigation approach for vision-based aerial {GPS}-denied
  navigation.
\newblock {\em Journal of Intelligent \& Robotic Systems}, 74(1-2), 97--111.

\bibitem{mahapf08}
Mahony,~R., Hamel,~T., \& Pflimlin,~J.~M. (2008).
\newblock Nonlinear complementary filters on the special orthogonal group.
\newblock {\em IEEE Transactions on Automatic Control}, 53(5), 1203--1218.

\bibitem{mabeda04}
Maithripala,~D.~H., Berg,~J.~M., \& Dayawansa,~W.~P. (2004).
\newblock An intrinsic observer for a class of simple mechanical systems on a
  {L}ie group.
\newblock In {\em Proceedings of the American Control Conference} (pp. 1546--1551). Boston,
  MA, USA.

\bibitem{markley1988attitude}
Markley,~F.~L. (1988).
\newblock Attitude determination using vector observations and the singular value decomposition.
\newblock {\em The Journal of the Astronautical Sciences}, 36(3), 245--258.

\bibitem{ekf2}
Markley,~F.~L. (2003).
\newblock Attitude Error Representations for {K}alman Filtering.
\newblock {\em AIAA Journal of Guidance, Control, and Dynamics}, 26(2), 311--317.

\bibitem{markSO3}
Markley,~F.~L. (2006).
\newblock Attitude filtering on $\SO$.
\newblock {\em The Journal of the Astronautical Sciences}, 54(4), 391--413.

\bibitem{marat}
Marsden,~J.~E., \& Ratiu,~T.~S. (1999).
\newblock {\em Introduction to mechanics and symmetry: a basic exposition of
  classical mechanical systems} (Vol. 17).
\newblock Springer Science \& Business Media.

\bibitem{marswest}
Marsden,~J.~E., \& West,~M. (2001).
\newblock Discrete mechanics and variational integrators.
\newblock {\em Acta Numerica}, 10, 357--514.

\bibitem{Phonesat1}
Marshall,~W., \& Beukelaers,~V. (2011).
\newblock PhoneSat: a smartphone-based spacecraft bus.
\newblock In {\em Proceedings of the $62^{nd}$ International Astronautical Congress}. Cape Town, South Africa.

\bibitem{miller2014tracking}
Miller,~A., \& Miller,~B. (2014).
\newblock Tracking of the UAV trajectory on the basis of bearing-only
  observations.
\newblock In {\em Proceedings of the 53rd Annual Conference on Decision and Control} (pp. 4178--4184). Los Angeles, CA, USA.

\bibitem{MilSch}
Miller,~J., Konopliv,~A., Antreasian,~P., Bordi,~J., Chesley,~S., Helfrich,~C., Owen,~W., Wang,~T., Williams,~B., Yeomans,~D., \& Scheeres,~D. (2002).
\newblock Determination of Shape, Gravity, and Rotational State of Asteroid 433 Eros.
\newblock {\em Icarus}, 155(1), 3--17.

\bibitem{bo:miln}
Milnor,~J. (1963).
\newblock {\em Morse Theory}.
\newblock Princeton University Press, Princteon, NJ.

\bibitem{Gaurav_ASR}
Misra,~G., Izadi,~M., Sanyal,~A.~K., \& Scheeres,~D.~J. (2015).
\newblock Coupled orbit–attitude dynamics and relative state estimation of
  spacecraft near small Solar System bodies.
\newblock {\em Advances in Space Research}.

\bibitem{Mortensen}
Mortensen,~R.~E. (1968).
\newblock Maximum-likelihood recursive nonlinear filtering.
\newblock {\em Journal of Optimization Theory and Applications}, 2(6), 386--394.

\bibitem{rehbinder2003pose}
Rehbinder,~H., \& Ghosh,~B.~K. (2003).
\newblock Pose estimation using line-based dynamic vision and inertial sensors.
\newblock {\em IEEE Transactions on Automatic Control}, 48(2), 186--199.

\bibitem{RodImp}
Rodrigues,~S.~S., Crasta,~N., Aguiar,~A.~P., \& Leite,~F.~S. (2010).
\newblock State Estimation for Systems on {SE}(3) with Implicit Outputs: An Application to Visual Servoing.
\newblock In {\em Proceedings of the 8th IFAC Symposium on Nonlinear Control Systems}. Bologna, Italy.

\bibitem{san06}
Sanyal,~A.~K. (2006).
\newblock Optimal Attitude Estimation and Filtering Without Using Local Coordinates, Part I: Uncontrolled and Deterministic Attitude Dynamics.
\newblock In {\em Proceedings of the American Control Conference} (pp. 5734--5739). Minneapolis, MN, USA.

\bibitem{CDC2013Sanyal}
Sanyal,~A.~K., Bohn,~J., \& Bloch,~A.~M. (2013).
\newblock Almost global finite time stabilization of rigid body attitude dynamics.
\newblock In {\em Proceedings of the $52^{nd}$ IEEE Conference on Decision and Control} (pp. 3261-3266). Florence, Italy.

\bibitem {AKS2008GNC}
Sanyal,~A.~K., \& Chaturvedi,~N.~A. (2008).
\newblock Almost global robust attitude tracking control of spacecraft in gravity.
\newblock In {\em Proceedings of the AIAA Guidance, Navigation, and Control Conference} (pp. AIAA-2008-6979). Honolulu, HI, USA.

\bibitem{sanyal2009inertia}
Sanyal,~A.~K., Fosbury,~A., Chaturvedi,~N.~A., \& Bernstein,~D.~S. (2009).
\newblock Inertia-free spacecraft attitude tracking with disturbance rejection
  and almost global stabilization.
\newblock {\em Journal of Guidance, Control, and Dynamics}, 32(4), 1167--1178.

\bibitem{Izadi2014DSCC}
Sanyal,~A., Izadi,~M., \& Bohn,~J. (2014).
\newblock An Observer for Rigid Body Motion with Almost Global Finite-Time Convergence.
\newblock In {\em Proceedings of the ASME Dynamic Systems and Control Conference}. San Antonio, TX, USA.

\bibitem{ast_acc14}
Sanyal,~A.~K., Izadi,~M., \& Butcher,~E.~A. (2014).
\newblock Determination of relative motion of a space object from simultaneous
  measurements of range and range rate.
\newblock In {\em Proceedings of the American Control Conference} (pp. 1607--1612). Portland, OR, USA.

\bibitem {Space2014}
Sanyal,~A.~K., Izadi,~M., Misra,~G., Samiei,~E., \& Scheeres,~D.~J. (2014).
\newblock Estimation of Dynamics of Space Objects from Visual Feedback During Proximity Operations.
\newblock In {\em Proceedings of the AIAA Space Conference}. San Diego, CA, USA.

\bibitem{SCLpaper}
Sanyal,~A.~K., Lee,~T., Leok,~M., \& McClamroch,~N.~H. (2008).
\newblock Global optimal attitude estimation using uncertainty ellipsoids.
\newblock {\em Systems \& Control Letters}, 57(3), 236--245.

\bibitem{jgcd12}
Sanyal,~A.~K., \& Nordkvist,~N. (2012).
\newblock Attitude state estimation with multi-rate measurements for almost
  global attitude feedback tracking.
\newblock {\em AIAA Journal of Guidance, Control, and Dynamics}, 35(3), 868--880.

\bibitem{Sanyal2011almost}
Sanyal,~A.~K., Nordkvist,~N., \& Chyba,~M. (2011).
\newblock An almost global tracking control scheme for maneuverable autonomous
  vehicles and its discretization.
\newblock {\em IEEE Transactions on Automatic Control}, 56(2), 457--462.

\bibitem{schaub2009analytical}
Schaub,~H., \& Junkins,~J. (2009).
\newblock {\em Analytical Mechanics of Space Systems}.
\newblock AIAA. Reston, VA.

\bibitem{Seba1}
Seba,~A., El Hadri,~A., Benziane,~L., \& Benallegue,~A. (2014).
\newblock Multiplicative Extended Kalman Filter based on Visual Data for Attitude Estimation.
\newblock In {\em the IEEE International Conference on Robotics and
  Automation} Workshop on Modelling, Estimation, Perception, and Control of All Terrain Mobile Robots. Hong Kong.

\bibitem{Seba2}
Seba,~A., El Hadri,~A., Benziane,~L., \& Benallegue,~A. (2014).
\newblock Attitude Estimation Using Line-Based Vision and Multiplicative Extended Kalman Filter.
\newblock In {\em Proceedings of the 13$^{th}$ International Conference on Control, Automation, Robotics, and Vision} (pp. 456--461). Singapore.

\bibitem{shen2013vision}
Shen,~S., Mulgaonkar,~Y., Michael,~N., \& Kumar,~V. (2013).
\newblock Vision-based state estimation and trajectory control towards
  aggressive flight with a quadrotor.
\newblock In {\em Proceedings of the Robotics Science and Systems}.

\bibitem{shen2013rotor}
Shen,~S., Mulgaonkar,~Y., Michael,~N., \& Kumar,~V. (2013).
\newblock Vision-based state estimation for autonomous rotorcraft {MAV}s in
  complex environments.
\newblock In {\em Proceedings of the IEEE International Conference on Robotics and
  Automation} (pp. 1758--1764). Karlsruhe, Germany.

\bibitem{Sommer}
Sommer,~H., Forbes,~J.~R., Siegwart,~R., Furgale,~P. (2015).
\newblock Continuous-Time Estimation of Attitude Using B-Splines on Lie Groups.
\newblock {\em AIAA Journal of Guidance, Control, and Dynamics}.

\bibitem{6315375}
Swensen,~J.~P., \& Cowan,~N.~J. (2012).
\newblock An almost global estimator on $\SO$ with measurement on $\ensuremath{\mathrm{S}}^2$.
\newblock In {\em Proceedings of the American Control Conference} (pp. 1780 -1786). Montreal, Canada.

\bibitem{TakSch}
Takahashi,~Y., \& Scheeres,~D.~J. (2011).
\newblock Small-Body Postrendezvous Characterization via Slow Hyperbolic Flybys.
\newblock {\em Journal of Guidance, Control, and Dynamics}, 34(6), 1815--1827.

\bibitem{Tayebi2011}
Tayebi,~A., Roberts,~A., \& Benallegue,~A. (2011).
\newblock Inertial measurements based dynamic attitude estimation and velocity-free attitude stabilization.
\newblock In {\em Proceedings of the American Control Conference} (pp. 1027--1032). San Francisco, CA, USA.

\bibitem{Vas1}
Vasconcelos,~J.~F., Cunha,~R., Silvestre,~C., \& Oliveira,~P. (2010).
\newblock A nonlinear position and attitude observer on {SE}(3) using landmark
  measurements.
\newblock {\em Systems \& Control Letters}, 59, 155--166.

\bibitem{silvest08}
Vasconcelos,~J.~F., Silvestre,~C., \& Oliveira,~P. (2008).
\newblock A nonlinear {GPS/IMU} based observer for rigid body attitude and
  position estimation.
\newblock In {\em Proceedings of the IEEE Conference on Decision and Control} (pp. 1255--1260). Cancun, Mexico.

\bibitem {ICC2015}
Viswanathan,~S.~P., Sanyal,~A.~K., \& Izadi,~M. (2015).
\newblock Mechatronics Architecture of Smartphone Based Spacecraft ADCS using VSCMG Actuators.
\newblock In {\em Proceedings of the Indian Control Conference}. Chennai, India.

\bibitem{jo:wahba}
Wahba,~G. (1965).
\newblock A least squares estimate of satellite attitude, Problem 65-1.
\newblock {\em SIAM Review}, 7(5), 409.

\bibitem{Yu20051957}
Yu,~S., Yu,~X., Shirinzadeh,~B., \& Man,~Z. (2005).
\newblock Continuous finite-time control for robotic manipulators with terminal sliding mode.
\newblock {\em Automatica}, 41, 1957--1964.

\bibitem{ZamPhD}
Zamani,~M. (2013).
\newblock {\em Deterministic Attitude and Pose Filtering, an Embedded {L}ie
  Groups Approach}.
\newblock Ph.D. Dissertation. Australian National University, Canberra, Australia.

\bibitem{zatruma11}
Zamani,~M., Trumpf,~J., \& Mahony,~R. (2011).
\newblock Near-optimal deterministic filtering on the rotation group.
\newblock {\em IEEE Transactions on Automatic Control}, 56(6), 1411--1414.

\bibitem{Zamani2012second}
Zamani,~M., Trumpf,~J., \& Mahony,~R. (2012).
\newblock A second order minimum-energy filter on the special orthogonal group.
\newblock In {\em Proceedings of the American Control Conference} (pp. 1895--1900). Montreal, Canada.

\bibitem{Zamani2013minimum}
Zamani,~M., Trumpf,~J., \& Mahony,~R. (2013).
\newblock Minimum-energy filtering for attitude estimation.
\newblock {\em IEEE Transactions on Automatic Control}, 58(11), 2917--2921.

\bibitem{Zarrouati}
Zarrouati,~N., Aldea,~E., \& Rouchon,~P. (2012).
\newblock $\SO$-invariant asymptotic observers for dense depth field estimation based on visual data and known camera motion.
\newblock In {\em Proceedings of the American Control Conference} (pp. 4116--4123). Montreal, Canada.

\bibitem{Zlotnik}
Zlotnik,~D.~E., \& Forbes,~J.~R. (2015).
\newblock Differential Geometric SLAM.
\end{thebibliography}
%

\end{document}